\theoremstyle{plain}
\newtheorem{thm}{Theorem}[section]
\newtheorem{lem}[thm]{Lemma}
\newtheorem{prop}[thm]{Proposition}
\newtheorem{ques}[thm]{Question}
\newtheorem{cor}[thm]{Corollary}
\theoremstyle{definition}
\newtheorem{defn}[thm]{Definition}
\theoremstyle{remark}
\newtheorem{remark}[thm]{Remark}
\newtheorem{fact}[thm]{Fact}
\newtheorem{exmp}[thm]{Example}
\numberwithin{equation}{section}
\begin{document}

\title {\bf Isometric immersions of RCD($K,N$) spaces via heat kernels}
\author{\it Zhangkai Huang \thanks{ Tohoku University: huang.zhangkai.t2@dc.tohoku.ac.jp}}
\date{\small\today}

\maketitle

\begin{abstract}
Given an RCD$(K,N)$ space $({X},\mathsf{d},\mathfrak{m})$, one can use its heat kernel $\rho$ to map it into the $L^2$ space by a locally Lipschitz map $\Phi_t(x):=\rho(x,\cdot,t)$. The space $(X,\mathsf{d},\mathfrak{m})$ is said to be an isometrically heat kernel immersing space, if each $\Phi_t$ is an isometric immersion {}{after a normalization}. A main result states that any compact isometrically heat kernel immersing RCD$(K,N)$ space is isometric to an unweighted  closed smooth Riemannian manifold. This is justified by a more general result: if a compact non-collapsed RCD$(K, N)$ space has an isometrically immersing eigenmap, then the space is isometric to an unweighted closed Riemannian manifold, which greatly improves a regularity result in \cite{H21} by Honda. As an application of these results, we give a $C^\infty$-compactness theorem for a certain class of Riemannian manifolds with a curvature-dimension-diameter bound and an isometrically immersing eigenmap.

\end{abstract}
\tableofcontents

\section{Introduction}

\subsection{Isometric immersions on Riemannian manifolds}

Let $( M^n,g)$ be an $n$-dimensional closed, that is, compact without boundary, Riemannian manifold. A map
\[
\begin{aligned}
 F: M^n &\longrightarrow \mathbb{R}^{m}\\
 \ p&\longmapsto (\phi_1(p),\ldots,\phi_m(p))
\end{aligned}
\]
is said to be an \textit{isometrically immersing eigenmap} if each $\phi_i$ is a non-constant eigenfunction of $-\Delta$ and $F$ is an isometric immersion in the following sense:
\begin{align}\label{aaaeqn1.1}
F^\ast g_{\mathbb{R}^m}=\sum\limits_{i=1}^m d\phi_i \otimes d\phi_i=g.
\end{align}

Let us recall a theorem of Takahashi in \cite{Ta66} which states that if $(M^n,g)$ is additionally homogeneous and irreducible, then for any eigenspace $V$ corresponding to some non-zero eigenvalue of $-\Delta$, there exists an $L^2(\mathrm{vol}_g)$-orthogonal basis $\{\phi_i\}_{i=1}^m$ ($m=\mathrm{dim}(V)$) of $V$ realizing (\ref{aaaeqn1.1}).

Besides, $(M^n,g)$ can be also smoothly embedded into an infinite dimensional Hilbert space by using its heat kernel ${}{\rho}: M^n\times  M^n\times (0,\infty)\rightarrow (0,\infty)$. More precisely, B\'{e}rard and B\'{e}rard-Besson-Gallot \cite{B85,BBG94} prove that the following map, which is called \textit{the $t$-time heat kernel mapping} in this paper, 
\[
\begin{aligned}
\Phi_t: M^n&\longrightarrow L^2(\text{vol}_g) \\
x&\longmapsto\left(y\longmapsto\rho(x,y,t)\right),
\end{aligned}
\]
is a smooth embedding. Moreover, one can use $\Phi_t$ to pull-back the flat Riemannian metric $g_{L^2}$ on $L^2(\mathrm{vol}_g)$ to get a metric tensor $g_t:=\Phi_t^\ast\left(g_{L^2}\right)$ with the following asymptotic formula:
\begin{equation}\label{eqn1.1}
4(8\pi)^{\frac{n}{2}}  t^{\frac{n+2}{2}}g_t=g-\frac{2t}{3}\left(\mathrm{Ric}_g-\frac{1}{2}\mathrm{Scal}_g g\right)+O(t^2),\ \ \ \  t\downarrow 0. 
\end{equation} 

Again when $(M^n,g)$ is additionally homogeneous and irreducible, it follows from another theorem by Takahashi \cite[Theorem 3]{Ta66} that there exists a non-negative function $c(t)$ such that for all $t>0$, $\sqrt{c(t)}\Phi_t$ is an isometric immersion.

 The observations above lead us to ask the following two questions.

\begin{ques}\label{q1.2}
How to characterize a manifold admitting an isometrically immersing eigenmap?
\end{ques}

\begin{ques}\label{q1.1}
How to characterize a manifold such that each $t$-time heat kernel mapping is an isometric immersion after a normalization?
\end{ques}

Note that if each $t$-time heat kernel mapping of a closed Riemannian manifold $(M^n,g)$ is an isometric immersion after a normalization, then $(M^n,g)$ admits an isometrically immersing eigenmap. Standard spectral theory of elliptic operators implies that there exists
an orthonormal basis $\{\varphi_i\}_{i=1}^\infty$ in $L^2(\mathrm{vol}_g)$ such that each $\varphi_i$ is an eigenfunction of $-\Delta$ with corresponding eigenvalue $\lambda_i$, and that $\{\lambda_i\}_{i=1}^\infty$ satisfies
\[
0=\lambda_0<\lambda_1\leqslant \lambda_2\leqslant \cdots\leqslant \lambda_i\rightarrow\infty.
\]

Then the classical estimates for eigenvalues $\lambda_i$ show that
\begin{align}\label{aeqn1.3}
g=c(t) g_t=c(t)\sum\limits_{i=1}^\infty e^{-2\lambda_i t}d\varphi_i\otimes d\varphi_i, \ \forall t>0.
\end{align}
These estimates also allow us to let $t\rightarrow \infty$ in (\ref{aeqn1.3}) to get (\ref{aaaeqn1.1}) with $\phi_i=\lim_{t\rightarrow \infty}c(t)e^{-\lambda_1 t}\varphi_i$ ($i=1,\cdots,m$), where $m$ is the dimension of the eigenspace corresponding to $\lambda_1$.

The main purposes of the paper are to give positive answers to the both questions above in a non-smooth setting, so-called RCD$(K, N)$ metric measure spaces, explained in the next subsection.

\subsection{Isometric immersions on RCD$(K,N)$ spaces}

\subsubsection{Metric measure spaces satisfying the RCD$(K,N)$ condition}
A triple $({X},\mathsf{d},\mathfrak{m})$ is said to be a metric measure space if $({X},\mathsf{d})$ is a complete separable metric space and $\mathfrak{m}$ is a nonnegative Borel measure with full support on $X$ and being finite on any bounded subset of ${X}$.

In the first decade of this century, Sturm \cite{St06a, St06b} and Lott-Villani \cite{LV09} independently define a notion of a lower Ricci curvature bound $K\in \mathbb{R}$ and an upper dimension bound $N\in [1,\infty]$ for metric measure spaces in a synthetic sense, which is named as the CD$(K,N)$ condition. A metric measure space is said to be an RCD$(K,N)$ space if it satisfies the CD$(K,N)$ condition, and its associated $H^{1,2}$-Sobolev space is a Hilbert space. The precise definition (and the equivalent ones) can be found in \cite{AGS14b,AMS19,G13,G15,EKS15}.

As an example, any weighted Riemannian manifold $(M^n,\mathsf{d}_g,e^{-f}\mathrm{vol}_g)$ with $f\in C^\infty(M^n)$ and $\mathrm{Ric}_N\geqslant Kg$ is an RCD$(K,N)$ space, where $\mathrm{Ric}_N$ is the Bakry-\'{E}mery $N$-Ricci curvature tensor defined by
\[
\mathrm{Ric}_N:=
\left\{\begin{array}{ll}
\mathrm{Ric}_g+\mathrm{Hess}_g(f)-\frac{df\otimes df}{N-n}&\text{if}\ N>n,\\
\mathrm{Ric}_g& \text{if $N=n$ and $f$ is a constant},\\
-\infty&\text{otherwise}.
\end{array}\right.
\]

In the sequel, we always assume that $N$ is finite.

Given an RCD$(K,N)$ space $({X},\mathsf{d},\mathfrak{m})$, with the aid of a work by Bru\`e-Semola \cite{BS20}, there exists a unique $n\in [1,N]\cap \mathbb{N}$, which is called the essential dimension of $({X},\mathsf{d},\mathfrak{m})$ and is denoted by $n:=\mathrm{dim}_{\mathsf{d},\mathfrak{m}}({X})$, such that the $n$-dimensional regular set $\mathcal{R}_n$ (see Definition \ref{111def2.18}) satisfies that $\mathfrak{m}=\theta \mathcal{H}^n\llcorner \mathcal{R}_n$ for some Borel function $\theta$ (see \cite{AHT18}), where $\mathcal{H}^n$ is the $n$-dimensional Hausdorff measure. It is remarkable that the canonical Riemannian metric $g$ on $({X},\mathsf{d},\mathfrak{m})$ is also well-defined due to a work by Gigli-Pasqualetto \cite{GP16} (see also \cite[Proposition 3.2]{AHPT21} and Definition \ref{111thm2.21}). Then its $\mathfrak{m}$-a.e. pointwise Hilbert-Schmidt norm $|g|_{\mathsf{HS}}$ is equal to $\sqrt{n}$.

Let us introduce a special restricted class of RCD$(K, N)$ spaces introduced in \cite{DG18} by De Philippis-Gigli as a synthetic counterpart of volume non-collapsed Gromov-Hausdorff limit spaces of Riemannian manifolds with a constant dimension and a lower Ricci curvature bound. The definition is simple: an RCD$(K, N)$ space is said to be non-collapsed if the reference measure is $\mathcal{H}^N$. {}{It can be easily shown that in this case $N$ must be an integer}. Non-collapsed RCD$(K, N)$ spaces have nicer properties than general RCD$(K,N)$ spaces. See also for instance \cite{ABS19, KM21}.

\subsubsection{Isometrically heat kernel immersing RCD$(K,N)$ spaces}

Thanks to works by Sturm \cite{St95, St96} and by Jiang-Li-Zhang \cite{JLZ16}, the heat kernel on an RCD$(K,N)$ space $({X},\mathsf{d},\mathfrak{m})$ has a locally Lipschitz representative $\rho$ with Gaussian estimates. This allows us to {}{construct $\Phi_t$ analogously as
\[
\begin{aligned}
\Phi_t:X&\longrightarrow L^2(\mathfrak{m})\\
 x&\longmapsto (y\longmapsto \rho(x,y,t)),
\end{aligned}
\]
which also naturally induces the pull back metric $g_t:=\Phi_t^\ast(g_{L^2(\mathfrak{m})})$.}

One can also generalize formula (\ref{eqn1.1}) to this setting with the $L^p_{\mathrm{loc}}$ convergence as follows, see \cite[Theorem 5.10]{AHPT21} and \cite[Theorem 3.11]{BGHZ21} for the proof.
\begin{thm}\label{20211222a}
Let $({X},\mathsf{d},\mathfrak{m})$ be an $\mathrm{RCD}(K,N)$ space with $\mathrm{dim}_{\mathsf{d},\mathfrak{m}}({X})=n$, then for any $p\in [1,\infty)$ and any bounded Borel set $A\subset X$, we have the following convergence in $L^p(A,\mathfrak{m})$:
\[
\left| t\mathfrak{m}(B_{\sqrt{t}}(\cdot))g_t-c(n) g\right|_{\mathsf{HS}}\rightarrow 0,  \ \ \text{as }t\downarrow 0,
\]
where $c(n)$ is a constant depending only on $n$.
\end{thm}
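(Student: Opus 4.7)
The goal is to show that the rescaled quantity $F_t(x):=t\,\mathfrak{m}(B_{\sqrt t}(x))\,g_t(x)-c(n)\,g(x)$ vanishes in $L^p(A,\mathfrak m)$, and the overall strategy is a two-step reduction: first establish the pointwise a.e.\ limit $|F_t|_{\mathsf{HS}}(x)\to 0$ at every regular point, then upgrade to $L^p$ on bounded sets via equi-integrability using Gaussian-type heat kernel bounds. The starting point is the explicit expression
\[
g_t(x)(V,W)=\int_{X}\langle \nabla_x\rho(x,y,t),V\rangle\,\langle \nabla_x\rho(x,y,t),W\rangle\,d\mathfrak m(y),
\]
for $V,W$ in the tangent module at $x$, together with the fact (\cite{BS20}) that $\mathfrak m$-a.e.\ $x$ lies in the $n$-regular set $\mathcal R_n$, and there $g(x)$ acts as the standard inner product on an $n$-dimensional tangent space.

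For the pointwise step I would use a blow-up / tangent cone argument. For $x\in\mathcal R_n$, consider the rescaled pointed metric measure space $(X,t^{-1/2}\mathsf d,\mathfrak m(B_{\sqrt t}(x))^{-1}\mathfrak m,x)$, which converges in the pmGH sense to the Euclidean model $(\mathbb R^n,\mathsf d_{\mathrm{euc}},\omega_n^{-1}\mathcal H^n,0)$ as $t\downarrow 0$. Under this rescaling the heat kernel $\rho$ transforms via the natural scaling $\widetilde\rho(\cdot,\cdot,s):=\mathfrak m(B_{\sqrt t}(x))\,\rho(\cdot,\cdot,ts)$, and by stability of the heat flow under pmGH convergence (together with the locally uniform Gaussian bounds and gradient estimates of Jiang--Li--Zhang \cite{JLZ16}), the rescaled kernel and its gradient converge to the Gaussian kernel on $\mathbb R^n$ and its gradient respectively. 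Writing
\[
t\,\mathfrak m(B_{\sqrt t}(x))\,g_t(x)(V,W)=\int_{X}\langle t^{1/2}\nabla_x\widetilde\rho(x,y,1),V\rangle\,\langle t^{1/2}\nabla_x\widetilde\rho(x,y,1),W\rangle\,d\left(\frac{\mathfrak m}{\mathfrak m(B_{\sqrt t}(x))}\right)(y),
\]
one can pass to the limit along any sequence $t_k\downarrow 0$ and reduce to an integral over $\mathbb R^n$. A direct Gaussian computation, using $\nabla_x G=-\frac{x-y}{2}G$ at time $1$ and Gaussian second moments, yields
\[
\int_{\mathbb R^n}\nabla_x G(x,y,1)\otimes\nabla_x G(x,y,1)\,dy=\frac{1}{2(8\pi)^{n/2}}\,\mathrm{Id},
\]
and after combining with the volume factor $\omega_n$ from the normalization of $\mathfrak m$, one obtains $c(n)=\omega_n/(4(8\pi)^{n/2})$, giving the desired pointwise convergence of $F_t(x)$ to $0$ at every regular $x$.

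For the $L^p$ upgrade, I would use the Gaussian bounds on $\rho$ and $|\nabla_x\rho|$ from \cite{JLZ16}, which combined with Bishop--Gromov give a bound of the form $t\,\mathfrak m(B_{\sqrt t}(x))\,|g_t|_{\mathsf{HS}}(x)\leqslant C(K,N,\mathrm{diam}(A))$ uniformly in small $t>0$ and $x\in A$. Since $|g(x)|_{\mathsf{HS}}=\sqrt n$ is also bounded, the family $\{|F_t|^p_{\mathsf{HS}}\}_t$ is uniformly bounded, hence uniformly integrable on $A$, and Vitali's theorem (or dominated convergence) promotes the a.e.\ convergence to convergence in $L^p(A,\mathfrak m)$.

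The main difficulty I expect is the stability of $\nabla_x\rho$ under pmGH convergence: pointwise heat kernel convergence is relatively standard, but passing it to gradients — in particular, controlling the pull-back of bilinear quantities in $\nabla_x\rho$ along a sequence of converging spaces — requires either Mosco convergence of the Cheeger energies plus a test-plan-type argument, or the $W^{1,2}$-convergence framework of Ambrosio--Honda with uniform Lipschitz bounds on $\rho(\cdot,\cdot,t)$. Making this rigorous for a quadratic expression in $\nabla_x\rho$, at $\mathfrak m$-a.e.\ point and with uniformity good enough to integrate, is where the bulk of the work would lie.
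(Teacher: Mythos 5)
Your proposal is essentially the same argument that the cited sources \cite[Theorem 5.10]{AHPT21} and \cite[Theorem 3.11]{BGHZ21} carry out; note the paper itself does not reprove this theorem but refers to those works. You correctly organize the proof into (a) blow-up at $\mathfrak m$-a.e.\ point in $\mathcal R_n$ using pmGH convergence of rescaled spaces to $(\mathbb R^n,\mathsf d_{\mathrm{euc}},\omega_n^{-1}\mathcal L^n,0)$ together with $H^{1,2}$-strong convergence of heat kernels (the paper's Theorem \ref{thm2.26}) and $L^2$-strong convergence of the tensor fields $g_t^{X_i}$ to $\omega_n g_t^{\mathbb R^n}$, (b) the explicit Euclidean Gaussian computation giving $c(n)=\omega_n/(4(8\pi)^{n/2})$ (your constant is correct), and (c) upgrading a.e.\ convergence to $L^p(A,\mathfrak m)$ via the uniform bound $t\mathfrak m(B_{\sqrt t}(\cdot))g_t\leqslant C(K,N)g$ (inequality (\ref{tsuikaeqn3.2})), $|g|_{\mathsf{HS}}=\sqrt n$, and finiteness of $\mathfrak m(A)$. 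The difficulty you flag — passing gradient quadratics to the limit under pmGH — is precisely what the $L^2$-strong tensor convergence framework of AHPT21/BGHZ21 is built to handle, so this is the right place to point the effort.
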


 
In connection with Question \ref{q1.1} in this setting, let us provide the following definition.
  \begin{defn}[Isometrically heat kernel immersing RCD$(K,N)$ spaces]
An RCD$(K,N)$ space $({X},\mathsf{d},\mathfrak{m})$ is said to be an \textit{isometrically heat kernel immersing} space, or briefly an IHKI space if there exists a non-negative function $c(t)$, such that $\sqrt{c(t)}\Phi_t$ is an isometric immersion for all $t>0$, namely
\[
c(t)g_t=\left(\sqrt{c(t)}\mathop{\Phi_t}\right)^\ast\left(g_{L^2(\mathfrak{m})}\right)=g,\ \forall t>0.
\]. 
  \end{defn}
 
We are now in a position to introduce the first main result of this paper.

\begin{thm}\label{thm1.2}
Let $({X},\mathsf{d},\mathfrak{m})$ be an $\mathrm{RCD}(K,N)$ space. Then the following two conditions are equivalent.
\begin{enumerate} 
\item[$(1)$]\label{thm1.1con1} There exist sequences $\{t_i\}\subset \mathbb{R}$ and $\{s_i\}\subset \mathbb{R}$ such that $t_i\rightarrow t_0$ for some $t_0>0$ and that $s_i\Phi_{t_i}$ is an isometric immersion for any $i$.
\item[$(2)$] $({X},\mathsf{d},\mathfrak{m})$ is an $\mathrm{IHKI}$ $\mathrm{RCD}(K,N)$ space. 
\end{enumerate}
\end{thm}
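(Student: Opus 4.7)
The direction $(2) \Rightarrow (1)$ is immediate, e.g.\ by taking the constant sequence $t_i \equiv t_0 > 0$ with $s_i = \sqrt{c(t_0)}$. My plan for $(1) \Rightarrow (2)$ is to combine the real-analytic dependence of $t \mapsto g_t$ with the identity theorem for real-analytic functions, so as to propagate the scaling identity $s_i^2 g_{t_i} = g$ from the sequence $\{t_i\}$ accumulating at $t_0 > 0$ to every $t > 0$. Without loss of generality I may pass to a subsequence where the $t_i$ take infinitely many distinct values.

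The first step will establish, for $\mathfrak{m}$-a.e.\ $x \in X$ and any tangent vectors $V, W$ at $x$, real-analyticity on $(0,\infty)$ of the scalar $\phi_{V,W}(t) := g_t(V,W)(x)$. This rests on the self-adjoint heat semigroup $\{e^{t\Delta}\}_{t>0}$ being analytic on $L^2(\mathfrak{m})$, so that $t \mapsto \rho(x,\cdot,t)$ is real-analytic with values in $W^{1,2}(X,\mathsf{d},\mathfrak{m})$; in the compact case it is the absolutely convergent Dirichlet series $g_t = \sum_k e^{-2\mu_k t} B_k$. At a point $x$ where $g(x)$ is defined and positive definite (true $\mathfrak{m}$-a.e., since $|g|_{\mathsf{HS}} = \sqrt{n}$), pick $V_0$ with $g(V_0,V_0)(x)>0$. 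The hypothesis $s_i^2 g_{t_i} = g$ then yields, for every $V, W$,
\[
\phi_{V,W}(t_i)\,g(V_0,V_0)(x) - \phi_{V_0,V_0}(t_i)\,g(V,W)(x) = 0.
\]
The left-hand side is real-analytic in $t$ and vanishes on $\{t_i\}$ accumulating at $t_0 \in (0,\infty)$. By the identity theorem it vanishes for every $t > 0$, yielding $g_t(x) = \lambda(t,x)\,g(x)$ for all $t > 0$, where $\lambda(t,x) := g_t(V_0,V_0)(x)/g(V_0,V_0)(x)$.

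To remove the $x$-dependence, for any two such points $x, y$ the hypothesis forces $\lambda(t_i,x) = s_i^{-2} = \lambda(t_i,y)$; a second application of the identity theorem to the analytic function $t \mapsto \lambda(t,x) - \lambda(t,y)$ gives $\lambda(t,x) \equiv \lambda(t,y)$ for all $t > 0$. Writing $c(t) := \lambda(t,\cdot)$, we conclude $g_t = c(t)\,g$ for all $t > 0$, which is the IHKI condition.

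The main obstacle I anticipate is the first step: rigorously justifying real-analyticity of $g_t(x)$ in $t$ within the RCD framework, where the pointwise tensor involves the gradient of $\rho$ in the base variable and must be interpreted through the tangent-module calculus of Gigli. Analyticity of the heat semigroup on $L^2(\mathfrak{m})$ should transfer to $W^{1,2}$ and then, via the tangent-module inner product and an integration against $\mathfrak{m}$, to $g_t(x)$ as a pointwise tensor, but one has to be careful with the differentiation under the integral in the non-smooth setting. Once this regularity is secured, the remainder is a clean double application of the identity theorem: first to extend pointwise proportionality from $\{t_i\}$ to every $t > 0$, then to remove the $x$-dependence of the proportionality constant.
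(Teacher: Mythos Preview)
Your overall strategy---real-analyticity of $t\mapsto g_t$ combined with the identity theorem---is exactly the idea the paper uses. The difference lies in the implementation, and your pointwise formulation is precisely where the difficulty you anticipate becomes a real obstacle.

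In the RCD framework, $g_t$ is only defined as an element of $L^\infty_{\mathrm{loc}}((T^\ast)^{\otimes 2}(X,\mathsf{d},\mathfrak{m}))$, i.e.\ an equivalence class modulo $\mathfrak{m}$-null sets, and the tangent module is an $L^2$-object without canonical pointwise fibers. Thus the function $\phi_{V,W}(t)=g_t(V,W)(x)$ has no a priori pointwise meaning: for each $t$ there is an exceptional null set in $x$, and these null sets may vary with $t$. Making sense of ``for $\mathfrak{m}$-a.e.\ $x$, $t\mapsto g_t(V,W)(x)$ is real-analytic'' requires first selecting a good joint representative---essentially proving a Fubini-type regularity statement for $\nabla_x\rho(x,y,t)$ in all three variables---which is not supplied by the analyticity of the semigroup on $L^2$ alone. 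Your step~5 inherits the same issue: $\lambda(t,x)$ is not yet a well-defined function of $t$ for fixed $x$.

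The paper sidesteps this entirely by integrating. For each ball $B_R(x_0)$ it considers the scalar
\[
f(t)=n\,\mathfrak{m}(B_R(x_0))\int_{B_R(x_0)}\langle g_t,g_t\rangle\,\mathrm{d}\mathfrak{m}-\left(\int_{B_R(x_0)}\langle g,g_t\rangle\,\mathrm{d}\mathfrak{m}\right)^{2},
\]
which by Cauchy--Schwarz satisfies $f(t)\ge 0$ with equality precisely when $g_t$ is a constant multiple of $g$ on $B_R(x_0)$. Since condition~(1) forces $f(t_i)=0$ along a sequence accumulating at $t_0$, it suffices to prove that $f$ is real-analytic on $(0,\infty)$; this is done via Davies-type bounds on $\partial_t^m\rho$ (Remark~\ref{aaaaarmk2.9}) combined with Jiang's gradient estimate (Theorem~\ref{aaaathm3.12}) to control $\nabla_x\partial_t^m\rho$, yielding factorial bounds on $f^{(m)}(t)$. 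The identity theorem then gives $f\equiv 0$, hence $g_t=c(t)g$ on every ball, and Proposition~\ref{llem3.4} ensures $c(t)>0$.

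So your plan is correct in spirit, but the integrated functional $f$ is the device that converts ``analyticity of the heat kernel'' into a genuine one-variable analytic function to which the identity theorem applies, without confronting pointwise issues in the tangent module.
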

\begin{remark}
Theorem \ref{thm1.2} is sharp in the following sense: there exists a closed Riemannain manifold $(M^n, g)$ such that it is not IHKI and that $c\Phi_{t_0}$ is an isometric immersion for some $c>0$ and some $t_0>0$. See Example \ref{exmp4.5}.
\end{remark}

Recalling that $g_t$ plays a role of a ``regularization'' of an RCD$(K, N)$ space as discussed in \cite{BGHZ21}, it is expected that IHKI RCD$(K, N)$ spaces have nice regularity properties. Along this, we end this subsection by collecting such regularity results as follows.   


\begin{thm}\label{mainthm1.3}
Let $({X},\mathsf{d},\mathfrak{m})$ be an $\mathrm{IHKI}$ $\mathrm{RCD}(K,N)$ space with $\mathrm{dim}_{\mathsf{d},\mathfrak{m}}({X})=n\geqslant 1$, then there exists $c>0$ such that $\mathfrak{m}=c\mathcal{H}^n$ and that $({X},\mathsf{d},\mathfrak{m})$ is an $\mathrm{RCD}(K,n)$ space. In particular, $({X},\mathsf{d},\mathcal{H}^n)$ is a non-collapsed $\mathrm{RCD}(K,n)$ space.

\end{thm}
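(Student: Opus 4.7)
The plan is to use the IHKI condition together with the regularization expansion in Theorem \ref{20211222a} to show that the density of $\mathfrak{m}$ with respect to $\mathcal{H}^n$ on the regular set $\mathcal{R}_n$ is $\mathfrak{m}$-a.e.\ constant, and then to invoke the known structural result that upgrades constant density to a non-collapsed RCD$(K,n)$ structure.

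First I would exploit the IHKI hypothesis $c(t)g_t = g$: this says $g_t$ is a pointwise scalar multiple of $g$. Recalling that the canonical metric $g$ on an RCD$(K,N)$ space satisfies $|g|_{\mathsf{HS}} = \sqrt{n}$ $\mathfrak{m}$-a.e., substituting $g_t = g/c(t)$ into the tensor convergence of Theorem \ref{20211222a} collapses it to the scalar convergence
\[
\frac{t\,\mathfrak{m}(B_{\sqrt{t}}(\cdot))}{c(t)} \longrightarrow c(n)\quad \text{in } L^p_{\mathrm{loc}}(X,\mathfrak{m})\text{ as }t\downarrow 0.
\]

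Next I would pass to a subsequence $t_k\downarrow 0$ along which this convergence is $\mathfrak{m}$-a.e.\ pointwise, and combine with the elementary density formula $\lim_{r\downarrow 0}\mathfrak{m}(B_r(x))/r^n = \omega_n \theta(x)$, valid for $\mathfrak{m}$-a.e.\ $x\in\mathcal{R}_n$ (recall $\mathfrak{m} = \theta\,\mathcal{H}^n\llcorner\mathcal{R}_n$ and $\mathfrak{m}(X\setminus\mathcal{R}_n)=0$). Plugging $r_k = \sqrt{t_k}$ into both statements forces the purely $t$-dependent ratio $c(t_k)/t_k^{n/2+1}$ to admit a positive limit $L$ independent of $x$, and yields
\[
\theta(x)\;=\;\frac{c(n)}{\omega_n L}\qquad \text{for $\mathfrak{m}$-a.e. } x.
\]
In particular $\theta$ is $\mathfrak{m}$-a.e.\ equal to a positive constant $c$.

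Finally I would invoke the characterization of non-collapsed RCD spaces by constant density, due to Honda (see \cite{H21} and subsequent works): any RCD$(K,N)$ space whose density on the regular set is $\mathfrak{m}$-a.e.\ a positive constant $c$ must satisfy $\mathfrak{m} = c\mathcal{H}^n$ and must itself be an RCD$(K,n)$ space; the trivial rescaling $\mathfrak{m}\mapsto\mathfrak{m}/c$ then shows that $(X,\mathsf{d},\mathcal{H}^n)$ is non-collapsed RCD$(K,n)$. The main obstacle is precisely this last step: the upgrade from RCD$(K,N)$ with constant density to non-collapsed RCD$(K,n)$ is a deep structural fact about RCD spaces, whereas the first two steps amount to careful bookkeeping with the regularization formula and the standard density theory on $\mathcal{R}_n$.
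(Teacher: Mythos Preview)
Your first two steps are correct and give a clean route to the conclusion that the density $\theta$ is $\mathfrak{m}$-a.e.\ constant on $\mathcal{R}_n^\ast$; this is essentially the content of Step~1 of the paper's proof (carried out there via explicit blow-ups and the $L^2$-convergence of $g_t$ under pmGH convergence, rather than via Theorem~\ref{20211222a} directly).

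The gap is in your final step. The black-box you invoke --- that any RCD$(K,N)$ space with essential dimension $n$ and $\mathfrak{m}$-a.e.\ constant density must satisfy $\mathfrak{m}=c\mathcal{H}^n$ and be RCD$(K,n)$ --- is not available in the literature in this form. In particular Theorem~\ref{BGHZmainthm} (from \cite{BGHZ21}) requires finite density with respect to $r^N$, where $N$ is the upper dimension parameter in RCD$(K,N)$; if $n<N$ your conclusion gives $\mathfrak{m}(B_r(x))/r^N\to\infty$ a.e., so that theorem does not apply. The reference \cite{H21} contains no such result either. Constant density $\mathfrak{m}$-a.e.\ says nothing about the behavior of $\mathcal{H}^n$ on the singular set $X\setminus\mathcal{R}_n$, nor does it give the uniform local volume lower bound needed to invoke Theorem~\ref{eqnBGHZ21}.

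The paper closes these two issues, and in both cases the IHKI condition is used again in an essential way. For $\mathcal{H}^n\ll\mathfrak{m}$ (hence $\mathfrak{m}=c\mathcal{H}^n$ globally) one blows up at points $x\notin\mathcal{R}_n$ and uses the rescaling identity $c(r_i^2 t)g_t^{X_i}=r_i^2\mathfrak{m}(B_{r_i}(x))g_{X_i}$ together with the bound (\ref{tsuikaeqn3.2}), Proposition~\ref{llem3.4}, and the asymptotic (\ref{11eqn3.11}) to produce two-sided bounds on $\mathfrak{m}(B_r(x))/r^n$ at every point. For RCD$(K,n)$ one appeals to Theorem~\ref{eqnBGHZ21}, which requires the uniform lower bound (\ref{eqn20220203}); this is established by a separate blow-up contradiction argument, again via the convergence of $g_t^{X_i}$ and Proposition~\ref{llem3.4}. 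So the IHKI structure does real work beyond forcing constant density: it controls the geometry at and near singular points, which your proposed black box cannot.
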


\begin{thm}\label{mainthm1.5}
Assume that $({X},\mathsf{d},\mathfrak{m})$ is a non-compact $\mathrm{IHKI}$ $\mathrm{RCD}(0,N)$ space with $\mathrm{dim}_{\mathsf{d},\mathfrak{m}}({X})=n\geqslant 2$, then $({X},\mathsf{d},\mathfrak{m})$ is isometric to $\left(\mathbb{R}^n,\mathsf{d}_{\mathbb{R}^n},c\mathcal{H}^n\right)$ for some $c>0$.
\end{thm}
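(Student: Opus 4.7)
The plan is to combine Theorem~\ref{mainthm1.3} with the short-time asymptotic of Theorem~\ref{20211222a} and the Euclidean volume-growth rigidity theorem for non-collapsed $\mathrm{RCD}(0,n)$ spaces. First, by Theorem~\ref{mainthm1.3} the hypotheses reduce $(X,\mathsf{d},\mathfrak{m})$ to a non-collapsed $\mathrm{RCD}(0,n)$ space with $\mathfrak{m}=c_0\mathcal{H}^n$, so after renormalising I may assume $\mathfrak{m}=\mathcal{H}^n$. Plugging the IHKI identity $c(t)g_t=g$ into Theorem~\ref{20211222a} and using $|g|_{\mathsf{HS}}=\sqrt{n}$ on the full-measure regular set yields, for any bounded Borel $A\subset X$ and any $p\in[1,\infty)$,
\[
\left\|\frac{t\,\mathcal{H}^n(B_{\sqrt{t}}(\cdot))}{c(t)}-c(n)\right\|_{L^p(A,\mathcal{H}^n)}\longrightarrow 0\qquad(t\downarrow 0).
\]
Using the local Lipschitz dependence of $x\mapsto\mathcal{H}^n(B_{\sqrt{t}}(x))$ inherited from volume doubling, this $L^p_{\mathrm{loc}}$ convergence upgrades, after extracting a subsequence, to locally uniform convergence. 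Evaluating at a regular $x_0$, where $\mathcal{H}^n(B_{\sqrt{t}}(x_0))/t^{n/2}\to\omega_n$, pins down the short-time asymptotic $c(t)\sim\omega_n\,t^{n/2+1}/c(n)$; substituting back gives $\Theta_n(x):=\lim_{r\downarrow 0}\mathcal{H}^n(B_r(x))/(\omega_n r^n)=1$ at \emph{every} $x\in X$.

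The remaining, and main, step is to upgrade the pointwise equality $\Theta_n\equiv 1$ to the global Euclidean volume growth $\mathrm{AVR}(X):=\lim_{R\to\infty}\mathcal{H}^n(B_R(x_0))/(\omega_n R^n)=1$; once this is in hand, the synthetic Cheeger--Colding volume-rigidity theorem for non-collapsed $\mathrm{RCD}(0,n)$ spaces forces $(X,\mathsf{d},\mathcal{H}^n)\cong(\mathbb{R}^n,\mathsf{d}_{\mathbb{R}^n},\mathcal{H}^n)$, and restoring the normalisation yields $\mathfrak{m}=c\,\mathcal{H}^n$. To access $\mathrm{AVR}$ one must exploit that IHKI holds for \emph{all} $t>0$. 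I would analyse the identity $c(t)g_t=g$ as $t\to\infty$ via long-time heat-kernel asymptotics on non-collapsed $\mathrm{RCD}(0,n)$ spaces of infinite measure (using the Jiang--Li--Zhang Gaussian bounds together with the convergence of $t^{n/2}\rho(x,x,t)$ to a positive constant proportional to $1/\mathrm{AVR}$), in which $\mathrm{AVR}$ enters as a multiplicative constant; matching the resulting large-time relation with the short-time $c(t)\sim\omega_n t^{n/2+1}/c(n)$ then forces $\mathrm{AVR}=1$. An alternative, more geometric route is to pass to a tangent cone at infinity $(X_\infty,\mathsf{d}_\infty,\mathcal{H}^n,o_\infty)$, which is a metric cone $C(Y)$ over a cross-section $Y$ (well-defined precisely because $n\geqslant 2$); since $Y$ is $\mathrm{RCD}(n-2,n-1)$, the Ketterer-type maximal-volume rigidity gives $\mathcal{H}^{n-1}(Y)\leqslant\mathcal{H}^{n-1}(S^{n-1})$ with equality iff $Y\cong S^{n-1}$, so combining with the Euclidean density pinned down above forces $Y=S^{n-1}$, hence $X_\infty\cong\mathbb{R}^n$ and $\mathrm{AVR}=1$.

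The hardest part will be this long-time IHKI step: Theorem~\ref{20211222a} provides only the small-scale asymptotic, whereas pinning down $\mathrm{AVR}$ requires uniform control on $g_t$ as $t\to\infty$. The hypothesis $n\geqslant 2$ enters essentially at this stage, since only then does the cross-section of the asymptotic cone carry enough dimension for the volume-rigidity machinery to apply; in the $n=1$ case there is no cross-section and the argument would need to be replaced by a direct classification.
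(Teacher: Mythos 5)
Your reduction via Theorem~\ref{mainthm1.3} and your identification of the endgame (Euclidean volume growth plus the volume-rigidity theorem of De Philippis--Gigli) match the paper. You also correctly diagnose that the hard part is converting the IHKI hypothesis into information about the asymptotic cone; that awareness is right. However, the central step you propose does not go through, and your alternative route (a) is left unexecuted.

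The gap is in the passage from $\Theta_n\equiv1$ to $\mathrm{AVR}(X)=1$. Pointwise Bishop--Gromov density equal to $1$ at every point is satisfied by \emph{any} smooth complete Riemannian manifold, in particular by a paraboloid of revolution, whose $\mathrm{AVR}$ is $0$; so $\Theta_n\equiv1$ carries no information about $\mathrm{AVR}$. In your route (b) you write ``combining with the Euclidean density pinned down above forces $Y=S^{n-1}$,'' but the Euclidean density is a local quantity on $X$ and says nothing about $\mathcal{H}^{n-1}(Y)$, which encodes the global volume ratio $\mathrm{AVR}(X)=\mathcal{H}^{n-1}(Y)/(n\omega_n)$. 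Your route (a) --- matching short-time and long-time behaviour of $c(t)$ --- is indeed the direction the paper takes, but what is actually needed is more than asymptotic matching of scalars. The paper passes the IHKI identity $c(t)g_t=g$ through the blow-down, obtaining a pointwise metric constraint $g_1^{Y_\infty}=b\,g^{Y_\infty}$ on the cone $Y_\infty=\mathrm{C}(Z)$, and this is fed into Theorem~\ref{thm4.5}. Proving Theorem~\ref{thm4.5} is where the real work is: one expands $\rho^{\mathrm{C}(Z)}$ in modified Bessel functions via Proposition~\ref{1prop2.23}, integrates the constraint over $Z$ to get a nonvanishing lower bound (inequality~(\ref{3.10})), and uses the Bessel decay estimates in Lemma~\ref{20211220b} to show the first eigenvalue of the cross-section $Z$ must equal $n-1$; then Ketterer's Obata rigidity splits off a line, and one iterates with a dimension reduction, handling $n=3$ and $n=2$ separately (the $n=3$ case using a superharmonicity argument, and both cases ruling out the half-space $\mathbb{R}^n_+$ by an explicit heat kernel computation). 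None of this spectral machinery is replaced in your proposal, so the proof as written does not close.

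One smaller caution: your claim that the $L^p_{\mathrm{loc}}$ convergence of $t\mathcal{H}^n(B_{\sqrt t}(\cdot))/c(t)$ upgrades to locally uniform convergence on ``Lipschitz'' grounds is not immediate --- the Lipschitz constant of $x\mapsto\mathcal{H}^n(B_{\sqrt t}(x))$ is not controlled uniformly in $t$. The paper sidesteps this by fixing a regular base point and running a blow-up argument, rather than trying to upgrade the mode of convergence.
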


Let us emphasize that in the compact setting we will be able to provide the best regularity result, namely the smoothness result (see Theorem \ref{thm1.5} and Corollary \ref{cor1.11}).

\subsubsection{Isometrically immersing eigenmaps on RCD$(K,N)$ spaces}

In order to discuss a finite dimensional analogue of the IHKI condition, let us recall the following definition. 

\begin{defn}[Isometric immersion {\cite[Definition 3.1]{H21}}]
Let $m\in \mathbb{N}_+$ and let $(X,\mathsf{d},\mathfrak{m})$ be an RCD$(K,N)$ space. A map 
\[
\begin{aligned}
\Phi:X&\longrightarrow \mathbb{R}^m\\
      x&\longmapsto (\phi_1(x),\ldots,\phi_m(x))
\end{aligned}
\]
is said to be an \textit{isometric immersion} if it is locally Lipschitz and 
\begin{align}\label{20221207a}
\Phi^\ast g_{\mathbb{R}^m}:=\sum\limits_{i=1}^m d\phi_i\otimes d\phi_i =g
\end{align}
\end{defn}

We are now ready to give an answer to Question \ref{q1.2} in the nonsmooth setting.

\begin{thm}\label{thm1.5}
Let $({X},\mathsf{d},\mathcal{H}^n)$ be a compact non-collapsed $\mathrm{RCD}(K,n)$ space. If there exists an isometric immersion
\[
\begin{aligned}
\Phi:X&\longrightarrow \mathbb{R}^m\\
      x&\longmapsto (\phi_1(x),\ldots,\phi_m(x))
\end{aligned}
\]
such that each $\phi_i$ is an eigenfunction of $-\Delta$ $(i=1,\ldots,m)$, then $({X},\mathsf{d})$ is isometric to an $n$-dimensional smooth closed Riemannian manifold $(M^n,g)$.
\end{thm}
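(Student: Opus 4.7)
My plan is to combine the rigidity coming from the constraint that the sum of the squared gradients is constant with an elliptic bootstrap on the eigenfunction system.

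First I would note that, since $(X,\mathsf{d},\mathcal{H}^n)$ is a non-collapsed $\mathrm{RCD}(K,n)$ space, $|g|_{\mathsf{HS}}^2=n$ $\mathcal{H}^n$-a.e., so tracing the immersion identity $\sum_i d\phi_i\otimes d\phi_i=g$ gives
\[
\sum_{i=1}^m|\nabla\phi_i|^2\equiv n\qquad\mathcal{H}^n\text{-a.e.\ on }X.
\]
Each $\phi_i$ is Lipschitz by the standard RCD regularity of eigenfunctions, so Bochner's inequality applies; summed over $i$ and integrated on the compact $X$, the fact that $\Delta(\sum_i|\nabla\phi_i|^2)=0$ produces the rigid bound
\[
\int_X\sum_{i=1}^m|\mathrm{Hess}\,\phi_i|_{\mathsf{HS}}^2\,d\mathcal{H}^n\leq\int_X\sum_{i=1}^m\lambda_i|\nabla\phi_i|^2\,d\mathcal{H}^n-Kn\,\mathcal{H}^n(X),
\]
placing each $\phi_i$ in $W^{2,2}(X)$ and providing the rigidity input for the blow-up analysis.

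Second, I would eliminate singularities. On the regular set $\mathcal{R}_n$ the pullback condition forces $d\Phi|_x$ to be a linear isometric embedding of $T_xX\cong\mathbb{R}^n$ into $\mathbb{R}^m$. At a general $x\in X$ the rescaled maps $\Phi_r(y):=r^{-1}(\Phi(y)-\Phi(x))$ are uniformly Lipschitz and isometrically immersing for $r^{-1}\mathsf{d}$; Arzel\`a--Ascoli along a pointed measured Gromov--Hausdorff sequence $(X,r^{-1}\mathsf{d},x)\to(C_xX,o_x)$ yields a Lipschitz limit $\Phi_\infty:C_xX\to\mathbb{R}^m$ that isometrically immerses the tangent cone. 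Combining the cone structure (available at every point by non-collapsed RCD theory) with the rigidity that an isometric Lipschitz immersion of a metric cone into Euclidean space must split off its radial factor, one can inductively force $C_xX=\mathbb{R}^n$ at every $x$. Hence $\mathcal{R}_n=X$ and $X$ is a topological $n$-manifold; this is the step where the regularity from Honda~\cite{H21} is substantially upgraded.

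Third, I would bootstrap to $C^\infty$. At each $x\in X$ the full-rank condition lets me pick components $\psi=(\phi_{i_1},\ldots,\phi_{i_n})$ of $\Phi$ whose differentials are linearly independent; upgrading the $W^{2,2}$ bound from Step 1 to $C^{1,\alpha}$ via the Sobolev--Poincar\'e theory available on non-collapsed RCD spaces makes $\psi$ a $C^{1,\alpha}$ chart around $x$. In these coordinates the metric becomes
\[
g_{kl}=\delta_{kl}+\sum_{j\notin\{i_1,\ldots,i_n\}}\partial_k\phi_j\,\partial_l\phi_j,
\]
and each $\phi_j$ satisfies the quasi-linear elliptic equation $-\mathrm{div}_g(\nabla_g\phi_j)=\lambda_j\phi_j$ whose coefficients are algebraic in the $\partial_k\phi_j$. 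A standard Schauder bootstrap for this coupled elliptic system then upgrades $\{\phi_j\}$ and $g$ to $C^\infty$, and the pulled-back atlas exhibits $(X,\mathsf{d})$ as isometric to a smooth closed $n$-dimensional Riemannian manifold. The hardest step will be Step~2: the rigidity statement that a non-Euclidean metric cone cannot be Lipschitz-isometrically immersed into $\mathbb{R}^m$ must pass to pmGH limits with only the integral Hessian control from Step~1 at hand, and a secondary difficulty is extracting $C^{1,\alpha}$ charts in Step~3 to close the bootstrap loop.
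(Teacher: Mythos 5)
Your proposal shares the overall architecture of the paper (trace identity, regularity, local charts, elliptic bootstrap), but the regularity input you extract in Step 1 is too weak to support the later steps, and the key technical device of the paper is missing.

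In Step 1 you only obtain a global $W^{2,2}$ bound on each $\phi_i$. This much is automatic from Bochner without using the identity $\sum_i|\nabla\phi_i|^2\equiv n$ at all. The real content of that identity in the paper is different: since Bochner gives a \emph{lower} bound on each $\Delta|\nabla\phi_i|^2$, the equation $\Delta|\nabla\phi_i|^2=-\sum_{j\neq i}\Delta|\nabla\phi_j|^2$ converts these into a two-sided $L^\infty$ bound on $\Delta|\nabla\phi_i|^2$, and then Jiang's gradient estimate (Theorem \ref{aaaathm3.12}) and a Bochner-type argument yield $\||\mathrm{Hess}\,\phi_i|_{\mathsf{HS}}\|_{L^\infty}\leqslant C$ and hence Lipschitz continuity of every $\langle\nabla\phi_i,\nabla\phi_j\rangle$ (Lemma \ref{1lem4.2}). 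Without this $L^\infty$ control, the jump from ``$W^{2,2}$'' to ``$C^{1,\alpha}$ charts'' that you make in Step 3 fails: for $n\geqslant 4$, $W^{2,2}$ does not embed in $C^{1,\alpha}$, and the Sobolev--Poincar\'e theory on RCD spaces does not bridge this gap. The paper's $C^{1,1}$ coordinate atlas in Lemma \ref{1lem4.4} relies squarely on the $L^\infty$ Hessian bounds.

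Step 2 is also problematic. The assertion that a Lipschitz isometric immersion of a metric cone into $\mathbb{R}^m$ forces the cone to be Euclidean is not an available off-the-shelf rigidity theorem; in this paper a related statement (Theorem \ref{thm4.5}) is proved only for IHKI cones using the explicit Bessel-function expansion of the heat kernel on $\mathrm{C}(X)$, and it does not apply to an arbitrary finite eigenfunction immersion. Moreover, passing the isometric immersion property to a pointed GH blow-up limit requires $L^2$-strong convergence of the pullback metric tensors, which $W^{2,2}$ control on a sequence of rescalings does not by itself supply. The paper sidesteps both difficulties: after the $L^\infty$ estimates of Lemma \ref{1lem4.2}, it applies Honda's local almost-isometric-immersion theorem (Theorem \ref{111thm4.3}) directly to linear combinations of the eigenfunctions to produce $(1+\epsilon)$-bi-Lipschitz charts around every point, never needing a general cone-rigidity statement. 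Your proposal thus contains two genuine gaps (the missing $L^\infty$ Hessian bound and the unsupported cone-rigidity claim), and its logical skeleton would not close without effectively reproducing those parts of the paper's argument.
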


It is emphasized again that the theorem above greatly improves a bi-Lipschitz regularity result proved in \cite{H21} and seems to provide the smoothness for a much wider class of RCD spaces than existing results as far as the author knows (see for instance \cite{K15b,GR18,MW19} for the special cases).

\begin{remark}
An isometrically immersing eigenmap may not be an embedding in general. See for instance \cite[Theorem 5]{L81}.
\end{remark}

As a corollary of Theorem \ref{thm1.5}, we obtain the following result, meaning that any compact IHKI RCD$(K,N)$ space must be smooth.

\begin{cor}\label{cor1.11}
Let $({X},\mathsf{d},\mathcal{H}^n)$ be a compact non-collapsed $\mathrm{IHKI}$ $\mathrm{RCD}(K,n)$ space. Let $E$ be the eigenspace with some non-zero corresponding eigenvalue $\lambda$ of $-\Delta$. Then by taking $\{\phi_i\}_{i=1}^m$ $(m=\mathrm{dim}(E))$ as an $L^2(\mathfrak{m})$-orthonormal basis of $E$, the map 
\[
\begin{aligned}
\Phi:{X}&\longrightarrow \mathbb{R}^m\\
x&\longmapsto\sqrt{\dfrac{\mathcal{H}^n({X})}{m}}(\phi_1,\cdots,\phi_m),
\end{aligned}
\]
satisfies that 
\[
\Phi({X})\subset \mathbb{S}^{m-1}\ \ \text{and}\ \ n\Phi^\ast g_{\mathbb{R}^m}=\lambda g.
\]
In particular, $(X,\mathsf{d})$ is isometric to an $n$-dimensional smooth closed Riemannian manifold $(M^n,g)$.
\end{cor}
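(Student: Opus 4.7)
My plan is to use the IHKI identity to derive the pointwise tensor equation $n\Phi^*g_{\mathbb{R}^m} = \lambda g$ spectrally, invoke Theorem \ref{thm1.5} to upgrade $(X, \mathsf{d})$ to a smooth closed Riemannian manifold, and then run Takahashi's classical argument on the resulting smooth manifold to place $\Phi(X)$ on the unit sphere.

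First I would use the spectral expansion of the heat kernel $\rho(x,y,t) = \sum_i e^{-\lambda_i t}\varphi_i(x)\varphi_i(y)$ in an $L^2(\mathcal{H}^n)$-orthonormal eigenbasis $\{\varphi_i\}_{i\geqslant 0}$ of $-\Delta$ to compute
\[
g_t = \Phi_t^*(g_{L^2}) = \sum_{i\geqslant 1} e^{-2\lambda_i t}\, d\varphi_i\otimes d\varphi_i,
\]
holding $\mathcal{H}^n$-a.e. Grouping terms by the distinct eigenvalues $\lambda_{(1)} < \lambda_{(2)} < \ldots$ with eigenspaces $E_{(k)}$, and setting $h_k := \sum_{i:\lambda_i=\lambda_{(k)}} d\varphi_i\otimes d\varphi_i$, this reads $g_t = \sum_k e^{-2\lambda_{(k)} t} h_k$. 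Taking the $g$-trace of the IHKI identity $c(t)g_t = g$ and integrating over $X$ (using $\int_X |d\varphi_i|^2\,d\mathcal{H}^n = \lambda_i$) yields
\[
c(t) = \frac{n\mathcal{H}^n(X)}{\sum_k \lambda_{(k)} \dim E_{(k)}\, e^{-2\lambda_{(k)} t}}.
\]

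Substituting this back into $c(t)g_t = g$ and clearing denominators produces
\[
\sum_k e^{-2\lambda_{(k)} t}\bigl(n\mathcal{H}^n(X)\, h_k - \lambda_{(k)}\dim E_{(k)}\, g\bigr) = 0, \qquad \forall t > 0.
\]
Invoking linear independence of the Dirichlet monomials $\{e^{-2\lambda_{(k)} t}\}$ coefficientwise at a.e.\ point, each bracketed term must vanish, so $n\mathcal{H}^n(X)\, h_k = \lambda_{(k)}\dim E_{(k)}\, g$ for every $k$, $\mathcal{H}^n$-a.e. Specializing to $E = E_{(k)}$ with eigenvalue $\lambda$ and dimension $m$ gives $\sum_i d\phi_i\otimes d\phi_i = \frac{\lambda m}{n\mathcal{H}^n(X)} g$, hence
\[
\Phi^*g_{\mathbb{R}^m} = \frac{\mathcal{H}^n(X)}{m}\sum_i d\phi_i\otimes d\phi_i = \frac{\lambda}{n}g,
\]
i.e., $n\Phi^*g_{\mathbb{R}^m} = \lambda g$. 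The rescaled map $\Psi := \sqrt{n/\lambda}\,\Phi$ is then an isometric immersion with eigenfunction components, so Theorem \ref{thm1.5} promotes $(X, \mathsf{d})$ to a smooth closed $n$-dimensional Riemannian manifold $(M^n, g)$.

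For the claim $\Phi(X)\subset\mathbb{S}^{m-1}$, I would run Takahashi's classical argument on the now-smooth manifold. Componentwise $\Delta_g \Phi = -\lambda \Phi$; setting $\tilde g := (\lambda/n)g$ makes $\Phi:(M,\tilde g)\to\mathbb{R}^m$ an isometric immersion, and $\Delta_{\tilde g}\Phi = (n/\lambda)\Delta_g\Phi = -n\Phi$. Beltrami's identity $\Delta_{\tilde g}\Phi = nH$ (with $H$ the mean curvature vector, normal to $d\Phi(TM)$) forces $H = -\Phi$, so $\Phi$ itself is perpendicular to $d\Phi(TM)$ pointwise. Consequently $d|\Phi|^2 = 2\Phi\cdot d\Phi \equiv 0$, so $|\Phi|^2$ is constant, and the value is pinned by $\int_X |\Phi|^2\,d\mathcal{H}^n = \frac{\mathcal{H}^n(X)}{m}\cdot m = \mathcal{H}^n(X)$, giving $|\Phi|^2\equiv 1$. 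The main obstacle is the pointwise-a.e.\ justification of the spectral series for $g_t$ and the coefficientwise Dirichlet-uniqueness step; this should follow from $L^\infty$-type gradient estimates $\|d\varphi_i\|_\infty\le C(1+\lambda_i)^\alpha$ on compact RCD spaces together with Weyl-type eigenvalue asymptotics controlling the tail of $\sum_i e^{-2\lambda_i t}|d\varphi_i(x)|^2$ at a.e.\ $x$.
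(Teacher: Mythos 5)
Your proposal is correct, and it diverges from the paper's proof in two interesting ways. For the per-eigenspace tensor identity $\sum_i d\phi_i\otimes d\phi_i = \frac{\lambda m}{n\mathcal{H}^n(X)}g$, the paper does not invoke Dirichlet-series linear independence; instead it isolates the lowest eigenspace by a large-$t$ limit: it writes $\bigl|\sum_{i=1}^m d\phi_i\otimes d\phi_i - \frac{e^{2\mu_1 t}}{c(t)}g\bigr|_{\mathsf{HS}}\leqslant \sum_{i>m}e^{2\mu_1 t-2\mu_i t}|\nabla\phi_i|^2$, integrates, sends $t\to\infty$ (using $e^{2\mu_1 t}/c(t)\to\mu_1/n$) to conclude the $L^1$ norm of the difference vanishes, and then states that the other eigenspaces follow by induction. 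Your Dirichlet-uniqueness route handles all eigenspaces in one stroke and is arguably cleaner, but you are right that the coefficientwise a.e.\ conclusion needs care: the a.e.\ set in $c(t)g_t=g$ a priori depends on $t$, so one should test against a countable generating family of vector fields and bounded Borel densities first, getting scalar Dirichlet series $\sum_k a_k e^{-2\lambda_{(k)}t}\equiv 0$ with genuine real coefficients, and only then conclude $a_k=0$ and vary the test objects; the $L^\infty$ bounds on $\phi_i$ and $|\nabla\phi_i|$ from Proposition \ref{heatkernel2} plus the Weyl-type growth $\mu_i\gtrsim i^{2/N}$ give the uniform convergence needed to make this work, exactly as you flag.

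For the sphere condition $\Phi(X)\subset\mathbb{S}^{m-1}$, the paper's proof actually does not address it at all — after deriving $n\Phi^*g_{\mathbb{R}^m}=\lambda g$ and invoking Theorem \ref{thm1.5} for smoothness, the inclusion in the sphere is apparently treated as a consequence of Takahashi's theorem on the resulting smooth manifold (this is consistent with the remark at the start of Section 6). Your Beltrami-identity argument (the mean curvature vector equals $-\Phi$, hence $d|\Phi|^2=0$, with the normalization pinned by $\int_X|\Phi|^2\,d\mathcal{H}^n=\mathcal{H}^n(X)$) supplies the missing details and is the standard route to Takahashi's theorem, so your write-up is in this respect more complete than the paper's. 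Both approaches are sound; yours packages the two claims together more explicitly.
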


\subsection{Diffeomorphic finiteness theorems}

As an application of Theorem \ref{thm1.5}, in Section 
\ref{sec5} we first study some special isometry classes of closed Riemannian manifolds admitting isometrically immersing $\tau$-eigenmaps.

\begin{defn}[Isometrically immersing $\tau$-eigenmap on Riemannian manifolds]
Let $(M^n,g)$ be an $n$-dimensional closed Riemannian manifold and let $\tau>0$. A map
\[
\begin{aligned}
F: M^n&\longrightarrow \mathbb{R}^m\\
p&\longmapsto \left(\phi_1(p),\ldots,\phi_m(p)\right),
\end{aligned}
\]
is said to be a \textit{$\tau$-eigenmap into $\mathbb{R}^m$} if each $\phi_i$ is a non-constant eigenfunction of $-\Delta$ and \[
\min\limits_{1\leqslant i\leqslant m} \|\phi_i\|_{L^2( \mathrm{vol}_g)}\geqslant \tau.
 \]
If in addition $F$ is an isometric immersion, then it is said to be an \textit{isometrically immersing $\tau$-eigenmap into $\mathbb{R}^m$}.

\end{defn}

\begin{defn}[Isometric immersion via $\tau$-eigenmaps]\label{defn1.7}
For all $K\in \mathbb{R}$, $D,\tau>0$, denote by $\mathcal{M}(K,n,D,\tau)$ the set of isometry classes of $n$-dimensional closed Riemannian manifolds $( M^n,g)$  such that the Ricci curvature is bounded below by $K$, that the diameter is bounded above by $D$ and that there exists an isometrically immersing $\tau$-eigenmap into $\mathbb{R}^m$ for some $m \in \mathbb{N}$.
\end{defn}

Our main result about $\mathcal{M}(K,n,D,\tau)$ is stated as follows.

\begin{thm}\label{thm1.8}
$\mathcal{M}(K,n,D,\tau)$ is compact in $C^\infty$-topology. That is, for any sequence of Riemannian manifolds $\{( M_i^n,g_i)\}\subset\mathcal{M}(K,n,D,\tau)$, after passing to a subsequence, there exists a Riemannian manifold $(M^n,g)\in \mathcal{M}(K,n,D,\tau)$ and diffeomorphisms $\psi_i: M^n\rightarrow  M^n_i$, such that $\{\psi_i^\ast g_i\}$ $C^{k}$-converges to $g$ on $(M^n,g)$ for any $k\in \mathbb{N}$.
\end{thm}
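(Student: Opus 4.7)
The plan is to reduce Theorem \ref{thm1.8} to Theorem \ref{thm1.5} via pmGH compactness and then bootstrap regularity. Fix a sequence $\{(M_i^n,g_i)\}\subset\mathcal{M}(K,n,D,\tau)$ with associated isometrically immersing $\tau$-eigenmaps $\Phi_i=(\phi_1^{(i)},\ldots,\phi_{m_i}^{(i)}):M_i\to\mathbb{R}^{m_i}$ having eigenvalues $\lambda_j^{(i)}$. Tracing the defining identity yields the pointwise relation $\sum_j|\nabla\phi_j^{(i)}|_{g_i}^2=n$, so each $\phi_j^{(i)}$ is $\sqrt{n}$-Lipschitz; integrating gives $\sum_j\lambda_j^{(i)}\|\phi_j^{(i)}\|_{L^2}^2=n\,\mathrm{vol}_{g_i}(M_i)$. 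The Bishop--Gromov inequality yields $\mathrm{vol}_{g_i}(M_i)\le V(K,D,n)$, while the Lichnerowicz/Zhong--Yang/Bakry--Qian spectral gap gives a universal lower bound $\lambda_j^{(i)}\ge\lambda_\ast(K,D,n)>0$. Combined with $\|\phi_j^{(i)}\|_{L^2}^2\ge\tau^2$, this forces the uniform bounds $m_i\le m_0$ and $\lambda_j^{(i)}\le\Lambda$. The Lipschitz bound together with the vanishing mean of each eigenfunction and the diameter bound gives $\|\phi_j^{(i)}\|_{L^\infty}\le\sqrt{n}\,D$, whence $\tau^2\le\|\phi_j^{(i)}\|_{L^2}^2\le nD^2\,\mathrm{vol}_{g_i}(M_i)$, producing the non-collapsing lower bound $\mathrm{vol}_{g_i}(M_i)\ge\tau^2/(nD^2)$.

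With these bounds, after passing to subsequences I may assume $m_i\equiv m$ and $\lambda_j^{(i)}\to\lambda_j\in[\lambda_\ast,\Lambda]$. Gromov's precompactness and the stability of the RCD condition produce a compact RCD$(K,n)$ limit $(X,\mathsf{d},\mathfrak{m})$; continuity of volume along the non-collapsed convergence forces $\mathfrak{m}=\mathcal{H}^n$, so $(X,\mathsf{d},\mathcal{H}^n)$ is non-collapsed. The equi-Lipschitz, equi-bounded family $\{\phi_j^{(i)}\}_i$ converges uniformly to some $\phi_j\in\mathrm{Lip}(X)$ along a further subsequence; by Mosco convergence of Cheeger energies and the standard stability of eigenfunctions under pmGH limits of RCD spaces, $\phi_j$ is an eigenfunction of $-\Delta_X$ with eigenvalue $\lambda_j$, and the pointwise identities $\sum d\phi_j^{(i)}\otimes d\phi_j^{(i)}=g_i$ pass to $\sum d\phi_j\otimes d\phi_j=g_X$. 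Thus $(X,\mathsf{d},\mathcal{H}^n)$ carries an isometrically immersing eigenmap, and Theorem \ref{thm1.5} gives an isometry $(X,\mathsf{d})\simeq(M_\infty^n,g_\infty)$ with a smooth closed Riemannian manifold, which automatically lies in $\mathcal{M}(K,n,D,\tau)$.

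The main obstacle is the final upgrade of pmGH-convergence to $C^\infty$-convergence in the absence of a two-sided Ricci bound, since Anderson's compactness theorem is not directly available. I would exploit the smoothness of the limit together with the rigidity of the isometric immersion: because $\Phi_\infty:M_\infty\to\mathbb{R}^m$ is a smooth isometric immersion, locally a smooth embedding, one may use a selection of components of $\Phi_\infty$ as local coordinates on $M_\infty$ and combine the uniform convergence $\Phi_i\to\Phi_\infty$ with the implicit function theorem to produce diffeomorphisms $\psi_i:M_\infty\to M_i$ for large $i$. The pulled-back components $\phi_j^{(i)}\circ\psi_i$ satisfy $\Delta_{\psi_i^\ast g_i}(\phi_j^{(i)}\circ\psi_i)=-\lambda_j^{(i)}(\phi_j^{(i)}\circ\psi_i)$ with uniformly bounded eigenvalues and $L^\infty$-norms; elliptic bootstrapping in harmonic coordinates on $(M_\infty,g_\infty)$ then yields uniform $C^{k,\alpha}$-bounds for every $k$. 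Since $\psi_i^\ast g_i=\sum d(\phi_j^{(i)}\circ\psi_i)\otimes d(\phi_j^{(i)}\circ\psi_i)$, this gives $\psi_i^\ast g_i\to g_\infty$ in $C^\infty$. Making this diffeomorphism construction and the bootstrap rigorous, without invoking a two-sided Ricci bound, is the technical crux.
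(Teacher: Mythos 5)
Your reduction to Theorem \ref{thm1.5} (bounds on $m_i$, $\lambda_j^{(i)}$, volume, mGH limit, smoothness of the limit) is sound and closely tracks the first half of the paper's argument, though the paper reaches the lower volume bound via the Li--Yau eigenvalue lower bound rather than the $L^\infty$ estimate you use. The paper also passes first through the diffeomorphism-finiteness statement (Theorem \ref{abcthm5.3}), which uses Theorem \ref{thm1.5} together with Cheeger--Colding's intrinsic Reifenberg theorem to fix the diffeomorphism type of the tail of the sequence before doing any analysis.

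The genuine gap is exactly the step you flag as the ``technical crux,'' and the mechanism you sketch does not close it. You propose to produce diffeomorphisms $\psi_i$ from the limit eigenmap and then bootstrap the equation $\Delta_{\psi_i^\ast g_i}(\phi_j^{(i)}\circ\psi_i)=-\lambda_j^{(i)}(\phi_j^{(i)}\circ\psi_i)$ in harmonic coordinates of the \emph{limit} metric $g_\infty$. But the coefficients of that operator are the components of $\psi_i^\ast g_i$ in $g_\infty$-harmonic coordinates, and to start the Schauder machinery you already need uniform $C^{1,\alpha}$-control of $\psi_i^\ast g_i$ in those coordinates --- which is essentially the conclusion you are trying to reach, so the argument is circular as written. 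The paper avoids this entirely by obtaining \emph{a priori} estimates on each $(M^n,g_i)$ independently of the limit: the eigenmap components themselves serve as local coordinates on $M_i$ (Lemma \ref{1lem4.3}), the metric coefficients in those coordinates are $\langle\nabla u_j,\nabla u_k\rangle$ which Lemma \ref{1lem4.2} controls uniformly in Lipschitz norm, and the eigenfunctions satisfy the elliptic PDE (\ref{111eqn4.26}) whose coefficients are exactly these metric entries. Iterating Schauder estimates in this self-contained system gives uniform $C^{k,\alpha}$ bounds on $(g_i)_{jk}$, hence uniform bounds on $|(\nabla^{g_i})^k\mathrm{Ric}_{g_i}|$ and a two-sided sectional curvature bound; the injectivity radius is then controlled by Cheeger--Gromov--Taylor, and Hebey--Herzlish's $C^\infty$-compactness theorem (Theorem \ref{11thm5.4}) finishes the proof. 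You would need to replace your limit-side bootstrap by this sequence-side a priori scheme (or something equivalent) to make the argument work.
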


Finally in order to introduce an improved finiteness result from \cite{H21}, let us introduce the following definition.

\begin{defn}[Almost isometric immersion via $\tau$-eigenmap]
For all $K\in \mathbb{R}$, $D,\tau>0$, $\epsilon\geqslant 0$, denote by $\mathcal{N}(K,n,D,\tau,\epsilon)$ the set of isometry classes of $n$-dimensional closed Riemannian manifolds $(M^n,g)$ such that the Ricci curvature is bounded below by $K$, that the diameter is bounded above by $D$ and that there exists a $\tau$-eigenmap $F_{M^n}$ into $\mathbb{R}^m$ for some $m \in \mathbb{N}$ with
\[
\frac{1}{\mathrm{vol}_{g}(M^n)}\int_{M^n}\left| F_{M^n}^\ast g_{\mathbb{R}^m}-g\right|\mathrm{dvol}_g\leqslant \epsilon.
\]
\end{defn}

Note that $\mathcal{N}(K,n,D,\tau,0)=\mathcal{M}(K,n,D,\tau)$. Combining the intrinsic Reifenberg method established in \cite[Appendix A]{ChCo1} by Cheeger-Colding, with Theorem \ref{thm1.5} gives us the following diffeomorphic finiteness theorem.

\begin{thm}\label{thm1.12}
There exists $\epsilon=\epsilon(K,n,D,\tau)>0$ such that $\mathcal{N}(K,n,D,\tau,\epsilon)$ has finitely many members up to diffeomorphism.
\end{thm}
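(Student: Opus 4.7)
The plan is a compactness-and-regularity argument by contradiction. Suppose the theorem fails, so there exist $\epsilon_j\downarrow 0$ and $(M_j^n,g_j)\in \mathcal{N}(K,n,D,\tau,\epsilon_j)$ whose members are pairwise non-diffeomorphic. By Gromov precompactness, a subsequence of $(M_j^n,\mathsf{d}_{g_j},\mathrm{vol}_{g_j})$ converges in measured Gromov-Hausdorff sense to a compact $\mathrm{RCD}(K,N)$ space $(X,\mathsf{d},\mathfrak{m})$. The aim is to show that $X$ is a smooth closed Riemannian manifold and that $M_j^n$ is diffeomorphic to it for all $j$ large, yielding the desired contradiction.

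The first step is to establish uniform spectral control for the eigenmaps $F_{M_j^n}=(\phi_1^j,\dots,\phi_{m_j}^j)$. The Poincar\'e inequality available on $\mathrm{RCD}(K,N)$ spaces of diameter at most $D$, applied to each mean-zero $\phi_i^j$, yields a uniform positive lower bound $\lambda_i^j\geqslant \lambda_\ast(K,N,D)>0$ for every nonzero eigenvalue of $-\Delta_{g_j}$. Combining the near-isometry hypothesis $\|F_{M_j^n}^\ast g_{\mathbb{R}^{m_j}}-g_j\|_{L^1}\leqslant \epsilon_j$ with integration by parts and the identity $|g_j|_{\mathsf{HS}}=\sqrt{n}$ gives $\sum_i \lambda_i^j\|\phi_i^j\|_{L^2}^2\leqslant (n+\epsilon_j)\mathrm{vol}_{g_j}(M_j^n)$. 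Together with the $\tau$-eigenmap bound $\|\phi_i^j\|_{L^2}\geqslant \tau$, this bounds $m_j$ and the $\lambda_i^j$ uniformly from above, and produces a uniform volume lower bound $\mathrm{vol}_{g_j}(M_j^n)\geqslant v_0(K,n,D,\tau)>0$; hence the limit is non-collapsed and $\mathfrak{m}=c\mathcal{H}^n$ for some $c>0$.

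The second step is to transfer the isometrically immersing eigenmap structure to $X$. After a further subsequence, $m_j$ stabilizes to some $m$, $\lambda_i^j\to \lambda_i^\infty\geqslant \lambda_\ast$, and by the Mosco/spectral convergence theory for Dirichlet forms on mGH-converging $\mathrm{RCD}$ spaces (in the sense of Ambrosio-Honda and Gigli-Mondino-Savar\'e), each $\phi_i^j$ converges strongly in $H^{1,2}$ to an eigenfunction $\phi_i^\infty$ on $(X,\mathsf{d},\mathcal{H}^n)$ with eigenvalue $\lambda_i^\infty>0$, so each $\phi_i^\infty$ is non-constant. Strong $H^{1,2}$ convergence of the $\phi_i^j$ combined with $\epsilon_j\to 0$ upgrades the $L^1$ near-equality to the pointwise a.e.\ identity $F_\infty^\ast g_{\mathbb{R}^m}=g_X$, so $F_\infty:=(\phi_1^\infty,\dots,\phi_m^\infty)$ is an isometrically immersing eigenmap on the compact non-collapsed limit. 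Theorem \ref{thm1.5} then identifies $(X,\mathsf{d})$ with a smooth closed Riemannian manifold $(M_\infty^n,g_\infty)$.

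The last step, and the main obstacle, is to promote the measured Gromov-Hausdorff proximity of $M_j^n$ to the smooth manifold $M_\infty^n$ into a genuine diffeomorphism for $j$ large. Smoothness of the limit forces every tangent cone of $X$ to be $\mathbb{R}^n$, so the intrinsic Reifenberg scheme of \cite[Appendix A]{ChCo1} applies at all sufficiently small scales on $(M_j^n,g_j)$ for $j$ large, producing bi-H\"older homeomorphisms $\psi_j:M_\infty^n\to M_j^n$. The bi-H\"older maps are then upgraded to diffeomorphisms by exploiting the uniform Ricci lower bound together with the uniform bounds on the $\lambda_i^j$ and on $\|\phi_i^j\|_{L^\infty}$ (from elliptic regularity on $\mathrm{RCD}(K,n)$ spaces and $\mathrm{vol}_{g_j}(M_j^n)\geqslant v_0$), which via the Anderson-Cheeger harmonic coordinate construction force a uniform positive lower bound on the $C^{1,\alpha}$-harmonic radius of $(M_j^n,g_j)$. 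This step, delicate because the Reifenberg conclusion is only bi-H\"older a priori, contradicts the pairwise non-diffeomorphism choice and completes the argument.
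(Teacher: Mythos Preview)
Your overall strategy coincides with the paper's: argue by contradiction, extract uniform bounds on the number of eigenfunctions, their eigenvalues and $L^2$ norms, deduce a volume lower bound and hence a non-collapsed limit, pass the eigenmaps to the limit to obtain an exact isometrically immersing eigenmap there, invoke Theorem~\ref{thm1.5} to conclude the limit is a smooth closed manifold, and then appeal to Cheeger--Colding to contradict pairwise non-diffeomorphism. The first two steps are fine and match the paper's argument (the paper uses Li--Yau for the eigenvalue lower bound where you use Poincar\'e, but this is immaterial).

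The issue is your last step. You treat the intrinsic Reifenberg output as ``only bi-H\"older a priori'' and propose to upgrade it to a diffeomorphism via an Anderson--Cheeger harmonic radius bound. This is both unnecessary and, as stated, would not work. It is unnecessary because \cite[Theorem A.1.12]{ChCo1} already yields a \emph{diffeomorphism} (not merely a bi-H\"older homeomorphism) when a non-collapsed sequence of manifolds with Ricci bounded below Gromov--Hausdorff converges to a smooth closed manifold; this is exactly what the paper cites, and no further upgrade is needed. Your proposed upgrade would not work because a uniform $C^{1,\alpha}$-harmonic radius lower bound of Anderson--Cheeger type requires a \emph{two-sided} Ricci bound, which you do not have here; the bounds on $\lambda_i^j$ and $\|\phi_i^j\|_{L^\infty}$ do not by themselves force such control, since the approximating eigenmaps are only $\epsilon_j$-almost isometric and hence do not provide the exact coordinate charts used in Section~\ref{sec4}. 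Simply replace your final paragraph with a direct invocation of \cite[Theorem A.1.12]{ChCo1}.
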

\subsection{Outline of the proofs}

The proofs of Theorems \ref{mainthm1.3} and \ref{mainthm1.5} are based on blow up and blow down arguments. See also the proofs of \cite[Theorem 2.19]{AHPT21} and \cite[Theorem 3.11]{BGHZ21} for related arguments.

The most delicate part of this paper is in the proof of Theorem \ref{thm1.5}, which makes full use of the equations for eigenfunctions, i.e. $\Delta \phi_i=-\mu_i\phi_i$ ($i=1,\ldots,m$). Note that one can easily obtain $L^\infty$-bounds of the Laplacian and the gradient of each $\phi_i$ from the estimates in \cite{J14,JLZ16,ZZ19,AHPT21} (see also Proposition \ref{heatkernel2}). 

In order to explain it more precisely, let us start with the following key equation:
\begin{equation}\label{aaaaa1111122}
\sum\limits_{i=1}^m |\nabla \phi_i|^2=n.
\end{equation}

Since the lower bound of each $\Delta |\nabla \phi_i|^2$ comes directly from Bochner inequality (see (\ref{bochnerineq})), (\ref{aaaaa1111122}) then guarantees the upper bound of each $\Delta |\nabla \phi_i|^2$ due to the following equality:

\[
\Delta|\nabla \phi_i|^2=\sum\limits_{j\neq i}^m -\Delta|\nabla \phi_j|^2.
\]
Therefore we have a uniform $L^\infty$-bound of all $|\nabla\langle \nabla \phi_i,\nabla\phi_j\rangle|$, which implies the $C^{1,1}$ differentiable structure of the space. Indeed, locally one can pick $\{u_i\}_{i=1}^m$ consisting of linear combinations of eigenfunctions $\phi_i$ and construct a bi-Lipschitz map $x\mapsto (u_1(x),\ldots,u_n(x))$ which satisfies the following PDE:
\[
\sum\limits_{j,k=1}^m \langle \nabla u_j,\nabla u_k\rangle\frac{\partial^2 \phi_i}{ \partial u_j \partial u_k}+\sum\limits_{j=1}^n\Delta u_j \frac{\partial \phi_i}{ \partial u_j }+\mu_i \phi_i=0.
\]
Then the smoothness of the space is justified by applying the elliptic regularity theory.  

Finally, a similar technique as in the proof of Theorem \ref{thm1.5} allows us to control each higher order covariant derivative of the Riemannian metric $g$ of $(M^n, g) \in \mathcal{M}$ quantitatively. Thus we can then apply a theorem of Hebey-Herzlish proved in \cite{HH97} to get the desired smooth compactness result, Theorem \ref{thm1.8}.

\textbf{Acknowledgement.} The author acknowledges the support of JST SPRING, Grant Number JPMJSP2114. He is grateful to the referee for carefully reading the paper and for giving many valuable suggestions. He thanks his supervisor Professor Shouhei Honda for his advice and encouragement. He also thanks Yuanlin Peng and Zuyi Zhang for their comments on this paper.

\section{{}{Notation} and preliminary results}\label{sec2}

Throughout this paper we will use standard {}{notation} in this topic. For example

\begin{itemize}

\item Denote by $C(K_1,\ldots,K_n)$ a positive constant depending on $K_1,\ldots,K_n$, and $\Psi=\Psi(\epsilon_1,\ldots,\epsilon_k|c_1,\ldots c_j)$ some nonnegative function determined by $\epsilon_1,\ldots,\epsilon_k$, $c_1,\ldots, c_j$ such that 
\[
\lim\limits_{\epsilon_1,\ldots,\epsilon_k\rightarrow 0}\Psi=0,\  \text{for any fixed}\ c_1,\ldots c_j.
\]

\item Denote by $\omega_n$ the $n$-dimensional Hausdorff measure of the unit ball in $\mathbb{R}^n$ which coincides with the usual volume of the unit ball in $\mathbb{R}^n$, and by $\mathcal{L}^n$ the standard Lebesgue measure on $\mathbb{R}^n$. 

\end{itemize}

We may use superscripts or subscripts when it is necessary to distinguish objects (for example, the Riemannian metrics, the gradients, etc.) on different spaces in this paper.

\subsection{Metric spaces}
{}{We fix some basic definitions} and {}{notation} about metric spaces in this subsection. {}{Let $({X},\mathsf{d})$ be a complete separable metric space.}
 
 Denote by $ \text{Lip}({X},\mathsf{d})$ (resp. $\text{Lip}_b({X},\mathsf{d})$, $\text{Lip}_c({X},\mathsf{d})$,  $C_\text{c}({X})$) the set of all Lipschitz functions (resp. bounded Lipschitz functions, compactly supported Lipschitz functions, compactly supported continuous functions) on  ${}{({X},\mathsf{d})}$.
 
 For any $f\in \text{Lip}({X},\mathsf{d})$, the local Lipschitz constant of $f$ at {}{a point} $x\in {X}$ is defined  by
\[
\text{lip}\
 f(x)=\left\{\begin{aligned}
\limsup\limits_{y\rightarrow x} \frac{|f(y)-f(x)|}{\mathsf{d}(y,x)}&\ \ \ \text{if $x\in {X}$ is not isolated},\\
0\ \ \ \ \ \ \ \ \ \ \ \ \ \ \ \ \ \ \ \ \ \ \ \ \ &\ \ \ \text{otherwise}.
\end{aligned}\right.
\]

If $({X},\mathsf{d})$ is compact, then the diameter of ${X}$ is defined by 
\[
\mathrm{diam}({X},\mathsf{d}):=\sup_{x,y\in{X}}\mathsf{d}(x,y).
\]

For a map $f:{X}\rightarrow {Y}$ from $({X},\mathsf{d})$ to another complete metric space $({Y},\mathsf{d}_Y)$, $f$ is said to be $C$-bi-Lipschitz from ${X}$ to $f({X})$ for some $C\geqslant 1$ if
\[
C^{-1}\mathsf{d}(x_1,x_2)\leqslant \mathsf{d}_Y(f(x_1),f(x_2))\leqslant C\mathsf{d}(x_1,x_2),\ \forall x_1,x_2\in{X}.
\]

We also denote by $B_R(x)$ the set $\{y\in{X}: \mathsf{d}(x,y)<R\}$, and by $B_\epsilon(A)$ the set $\{x\in {X}:\mathsf{d}(x,A)<\epsilon\}$ for any $A\subset {X}$, $\epsilon>0$. In particular,  denote by $B_r(0_n):=\{x\in \mathbb{R}^n:|x|< r\}$ for any $r>0$.

\subsection{RCD$(K,N)$ spaces: definition and basic properties}\label{sec2.2}


{}{Let $({X},\mathsf{d},\mathfrak{m})$  be a metric measure space}.

\begin{defn}[Cheeger energy]
 The Cheeger energy Ch: $L^2(\mathfrak{m})\rightarrow [0,\infty]$ is defined by
\[
\text{Ch}(f):=\inf\limits_{\{f_i\}}\left\{ \liminf\limits_{i\rightarrow \infty} \int_{{X}} |\text{lip}\mathop{f_i}|^2 \mathrm{d}\mathfrak{m} \right\},
\]
where the infimum is taken among all sequences $\{f_i\}$ satisfying $f_i\in \text{Lip}_b({X},\mathsf{d})\cap L^2(\mathfrak{m})$ and $\left\|f_i-f\right\|_{L^2(\mathfrak{m})}\rightarrow 0$. 
\end{defn}

The domain of the Cheeger energy, denoted by $D\text{(Ch)}$, is the set of all $f\in L^2(\mathfrak{m})$ with $\text{Ch}(f)<\infty$. It is dense in $L^2(\mathfrak{m})$, and is a Banach space when equipped with the norm $\sqrt{\text{Ch}(\cdot)+\left\|\cdot\right\|_{L^2(\mathfrak{m})}^2}$. This Banach space is the Sobolev space $H^{1,2}({X},\mathsf{d},\mathfrak{m})$. In addition, for any $f\in H^{1,2}({X},\mathsf{d},\mathfrak{m})$, it is known that there exists a {}{unique} $|\text{D}f|\in L^2(\mathfrak{m})$ such that
\[
\text{Ch}(f)=\int_{{X}} |\text{D}f|^2 \mathrm{d}\mathfrak{m}.
\]
This $|\text{D}f|$ is called the minimal relaxed slope of $f$ and satisfies the {}{locality property, that is}, for any other $h \in H^{1,2}({X},\mathsf{d},\mathfrak{m})$, $|\mathrm{D}f|=|\mathrm{D}h|$ $\mathfrak{m}$-a.e. on $\{x\in{X}:f=h\}$.


{}{In particular}, $({X},\mathsf{d},\mathfrak{m})$ is said to be infinitesimally Hilbertian if $H^{1,2}({X},\mathsf{d},\mathfrak{m})$ is a Hilbert space. In this case, for any $f,h\in H^{1,2}({X},\mathsf{d},\mathfrak{m})$, the following $ L^1(\mathfrak{m}) $ integrable function is well-defined \cite{AGS14b}: 

  \[
  \langle \nabla f, \nabla h\rangle :=  \lim_{\epsilon \rightarrow 0}\frac{|\text{D}(f+\epsilon h)|^2-|\text{D} f|^2}{2\epsilon}.
  \]
 
\begin{remark}
For any $f\in H^{1,2}({X},\mathsf{d},\mathfrak{m})$, it is clear that
\[
|\nabla f|^2:=\langle \nabla f,\nabla f\rangle=|\mathrm{D}f|^2,\ \mathfrak{m}\text{-a.e.}
\]
\end{remark}

\begin{defn}[The Laplacian \cite{G15}]

  Assume that $({X},\mathsf{d},\mathfrak{m})$ is infinitesimally Hilbertian. The domain of Laplacian, namely $D(\Delta)$, is defined as the set of all $f\in H^{1,2}({X},\mathsf{d},\mathfrak{m})$ such that 
\[
 \int_{{X}} \langle \nabla f, \nabla \varphi\rangle \mathrm{d}\mathfrak{m}= - \int_{{X}} h\varphi \mathrm{d}\mathfrak{m},\ \  \forall \varphi \in H^{1,2}({X},\mathsf{d},\mathfrak{m}),
\]  
for some $h\in L^2(\mathfrak{m})$.
In particular, denote by $\Delta f:= h$ for any $f\in D(\Delta)$ because $h$ is unique if it exists.
\end{defn}

We are now ready to introduce the definition of RCD$(K,N)$ spaces. {}{The following is an equivalent definition with the one proposed in \cite{G15}, and the equivalence is proved in \cite{AGS15,EKS15}. See also \cite{AMS19}.}

\begin{defn}
Let $K\in \mathbb{R}$ and $N\in [1,\infty)$. $({X},\mathsf{d},\mathfrak{m})$ is said to be an RCD$(K,N)$ space {}{if and only if} it satisfies the following conditions.
\begin{enumerate}
  \item  $({X},\mathsf{d},\mathfrak{m})$ is infinitesimally Hilbertian.

  \item There exists $ x \in {X}$ and $C >0$, such that {}{for any $r>0$}, $\mathfrak{m} (B_r(x)) \leqslant C e^{Cr^2}$.

  \item (Sobolev to Lipschitz property) If $f \in H^{1,2}({X},\mathsf{d},\mathfrak{m})$ with $|\text{D} f|\leqslant 1$ $\mathfrak{m}$-a.e., then $f$ has a 1-Lipschitz {}{representative, that is,} there exists {}{a 1-Lipschitz function $h$ such that $h=f$ $\mathfrak{m}$-a.e.}

  \item  ({}{Bochner} inequality) For any {}{$f\in D(\Delta)$ with $\Delta f \in H^{1,2}({X},\mathsf{d},\mathfrak{m})$}, the following holds for any $\varphi \in \mathrm{Test}F\left({X},\mathsf{d},\mathfrak{m}\right)$ with
  $ \varphi \geqslant 0$,
\begin{equation}\label{bochnerineq}
  \frac{1}{2}\int_{X} |\nabla f|^2 \Delta \varphi \mathrm{d}\mathfrak{m} 
  \geqslant \int_{X} \varphi \left(\langle \nabla f , \nabla \Delta f \rangle +K |\nabla f|^2   + \frac{(\Delta f)^2}{N}  \right) \mathrm{d}\mathfrak{m},
  \end{equation}
where $\mathrm{Test}F({X},\mathsf{d},\mathfrak{m})$ is the class of test functions defined by
\end{enumerate}
 \[
\mathrm{Test}F({X},\mathsf{d},\mathfrak{m}):=\{f\in \text{Lip}({X},\mathsf{d})\cap D(\Delta)\cap L^\infty(\mathfrak{m}):\Delta f\in H^{1,2}({X},\mathsf{d},\mathfrak{m})\cap L^\infty(\mathfrak{m})\}.
\]

If in addition $\mathfrak{m}=\mathcal{H}^N$, then $({X},\mathsf{d},\mathfrak{m})$ is said to be a non-collapsed RCD$(K,N)$ space.
\end{defn}

For the class of test functions on an RCD$(K,N)$ space $({X},\mathsf{d},\mathfrak{m})$, by \cite{S14},
\begin{enumerate}
\item  $|\nabla f|^2 \in H^{1,2}({X},\mathsf{d},\mathfrak{m})$ for any $f\in \mathrm{Test}F\left({X},\mathsf{d},\mathfrak{m}\right)${}{.} 
\item Define $\mathrm{Test}F_+({X},\mathsf{d},\mathfrak{m}):=\left\{f\in \mathrm{Test}F\left({X},\mathsf{d},\mathfrak{m}\right): f\geqslant 0\right\}$ and $H^{1,2}_+({X},\mathsf{d},\mathfrak{m}):=\left\{f\in H^{1,2}({X},\mathsf{d},\mathfrak{m}):  f\geqslant 0\ \ \mathfrak{m}\text{-a.e.}\right\}$.  Then $\mathrm{Test}F_+({X},\mathsf{d},\mathfrak{m})$ (resp. $\mathrm{Test}F({X},\mathsf{d},\mathfrak{m})$) is dense in $H^{1,2}_+({X},\mathsf{d},\mathfrak{m})$ (resp. $H^{1,2}({X},\mathsf{d},\mathfrak{m})$).
\end{enumerate}

The following inequality is a generalization of the Bishop-Gromov inequality {}{in Riemannian geometry.}
\begin{thm}[Bishop-Gromov inequality \cite{LV09,St06b}]\label{BGineq}
Assume that $({X},\mathsf{d},\mathfrak{m})$ is an $\mathrm{RCD}(K,N)$ space. Then the following holds for any $x\in {X}$.
\begin{enumerate}
\item If $N>1$, $K\neq 0$, $r<R\leqslant \pi\sqrt{\dfrac{N-1}{K\lor 0}}$, then $\dfrac{\mathfrak{m}\left(B_R(x)\right)}{\mathfrak{m}\left(B_r(x)\right)}\leqslant  \dfrac{\int_0^R V_{K,N}\mathrm{d}t}{\int_0^r V_{K,N}\mathrm{d}t}$, where 
\[ 
V_{K,N}(t):=\left\{
\begin{array}{ll}
\sin\left(t\sqrt{K/(N-1)}\right)^{N-1}, &\text{if}\  K>0,\\
 \sinh\left(t\sqrt{{}{-K}/(N-1)}\right)^{N-1}, &\text{if}\  K<0.
 \end{array}
 \right.
 \]  
\item If $N=1$ and $K\leqslant 0$, or $N\in (1,\infty)$ and $K= 0$, then $\dfrac{\mathfrak{m}\left(B_R(x)\right)}{\mathfrak{m}\left(B_r(x)\right)}\leqslant  \left(\dfrac{R}{r}\right)^{N}$. 
\end{enumerate}
\end{thm}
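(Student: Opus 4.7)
The plan is to derive the Bishop--Gromov inequality from the $\mathrm{CD}(K,N)$ condition built into the definition of $\mathrm{RCD}(K,N)$, following the optimal-transport approach of Sturm \cite{St06b} and Lott--Villani \cite{LV09}. The main technical tool I would use is the Brunn--Minkowski type inequality: for bounded Borel sets $A_0, A_1 \subset X$ and $t \in [0,1]$, if $A_t$ denotes the set of $t$-intermediate points on $W_2$-geodesics from $A_0$ to $A_1$, then $\mathfrak{m}(A_t)^{1/N}$ is bounded below in terms of $\mathfrak{m}(A_0)^{1/N}$ and $\mathfrak{m}(A_1)^{1/N}$ weighted by the distortion coefficients $\tau_{K,N}^{(\cdot)}$. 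This is the set-theoretic counterpart of the displacement convexity of the R\'enyi entropy $E_N(\mu) = -\int \rho^{1-1/N}\, d\mathfrak{m}$ that characterizes $\mathrm{CD}(K,N)$, and it is the natural bridge from curvature-dimension bounds to volume comparison.

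To deduce the comparison for concentric balls, I would fix $x \in X$ and apply the Brunn--Minkowski inequality with $A_0 = B_\epsilon(x)$ and $A_1 = B_R(x) \setminus B_{R-\epsilon}(x)$ for small $\epsilon > 0$. Because $W_2$-geodesics with initial mass concentrated near $x$ behave essentially radially, the interpolant $A_t$ is concentrated near the sphere of radius $tR$ up to $\epsilon$-errors. Sending $\epsilon \downarrow 0$ and carefully bookkeeping the distortion coefficients converts the set inequality into the pointwise monotonicity of the ratio $r \mapsto \mathfrak{m}(B_r(x)) / \int_0^r V_{K,N}(s)\, ds$, whose integration from $r$ to $R$ yields item (1). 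The restriction $R \leqslant \pi\sqrt{(N-1)/(K\vee 0)}$ in the positive-curvature case is exactly what is needed for $V_{K,N}$ to stay non-negative throughout the range. The degenerate cases in item (2) follow in the same way, with the distortion coefficients collapsing to $\tau_{K,N}^{(t)}(\theta) = t$ when $K = 0$, which directly produces the Euclidean-type comparison $(R/r)^N$.

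The main obstacle I anticipate is the possible branching of optimal geodesics, which could prevent the identification of $A_t$ with the image of $A_0 \times A_1$ under the midpoint map and thus derail the set-theoretic interpolation argument. In the fully general $\mathrm{CD}(K,N)$ setting this is a serious subtlety, but the infinitesimal Hilbertianity axiom of $\mathrm{RCD}(K,N)$ guarantees essentially non-branching behavior of optimal transports, and all of the necessary measure-theoretic refinements have been rigorously established in \cite{St06b, LV09}. For this reason I would not expect any genuinely new argument to be required; the proof should amount to invoking the results of these references in the present framework.
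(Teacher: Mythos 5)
The paper does not prove this theorem; it is stated as an imported result with the citations \cite{LV09,St06b}, so there is no in-paper argument to compare against. Your sketch is a reasonable outline of the classical optimal-transport proof given in those references: displacement convexity of the R\'enyi entropy under $\mathrm{CD}(K,N)$ yields a generalized Brunn--Minkowski inequality (Sturm's Proposition 2.1 in \cite{St06b}), which is then applied with a small ball at $x$ and an annulus to deduce the volume comparison, first for annuli/sphere measures and then, after an integration lemma, for balls.

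Two small points of calibration. First, you do not need any claim that ``$W_2$-geodesics behave essentially radially'' to localize $A_t$: if $a_0\in B_\epsilon(x)$ and $a_1\in B_R(x)\setminus B_{R-\epsilon}(x)$, then every $t$-midpoint $m$ satisfies $t(R-\epsilon)-(1-t)\epsilon<\mathsf{d}(x,m)<tR+(1-t)\epsilon$ by the triangle inequality alone, so the containment of $A_t$ in an $O(\epsilon)$-thin annulus near radius $tR$ is purely metric and uses nothing about the structure of the transport plan. Second, your worry about branching is not needed for this particular result: the Brunn--Minkowski inequality, and hence Bishop--Gromov, is a theorem for arbitrary $\mathrm{CD}(K,N)$ spaces with no non-branching hypothesis. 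Infinitesimal Hilbertianity (or more generally essential non-branching) enters for finer structural statements such as the local-to-global property or the measure-contraction property, but Sturm's derivation of Bishop--Gromov from $\mathrm{CD}(K,N)$ goes through directly. Neither point is a gap in your argument --- the first is merely an unnecessary appeal, the second an overcautious caveat --- but keeping the logical dependencies tight matters here, since Bishop--Gromov is one of the basic tools one wants available at full $\mathrm{CD}(K,N)$ generality before any Riemannian-type refinement is assumed.
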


\begin{remark}
(\ref{BGinequality}) and (\ref{BGinequality111}) are direct consequences of Theorem \ref{BGineq}, where  (\ref{BGinequality111}) is a combination of (\ref{BGinequality}) and the fact that $B_r(x)\subset B_{r+\mathsf{d}(x,y)}(y)$. 
\begin{equation}\label{BGinequality}
\frac{\mathfrak{m}(B_R(x))}{\mathfrak{m}(B_r(x))}\leqslant C(K,N)\exp\left(C(K,N)\frac{R}{r}\right), \ \ \ \forall x\in {X}, \ \forall r<R.
\end{equation}
{}{\begin{equation}\label{BGinequality111}
\frac{\mathfrak{m}(B_r(x))}{\mathfrak{m}(B_r(y))}\leqslant C(K,N)\exp\left(C(K,N)\mathop{\frac{r+\mathsf{d}(x,y)}{r}}\right), \ \ \ \forall x,y\in {X}, \ \forall r>0.
\end{equation}}

\end{remark}

For an RCD$(K,N)$ space $({X},\mathsf{d},\mathfrak{m})$, the heat flow associated with its {}{Cheeger energy} is defined as ${}{\{\mathrm{h}_t:L^2(\mathfrak{m})\rightarrow L^2(\mathfrak{m})\}_{t>0}}$ such that for any $f \in L^2(\mathfrak{m})$,  $\{{}{\mathrm{h}_t} f\}_{t>0}$ satisfies the following properties.

\begin{enumerate}

\item (Solution to the heat equation) {}{For any $t>0$}, $\text{h}_t f\in D(\Delta)$ and $\dfrac{\partial}{\partial t}\text{h}_t(f)=\Delta {}{
\mathrm{h}_t f} \ \ \text{in}\ L^2(\mathfrak{m})$.
\item (Semigroup property) For any $s,t>0$, ${}{\text{h}_{t+s}}f=\text{h}_t ({}{\text{h}_s} f)$.
{}{\item (Contraction on $L^2(\mathfrak{m})$) $\left\|\text{h}_t  f\right\|_{L^2(\mathfrak{m})}\leqslant \left\|f\right\|_{L^2(\mathfrak{m})},\ \ \forall t>0$.
\item (Commutative with $\Delta$) If $f\in D(\Delta)$, then for any $t>0$, $\text{h}_t (\Delta f)=\Delta (\text{h}_t f)$.}
\end{enumerate}

For any $p\in [1,\infty]$, $\{\text{h}_t\}_{t>0}$ also acts on $L^p(\mathfrak{m})$ as a linear family of contractions, namely 
\begin{equation}\label{111eqn2.4}
\left\|\text{h}_t  \varphi\right\|_{L^p(\mathfrak{m})}\leqslant \left\|\varphi\right\|_{L^p(\mathfrak{m})},\ \ \forall t>0,\ \  \forall \varphi\in L^p(\mathfrak{m}).
\end{equation}

Set $\hat{1}\in L^\infty(\mathfrak{m})$ as (the equivalence class in $\mathfrak{m}$-a.e. sense of) the function on ${X}$ identically equal to 1. It is now worth pointing out the stochastic completeness of RCD$(K,N)$ spaces as follows:
 \[
{}{\mathrm{h}_t}(\hat{1})\equiv \hat{1},\ \ \forall t>0.
\]

Sturm's works \cite{St95, St96} guarantee the existence of a locally H\"older continuous {}{representative} $\rho$ on ${X}\times{X}\times (0,\infty)$ of the heat kernel for $({X},\mathsf{d},\mathfrak{m})$. More precisely, the solution to the heat equation can be expressed by using $\rho$ as follows:
\[
\text{h}_t(f)=\int_{{X}} \rho(x,y,t)f(y)\mathrm{d}\mathfrak{m}(y),\  \forall f\in L^2(\mathfrak{m}).
\] 
\begin{remark}[Rescaled RCD space]
For any RCD$(K,N)$ space $({X},\mathsf{d},\mathfrak{m})$ and any $a,b\in (0,\infty)$, the rescaled space $({X},a\mathsf{d},b\mathfrak{m})$ is an RCD$(a^{-1}K,N)$ space whose heat kernel $\tilde{\rho}$ can be written as $\tilde{\rho}(x,y,t)=b^{-1}\rho(x,y,a^{-2}t)$.
\end{remark}

The locally H\"older {}{continuity} of the heat kernel on RCD$(K,N)$ spaces is improved to be locally Lipschitz due to the following Jiang-Li-Zhang's \cite{JLZ16} estimates. 

\begin{thm}\label{thm2.12}
Let $({X},\mathsf{d},\mathfrak{m})$ be an $\mathrm{RCD}(K,N)$ space. Given any $\epsilon>0$, there exist positive constants $C_i=C_i(K,N,\epsilon),i=1,2,3,4$ such that the heat kernel $\rho$ {}{satisfies} the following estimates.
\[
\frac{1}{C_1}\exp\left({-\frac{\mathsf{d}^2(x,y)}{(4-\epsilon)t}}-C_2t\right)\leqslant \mathfrak{m}\left(B_{\sqrt{t}}(y)\right)\rho(x,y,t) \leqslant C_1\exp\left({-\frac{\mathsf{d}^2(x,y)}{(4+\epsilon)t}}+C_2t\right)
\]
holds for all $t>0$, and all $x,y\in {X}$ and

\[
|\nabla_x \rho(x,y,t)| \leqslant \frac{C_3}{\sqrt{t}\ \mathfrak{m}\left(B_{\sqrt{t}}(x)\right)}\exp\left({-\frac{\mathsf{d}^2(x,y)}{(4+\epsilon)t}}+C_4t\right)
\]
holds for all $t>0$ and $\mathfrak{m}$-a.e. $x,y\in {X}$.
\end{thm}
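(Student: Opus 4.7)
The plan is to separate the three estimates into two logically distinct parts: the two-sided Gaussian bounds, proved by Dirichlet-form methods, and the gradient estimate, proved via the Bakry--\'Emery calculus combined with the semigroup structure.

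For the Gaussian upper and lower bounds, the first observation is that the RCD$(K,N)$ condition supplies exactly the two structural inputs needed for Sturm's classical machinery \cite{St95,St96}: volume doubling, which is a direct consequence of Theorem \ref{BGineq} (more precisely (\ref{BGinequality})), and a scale-invariant local $L^2$-Poincar\'e inequality on balls, which is known to hold on RCD$(K,N)$ spaces. Combining these yields two-sided Gaussian estimates of the desired shape but with a generic constant $c=c(K,N)$ in place of the sharp $4\pm\epsilon$ in the Gaussian exponent. To sharpen the exponent I would invoke Davies' perturbation trick: for a $1$-Lipschitz function $\psi$ and parameter $\alpha\in\mathbb{R}$ consider the conjugated semigroup $T^{\alpha}_t f := e^{-\alpha\psi}\mathrm{h}_t(e^{\alpha\psi}f)$ and differentiate $\|T^{\alpha}_t f\|_{L^2}^2$, using the chain rule and locality of $|\mathrm{D}f|$ to bound the error by $\alpha^2 t$ up to a factor depending on $K$; optimizing in $\alpha$ (with $\psi=\mathsf{d}(\cdot,y)$) converts the off-diagonal $L^2$ decay into the pointwise Gaussian with exponent $(4\pm\epsilon)^{-1}$, after paying an additive $C_2(K,N,\epsilon)t$ for the Bakry--\'Emery correction. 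The lower bound then follows by the standard chaining argument: an on-diagonal lower bound, coming from the Nash inequality (itself a consequence of doubling and Poincar\'e), is iterated via the semigroup property $\rho(x,y,t)\geqslant \int \rho(x,z,t/k)\rho(z,y,(k-1)t/k)\,\mathrm{d}\mathfrak{m}(z)$ along a chain of balls connecting $x$ and $y$, with $k\sim \mathsf{d}(x,y)^2/t$.

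For the gradient estimate, the starting point is the Bakry--\'Emery/Wasserstein contraction estimate, available on any RCD$(K,\infty)$ space,
$$|\nabla \mathrm{h}_t f|^2 \leqslant e^{-2Kt}\,\mathrm{h}_t\!\left(|\nabla f|^2\right),\qquad \mathfrak{m}\text{-a.e.}$$
Using the semigroup identity $\rho(\cdot,y,t)=\mathrm{h}_{t/2}\rho(\cdot,y,t/2)$ gives
$$|\nabla_x\rho(x,y,t)|^2 \leqslant e^{-Kt}\,\mathrm{h}_{t/2}\bigl(|\nabla_x \rho(\cdot,y,t/2)|^2\bigr)(x).$$
To handle the inner gradient one applies a Caccioppoli-type estimate: integrating the identity $\partial_t(\rho^2)=2\rho\Delta\rho$ against a cutoff of scale $\sqrt{t}$ controls $\int_{B_{\sqrt{t}}(y)}|\nabla_x\rho(\cdot,y,t/2)|^2\,\mathrm{d}\mathfrak{m}$ by $t^{-1}\mathfrak{m}(B_{\sqrt{t}}(y))^{-1}$ times a Gaussian factor coming from the upper bound of part (a). Writing out the action of $\mathrm{h}_{t/2}$ via its kernel and inserting the Gaussian upper bound on both factors, then using doubling to switch from $\mathfrak{m}(B_{\sqrt{t}}(y))$ to $\mathfrak{m}(B_{\sqrt{t}}(x))$ (which costs another Gaussian factor absorbable into the $(4+\epsilon)$-exponent after slightly decreasing $\epsilon$), yields the claimed pointwise bound.

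The main obstacle I expect is not the overall structure, which is by now routine, but the bookkeeping for the sharp constant $4\pm\epsilon$ in the Gaussian exponent. Each of the three steps (Davies perturbation, chaining for the lower bound, and absorbing the doubling factor in the gradient estimate) introduces a multiplicative loss that must be packaged into the $\epsilon$-adjustable exponent and the pre-factor $e^{\pm C_2 t}$; doing this rigorously in the synthetic setting requires approximating the distance-like test function $\psi$ by elements of $\mathrm{Test}F(X,\mathsf{d},\mathfrak{m})$ and using the density statement recalled after the definition of RCD$(K,N)$ spaces, together with the locality of $|\mathrm{D}f|$, to justify differentiating the conjugated energy.
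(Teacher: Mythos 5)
The paper does not prove Theorem \ref{thm2.12}: it is quoted as a known result, with the two-sided Gaussian bounds attributed to Sturm \cite{St95,St96} (Dirichlet-form methods for doubling and Poincar\'e spaces, sharpened to exponent $4\pm\epsilon$ via Davies' perturbation) and the gradient estimate to Jiang--Li--Zhang \cite{JLZ16}, so there is no in-paper argument against which to check your steps. Your sketch reconstructs essentially that route, and the ingredients you invoke --- doubling from Bishop--Gromov, the $L^2$-Poincar\'e inequality, Davies' trick with a Lipschitz weight, semigroup chaining for the lower bound, and the Bakry--\'Emery contraction $|\nabla \mathrm{h}_t f|^2 \leqslant e^{-2Kt}\mathrm{h}_t(|\nabla f|^2)$ --- are the ones those references rely on, so the outline is consistent with the intended citation. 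One step to spell out: after reducing to $|\nabla_x\rho(x,y,t)|^2\leqslant e^{-Kt}\mathrm{h}_{t/2}\bigl(|\nabla\rho(\cdot,y,t/2)|^2\bigr)(x)$, your Caccioppoli inequality only controls local $L^2$ averages of $|\nabla\rho(\cdot,y,t/2)|^2$ on balls of radius $\sim\sqrt{t}$, while $\mathrm{h}_{t/2}$ integrates against $\rho(x,\cdot,t/2)$ over all of $X$; you need to decompose $X$ into annuli of width $\sqrt{t}$ about $x$, apply Caccioppoli on each, sum using the Gaussian upper bound on $\rho(x,\cdot,t/2)$, and finally switch volume normalizations via (\ref{BGinequality111}) to land on $\mathfrak{m}(B_{\sqrt{t}}(x))$. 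As written this is a plausible but not fully executed outline of an argument that the paper in any case delegates to the references.
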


{}{\begin{remark}\label{aaaaarmk2.9}
The theories of \cite{D97} are also applicable to RCD$(K,N)$ spaces. In particular, under the assumption of Theorem \ref{thm2.12}, for any $x,y\in {X}$, the function $t\mapsto \rho(x,y,t)$ is analytic. Moreover, for any $n\geqslant 1$, $t\in (0,1)$, and $x,y\in {X}$, the Bishop-Gromov inequality (\ref{BGinequality}), Theorem \ref{thm2.12} and \cite[Theorem 4]{D97} give that, 
\begin{align}\label{aabbeqn3.7}
\left|\frac{\partial^n}{\partial t^n}\rho(x,y,t)\right|\leqslant \frac{C(K,N)n!}{t^n }\left(\mathfrak{m}(B_{\sqrt{t}}(x))\mathfrak{m}(B_{\sqrt{t}}(y))\right)^{-\frac{1}{2}}\exp\left(-\frac{\mathsf{d}^2(x,y)}{100t}\right).
\end{align}
\end{remark}}

For a compact $\mathrm{RCD}(K,N)$ space $({X},\mathsf{d},\mathfrak{m})$, by \cite{J14,JLZ16}, its heat kernel $\rho$ can be expressed as follows. See also \cite[Appendix]{AHPT21}.
\begin{equation}\label{heatkernel}
\rho(x,y,t)=\sum\limits_{i= 0}^\infty e^{-\mu_i t}\phi_i(x) \phi_i(y) ,
\end{equation}
where eigenvalues of $-\Delta$ counted with multiplicities and the corresponding eigenfunctions are set as follows.
\begin{equation}\label{notation2.7}
\left\{
\begin{aligned}
&0=\mu_0<\mu_1\leqslant \mu_2 \leqslant  \cdots \rightarrow +\infty,\\
&-\Delta \phi_i=\mu_i\phi_i,\\
&\{\phi_i\}_{i\in \mathbb{N}}: \text{an orthonormal basis of $L^2(\mathfrak{m})$}.
\end{aligned}
\right.
\end{equation}

We may use (\ref{notation2.7}) in Proposition \ref{heatkernel2}, Proposition \ref{1prop2.23} without explanation.

The following estimates can be obtained by the Gaussian estimates (Theorem \ref{thm2.12}) and {}{are} useful in this paper. See \cite[Appendix]{AHPT21} and \cite{ZZ19}.
\begin{prop}\label{heatkernel2}
Let $({X},\mathsf{d},\mathfrak{m})$ be a compact $\mathrm{RCD}(K,N)$ space with $\mathfrak{m}({X})=1$, then there exist $C_j=C_j(K,N,\mathrm{diam}({X},\mathsf{d})) $ $(j=5,6)$, such that for {}{all} $i\geqslant 1$,
\[
\left\|\phi_i\right\|_{L^\infty(\mathfrak{m})}\leqslant C_5\mu_i^{N/4},\ \ \ \ \left\|\left|\nabla \phi_i\right|\right\|_{L^\infty(\mathfrak{m})}\leqslant C_5\mu_i^{(N+2)/4},\ \ \ \   C_6 i^{2/N}\leqslant \mu_i\leqslant C_5 i^2.
\]
\end{prop}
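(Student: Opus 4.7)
The strategy is to combine the heat kernel expansion (\ref{heatkernel}) with the Gaussian estimates of Theorem \ref{thm2.12} and the two-sided volume comparison $c(K,N,D)r^N \leqslant \mathfrak{m}(B_r(x)) \leqslant C(K,N,D)r^N$ for $r \in (0,D]$, which is a direct consequence of the Bishop--Gromov inequality (Theorem \ref{BGineq}) once $\mathfrak{m}(X)=1$ and $\mathrm{diam}(X,\mathsf{d})\leqslant D$ are used (the lower bound by comparison with $B_D(x) \supset X$, the upper one by the standard doubling consequence). For the $L^\infty$-bound on $\phi_i$, the identity $\mathrm{h}_t \phi_i = e^{-\mu_i t}\phi_i$ together with the reproducing property $\int_X \rho(x,y,t)^2 d\mathfrak{m}(y)=\rho(x,x,2t)$, Cauchy--Schwarz, and $\|\phi_i\|_{L^2(\mathfrak{m})}=1$ yield
\[
|\phi_i(x)|^2 \leqslant e^{2\mu_i t}\rho(x,x,2t) \leqslant \frac{Ce^{2\mu_i t}}{\mathfrak{m}(B_{\sqrt{t}}(x))} \leqslant C(K,N,D)e^{2\mu_i t}t^{-N/2},
\]
so setting $t=1/\mu_i$ (and absorbing the small eigenvalues into the constant) produces the first inequality.

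The gradient bound follows the same template. Differentiating $\phi_i(x) = e^{\mu_i t}\int_X \rho(x,y,t)\phi_i(y)d\mathfrak{m}(y)$ in $x$, applying Cauchy--Schwarz, and inserting the gradient heat-kernel estimate of Theorem \ref{thm2.12} give
\[
|\nabla \phi_i(x)|^2 \leqslant e^{2\mu_i t}\int_X |\nabla_x \rho(x,y,t)|^2 d\mathfrak{m}(y) \leqslant \frac{Ce^{2\mu_i t}}{t\,\mathfrak{m}(B_{\sqrt{t}}(x))} \leqslant C(K,N,D)e^{2\mu_i t}t^{-(N+2)/2},
\]
where the integral on the left is controlled by a standard dyadic-annulus decomposition around $x$ combined with volume doubling from Bishop--Gromov. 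Choosing $t=1/\mu_i$ then produces the exponent $(N+2)/4$.

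For the eigenvalue bounds, integrating (\ref{heatkernel}) along the diagonal and using the diagonal heat-kernel upper bound give the heat-trace estimate $\sum_{j\geqslant 0}e^{-\mu_j t}\leqslant C(K,N,D)t^{-N/2}$; since $N(\lambda):=\#\{j:\mu_j\leqslant \lambda\}$ satisfies $N(\lambda)e^{-\lambda t} \leqslant \sum_j e^{-\mu_j t}$, optimizing in $t=1/\lambda$ yields $N(\lambda)\leqslant C\lambda^{N/2}$, hence $\mu_i \geqslant C_6 i^{2/N}$. The upper bound $\mu_i \leqslant C_5 i^2$ comes from the min-max characterization: the two-sided volume comparison ensures that a maximal $2r$-separated set with $r = c(K,N,D)(i+1)^{-1/N}$ has cardinality at least $i+1$, supplying $i+1$ disjoint balls $B_r(x_l)$; the Lipschitz cutoffs $\varphi_l$ that equal $1$ on $B_{r/2}(x_l)$ and have Lipschitz constant $\leqslant 2/r$ are pairwise $L^2$-orthogonal with Rayleigh quotient $\leqslant C/r^2$, so min-max gives $\mu_i \leqslant C(K,N,D) i^{2/N} \leqslant C_5 i^2$ since $N \geqslant 1$.

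I expect the main technical obstacle to lie in the bound on $\int_X |\nabla_x \rho(x,y,t)|^2 d\mathfrak{m}(y)$, where the pointwise Gaussian of Theorem \ref{thm2.12} must be integrated against $\mathfrak{m}$ using a dyadic decomposition and Bishop--Gromov doubling in a uniform-in-$x$ way; once this ingredient is in place, the remaining steps reduce to standard semigroup manipulations, Cauchy--Schwarz, and the spectral min-max principle, which is available in the compact RCD setting by the self-adjointness and compact resolvent of $-\Delta$.
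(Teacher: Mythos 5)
The paper states Proposition \ref{heatkernel2} without proof, citing \cite[Appendix]{AHPT21} and \cite{ZZ19}; your argument reconstructs the standard proof used in those references and is correct. The semigroup-reproducing-property trick combined with Cauchy--Schwarz, the diagonal Gaussian bound, and the two-sided volume comparison gives the sup-norm bound; the gradient bound follows the same template with the gradient Gaussian estimate and the Gaussian-integral lemma (which appears in the paper as Lemma \ref{aaaalem3.11} and does exactly the dyadic bookkeeping you describe); and the heat-trace estimate together with the disjoint-ball/min-max construction gives the two-sided eigenvalue bound (your stronger $\mu_i\leqslant Ci^{2/N}$ of course implies the stated $\mu_i\leqslant C_5 i^2$). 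One small remark: the gradient bound can be obtained more directly from Jiang's estimate (Theorem \ref{aaaathm3.12}) applied to $\phi_i$ with $R=\mu_i^{-1/2}$, once the sup-norm bound is in hand, which sidesteps differentiating under the integral; but your route is also valid provided one justifies the identity $\nabla_x\int_X\rho(x,y,t)\phi_i(y)\,\mathrm{d}\mathfrak{m}(y)=\int_X\nabla_x\rho(x,y,t)\phi_i(y)\,\mathrm{d}\mathfrak{m}(y)$ in the RCD setting (local Hille theorem, as invoked elsewhere in the paper).
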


The rest of this subsection is based on \cite{GH18,GR20}. We first introduce some basic knowledge of the Euclidean cone over metric measure spaces. Then the background of the product space of metric measure spaces follows.

\begin{defn}[Euclidean cone as a metric measure space]
Let $({X},\mathsf{d},\mathfrak{m})$ be an RCD$(N-2,N-1)$ space with $N\geqslant 2$. We define the Euclidean cone over $({X},\mathsf{d},\mathfrak{m})$ as the metric measure space $\left(\text{C}({X}),\mathsf{d}_{\text{C}({X})},\mathfrak{m}_{\text{C}({X})}\right)$ as follows. 
\begin{enumerate}

\item The space $\mathrm{C}({X})$ is defined as $\text{C}({X}):= [0,\infty)\times {X}/\left(\{ 0\}\times{X}\right)$. The origin is denoted by $o^\ast$.

\item For any two points $(r,x)$ and $(s,y)$, the distance between them is defined as
\[
\mathsf{d}_{\text{C}({X})}\left((r,x),(s,y)\right):=\sqrt{r^2+s^2-2rs \cos\left(\mathsf{d}(x,y)\right)}.
\]

\item The measure of $\mathrm{C}({X})$ is defined as {}{$\mathrm{d}\mathfrak{m}_{\text{C}({X})}(r,x)=r^{N-1}\mathrm{d}r\otimes \mathrm{d}\mathfrak{m}(x)$.}
\end{enumerate}
\end{defn}
 
\begin{remark}\label{rmk2.10}
If $({X},\mathsf{d},\mathfrak{m})$ is an RCD$(N-2,N-1)$ space, then it has an upper diameter bound $\pi$ due to {}{\cite[Theorem 4.3]{O07}}. In addition, by \cite[Theorem 1.1]{K15a}, $\left(\text{C}({X}),\mathsf{d}_{\text{C}({X})},\mathfrak{m}_{\text{C}({X})}\right)$ is an RCD$(0,N)$ space {}{if and only if} $({X},\mathsf{d},\mathfrak{m})$ is an RCD$(N-2,N-1)$ space. 
\end{remark}
 
By \cite[Definition 3.8, Proposition 3.12]{GH18}, for any $f\in H^{1,2}\left(\text{C}({X}),\mathsf{d}_{\text{C}({X})},\mathfrak{m}_{\text{C}({X})}\right)$, it holds that
\[
\left(f^{(x)}:r\longmapsto f(r,x)\right)\in H^{1,2}(\mathbb{R},\mathsf{d}_\mathbb{R},{}{r^{N-1}}\mathcal{L}^1), \ \  \mathfrak{m}\text{-a.e.}\  x\in {X},
\] 
\[
\left(f^{(r)}:x\longmapsto f(r,x)\right)\in H^{1,2}({X},\mathsf{d},\mathfrak{m}),\ \ \ \ {}{r^{N-1}}\mathcal{L}^1\text{-a.e.}\  r\in \mathbb{R},
\]
and $\left|\nabla f\right|^2_{\text{C}({X})}$ can be written as
\[
\left|\nabla f\right|^2_{\text{C}({X})}(r,x)=\left|\nabla f^{(x)}\right|^2_{\mathbb{R}}(r)+\frac{1}{r^2}\left|\nabla f^{(r)}\right|^2_{{X}}(x) \ \text{$\mathfrak{m}_{\text{C}({X})}$-a.e.}\ (r,x)\in \text{C}({X}). 
\]

Thus for any  $f_1, f_2 \in H^{1,2}\left(\text{C}({X}),\mathsf{d}_{\text{C}({X})},\mathfrak{m}_{\text{C}({X})}\right)$, it can be readily checked that for $\text{$\mathfrak{m}_{\text{C}({X})}$-a.e.}\ (r,x)\in \text{C}({X})$,
\begin{equation}\label{neiji1}
\left\langle \nabla f_1 ,\nabla f_2 \right\rangle_{\text{C}({X})}(r,x)= \left\langle \nabla f_1^{(x)},\nabla f_2^{(x)}\right\rangle_{\mathbb{R}}(r)+\frac{1}{r^2}\left\langle \nabla f_1^{(r)},\nabla f_2^{(r)}\right\rangle_{{X}}(x).
\end{equation}

In addition, the heat kernel $\rho^{\text{C}({X})}$ on $\left(\text{C}({X}),\mathsf{d}_{\text{C}({X})},\mathfrak{m}_{\text{C}({X})}\right)$ has the following explicit expression as {}{ \cite[Theorem 6.20]{D02}}. 
\begin{prop}\label{1prop2.23}
Let $({X},\mathsf{d},\mathfrak{m})$ be a compact $\mathrm{RCD}(N-2,N-1)$ space with $N\geqslant 3$. Let $\alpha=(2-N)/2$, $\nu_j=\sqrt{\alpha^2+\mu_j}$ for $j\in \mathbb{N}$. Then $\rho^{\text{C}({X})}$ can be written as follows:  
\begin{equation}\label{Ding}
\rho^{\text{C}({X})}\left((r_1,x_1),(r_2,x_2),t\right)=(r_1 r_2)^\alpha \sum\limits_{j=0}^\infty  \frac{1}{2t} \exp\left(-\frac{r_1^2+r_2^2}{4t}\right)I_{\nu_j}\left(\frac{r_1 r_2}{2t}\right) \phi_j(x_1)\phi_j(x_2).
\end{equation}

Here $I_{\nu}$ is a modified Bessel function defined  by

\begin{equation}\label{Bessel}
 I_{\nu}(z)=\sum\limits_{k=0}^\infty  \frac{1}{k! \Gamma(\nu+k+1)}\left(\frac{z}{2}\right)^{2k+\nu}.
\end{equation}
\end{prop}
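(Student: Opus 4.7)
The plan is to verify directly that the right-hand side of \eqref{Ding}, denoted by $K((r_1,x_1),(r_2,x_2),t)$, coincides with $\rho^{C(X)}$ by checking the defining properties of the heat kernel: it is a nonnegative symmetric kernel, it solves the heat equation on $(C(X),\mathsf{d}_{C(X)},\mathfrak{m}_{C(X)})$, and it produces a delta function as $t\downarrow 0$. Uniqueness of the heat kernel, which holds because $C(X)$ is an $\mathrm{RCD}(0,N)$ space (Remark \ref{rmk2.10}) and is in particular stochastically complete, then forces $K=\rho^{C(X)}$.

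First I would establish convergence of the series. Using Proposition \ref{heatkernel2}, the eigenvalue lower bound $\mu_j\geqslant C_6 j^{2/N}$ forces $\nu_j=\sqrt{\alpha^2+\mu_j}\to\infty$, and the bounds $\|\phi_j\|_\infty\leqslant C_5\mu_j^{N/4}$ control the eigenfunction factors. Combined with the Bessel asymptotic $I_\nu(z)\sim (ez/(2\nu))^\nu/\sqrt{2\pi\nu}$ as $\nu\to\infty$ (for fixed $z$), this shows that the series in \eqref{Ding} converges absolutely and uniformly on compacta of $(C(X)\setminus\{o^\ast\})^2\times(0,\infty)$, together with all termwise $t$- and $r$-derivatives. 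Analyticity in $t$ (Remark \ref{aaaaarmk2.9}) further legitimates termwise differentiation.

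Next, I would verify the heat equation $\partial_t K=\Delta_{C(X)} K$ term by term in $(r_1,x_1)$. By the tensorization identity \eqref{neiji1} together with the definition of the Laplacian, on separated functions the Laplacian on the cone acts by
\[
\Delta_{C(X)}\bigl(f(r)\phi(x)\bigr)=\Bigl(f''(r)+\tfrac{N-1}{r}f'(r)\Bigr)\phi(x)+\tfrac{1}{r^2}f(r)\Delta_X\phi(x).
\]
Setting $u_\nu(r_1,r_2,t):=\frac{1}{2t}(r_1 r_2)^\alpha e^{-(r_1^2+r_2^2)/(4t)}I_\nu(r_1 r_2/(2t))$, applying the above to the $j$-th term $u_{\nu_j}(r_1,r_2,t)\phi_j(x_1)\phi_j(x_2)$ and using $-\Delta_X\phi_j=\mu_j\phi_j$ reduces the task to the single radial identity
\[
\partial_t u_{\nu_j}=\partial_{r_1}^2 u_{\nu_j}+\tfrac{N-1}{r_1}\partial_{r_1} u_{\nu_j}-\tfrac{\mu_j}{r_1^2}u_{\nu_j},
\]
which is a direct computation using the modified Bessel equation $z^2 I_\nu''(z)+zI_\nu'(z)-(z^2+\nu^2)I_\nu(z)=0$ together with the relation $\nu_j^2-\alpha^2=\mu_j$. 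The particular power $(r_1 r_2)^\alpha$ with $\alpha=(2-N)/2$ is precisely what absorbs the first-order drift $\frac{N-1}{r_1}\partial_{r_1}$ into the Bessel equation.

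The step I expect to be the main obstacle is the initial condition: for every $f\in C_c(C(X))$ with support bounded away from $o^\ast$,
\[
\int_{C(X)}K((r_1,x_1),(r_2,x_2),t)\,f(r_1,x_1)\,\mathrm{d}\mathfrak{m}_{C(X)}(r_1,x_1)\longrightarrow f(r_2,x_2)\quad(t\downarrow 0).
\]
For $r_1,r_2$ bounded away from $0$ and $t$ small, the Bessel asymptotic $I_\nu(z)\sim e^z/\sqrt{2\pi z}$ rewrites the radial weight as $\frac{1}{\sqrt{4\pi t r_1 r_2}}e^{-(r_1-r_2)^2/(4t)}$, concentrating on the diagonal $r_1=r_2$, while the angular sum $\sum_j e^{-(\nu_j-|\alpha|)r_1 r_2/\ldots}\phi_j(x_1)\phi_j(x_2)$, after reorganization, becomes an approximation of the identity on $X$ at the scale $\sqrt{t}/r_2$. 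The delicate point is synchronizing these two asymptotics and controlling the tail of the $j$-series uniformly in $t$; this I would do by splitting the sum at $\nu_j\sim r_1 r_2/(2t)$, using the sharper Bessel bounds in each regime, and invoking the Gaussian estimates of Theorem \ref{thm2.12} on $X$ to control the small-scale behaviour of the angular part. The apex itself contributes nothing since $\mathfrak{m}_{C(X)}(\{o^\ast\})=0$.
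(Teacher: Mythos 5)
Your route is genuinely different from the paper's, and the comparison is instructive. The paper proves the formula by expanding the test function $f\in C_c(\mathrm C(X))$ in the eigenbasis of the cross-section, $f(r,x)=\sum_j u_j(r)\phi_j(x)$ with $u_j(r)=\int_X f(r,\cdot)\phi_j\,\mathrm d\mathfrak m$, and then applies classical separation of variables: each mode $u_j(r)\phi_j(x)$ evolves independently, and the radial factor is governed by the one-dimensional Bessel/Weber kernel $\frac{(r_1r_2)^\alpha}{2t}e^{-(r_1^2+r_2^2)/(4t)}I_{\nu_j}(r_1r_2/(2t))$. This reduces the initial-condition verification to a one-variable calculation, one mode at a time, and the two asymptotic regimes you try to synchronize (Gaussian concentration in $r$ and approximation of the identity in $x$) never have to be analyzed jointly. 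Your proposal instead tries to verify, for the fully assembled kernel $K$, that $\partial_t K=\Delta_{\mathrm C(X)}K$ termwise (fine, this is a direct Bessel-ODE computation as you describe) and that $K\to\delta$ as $t\downarrow 0$ directly. That last step is exactly the one you flag as the main obstacle, and it genuinely is: splitting the series at $\nu_j\sim r_1r_2/(2t)$ puts you in the uniform (Debye-type) asymptotic regime for $I_\nu$, where the fixed-$\nu$ large-$z$ asymptotic $I_\nu(z)\sim e^z/\sqrt{2\pi z}$ and the fixed-$z$ large-$\nu$ asymptotic you quote must be patched; this is a substantial piece of classical analysis that your sketch does not carry out, and it is precisely the work the paper avoids by going mode by mode.

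There is also a smaller but real gap in how you close the argument. Heat-kernel uniqueness on an RCD space does not follow from stochastic completeness per se; the heat kernel is defined as the integral kernel of the $L^2$-semigroup $\mathrm h_t=e^{t\Delta}$ generated by the Cheeger energy, so the right mechanism is: show that $u(t):=\int_{\mathrm C(X)}K(\cdot,w,t)f(w)\,\mathrm d\mathfrak m_{\mathrm C(X)}(w)$ lies in $L^2$, solves the abstract ODE $u'(t)=\Delta u(t)$ with $u(t)\to f$ in $L^2$, and then invoke $L^2$ well-posedness to conclude $u(t)=\mathrm h_t f$; finally extend from $C_c$ to $L^2$ by density. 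This is exactly what the paper's last two sentences accomplish. Rephrasing your uniqueness step this way, and either carrying out the uniform Bessel asymptotics or — more simply — adopting the paper's mode decomposition for the initial condition, would make your argument complete.
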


\begin{proof}

We claim that for any $f\in C_c(\mathrm{C}({X}))$, by using $\rho^{\mathrm{C}({X})}$ defined in (\ref{Ding}), ${}{\mathrm{h}_t} f$ can be expressed as follows.
\begin{equation}\label{1111eqn2.11}
{}{\mathrm{h}_t} f(r_1,x_1)=\int_{\mathrm{C}({X})}\rho^{\mathrm{C}({X})}((r_1,x_1),(r_2,x_2),t)f(r_2,x_2) \mathrm{d}\mathfrak{m}_{\mathrm{C}({X})}(r_2,x_2).
\end{equation}

Then we are done by combining (\ref{111eqn2.4}) and the fact that $C_c(\text{C}({X}))$ is dense in $L^2\left(\mathfrak{m}_{\text{C}({X})}\right)$.

To show (\ref{1111eqn2.11}), {}{we first set} $u_i(r)=\int_{X} f(r,x)\phi_i(x)\mathrm{d}\mathfrak{m}(x)$ $(i=0,1,\cdots)$. For any $r\in (0,\infty)$, since $f^{(r)}$ is continuous, by Parseval's identity we have 

\[
{}{\sum\limits_{i=0}^\infty u_i^2(r)=\int_{X}\sum\limits_{i=0}^\infty u_i^2(r)\phi_i^2(x)\mathrm{d}\mathfrak{m}(x)= \int_{X} f^2(r,x)\mathrm{d}\mathfrak{m}(x).}
\]
{}{Letting} 
$f_k(r):=\sum\limits_{i=0}^k r^{N-1}u_i^2(r)$, and using the dominated convergence theorem, we get
\[
\lim\limits_{k\rightarrow \infty}\int_{(0,\infty)} f_k(r)\mathrm{d}r=\int_{(0,\infty)}\int_{X} r^{N-1} f^2(r,x)\mathrm{d}\mathfrak{m}(x)\mathrm{d}r.
\]

This yields  
\[
\begin{aligned}
\ &\lim\limits_{k\rightarrow \infty}\int_{\mathrm{C}({X})}\left(f(r,x)-\sum\limits_{i=0}^k u_i(r)\phi_i(x) \right)^2\mathrm{d}\mathfrak{m}_{\mathrm{C}({X})}(r,x)\\
=&\lim\limits_{k\rightarrow \infty}\left(\int_{(0,\infty)}\int_{X} r^{N-1} f^2(r,x)\mathrm{d}\mathfrak{m}(x)\mathrm{d}r-\int_{(0,\infty)} f_k(r)\mathrm{d}r\right)=0.
\end{aligned}
\]

Therefore $f(r,x)=\sum\limits_{i=0}^\infty u_i(r)\phi_i(x) $ {}{for $\mathfrak{m}_{\mathrm{C}({X})}$-a.e. $(r,x)\in \mathrm{C}({X})$}. Applying the separation of variables in classical ways like \cite[Chapter 8]{Ta96}, we complete the proof of (\ref{1111eqn2.11}).

\end{proof}

\begin{defn}[Cartesian product as a metric measure space]\label{cp1}
{}{Let $({X},\mathsf{d}_{X},\mathfrak{m}_{X})$, $({Y},\mathsf{d}_{Y},\mathfrak{m}_{Y})$ be two metric measure spaces. The product metric measure space $({X}\times {Y} ,\mathsf{d}_{{X}\times {Y} }, \mathfrak{m}_{{X}\times {Y} })$} is defined as the product space ${X}\times {Y} $ equipped with the distance 
\[
\mathsf{d}_{{X}\times {Y} }\left((x_1,y_1),(x_2,y_2)\right)=\sqrt{\mathsf{d}_{X}^2(x_1,x_2)+\mathsf{d}_{Y}^2(y_1,y_2)},\ \ \forall (x_1,y_1),(x_2,y_2)\in {X}\times {Y}, 
\]
 and the measure {}{$\mathrm{d} \mathfrak{m}_{{X}\times {Y} }:=\mathrm{d}\mathfrak{m}_{X} \otimes \mathrm{d}\mathfrak{m}_{Y}$.}
\end{defn}

Since \cite[Proposition 4.1]{GR20} applies for RCD$(K,\infty)$ spaces, for any $f\in H^{1,2}\left({X}\times {Y} ,\mathsf{d}_{{X}\times {Y} }, \mathfrak{m}_{{X}\times {Y} }\right)$,  it holds that 
\[
\left(f^{(x)}:y\longmapsto f(x,y)\right)\in H^{1,2}({Y},\mathsf{d}_{Y},\mathfrak{m}_{Y}),\   \mathfrak{m}_{X}\text{-a.e.}\  x\in{X}{}{,}
\]
\[
\left(f^{(y)}:x\longmapsto f(x,y)\right)\in H^{1,2}({X},\mathsf{d}_{X},\mathfrak{m}_{X}),\   \mathfrak{m}_{Y}\text{-a.e.}\  y\in{Y}{}{,}
\]
and $|\nabla f|^2_{{X}\times {Y} }$ can be expressed as
\begin{equation}\label{2.27}
\left|\nabla f\right|^2_{{X}\times {Y} }(x,y)=\left|\nabla f^{(y)}\right|^2_{{X}}(x)+\left|\nabla f^{(x)}\right|^2_{{Y}}(y), \text{ $\mathfrak{m}_{{X}\times {Y} }$-a.e. }(x,y)\in {X}\times {Y}.
\end{equation}

Thus for any  $f_1, f_2 \in H^{1,2}\left({X}\times {Y} ,\mathsf{d}_{{X}\times {Y} }, \mathfrak{m}_{{X}\times {Y} }\right)$, we have the following for $\text{ $\mathfrak{m}_{{X}\times {Y} }$-a.e. }(x,y)\in {X}\times {Y}$:
\begin{equation}\label{1234eqn2.9}
\left\langle \nabla f_1 ,\nabla f_2 \right\rangle_{{X}\times {Y} }(x,y)= \left\langle \nabla f_1^{(y)},\nabla f_2^{(y)}\right\rangle_{{X}}(x)+\left\langle \nabla f_1^{(x)},\nabla f_2^{(x)}\right\rangle_{{Y}}(y).
\end{equation}

It also follows from \cite[Corollary 4.2]{GR20} that for any $f\in L^2(\mathfrak{m}_{{X}\times {Y} })$,
\[
\text{h}_t^{{X}\times {Y} }f=\text{h}_t^{X} \left(\text{h}_t^{Y} f^{(x)}\right)=\text{h}_t^{Y} \left(\text{h}_t^{X} f^{(y)}\right).
\]

As a result, $\rho^{{X}\times {Y} }$ has an explicit expression as follows.{}{
\begin{equation}\label{eqn2.1}
\rho^{{X}\times {Y} }((x_1,y_1),(x_2,y_2),t)=\rho^{X}(x_1,x_2,t) \rho^{Y}(y_1,y_2,t).
\end{equation}}

\subsection{First and second order calculus on RCD($K,N$) spaces}

This subsection is based on \cite{G18}.  We assume that $({X},\mathsf{d},\mathfrak{m})$ is an RCD($K,N$) space in this subsection. 

\begin{defn}[$L^p$-normed $L^\infty$-module]\label{module}
For any $p\in [1,\infty]$, a quadruplet $\left(\mathscr{M},\left\|\cdot\right\|_{\mathscr{M}},\cdot,|\cdot|\right)$ is said to be an $L^p$-normed $L^\infty$-module if it satisfies the following conditions.
\begin{enumerate}
\item The normed vector space $\left(\mathscr{M},\left\|\cdot\right\|_{\mathscr{M}}\right)$ is a Banach space.
\item The multiplication by $L^\infty$-functions $\cdot:L^\infty(\mathfrak{m})\times\mathscr{M}\rightarrow \mathscr{M}$ is a bilinear map such that for every $ f,h\in L^\infty(\mathfrak{m})$ and every $v\in\mathscr{M}$, it holds that
\[
f\cdot (h\cdot v)=(fh)\cdot v, \ \ \hat{1}\cdot v=v.
\]

\item The pointwise norm $|\cdot|:\mathscr{M}\rightarrow L^p(\mathfrak{m})$ satisfies that for every $ f\in L^\infty(\mathfrak{m})$ and every $v\in\mathscr{M}$, it holds that 
\[
|v|\geqslant 0,\  |f\cdot v|=|f\|v|\ \  \mathfrak{m}\text{-a.e.},\ \text{and}\ \ \|v\|_\mathscr{M}=\left\||v|\right\|_{L^p(\mathfrak{m})}.
\]
\end{enumerate}
In particular, $\left(\mathscr{M},\left\|\cdot\right\|_{\mathscr{M}},\cdot,|\cdot|\right)$ is said briefly to be a module when $p=2$.
\end{defn}
\begin{remark} 
The homogeneity and subadditivity of $|\cdot|$ follows directly from Definition \ref{module}. Write $fv$ instead of $f\cdot v$ later on for simplicity.
\end{remark}

To construct the cotangent module, the first step is to define a pre-cotangent module $\mathsf{Pcm}$. Elements of $\mathsf{Pcm}$ are of the form $\left\{ (E_i ,f_i )\right\}_{i=1}^n$ where $\left\{E_i\right\}_{i=1}^n$ is some Borel partition of ${X}$ and $\left\{f_i\right\}_{i=1}^n\subset H^{1,2}({X},\mathsf{d},\mathfrak{m})$. Secondly, define an equivalence relation on $\mathsf{Pcm}$ as follows. 
\[
\left\{(E_i,f_i)\right\}_{i=1}^n\sim \left\{(F_i,h_i)\right\}_{j=1}^m \text{{}{if and only if for any}}\  i,j, \ |\text{D}f_i|=|\text{D}h_j| \text{ holds $\mathfrak{m}$-a.e. on $E_i\cap F_j$}. 
\]

Denote by $\left[E_i,f_i\right]_i$ the equivalence class of $\left\{(E_i,f_i)\right\}_{i=1}^n$ and by $\chi_E$ the characteristic function of $E$ for any Borel set $E\subset {X}$. 

With the help of the locality of minimal relaxed slopes, the following operations on the quotient $\mathsf{Pcm}/\sim$ are well-defined:
\[
\begin{aligned}
\left[E_i,f_i\right]_i+\left[F_j,g_j\right]_j&:=\left[E_i\cap F_j,f_i+g_j\right]_{i,j},\\
\alpha \left[E_i,f_i\right]_i&:=\left[E_i,\alpha f_i\right]_i,\\
\left(\sum\limits_j \alpha_j \chi_{F_j}\right)\cdot \left[E_i,f_i\right]_i&:=\left[E_i\cap F_j,\alpha_j f_i\right]_{i,j},\\
\left|\left[E_i,f_i\right]_i\right|&:=\sum\limits_i \chi_{E_i}|\text{D}f_i|\ \mathfrak{m}\text{-a.e. in }{X},\\
\left\|\left[E_i,f_i\right]_i\right\|&:=\left\|\left|[E_i,f_i]_i\right|\right\|_{L^2(\mathfrak{m})}=\left(\sum\limits_i \int_{E_i}|\text{D}f_i|^2\mathrm{d}\mathfrak{m}\right)^{\frac{1}{2}}.
\end{aligned}
\]

Let $\left(L^2(T^\ast ({X},\mathsf{d},\mathfrak{m})),\|\cdot\|_{L^2(T^\ast ({X},\mathsf{d},\mathfrak{m}))}\right)$ be the completion of $\left(\mathsf{Pcm}/\sim,\left\|\cdot\right\|\right)$. The multiplication $\cdot$ and the pointwise norm $|\cdot|$ in Definition \ref{module} can be continuously extended to
 \[
\begin{aligned}
\cdot&:L^\infty(\mathfrak{m})\times L^2(T^\ast ({X},\mathsf{d},\mathfrak{m}))\rightarrow L^2(T^\ast ({X},\mathsf{d},\mathfrak{m})),\\
|\cdot|&: L^2(T^\ast ({X},\mathsf{d},\mathfrak{m}))\rightarrow L^2(\mathfrak{m}).\\
\end{aligned}
\]
Then the construction of the module $\left(L^2(T^\ast ({X},\mathsf{d},\mathfrak{m})),\left\|\cdot\right\|_{L^2(T^\ast ({X},\mathsf{d},\mathfrak{m}))}, \cdot ,|\cdot|\right)$ is completed. {}{We write $L^2(T^\ast ({X},\mathsf{d},\mathfrak{m}))$ for short if no ambiguity is caused.}

\begin{thm}[Uniqueness of cotangent module]
There is a unique couple $\left(L^2(T^\ast ({X},\mathsf{d},\mathfrak{m})),d\right)$, where $L^2(T^\ast ({X},\mathsf{d},\mathfrak{m}))$ is a module and $d:H^{1,2}({X},\mathsf{d},\mathfrak{m})\rightarrow L^2(T^\ast ({X},\mathsf{d},\mathfrak{m}))$ is a linear operator such that $|df|=|\mathrm{D}f|$ holds $\mathfrak{m}$-a.e. for every $f\in H^{1,2}({X},\mathsf{d},\mathfrak{m})$.
Uniqueness is intended up to unique isomorphism: if another couple $(\mathscr{M},d')$ satisfies the same properties, then there exists a unique module isomorphism $\zeta:L^2(T^\ast ({X},\mathsf{d},\mathfrak{m}))\rightarrow \mathscr{M}$ such that $\zeta\circ d=d'$.
\end{thm}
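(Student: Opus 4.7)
The plan is to prove existence and uniqueness separately. For \textbf{existence}, I would define $d : H^{1,2}({X},\mathsf{d},\mathfrak{m}) \to L^2(T^\ast ({X},\mathsf{d},\mathfrak{m}))$ by $df := [{X}, f]$, the equivalence class of the trivial partition $\{{X}\}$ with the single function $f$. Linearity is inherited from the sum operation on $\mathsf{Pcm}/\sim$, and $|df| = |\mathrm{D}f|$ $\mathfrak{m}$-a.e. is immediate from the pointwise-norm formula $\left|[E_i, f_i]_i\right| = \sum_i \chi_{E_i}|\mathrm{D}f_i|$ applied to the trivial partition.

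For \textbf{uniqueness}, given another pair $(\mathscr{M}, d')$ with the same properties (and the implicit hypothesis that $d'(H^{1,2})$ generates $\mathscr{M}$ as an $L^\infty$-module, without which the conclusion would fail), I would first construct $\zeta$ on the dense subset $\mathsf{Pcm}/\sim \subset L^2(T^\ast({X},\mathsf{d},\mathfrak{m}))$ by the natural formula
\[
\zeta\bigl([E_i, f_i]_i\bigr) := \sum_i \chi_{E_i} \cdot d'f_i \in \mathscr{M}.
\]
The key verification is well-definedness with respect to $\sim$: this follows from the locality property of $|\mathrm{D}\cdot|$ (which legitimized $\sim$ in the first place) combined with the identity $|d'f| = |\mathrm{D}f|$, the latter transporting locality into the abstract module $\mathscr{M}$ via the module axioms. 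A direct computation then gives
\[
\left|\zeta([E_i, f_i]_i)\right| = \sum_i \chi_{E_i}|d'f_i| = \sum_i \chi_{E_i}|\mathrm{D}f_i| = \left|[E_i, f_i]_i\right| \quad \mathfrak{m}\text{-a.e.},
\]
so $\zeta$ is an $\mathbb{R}$-linear, $L^\infty(\mathfrak{m})$-linear, isometric embedding on a dense subspace, hence extends continuously and uniquely to all of $L^2(T^\ast({X},\mathsf{d},\mathfrak{m}))$.

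Finally, the extended $\zeta$ is surjective because its image contains every finite sum $\sum_i \chi_{E_i} d'f_i$ (dense in $\mathscr{M}$ by the generation hypothesis) and is closed as an isometric image of a Banach space. Uniqueness of $\zeta$ subject to $\zeta\circ d = d'$ is automatic: this condition forces $\zeta(df) = d'f$ for every $f \in H^{1,2}$, and then $L^\infty$-linearity together with continuity determine $\zeta$ on $\mathsf{Pcm}/\sim$ and hence on the whole completion. \textbf{The main obstacle} I expect is the well-definedness of $\zeta$ on $\mathsf{Pcm}/\sim$: this is where the entire structural content of the statement lives, since one must transport the locality of the minimal relaxed slope through the abstract module axioms of $\mathscr{M}$ using only the isometric identity $|d'f| = |\mathrm{D}f|$. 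Every remaining step is a routine density/extension argument.
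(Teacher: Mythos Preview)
Your proposal is correct and follows the standard argument; note however that the paper does not actually prove this theorem---it is stated as a background result from \cite{G18}, on which the entire subsection is based. Your argument is essentially Gigli's original one, and your observation that the generation hypothesis on $(\mathscr{M},d')$ is implicitly required is accurate (without it surjectivity of $\zeta$ fails, though one still gets a unique isometric embedding onto the submodule generated by $d'(H^{1,2})$).
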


In this paper, $L^2\left(T^\ast({X},\mathsf{d},\mathfrak{m})\right)$ and $d$ are called the cotangent module and the differential respectively. Elements of $L^2\left(T^\ast({X},\mathsf{d},\mathfrak{m})\right)$ are called 1-forms. 

Likewise, the tangent module $L^2(T({X},\mathsf{d},\mathfrak{m}))$ can be defined as a module generated by $\{\nabla f
:\ f\in H^{1,2} ({X},\mathsf{d},\mathfrak{m})\}$, where $\nabla f$ satisfies that 
\[
dh(\nabla f)=\langle \nabla h,\nabla f\rangle\ \ \mathfrak{m}\text{-a.e.},  \ \ \forall\ h\in H^{1,2}({X},\mathsf{d},\mathfrak{m}).
\]
  
$L^2(T({X},\mathsf{d},\mathfrak{m}))$ is the dual module of $L^2(T^\ast ({X},\mathsf{d},\mathfrak{m}))$, and its elements are called vector fields.

Let us recall the construction of the tensor product of $L^2(T^\ast ({X},\mathsf{d},\mathfrak{m}))$ with itself in \cite{G18}.

For any $f\in L^\infty(\mathfrak{m}),f_1,f_2\in \mathrm{Test}F\left({X},\mathsf{d},\mathfrak{m}\right)$, the tensor $f  d f_1\otimes d f_2$ is defined as
\[
f d f_1\otimes d f_2(\eta_1,\eta_2):=f df_1(\eta_1)  df_2(\eta_2), \  \forall \eta_1,\eta_2\in L^2(T({X},\mathsf{d},\mathfrak{m})).
\]
  Set

\[
\text{Test}(T^\ast)^{\otimes 2}({X},\mathsf{d},\mathfrak{m}):=\left\{ \sum\limits_{i=1}^k f_{1,i}df_{2,i}\otimes d f_{3,i}:\ k\in \mathbb{N},f_{j,i}\in \mathrm{Test}F\left({X},\mathsf{d},\mathfrak{m}\right)\right\}.
\]
and define the $L^\infty(\mathfrak{m})$-bilinear norm 
\[
\left\langle\cdot ,\cdot 
\right\rangle: \text{Test}(T^\ast)^{\otimes 2}({X},\mathsf{d},\mathfrak{m})\times \text{Test}(T^\ast)^{\otimes 2}({X},\mathsf{d},\mathfrak{m}) \rightarrow L^2(\mathfrak{m})
\]
as 
\[
\langle d f_1\otimes d f_2,df_3\otimes d f_4\rangle:= \langle \nabla f_1,\nabla f_3\rangle \langle \nabla f_2,\nabla f_4\rangle, \  \forall f_i\in \mathrm{Test}F\left({X},\mathsf{d},\mathfrak{m}\right)\  (i=1,2,3,4).
\]

{}{The pointwise Hilbert-Schmidt norm is then defined as 
\[
\begin{aligned}
\left|\cdot\right|_{\mathsf{HS}}:\text{Test}(T^\ast)^{\otimes 2}({X},\mathsf{d},\mathfrak{m})&\longrightarrow L^2(\mathfrak{m})\\
A&\longmapsto |A|_{\mathsf{HS}}:=\sqrt{\langle A,A\rangle}. 
\end{aligned}
\]
}

For any $p\in [1,\infty]$, adapting a similar continuous extension procedure of $\text{Test}(T^\ast)^{\otimes 2}({X},\mathsf{d},\mathfrak{m})$ with respect to the norm $\left\|\left|\cdot\right|_{\mathsf{HS}}\right\|_{L^p(\mathfrak{m})}$ gives a construction of the $L^p$-normed $L^\infty$-module $L^p((T^\ast)^{\otimes 2}({X},\mathsf{d},\mathfrak{m}))$.

In addition, denote by $L^p_{\text{loc}}(T^\ast({X},\mathsf{d},\mathfrak{m}))$ the collection of 1-forms $\omega$ with $|\omega|\in L^p_{\text{loc}}(\mathfrak{m})$. Here $L^p_{\mathrm{loc}}(\mathfrak{m})$ is the set of all functions $f$ such that $f\in L^p\left(B_R(x),\mathfrak{m}\right)$ for any $B_R(x)\subset {X}$.  Similarly for other vector fields and other tensors.

The end of this subsection is  {}{aimed at recalling} definitions of two kinds of tensor fields.

\begin{thm}[The Hessian \cite{G18}]
For any $f\in \mathrm{Test}F\left({X},\mathsf{d},\mathfrak{m}\right)$, there exists a unique $T\in L^2\left((T^\ast)^{\otimes 2}({X},\mathsf{d},\mathfrak{m})\right)$, called the Hessian of $f$, denoted by $ \mathop{\mathrm{Hess}}f$, such that for all $f_i\in \mathrm{Test}F\left({X},\mathsf{d},\mathfrak{m}\right)$ $(i=1,2)$,
\begin{equation}
{}{2T(\nabla f_1,\nabla f_2)= \langle \nabla f_1,\nabla\langle \nabla f_2,\nabla f\rangle\rangle +\langle \nabla f_2,\nabla\langle \nabla f_1,\nabla f\rangle\rangle-\langle \nabla f,\nabla\langle \nabla f_1,\nabla f_2\rangle\rangle }
\end{equation}
holds for $\mathfrak{m}$-a.e. $x\in {X}$. Moreover, the following holds for any $f\in \mathrm{Test}F\left({X},\mathsf{d},\mathfrak{m}\right)$, $\varphi\in \mathrm{Test}F_+({X},\mathsf{d},\mathfrak{m})$.

\begin{equation}\label{abc2.14}
\frac{1}{2}\int_{X}  \Delta \varphi \cdot |\nabla f|^2\mathrm{d}\mathfrak{m}\geqslant \int_{X}\varphi \left(|\mathop{\mathrm{Hess}}f|_{\mathsf{HS}}^2+ \langle \nabla \Delta f,\nabla f\rangle+K|\nabla f|^2\right) \mathrm{d}\mathfrak{m}.
\end{equation}

\end{thm}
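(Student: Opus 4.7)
The plan is to follow Gigli's construction from his nonsmooth differential calculus memoir, whose two ingredients are a Koszul-type identification for $\mathrm{Hess}f$ and Savaré's self-improvement of the Bochner inequality already recorded in (\ref{bochnerineq}). For fixed $f\in \mathrm{Test}F({X},\mathsf{d},\mathfrak{m})$, I would first introduce the trilinear form $A_f:\mathrm{Test}F^3\to \mathbb{R}$ by
\[
A_f(f_1,f_2,h):=\frac{1}{2}\int_X h\bigl[\langle\nabla f_1,\nabla\langle\nabla f_2,\nabla f\rangle\rangle+\langle\nabla f_2,\nabla\langle\nabla f_1,\nabla f\rangle\rangle-\langle\nabla f,\nabla\langle\nabla f_1,\nabla f_2\rangle\rangle\bigr]\,\mathrm{d}\mathfrak{m}.
\]
Every inner-product factor already lies in $H^{1,2}\cap L^\infty(\mathfrak{m})$ because $\mathrm{Test}F$ is an algebra stable under the relevant operations, so $A_f$ is at least well-defined on test inputs, and a direct Leibniz-type manipulation together with the locality of $|\nabla\cdot|$ shows that it behaves $L^\infty$-linearly in the two gradient slots.

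The decisive step is to prove a pointwise bound strong enough to extend $A_f$ to a bounded trilinear map on $L^2(T({X},\mathsf{d},\mathfrak{m}))\times L^2(T({X},\mathsf{d},\mathfrak{m}))\times L^2(\mathfrak{m})$. To obtain it I would run Savaré's self-improvement: substitute the perturbation $f\leadsto f+\varepsilon(\alpha_1 f_1+\alpha_2 f_2)$ into (\ref{bochnerineq}), expand to second order in $(\varepsilon\alpha_1,\varepsilon\alpha_2)$, and integrate by parts so that after collecting terms one obtains a symmetric $2\times 2$ matrix $M_{ij}(\varphi)$ which must be positive semidefinite. The off-diagonal entry $M_{12}$ turns out to be precisely the right-hand side of the displayed Koszul identity integrated against $\varphi$, while a suitable diagonal term isolates a nonnegative $L^1$ density that one christens $|\mathrm{Hess}f|_{\mathsf{HS}}^2$. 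Positivity of $M$ then yields a Cauchy–Schwarz estimate
\[
|A_f(f_1,f_2,h)|\leqslant C\bigl\||\nabla f_1|\,|\nabla f_2|\,h\bigr\|_{L^1(\mathfrak{m})}^{1/2}\cdot\textrm{(Hessian-norm factor)},
\]
and duality against $L^\infty(\mathfrak{m})$ singles out a unique $\mathrm{Hess}f\in L^2((T^\ast)^{\otimes 2}({X},\mathsf{d},\mathfrak{m}))$ representing $A_f$. Uniqueness of such a tensor is automatic because $L^2((T^\ast)^{\otimes 2})$ is generated, as a module, by test tensors $\mathrm{d}f_1\otimes \mathrm{d}f_2$.

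The improved Bochner inequality (\ref{abc2.14}) is now the diagonal specialization of the positivity of $M$: taking $\alpha_1=\alpha_2$ and $f_1=f_2=f$ in the perturbation analysis, the $\varepsilon^2$-coefficient produces exactly the density $|\mathrm{Hess}f|^2_{\mathsf{HS}}+\langle\nabla\Delta f,\nabla f\rangle+K|\nabla f|^2$, whereas the left-hand side collapses to $\tfrac12\int\Delta\varphi\,|\nabla f|^2\,\mathrm{d}\mathfrak{m}$ after one integration by parts against $\varphi\in\mathrm{Test}F_+({X},\mathsf{d},\mathfrak{m})$. The main obstacle is executing the perturbation argument rigorously in the RCD setting: the algebraic identities are formal on a smooth manifold, but on $({X},\mathsf{d},\mathfrak{m})$ each step must be justified by a mollification (for instance via the heat flow, exploiting the locally Lipschitz heat-kernel estimates of Theorem \ref{thm2.12}), followed by dominated convergence in $L^1(\mathfrak{m})$. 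This is precisely where the assumption $f\in\mathrm{Test}F$ with $\Delta f\in H^{1,2}\cap L^\infty(\mathfrak{m})$ is used in an essential way, both to legitimate the integrations by parts and to ensure that the relevant quadratic forms have finite $L^1$-densities uniformly in the approximation parameter.
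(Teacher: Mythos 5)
The paper does not prove this theorem: it is recorded as a background result cited directly from Gigli's memoir \cite{G18} (with the improved Bochner inequality tracing back to Savar\'e \cite{S14}), so there is no ``paper's own proof'' to match against. Your reconstruction of the cited proof captures the right general framework --- build the Koszul-type trilinear form, control it through a self-improvement of the Bochner inequality, then recover the tensor by module duality --- but the mechanism you propose for the self-improvement step contains a genuine gap.

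Perturbing $f$ by a \emph{linear} combination $f\leadsto f+\varepsilon(\alpha_1 f_1+\alpha_2 f_2)$ in (\ref{bochnerineq}) cannot produce the quantity $\mathrm{Hess}\,f(\nabla f_1,\nabla f_2)$. Writing the Bochner inequality as $Q(u):=\Gamma_2(u)-K\Gamma(u)-(\Delta u)^2/N\geqslant 0$ (weakly against $\varphi$), the expansion $Q(f+t g)=Q(f)+2tB(f,g)+t^2Q(g)$ yields a Cauchy--Schwarz bound $|B(f,g)|\lesssim\sqrt{Q(f)Q(g)}$, but the cross form $B(f,g)$ is (on a smooth model) $\langle\mathrm{Hess}\,f,\mathrm{Hess}\,g\rangle+\mathrm{Ric}(\nabla f,\nabla g)-\cdots$, i.e.\ it pairs the two Hessians, not $\mathrm{Hess}\,f$ against a pair of gradients. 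Consequently the off-diagonal entry of your $2\times 2$ matrix $M_{ij}(\varphi)$ is a $\langle\mathrm{Hess}\,f_1,\mathrm{Hess}\,f_2\rangle$-type term and \emph{not} the right-hand side of the Koszul identity, as you claim. The actual argument of \cite{S14,G18} is essentially nonlinear in the perturbation: one applies the $\mathrm{BE}(K,\infty)$ inequality to suitable polynomial combinations such as $f+t\sum_i g_ih_i$ (or, in Savar\'e's formulation, to $\Phi(f_1,\dots,f_n)$ for smooth $\Phi$), exploiting that $\mathrm{Hess}(g h)$ contains the symmetric product $dg\otimes dh+dh\otimes dg$; this is what makes terms of the form $\mathrm{Hess}\,f(\nabla g,\nabla h)$ appear after algebraic cancellations, and it is the source of the a priori $L^2$ estimate on $|\mathrm{Hess}\,f|_{\mathsf{HS}}$. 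Without passing through such product perturbations, the extension of $A_f$ to a bounded map on the tangent module, and hence the very definition of $\mathrm{Hess}\,f$ as an $L^2$ tensor, would not go through. The final step (deriving (\ref{abc2.14}) once the Hessian exists) is fine in spirit, but it is logically downstream of the step that is missing.
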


Since $\mathrm{Test}F({X},\mathsf{d},\mathfrak{m})$ is dense in $D(\Delta)$, $\mathop{\mathrm{Hess}}f\in L^2\left((T^\ast)^{\otimes 2}({X},\mathsf{d},\mathfrak{m})\right)$ is well-defined for any $f\in D(\Delta)$. In addition, if $f_i\in \mathrm{Test}F\left({X},\mathsf{d},\mathfrak{m}\right)$ $(i=1,2)$, then $\langle \nabla f_1,\nabla f_2 \rangle\in H^{1,2}({X},\mathsf{d},\mathfrak{m})$, and the following holds for any $
\varphi\in H^{1,2}({X},\mathsf{d},\mathfrak{m})$.
\begin{equation}\label{11eqn2.16}
\langle \nabla \varphi, \nabla \langle \nabla f_1,\nabla f_2 \rangle \rangle= \mathop{\mathrm{Hess}}f_1\left(\nabla f_2,\nabla\varphi\right)+ \mathop{\mathrm{Hess}}f_2\left(\nabla f_1,\nabla\varphi\right) \ \ \mathfrak{m}\text{-a.e.}
\end{equation}
\begin{defn}[The Riemannian metric]
A tensor field $\bar{g}\in L^\infty_\text{loc}((T^\ast)^{\otimes 2}({X},\mathsf{d},\mathfrak{m}))$ is said to be a (resp. semi) Riemannian metric on $({X},\mathsf{d},\mathfrak{m})$ if it satisfies the following properties.
\begin{enumerate}
\item (Symmetry) $\bar{g}(V,W)=\bar{g}(W,V)$ $\mathfrak{m}$-a.e. for any $V,W\in L^2(T({X},\mathsf{d},\mathfrak{m}))$.
\item (Non (resp. {}{Non semi-}) degeneracy) For any $V\in L^2(T({X},\mathsf{d},\mathfrak{m}))$, it holds that
\[
\bar{g}\left(V,V\right)>0\ \ (\text{resp.}\ \bar{g}\left(V,V\right)\geqslant 0) \ \ \mathfrak{m}\text{-a.e. on}\ \left\{|V|>0\right\}. 
\] 
\end{enumerate}
\end{defn}

\subsection{Convergence of RCD$(K,N)$ spaces}

For a sequence of pointed RCD$(K,N)$ spaces $({X}_i,\mathsf{d}_i,\mathfrak{m}_i,x_i)$, the equivalence between pointed measured Gromov Hausdorff (pmGH) convergence and pointed measured Gromov (pmG) convergence is established in \cite{GMS13}. We only introduce the definition of pmGH convergence and a precompactness theorem of a sequence of pointed RCD$(K,N)$ spaces. It is remarkable that for compact metric measure spaces there is a more convenient convergence named measured Gromov-Hausdorff (mGH) convergence (see \cite{F87}).

\begin{defn}[Pointed measured Gromov-Hausdorff (pmGH) convergence]\label{1defn2.5}
A sequence of pointed metric measure spaces $\{({X}_i,\mathsf{d}_i,\mathfrak{m}_i,x_i)\}$ is said to be convergent to a pointed metric measure space $ ({X}_\infty,\mathsf{d}_\infty,\mathfrak{m}_\infty,x_\infty)$ in the pointed measured Gromov-Hausdorff (pmGH) sense, if there {}{exists} a complete separable metric space $({Y},\mathsf{d}_{Y})$ and a sequence of isometric embeddings $\{\iota_i:{X}_i\rightarrow {Y}\}_{i\in \mathbb{N}\cup \{\infty\}}$, such that
\begin{enumerate}
\item $\mathsf{d}_{Y}(\iota_i(x_i), \iota_\infty(x_\infty))\rightarrow 0${}{,}
\item for any $R,\epsilon>0$, there exists $N>0$, such that for any $i>N$, we have $\iota_\infty\left(B_R^{{X}_\infty}(x_\infty)\right)\subset B^{Y}_\epsilon \left(\iota_i\left(B_R^{{X}_i}(x_i)\right)\right) $ and {}{$\iota_i\left(B_R^{{X}_i}(x_i)\right)\subset B^{Y}_\epsilon \left(\iota_\infty\left(B_R^{{X}_\infty}(x_\infty)\right)\right) $,}
\item for every {}{$f\in C_{c}({Y})$}, $\lim\limits_{i\rightarrow \infty}\int_{Y}f \mathrm{d}(\iota_i)_\sharp \mathfrak{m}_i= \int_{Y} f \mathrm{d}(\iota_\infty)_\sharp \mathfrak{m}_\infty$.
\end{enumerate}
In particular, we say that $ X_i\ni x_i'\rightarrow x_\infty'\in X_\infty$ if $\mathsf{d}_{Y}\left(\iota_i(x_i'), \iota_\infty(x_\infty')\right)\rightarrow 0$.
\end{defn}

\begin{defn}[Measured Gromov-Hausdorff convergence]
Let $\{ ({X}_i,\mathsf{d}_i,\mathfrak{m}_i)\}$ be a sequence of compact metric measure spaces with {}{$\sup_i \mathrm{diam}({X}_i,\mathsf{d}_i)<\infty$}. Then $\{ ({X}_i,\mathsf{d}_i,\mathfrak{m}_i)\}$ is said to be convergent to a metric measure space $({X}_\infty,\mathsf{d}_\infty,\mathfrak{m}_\infty)$ in the measured Gromov-Hausdorff (mGH) sense if there exists a sequence of points $\{x_i\in {X}_i\}_{i\in \mathbb{N}\cup \{\infty\}}$, such that  
\[
({X}_i,\mathsf{d}_i,\mathfrak{m}_i,x_i)\xrightarrow{\mathrm{pmGH}}({X}_\infty,\mathsf{d}_\infty,\mathfrak{m}_\infty,x_\infty).
\]
\end{defn}

\begin{thm}[Precompactness of pointed RCD$(K,N)$ spaces under pmGH convergence \cite{GMS13}]\label{sta}
Let $\left\{({X}_i,\mathsf{d}_i,\mathfrak{m}_i,x_i)\right\}$ be a sequence of pointed $\mathrm{RCD}(K,N)$ spaces such that
\[
0<\liminf\limits_{i\rightarrow \infty} \mathfrak{m}_i\left(B_1^{{X}_i}(x_i)\right)<\limsup\limits_{i\rightarrow \infty} \mathfrak{m}_i\left(B_1^{{X}_i}(x_i)\right)<\infty.
\]
Then there exists a subsequence $\left\{\left({X}_{i(j)},\mathsf{d}_{i(j)},\mathfrak{m}_{i(j)},x_{i(j)}\right)\right\}$, such that it $\mathrm{pmGH}$ converges to a pointed $\mathrm{RCD}(K,N)$ space $({X}_\infty,\mathsf{d}_\infty,\mathfrak{m}_\infty,x_\infty)$.  
\end{thm}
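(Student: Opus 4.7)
The plan is to obtain the limit space in three stages: first extract a subsequence so that the underlying pointed metric spaces converge in the pointed Gromov--Hausdorff sense, second upgrade this to pmGH convergence of the full metric measure structures, and third invoke the stability of the synthetic curvature-dimension condition to identify the limit as an RCD$(K,N)$ space. Throughout, all $({X}_i,\mathsf{d}_i)$ together with the candidate limit will be realized inside a common ambient metric space $({Y},\mathsf{d}_{Y})$ via isometric embeddings $\iota_i$, as required by Definition \ref{1defn2.5}.

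For the metric part, the key tool is the Bishop--Gromov inequality (Theorem \ref{BGineq}, especially the form (\ref{BGinequality})), which yields a uniform doubling constant on balls of bounded radius around the basepoints. Combined with the assumption $\limsup_i \mathfrak{m}_i(B_1^{{X}_i}(x_i)) < \infty$, this gives uniform upper bounds on $\mathfrak{m}_i(B_R^{{X}_i}(x_i))$ for every $R>0$, and more importantly uniform bounds on the minimal number of $\delta$-balls needed to cover $B_R^{{X}_i}(x_i)$. This is precisely Gromov's precompactness criterion in the pointed Gromov--Hausdorff topology, so a diagonal argument over $R\in\mathbb{N}$ and $\delta = 1/k$ yields a subsequence together with $({Y},\mathsf{d}_{Y})$ and isometric embeddings $\iota_i$ ($i\in\mathbb{N}\cup\{\infty\}$) realizing conditions (1) and (2) of Definition \ref{1defn2.5}.

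For the measure part, push forward the $\mathfrak{m}_i$ to ${Y}$ and restrict them to each $\bar{B}_R^{Y}(\iota_\infty(x_\infty))$. The uniform mass bound $\sup_i \mathfrak{m}_i(B_{R+1}^{{X}_i}(x_i)) < \infty$ coming from Bishop--Gromov, together with tightness (each restricted measure is supported in a compact subset of ${Y}$), yields a Prokhorov-type weak compactness. A further diagonal extraction produces a nonzero Radon measure $\mathfrak{m}_\infty$ on ${X}_\infty$ with $(\iota_i)_\sharp\mathfrak{m}_i \to (\iota_\infty)_\sharp\mathfrak{m}_\infty$ in duality with $C_c({Y})$, which is exactly condition (3) of Definition \ref{1defn2.5}; the hypothesis $\liminf_i \mathfrak{m}_i(B_1^{{X}_i}(x_i))>0$ ensures that $\mathfrak{m}_\infty$ is nontrivial, and a standard argument using that each $\mathfrak{m}_i$ has full support (together with the uniform doubling coming from Theorem \ref{BGineq}) yields full support for $\mathfrak{m}_\infty$ as well.

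The main obstacle lies in the third step: verifying that the pmGH limit inherits the RCD$(K,N)$ condition. This is the genuine content of \cite{GMS13}, where one proves Mosco convergence of the Cheeger energies along the pmGH-convergent sequence (which transfers infinitesimal Hilbertianity to the limit) and lower semicontinuity of the R\'enyi-type entropy functionals along the optimal transport plans that realize the CD$(K,N)$ inequality (which transfers the synthetic curvature-dimension bound). Equivalently, one may work with the $\mathrm{BE}(K,N)$ formulation of \cite{AMS19, EKS15} and transfer (\ref{bochnerineq}) to the limit by testing against functions pulled back from ${Y}$. The metric and measure convergence steps are essentially soft, but the stability of the synthetic curvature-dimension condition is the deep analytical core of the theorem and the place where all the RCD structure enters.
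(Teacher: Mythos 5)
The paper states Theorem \ref{sta} purely as a citation to \cite{GMS13} and does not supply a proof, so there is no in-paper argument to compare against. Your outline correctly reproduces the standard proof structure from the cited literature: Bishop--Gromov (Theorem \ref{BGineq}) together with the uniform upper mass bound on unit balls gives a uniform doubling constant on bounded balls, hence uniform covering-number bounds, and therefore pointed Gromov--Hausdorff precompactness via Gromov's criterion; the same mass bounds give tightness of the pushforward measures and a diagonal Prokhorov argument upgrades this to pmGH convergence, with the $\liminf$ hypothesis guaranteeing nontriviality and the uniform doubling propagating a positive lower mass bound to arbitrary limit points so that $\mathfrak{m}_\infty$ has full support; and the stability of the RCD$(K,N)$ condition under pmGH limits---which you correctly identify as the genuine analytic content---is precisely what \cite{GMS13}, in combination with \cite{AGS15,EKS15,AMS19}, establishes via Mosco convergence of Cheeger energies and lower semicontinuity of the relevant entropy/Bochner functionals. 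The sketch is sound and defers to the literature exactly at the point where it should.
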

{}{Especially, non-collapsed pmGH convergent sequences of non-collapsed RCD$(K,N)$ spaces preserve the Hausdorff measure.}
\begin{thm}[Continuity of Hausdorff measure {\cite[Theorem 1.3]{DG18}}]\label{11thm2.15}

If a sequence of pointed non-collapsed $\mathrm{RCD}(K,N)$ spaces $\left\{\left({X}_i,\mathsf{d}_i,\mathcal{H}^N,x_i\right)\right\}$ $\mathrm{pmGH}$ converges to a pointed $\mathrm{RCD}(K,N)$ space $ ({X}_\infty,\mathsf{d}_\infty,\mathfrak{m}_\infty,x_\infty)$ and satisfies $\inf_i \mathcal{H}^N\left(B_1^{{X}_i}(x_i)\right)>0$, then $\mathfrak{m}_\infty=\mathcal{H}^N$.
\end{thm}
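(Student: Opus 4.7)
The plan is to pass the Bishop--Gromov volume comparison through the pmGH limit and then identify $\mathfrak{m}_\infty$ with $\mathcal{H}^N$ via a density argument at the level of balls.

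The first step is to establish pointwise ball-volume convergence. Condition (3) of Definition \ref{1defn2.5} gives weak convergence of the pushed-forward measures, which for open and closed balls yields, for any $x_i \to x_\infty$,
\[
\mathfrak{m}_\infty(B_r(x_\infty)) \leq \liminf_i \mathcal{H}^N(B_r(x_i)) \leq \limsup_i \mathcal{H}^N(\overline{B_r(x_i)}) \leq \mathfrak{m}_\infty(\overline{B_r(x_\infty)}).
\]
Combined with the Bishop--Gromov inequality (Theorem \ref{BGineq}) applied in each $X_i$, a standard monotonicity argument shows that the ratio $r \mapsto \mathfrak{m}_\infty(B_r(x_\infty))/v_{K,N}(r)$ is non-increasing, where $v_{K,N}$ denotes the comparison volume function. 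Consequently $\mathcal{H}^N(B_r(x_i)) \to \mathfrak{m}_\infty(B_r(x_\infty))$ holds for \emph{every} $r > 0$, and the density
\[
\Theta_N(\mathfrak{m}_\infty, x) := \lim_{r \to 0^+} \frac{\mathfrak{m}_\infty(B_r(x))}{\omega_N r^N}
\]
exists at every point; the hypothesis $\inf_i \mathcal{H}^N(B_1^{X_i}(x_i)) > 0$ guarantees $\mathfrak{m}_\infty$ is non-trivial.

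The second step is the two-sided identity $\Theta_N(\mathfrak{m}_\infty, \cdot) \equiv 1$ $\mathfrak{m}_\infty$-a.e. The upper bound $\Theta_N(\mathfrak{m}_\infty, \cdot) \leq 1$ is inherited from the corresponding pointwise bound $\Theta_N(\mathcal{H}^N, x_i) \leq 1$ on each non-collapsed $X_i$ (a consequence of Bishop--Gromov monotonicity together with the definition and normalization of $\mathcal{H}^N$) by passing to the limit in the first step. For the matching lower bound I would use that $\mathcal{H}^N$-a.e. point of a non-collapsed RCD$(K,N)$ space is an $N$-regular point, at which $\Theta_N = 1$. A diagonal/selection argument along the pmGH convergence produces, for $\mathfrak{m}_\infty$-a.e. $x_\infty$, an approximating sequence $x_i \to x_\infty$ consisting of regular points; the resulting uniform lower density estimate at scale $r$ then survives the limit $i \to \infty$ followed by $r \to 0$.

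The final step concludes via differentiation. Being doubling (by Bishop--Gromov), $\mathfrak{m}_\infty$ admits Lebesgue differentiation, so $\Theta_N(\mathfrak{m}_\infty, \cdot) \equiv 1$ $\mathfrak{m}_\infty$-a.e., combined with the corresponding pointwise Bishop--Gromov control of $\mathcal{H}^N$ on $(X_\infty, \mathsf{d}_\infty)$, forces $\mathfrak{m}_\infty = \mathcal{H}^N$. The main obstacle is the second step: transporting the density-one property of regular points through pmGH convergence requires a quantitative stability of the regular stratum under pmGH limits, and is the central technical content of \cite{DG18}; all the other ingredients reduce to volume monotonicity and standard weak convergence of measures.
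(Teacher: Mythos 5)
The paper does not actually prove this result; it cites it verbatim from De Philippis--Gigli \cite[Theorem 1.3]{DG18}. Your proposal, however, has a genuine gap exactly at the step you flag as central, and the substitute mechanism you sketch for it does not work. The upper density bound does pass to the limit, because Bishop--Gromov monotonicity converts $\Theta_N(\mathcal{H}^N,x_i)\le1$ into the scale-by-scale inequality $\mathcal{H}^N(B_r(x_i))\le v_{K,N}(r)$, which survives weak convergence of measures. But the lower bound cannot be handled symmetrically. If $x_i$ is an $N$-regular point of $X_i$, all you know is
\[
\lim_{r\to0^+}\frac{\mathcal{H}^N(B_r(x_i))}{\omega_Nr^N}=1,
\]
and by that same Bishop--Gromov monotonicity this limit is a \emph{supremum} over scales; it therefore imposes no lower bound whatsoever on $\mathcal{H}^N(B_r(x_i))/(\omega_Nr^N)$ at any fixed $r>0$, let alone one uniform in $i$. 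The ``uniform lower density estimate at scale $r$'' you invoke does not exist, so the order of limits ($i\to\infty$, then $r\to0$) cannot be exchanged. The missing ingredient is a Colding-type almost-volume-rigidity statement: a ball $B_r(x_i)$ that is Gromov--Hausdorff $\epsilon r$-close to a Euclidean $r$-ball satisfies $\mathcal{H}^N(B_r(x_i))\ge(1-\Psi(\epsilon|K,N))\omega_Nr^N$. That quantitative estimate, not a diagonal selection of regular approximating points, is what lets the lower bound survive the pmGH limit, and proving it is the substantial content of \cite{DG18}.

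There is also a smaller issue in your concluding step. Knowing $\Theta_N(\mathfrak{m}_\infty,\cdot)\le1$ everywhere and $=1$ $\mathfrak{m}_\infty$-a.e.\ gives, by density comparison, $\mathfrak{m}_\infty\le\mathcal{H}^N$ and $\mathfrak{m}_\infty\ge\mathcal{H}^N\llcorner\{\Theta_N(\mathfrak{m}_\infty,\cdot)=1\}$; to close the argument you still need $\mathcal{H}^N(\{\Theta_N(\mathfrak{m}_\infty,\cdot)<1\})=0$, which does not follow merely from that set being $\mathfrak{m}_\infty$-null. It does follow once you observe that the non-collapsing hypothesis combined with Bishop--Gromov forces $\Theta_N(\mathfrak{m}_\infty,\cdot)$ to be bounded below by a positive constant locally uniformly, hence $\mathcal{H}^N\ll\mathfrak{m}_\infty$; this deserves to be said rather than absorbed into ``differentiation.''
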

It is also worth recalling the following definition.

\begin{defn}[Regular set]\label{111def2.18}
Let $({X},\mathsf{d},\mathfrak{m})$ be an RCD$(K,N)$ space. Given any integer $k\in [1,N]$,  the $k$-dimensional regular set $\mathcal{R}_k:=\mathcal{R}_k({X})$ of ${X}$ is defined as the set of all points of $x$ such that 
\[
\left({X},\frac{1}{r_i}\mathsf{d},\frac{\mathfrak{m}}{\mathfrak{m}(B_{r_i}(x))},x\right)\xrightarrow{\mathrm{pmGH}} \left(\mathbb{R}^k,\mathsf{d}_{\mathbb{R}^k},\frac{1}{\omega_k}\mathcal{L}^k,0_k\right)\ \  \forall\{ r_i \}\subset (0,\infty)\  \text{with}\  r_i \rightarrow 0.
\] 
\end{defn}

It is time to introduce the definition of the essential dimension of RCD spaces. Compare \cite{CN12}.
\begin{thm}[Essential dimension \cite{BS20}]\label{1111thm2.22}
Let $({X},\mathsf{d},\mathfrak{m})$ be an $\mathrm{RCD}(K,N)$ space. Then there exists a unique $n\in \mathbb{N}\cap [1,N]$ such that $\mathfrak{m}({X}\setminus \mathcal{R}_n)=0$. The essential dimension $\mathrm{dim}_{\mathsf{d},\mathfrak{m}}({X})$ of $({X},\mathsf{d},\mathfrak{m})$ is defined as this $n$.
\end{thm}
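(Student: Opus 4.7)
My plan is to prove existence and uniqueness separately, with the latter being the core difficulty.

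For existence, the strategy is to show that the regular sets $\mathcal{R}_k$ with $k\in\mathbb{N}\cap[1,N]$ cover $X$ up to an $\mathfrak{m}$-null set. I would follow a Cheeger-Colding-Mondino-Naber scheme adapted to the RCD setting: at $\mathfrak{m}$-a.e.\ $x$, use the Gaussian heat kernel bounds of Theorem~\ref{thm2.12} and the Bochner inequality to extract many approximate harmonic $\delta$-splitting maps on small balls, then iterate blow-ups while invoking stability of the RCD condition under pmGH convergence (Theorem~\ref{sta}). This yields at least one tangent cone of the form $(\mathbb{R}^k,\mathsf{d}_{\mathbb{R}^k},\omega_k^{-1}\mathcal{L}^k,0_k)$ at $\mathfrak{m}$-a.e.\ $x$, for some $k(x)\in\mathbb{N}\cap[1,N]$. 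An iterated-tangent argument (``cones inside cones'') combined with the rigidity of the volume cone characterization upgrades this to the statement that \emph{every} tangent cone at $\mathfrak{m}$-a.e.\ $x$ is such an $\mathbb{R}^{k(x)}$, so $x\in \mathcal{R}_{k(x)}$.

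For uniqueness, the main task is to show that $x\mapsto k(x)$ is $\mathfrak{m}$-a.e.\ constant. My plan is an argument by contradiction: assume there exist $1\leq k<h\leq N$ with $\mathfrak{m}(\mathcal{R}_k)>0$ and $\mathfrak{m}(\mathcal{R}_h)>0$. I would produce a one-parameter family of $\mathfrak{m}$-quasi-invariant transformations of $X$ connecting density points of $\mathcal{R}_k$ to density points of $\mathcal{R}_h$. Natural candidates are regular Lagrangian flows associated with gradient vector fields $\nabla f$ of test functions $f\in\mathrm{Test}F(X,\mathsf{d},\mathfrak{m})$: these are well-defined $\mathfrak{m}$-a.e.\ and admit controlled Jacobians via the integrated Hessian bound~(\ref{abc2.14}) together with the Bochner inequality~(\ref{bochnerineq}). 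Showing that the tangent cone structure is preserved along such flows in a measure-theoretic sense would force $k=h$, yielding the contradiction and hence uniqueness.

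The hard part will be justifying the invariance of the tangent-cone dimension along the flow in a purely measure-theoretic framework, since blow-ups are defined only $\mathfrak{m}$-a.e.\ and the flow lacks classical smoothness. The workaround, in the spirit of Bru\`e-Semola, is to replace the dimension function $k(\cdot)$ by an equivalent but manifestly integral characterization --- for instance, via pointwise $r\downarrow 0$ limits of the rescaled heat kernel value $t^{k/2}\rho(x,x,t)$ or of the volume ratio $\mathfrak{m}(B_r(x))/r^k$ --- both of which can be propagated along the flow using the continuity equation and the Gaussian estimates of Theorem~\ref{thm2.12}. Once such a flow-invariant $\mathfrak{m}$-a.e.\ expression of the dimension is available, a connectedness argument over a sufficiently rich family of gradient flows (using density of $\mathrm{Test}F$ in $H^{1,2}$) upgrades local constancy to global constancy, completing the proof.
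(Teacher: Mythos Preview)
This theorem is not proved in the paper: it is stated as a preliminary result and attributed to Bru\`e--Semola \cite{BS20} (see also \cite{CN12} for the Ricci-limit setting). The paper invokes it only as a black box, so there is no ``paper's own proof'' to compare against.

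That said, your sketch is a reasonable high-level summary of the actual strategy in \cite{BS20}. The existence part (that $\mathfrak{m}$-a.e.\ point lies in some $\mathcal{R}_k$) indeed goes back to the Mondino--Naber structure theory \cite{MN19}, not to \cite{BS20} itself. The uniqueness part is the genuine contribution of \cite{BS20}, and your idea of using regular Lagrangian flows of gradient vector fields to propagate the tangent-cone dimension is exactly their mechanism. One correction: the flow-invariant quantity they use is not the heat-kernel or volume-ratio asymptotics you suggest, but rather the maximal number of independent $\delta$-splitting functions on small balls, controlled through quantitative Lusin-type estimates for the flow. Your heat-kernel/volume-ratio substitutes would require knowing in advance that these quantities detect the dimension pointwise $\mathfrak{m}$-a.e., which is closer to what one is trying to prove. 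In any case, since the present paper does not reprove this result, no proof is expected here.
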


\begin{remark}{}{Under the assumption of Theorem \ref{1111thm2.22}, for any $m\in \mathbb{N}_+$, define the Bishop-Gromov density of $(X,\mathsf{d},\mathfrak{m})$ as
\[
\begin{aligned}
\vartheta_m({X},\mathsf{d},\mathfrak{m}) :{X}&\longrightarrow [0,\infty]\\
x&\longmapsto \left\{\begin{aligned}\lim\limits_{r\rightarrow 0} \frac{\mathfrak{m}(B_r(x))}{\omega_m r^m},&\ \ \text{    if it exists,}\\
\infty, &\ \ \text{  otherwise.}
\end{aligned}
\right.
\end{aligned}
\]
}
The measure $\mathfrak{m}$ then can be represented as $\vartheta_n({X},\mathsf{d},\mathfrak{m})(x) \mathcal{H}^n\llcorner\mathcal{R}_n$. Moreover, $\mathfrak{m}(\mathcal{R}_n\setminus \mathcal{R}_n^\ast)=0$, where $\mathcal{R}_n^\ast:=\left\{x\in \mathcal{R}_n: \vartheta_n({X},\mathsf{d},\mathfrak{m})\in (0,\infty)\right\}$. See \cite{AHT18}.

\end{remark}
In particular, for non-collapsed RCD$(K,N)$ spaces, the following statement holds.

\begin{thm}[Bishop inequality {\cite[Corollary 1.7]{DG18}}]\label{1111thm2.20}
Let $({X},\mathsf{d},\mathcal{H}^N)$ be a non-collapsed $\mathrm{RCD}(K,N)$ space. Then $\mathrm{dim}_{\mathsf{d},\mathcal{H}^N}(X)=N\in \mathbb{N}$, and  $\vartheta_N({X},\mathsf{d},\mathcal{H}^N)\leqslant 1$ holds for any $x\in {X}$. Moreover, the equality holds {}{if and only if} $x\in \mathcal{R}_N$.
\end{thm}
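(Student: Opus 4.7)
The plan is to combine the Bishop--Gromov comparison with a blow-up/tangent cone analysis. First, applied to $\mathfrak{m}=\mathcal{H}^N$, Theorem \ref{BGineq} yields that the function $r\mapsto \mathcal{H}^N(B_r(x))/\int_0^r V_{K,N}(s)\,\mathrm{d}s$ is non-increasing, so in particular the density $\vartheta_N(X,\mathsf{d},\mathcal{H}^N)(x)$ exists as an honest limit (not just a limsup) and is finite at every $x\in X$.

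To establish $\vartheta_N(x)\leqslant 1$, I would fix $x\in X$ and perform a blow-up: consider the rescaled sequence $(X, r_i^{-1}\mathsf{d}, r_i^{-N}\mathcal{H}^N, x)$ with $r_i\downarrow 0$. Each rescaled space is $\mathrm{RCD}(r_i^2 K, N)$, the curvature parameter vanishes in the limit, and the Bishop--Gromov ratio controls the mass of the unit ball about $x$ from above and below. By the precompactness Theorem \ref{sta} combined with preservation of Hausdorff measure under non-collapsed pmGH convergence (Theorem \ref{11thm2.15}), a subsequence pmGH-converges to a non-collapsed $\mathrm{RCD}(0,N)$ space $(Y,\mathsf{d}_Y,\mathcal{H}^N,y^*)$ whose Bishop--Gromov density at $y^*$ equals $\vartheta_N(x)$. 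The monotone Bishop--Gromov ratio in $Y$ based at $y^*$ is then constant, and the volume-cone-implies-metric-cone theorem gives that $(Y,\mathsf{d}_Y)$ is a Euclidean cone over a compact non-collapsed $\mathrm{RCD}(N-2,N-1)$ cross section (cf.\ Remark \ref{rmk2.10}). An induction on $N$ applied to the cross section, or equivalently iterated blow-up until one hits a regular point, reduces the density bound to the trivial case $N=1$. Monotonicity of the density along successive blow-ups then forces $\vartheta_N(x)\leqslant 1$.

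The essential dimension claim is immediate from Theorem \ref{1111thm2.22}: if $n=\dim_{\mathsf{d},\mathcal{H}^N}(X)$, then $\mathcal{H}^N$ is concentrated on $\mathcal{R}_n$, and since $\mathcal{R}_n$ has Hausdorff dimension at most $n$, compatibility with the ambient measure $\mathcal{H}^N$ forces $n=N$. For the rigidity in the Bishop inequality, if $\vartheta_N(x)=1$ then the Bishop--Gromov monotone ratio is identically $1$ on every ball centred at $x$, and the volume-cone-implies-metric-cone theorem implies that every tangent cone at $x$ is itself a metric cone with vertex density $1$. Combined with the inductive upper bound on cross-sectional densities, this forces the cross section to be the round $(N-1)$-sphere, so the tangent cone is $(\mathbb{R}^N,\mathsf{d}_{\mathbb{R}^N})$, i.e.\ $x\in\mathcal{R}_N$. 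Conversely, if $x\in\mathcal{R}_N$ then by definition the rescaled spaces pmGH-converge to $(\mathbb{R}^N,\mathsf{d}_{\mathbb{R}^N},\omega_N^{-1}\mathcal{L}^N,0_N)$, and the continuity of Hausdorff measure (Theorem \ref{11thm2.15}) yields $\vartheta_N(x)=1$.

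The genuinely hard step is the volume-cone-implies-metric-cone rigidity on which the whole blow-up argument pivots; in the synthetic $\mathrm{RCD}$ setting this requires the sharp Laplacian comparison together with the local splitting and cone rigidity machinery developed by De Philippis--Gigli, and one would in practice import it rather than reprove it.
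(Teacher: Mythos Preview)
The paper does not prove this theorem; it is imported verbatim from \cite[Corollary 1.7]{DG18} and stated without argument in the preliminaries. Your sketch follows essentially the strategy of De Philippis--Gigli's original proof: Bishop--Gromov monotonicity gives existence of the density, a blow-up produces a tangent cone via the volume-cone-implies-metric-cone rigidity, and a comparison on the cross-section yields the bound and the equality case.

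Two small sharpenings are worth noting. First, rather than inducting on $N$ or iterating blow-ups, the cleanest route to $\vartheta_N(x)\leqslant 1$ is to observe that the tangent cone at $x$ is $C(Z)$ for some non-collapsed $\mathrm{RCD}(N-2,N-1)$ space $(Z,\mathcal{H}^{N-1})$ with $\vartheta_N(x)=\mathcal{H}^{N-1}(Z)/\mathcal{H}^{N-1}(\mathbb{S}^{N-1})$; the Bishop volume bound for $\mathrm{RCD}(N-2,N-1)$ spaces (total measure at most that of the round sphere) then gives $\vartheta_N(x)\leqslant 1$ in one step, and equality forces $Z\cong\mathbb{S}^{N-1}$ by maximal-volume rigidity. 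Second, your assertion that $\vartheta_N(x)=1$ forces the Bishop--Gromov ratio to be identically $1$ on every ball of $X$ centred at $x$ is not accurate when $K<0$: the ratio is constant (and equal to $1$) only in the tangent cone, where the curvature lower bound is $0$; in $X$ itself the ratio is merely monotone with limit $1$. This does not affect the conclusion, since the rigidity argument takes place in the tangent cone.
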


Given an RCD$(K,N)$ space $({X},\mathsf{d},\mathfrak{m})$, there is a canonical Riemannian metric $g$ in the following sense.

\begin{thm}[The canonical Riemannian metric \cite{GP16, AHPT21}]\label{111thm2.21}
There exists a unique Riemannian metric $g$ such that for any $f_1,f_2 \in H^{1,2}({X},\mathsf{d},\mathfrak{m})$, it holds that
\[
g\left(\nabla f_1,\nabla f_2\right)=\left\langle \nabla f_1,\nabla f_2\right\rangle\ \ \text{$\mathfrak{m}$-a.e. in ${X}$}.
\]
Moreover, $\left|g\right|_{\mathsf{HS}}=\sqrt{\mathrm{dim}_{\mathsf{d},\mathfrak{m}}({X})}$ $\mathfrak{m}$-a.e. in ${X}$.
\end{thm}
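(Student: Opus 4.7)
The plan is to construct $g$ inside the tensorial module $L^\infty_{\mathrm{loc}}((T^*)^{\otimes 2}(X,\mathsf{d},\mathfrak{m}))$ recalled in Subsection 2.3, using the infinitesimally Hilbertian structure of the space. Uniqueness and the basic construction are essentially formal; the main weight of the theorem lies in the identity $|g|_{\mathsf{HS}} = \sqrt{n}$ with $n := \mathrm{dim}_{\mathsf{d},\mathfrak{m}}(X)$.

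For uniqueness, if two such tensors $g, g'$ exist, then $g-g'$ annihilates every pair $(\nabla f_1, \nabla f_2)$ with $f_i \in H^{1,2}(X,\mathsf{d},\mathfrak{m})$. Since elements of the form $\sum_i \chi_{E_i} \nabla f_i$ are dense in the tangent module $L^2(T(X,\mathsf{d},\mathfrak{m}))$, and $g-g'$ is $L^\infty$-bilinear and continuous in each argument, this forces $g = g'$ $\mathfrak{m}$-a.e.

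For existence, I would take as candidate the cotangent $(0,2)$-tensor corresponding to the identity on the tangent module under its Hilbertian inner product. Concretely, on a Borel decomposition $X = \bigsqcup_k E_k$ on which $L^2(T(X,\mathsf{d},\mathfrak{m}))$ admits a local orthonormal frame $\{e_1^{(k)}, \ldots, e_n^{(k)}\}$ with dual coframe $\{\omega_1^{(k)}, \ldots, \omega_n^{(k)}\}$, set
\[
g := \sum_k \chi_{E_k} \sum_{\alpha=1}^n \omega_\alpha^{(k)} \otimes \omega_\alpha^{(k)}.
\]
Independence of the frame choice is immediate from invariance under orthogonal change of basis, symmetry is visible from the formula, and the defining identity $g(\nabla f_1, \nabla f_2) = \langle \nabla f_1, \nabla f_2 \rangle$ follows on each $E_k$ by expanding $\nabla f_i$ in the frame and using bilinearity of $\langle \cdot,\cdot\rangle$.

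The Hilbert--Schmidt identity then follows by direct computation in the same frame: on each $E_k$,
\[
|g|_{\mathsf{HS}}^2 = \sum_{i,j=1}^n g(e_i^{(k)}, e_j^{(k)})^2 = \sum_{i,j=1}^n \delta_{ij}^2 = n,
\]
which in particular shows $g \in L^\infty((T^*)^{\otimes 2}(X,\mathsf{d},\mathfrak{m}))$. The hardest step of this plan is the existence of local orthonormal frames with cardinality exactly equal to the essential dimension $n$ at $\mathfrak{m}$-a.e.\ point: this is the nontrivial content of the module-theoretic analysis of Gigli--Pasqualetto \cite{GP16}, combined with the essential dimension theorem of Bru\`e--Semola (Theorem \ref{1111thm2.22}), which ensures that the local dimension of $L^2(T(X,\mathsf{d},\mathfrak{m}))$ is $\mathfrak{m}$-a.e.\ equal to $n$ rather than jumping on a set of positive measure. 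Once that structural result is granted, everything above reduces to a formal verification within the $L^\infty$-module formalism of Subsection 2.3.
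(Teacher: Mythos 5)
The paper does not include a proof of this theorem; it is quoted with citations to Gigli--Pasqualetto and [AHPT21]. Your plan is the standard one underlying those references and is correct: existence and uniqueness of $g$ are formal within Gigli's $L^\infty$-module calculus (the dimensional decomposition $X=\bigsqcup_k E_k$ of the tangent module together with local orthonormal frames on each $E_k$ is a general structure theorem for $L^2$-normed $L^\infty$-modules), and the entire geometric content sits in the identification of the local module dimension with the essential dimension $n=\mathrm{dim}_{\mathsf{d},\mathfrak{m}}(X)$, which you correctly attribute to [GP16] combined with Bru\`e--Semola. Your frame-independence, defining-identity, and Hilbert--Schmidt computations are all correct, as is the uniqueness argument via density of simple combinations $\sum_i\chi_{E_i}\nabla f_i$ in $L^2(T(X,\mathsf{d},\mathfrak{m}))$ and $L^\infty$-bilinearity.

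One organizational remark: as written, you invoke the constancy of the local dimension already in the \emph{definition} of $g$ (by summing $\alpha$ from $1$ to $n$ on each $E_k$), but that constancy is only needed for the last assertion $|g|_{\mathsf{HS}}=\sqrt{n}$. It is slightly cleaner to set
\[
g:=\sum_{k}\chi_{E_k}\sum_{\alpha=1}^{k}\omega_\alpha^{(k)}\otimes\omega_\alpha^{(k)},
\]
where the inner sum runs over the local dimension $k$ of the module on $E_k$. This already satisfies the defining identity $g(\nabla f_1,\nabla f_2)=\langle\nabla f_1,\nabla f_2\rangle$ and is unique, with no input about $n$; one then computes $|g|_{\mathsf{HS}}^2=\sum_k k\,\chi_{E_k}$, and the result on constancy of dimension is invoked exactly where it is used, namely to conclude that this equals $n$ $\mathfrak{m}$-a.e. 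This also makes it transparent that $g\in L^\infty$ (not merely $L^\infty_{\mathrm{loc}}$) once dimensional constancy is known.
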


Let us use this canonical Riemannian metric to define the trace {}{as 
\[
\begin{aligned}
\mathrm{Tr}: L^2_{\text{loc}}\left((T^\ast)^{\otimes 2}({X},\mathsf{d},\mathfrak{m})\right)&\longrightarrow L^2_{\text{loc}}\left((T^\ast)^{\otimes 2}({X},\mathsf{d},\mathfrak{m})\right)\\
T&\longmapsto  \langle T,g\rangle.
\end{aligned}
\]
}

{}{The convergence of functions and tensor fields on pmGH convergent pointed RCD$(K,N)$ spaces are also well-defined} as in \cite{GMS13}, \cite[Definition 1.1]{H15} and \cite{AH17,AST16}. In the rest of this subsection, we assume that $({X}_i,\mathsf{d}_i,\mathfrak{m}_i,x_i)\xrightarrow{\mathrm{pmGH}}({X}_\infty,\mathsf{d}_\infty,\mathfrak{m}_\infty,x_\infty)$, and use the {}{notation} in Definition \ref{1defn2.5}.

\begin{defn}[$L^2$-convergence of functions defined on varying spaces] A sequence $\{f_i:{X}_i\rightarrow \mathbb{R}\}$ is said to be $L^2$-weakly convergent to $f_\infty \in L^2(\mathfrak{m}_\infty)$ if 
 \[
 \left\{
 \begin{aligned}
 &\sup_i \left\|f_i\right\|_{L^2(\mathfrak{m}_i)}<\infty,\\
  &\lim\limits_{i\rightarrow \infty}\int_{Y}hf_i \mathrm{d}(\iota_i)_\sharp \mathfrak{m}_i= \int_{Y} hf_\infty \mathrm{d}(\iota_\infty)_\sharp \mathfrak{m}_\infty, \ \ \forall h\in C_c({Y}).
 \end{aligned}
 \right.
 \]
 If moreover $\{f_i\}$ satisfies $\limsup_{i\rightarrow \infty}\left\|f_i\right\|_{L^2(\mathfrak{m}_i)}\leqslant \left\|f\right\|_{L^2(\mathfrak{m}_\infty)}$, then $\{f_i\}$ is said to be $L^2$-strongly convergent to $f$.
\end{defn}

\begin{defn}[$H^{1,2}$-convergence of functions defined on varying spaces] A sequence $\{f_i:{X}_i\rightarrow \mathbb{R}\}$ is said to be $H^{1,2}$-weakly convergent to $f_\infty \in H^{1,2}({X}_\infty,\mathsf{d}_\infty,\mathfrak{m}_\infty)$ if 
\[
f_i\xrightarrow{L^2\text{-weakly}}f\ \text{and}\  \sup_i \text{Ch}^{{X}_i}(f_i)<\infty.
\]
 If moreover, $\{f_i\}$ satisfies 
 \[
 \limsup_{i\rightarrow \infty}\left\|f_i\right\|_{L^2(\mathfrak{m}_i)}\leqslant \left\|f\right\|_{L^2(\mathfrak{m}_\infty)}\  \text{and}\  \limsup_{i\rightarrow \infty}\text{Ch}^{{X}_i}(f_i)=\text{Ch}^{{X}_\infty}(f_\infty),
 \] 
 then $\{f_i\}$ is said to be $H^{1,2}$-strongly convergent to $f$.

\end{defn}

\begin{defn}[Convergence of tensor fields defined on varying spaces]
Assume  {}{$T_i\in L^2_\mathrm{loc}\left((T^\ast)^{\otimes 2}({X}_i,\mathsf{d}_i,\mathfrak{m}_i)\right)$, $(i\in \mathbb{N})$}. For any $R>0$, $\{T_i\} $ is said to be $L^2$-weakly convergent to $T_\infty\in L^2\left((T^\ast)^{\otimes 2}(B_R^{{X}_\infty}(x_\infty),\mathsf{d}_\infty,\mathfrak{m}_\infty)\right)$ on $B_R^{{X}_\infty}(x_\infty)$ if it satisfies the following conditions.
\begin{enumerate}
\item (Uniform upper $L^2$ bound) $\sup_i \left\||T_i|_{\mathsf{HS}}\right\|_{L^2\left(B_R^{{X}_i}(x_i),\mathfrak{m}_i\right)}<\infty$.
\item For any $f_{j,i}\in \mathrm{Test}F({X}_i,\mathsf{d}_i,\mathfrak{m}_i)$ $(i\in\mathbb{N},\ j=1,2)$ {}{such that} $\{f_{j,i}\}$ $L^2$-strongly converges to $f_{j,\infty}\in \mathrm{Test}F({X}_\infty,\mathsf{d}_\infty,\mathfrak{m}_\infty)$ ($j=1,2$) and that  
\[
\sup_{i,j}\left(\left\|f_{j,i}\right\|_{L^\infty(\mathfrak{m}_i)}+\left\||\nabla^{{X}_i}f_{j,i}|\right\|_{L^\infty(\mathfrak{m}_i)}+\left\|\Delta^{{X}_i}f_{j,i}\right\|_{L^\infty(\mathfrak{m}_i)}\right)<\infty,
\]
we have {}{$\{\chi_{B_R^{{X}_i}(x_i)}\left\langle T_i, df_{1,i}\otimes d f_{2,i}\right\rangle \}$ $L^2$-weakly converges to $\chi_{B_R^{{X}_\infty}(x_\infty)}\langle T_\infty,d f_{1,\infty}\otimes df_{2,\infty} \rangle$.}

\end{enumerate}
If moreover,  $\limsup_{i\rightarrow \infty}\left\||T_i|_{\mathsf{HS}}\right\|_{L^2\left(B_R^{{X}_i}(x_i),\mathfrak{m}_i\right)}\leqslant \left\||{}{T_\infty}|_{\mathsf{HS}}\right\|_{L^2\left(B_R^{{X}_\infty}(x_\infty),\mathfrak{m}_\infty\right)}$, then $\{T_i\}$ is said to be $L^2$-strongly convergent to $T_\infty$ on $B_R^{{X}_\infty}(x_\infty)$.
\end{defn}

Let us recall two convergences to end this section. 

\begin{thm}[$H^{1,2}$-strong convergence of heat kernels {\cite[Theorem 2.19]{AHPT21}}]\label{thm2.26}
For any $\{t_i\}\subset (0,\infty)$ with $t_i\rightarrow t_0 \in (0,\infty)$ and any $\{y_i\}$ with ${X}_i\ni y_i\rightarrow y_\infty \in {X}_\infty$,  $\left\{\rho^{{X}_i}(\cdot,y_i,t_i)\right\}$ $H^{1,2}$-strongly converges to $\rho^{{X}_\infty}(\cdot,y,t)\in H^{1,2}({X}_\infty,\mathsf{d}_\infty,\mathfrak{m}_\infty)$.
\end{thm}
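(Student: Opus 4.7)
The plan is to prove the $H^{1,2}$-strong convergence in three stages: (a) establish uniform a priori $H^{1,2}$ bounds via the Gaussian estimates, (b) extract a weak limit and identify it with $\rho^{X_\infty}(\cdot,y_\infty,t_0)$ via stability of the heat semigroup, and (c) upgrade weak to strong convergence by producing the critical $\limsup$ inequality for Cheeger energies.

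For (a), I would use Theorem \ref{thm2.12}. The upper Gaussian bound, together with the Bishop-Gromov inequality (\ref{BGinequality111}) and the fact that $\mathfrak{m}_i(B_1(y_i))\to \mathfrak{m}_\infty(B_1(y_\infty))\in(0,\infty)$, produces a uniform pointwise bound
\[
\rho^{X_i}(x,y_i,t_i)\leqslant C\exp\!\left(-c\,\mathsf{d}_i^2(x,y_i)\right),
\]
and similarly for $|\nabla_x\rho^{X_i}(x,y_i,t_i)|$. These decay estimates give $\sup_i\|\rho^{X_i}(\cdot,y_i,t_i)\|_{L^2(\mathfrak{m}_i)}<\infty$ and $\sup_i \mathrm{Ch}^{X_i}(\rho^{X_i}(\cdot,y_i,t_i))<\infty$, which via Mosco-type compactness for $H^{1,2}$ along pmGH convergence of RCD$(K,N)$ spaces yields an $H^{1,2}$-weak limit $f_\infty$ along a subsequence.

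For (b), I would fix a small $s\in(0,t_0)$ and use the semigroup identity $\rho^{X_i}(\cdot,y_i,t_i)=\mathrm{h}^{X_i}_{t_i-s}\bigl[\rho^{X_i}(\cdot,y_i,s)\bigr]$. The $L^2$-Gaussian tail bound above is uniform, and one can verify $L^2$-strong convergence of $\rho^{X_i}(\cdot,y_i,s)$ to $\rho^{X_\infty}(\cdot,y_\infty,s)$ by combining local uniform H\"older continuity (Sturm) on compact sets with the uniform Gaussian tail. Stability of the heat semigroup along pmGH convergence then propagates this to $L^2$-strong convergence of $\rho^{X_i}(\cdot,y_i,t_i)$ to $\rho^{X_\infty}(\cdot,y_\infty,t_0)$, identifying $f_\infty=\rho^{X_\infty}(\cdot,y_\infty,t_0)$.

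For (c), the crucial ingredient is the identity (obtained from self-adjointness of $\Delta$, the semigroup property, and symmetry of the heat kernel)
\[
2\,\mathrm{Ch}^{X}\bigl(\rho^{X}(\cdot,y,t)\bigr)=-\partial_t\rho^{X}(y,y,2t).
\]
Thus the $\limsup$ inequality reduces to proving
\[
\partial_t\rho^{X_i}(y_i,y_i,2t_i)\longrightarrow \partial_t\rho^{X_\infty}(y_\infty,y_\infty,2t_0).
\]
Pointwise convergence $\rho^{X_i}(y_i,y_i,2t_i)\to\rho^{X_\infty}(y_\infty,y_\infty,2t_0)$ follows from local uniform H\"older continuity and stability, and the analyticity-in-time bound (\ref{aabbeqn3.7}) in Remark \ref{aaaaarmk2.9} (uniform in $i$ thanks to the uniform volume control) promotes this pointwise convergence to $C^1$-convergence in $t$ by a normal families / Vitali argument on analytic functions. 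This gives the desired convergence of time derivatives, hence of Cheeger energies.

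The main obstacle is precisely step (c): weak convergence gives $\liminf$ for free via lower semicontinuity of Cheeger energy under pmGH convergence, but the reverse $\limsup$ bound is delicate and requires leveraging the analytic regularity of $t\mapsto\rho(y,y,t)$ uniformly in $i$. The route via (\ref{aabbeqn3.7}) is what makes this tractable; without the uniform analyticity-type bound one would be forced into a more cumbersome Bochner-based argument on each $X_i$.
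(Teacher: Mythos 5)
The paper does not prove Theorem \ref{thm2.26}; it is imported verbatim from \cite[Theorem 2.19]{AHPT21}, so there is no in-paper proof to compare against. Evaluating your proposal on its own terms, the overall strategy is sound and is close in spirit to the standard argument. In particular, the identity
\[
2\,\mathrm{Ch}^{X}\bigl(\rho^{X}(\cdot,y,t)\bigr)=-\partial_t\bigl[\rho^{X}(y,y,2t)\bigr]
\]
is exactly the correct bridge between Cheeger energies and the time-derivative of the on-diagonal heat kernel (it follows from $\rho^X(y,y,2t)=\|\rho^X(\cdot,y,t)\|_{L^2}^2$, $\partial_t\rho=\Delta\rho$ and integration by parts, using that $\rho(\cdot,y,t)\in D(\Delta)$ via the semigroup property), and invoking the uniform Davies-type bound (\ref{aabbeqn3.7}) together with a Vitali/normal-families argument to upgrade pointwise to $C^1$ convergence in $t$ is a legitimate and clean way to obtain the $\limsup$ inequality.

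Two details you should pin down. First, the logical order in step (b) has a mild circularity risk: you invoke ``$L^2$-strong convergence of $\rho^{X_i}(\cdot,y_i,s)$'' for the initial data, but $L^2$-strong convergence already entails $\|\rho^{X_i}(\cdot,y_i,s)\|_{L^2}^2=\rho^{X_i}(y_i,y_i,2s)\to\rho^{X_\infty}(y_\infty,y_\infty,2s)$. The clean ordering is: (i) establish pointwise convergence of heat kernels $\rho^{X_i}(x_i,y_i,t_i)\to\rho^{X_\infty}(x_\infty,y_\infty,t_0)$ for all converging triples, via Sturm's local H\"older estimates, Theorem \ref{thm2.12}, Arzel\`a--Ascoli, and uniqueness of the limiting heat flow; (ii) combine this with the uniform Gaussian tail for dominated convergence to get $L^2$-strong convergence; (iii) then run your step (c). Second, the bound (\ref{aabbeqn3.7}) is stated for $t\in(0,1)$; for a general $t_0>0$ you should either rescale the spaces or note that Davies' estimate holds on any compact time interval away from $0$ with constants depending on that interval — this is harmless but should be said. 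With those repairs, your proof is correct, and it is essentially the argument behind the cited result.
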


\begin{thm}[Lower semicontinuity of essential dimension {\cite[Theorem 1.5]{K19}}]\label{11thm2.26}
\[
\liminf\limits_{i\rightarrow \infty}\mathrm{dim}_{\mathsf{d}_i,\mathfrak{m}_i}({X}_i)\leqslant \mathrm{dim}_{\mathsf{d}_\infty,\mathfrak{m}_\infty}({X}_\infty).
\]
\end{thm}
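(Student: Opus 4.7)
The plan is to use an iterated blow-up / diagonal argument. Set $n_i := \mathrm{dim}_{\mathsf{d}_i,\mathfrak{m}_i}({X}_i)$ for $i \in \mathbb{N}\cup\{\infty\}$, and after passing to a subsequence assume $n_i \equiv m$ where $m := \liminf_{i}n_i$. The goal is then to establish $m \leqslant n_\infty$.

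First, by Theorem \ref{1111thm2.22}, $\mathfrak{m}_\infty(\mathcal{R}_{n_\infty}({X}_\infty)) = \mathfrak{m}_\infty({X}_\infty)$, so I can pick a regular point $p \in \mathcal{R}_{n_\infty}({X}_\infty)$. By Definition \ref{111def2.18}, there exist scales $r_k \downarrow 0$ with
\[
\left({X}_\infty,\; r_k^{-1}\mathsf{d}_\infty,\; \tfrac{\mathfrak{m}_\infty}{\mathfrak{m}_\infty(B_{r_k}(p))},\; p\right) \xrightarrow{\mathrm{pmGH}} \left(\mathbb{R}^{n_\infty},\mathsf{d}_{\mathbb{R}^{n_\infty}},\omega_{n_\infty}^{-1}\mathcal{L}^{n_\infty},0_{n_\infty}\right).
\]
A standard diagonal argument combining Theorem \ref{sta} (precompactness) with the original convergence $({X}_i,\mathsf{d}_i,\mathfrak{m}_i,x_i)\to({X}_\infty,\mathsf{d}_\infty,\mathfrak{m}_\infty,x_\infty)$ then produces points $p_i \in {X}_i$ with $p_i \to p$ and scales $s_i \downarrow 0$ such that
\[
\widetilde{X}_i := \left({X}_i,\; s_i^{-1}\mathsf{d}_i,\; \tfrac{\mathfrak{m}_i}{\mathfrak{m}_i(B_{s_i}(p_i))},\; p_i\right) \xrightarrow{\mathrm{pmGH}} \left(\mathbb{R}^{n_\infty},\mathsf{d}_{\mathbb{R}^{n_\infty}},\omega_{n_\infty}^{-1}\mathcal{L}^{n_\infty},0_{n_\infty}\right).
\]

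Second, rescaling preserves both the essential dimension and the regular sets, so $\mathcal{R}_m$ still has full measure in $\widetilde{X}_i$. Choose $q_i \in \mathcal{R}_m \cap \widetilde{B}_1(p_i)$ (the ball in the rescaled metric), which is possible because full-measure sets are dense. Passing to a further subsequence, $q_i \to q_\infty \in \overline{B_1(0_{n_\infty})}\subset \mathbb{R}^{n_\infty}$. Since $q_i \in \mathcal{R}_m$, Definition \ref{111def2.18} yields further scales $t_i \downarrow 0$ for which
\[
\left({X}_i,\; (s_i t_i)^{-1}\mathsf{d}_i,\; \tfrac{\mathfrak{m}_i}{\mathfrak{m}_i(B_{s_i t_i}(q_i))},\; q_i\right) \xrightarrow{\mathrm{pmGH}} \left(\mathbb{R}^{m},\mathsf{d}_{\mathbb{R}^{m}},\omega_{m}^{-1}\mathcal{L}^{m},0_{m}\right).
\]
On the other hand, because $\mathbb{R}^{n_\infty}$ is scale-invariant, every pointed rescaling of $\mathbb{R}^{n_\infty}$ at $q_\infty$ is again $\mathbb{R}^{n_\infty}$. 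Via a second diagonal argument synchronised with the first-level convergence $\widetilde{X}_i \to \mathbb{R}^{n_\infty}$, the same twice-rescaled sequence also converges in pmGH to $\mathbb{R}^{n_\infty}$. Uniqueness of pmGH limits forces $\mathbb{R}^m \cong \mathbb{R}^{n_\infty}$ as pointed metric measure spaces, hence $m = n_\infty$.

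The main obstacle is executing this second diagonal choice of $t_i$: it has to go to zero fast enough that the tangent $\mathbb{R}^m$ at $q_i$ is effectively resolved, yet slowly enough that the quantitative pmGH convergence $\widetilde{X}_i \to \mathbb{R}^{n_\infty}$ still transfers under the further rescaling by $t_i^{-1}$. Making this precise requires a pmGH-type distance on pointed metric measure spaces at bounded radii together with Bishop--Gromov volume controls (available here via Theorem \ref{BGineq}), so that normalising measures remain comparable along the diagonal. This synchronisation is essentially the technical content of Kitabeppu's proof in \cite{K19}.
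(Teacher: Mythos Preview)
The paper does not supply a proof of this statement; it is quoted from \cite[Theorem 1.5]{K19} as background, so there is nothing in-paper to compare your argument against.

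Your argument has a genuine gap, and in fact it would prove something false. You conclude $m = n_\infty$, but consider the flat tori $T^2_\epsilon = \mathbb{S}^1(1)\times\mathbb{S}^1(\epsilon)$ with normalised Riemannian volume: these are $\mathrm{RCD}(0,2)$ spaces of essential dimension $2$, and as $\epsilon\to 0$ they mGH-converge to $(\mathbb{S}^1(1),\mathsf{d}_{\mathbb{S}^1},(2\pi)^{-1}\mathcal{H}^1)$, which has essential dimension $1$. Thus $m=2\neq 1=n_\infty$, and in particular the inequality as printed here is in the wrong direction; Kitabeppu's actual result is $n_\infty \leqslant \liminf_i n_i$, which is what ``lower semicontinuity'' of the dimension should mean.

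The broken step is precisely the ``second diagonal'' you flag. If $\widetilde{X}_i$ is $\epsilon_i$-close to $\mathbb{R}^{n_\infty}$ at unit scale, then after rescaling by $t_i^{-1}$ the pmGH discrepancy becomes of order $\epsilon_i/t_i$, so retaining convergence to $\mathbb{R}^{n_\infty}$ forces $t_i \gg \epsilon_i$. But the scale $\delta_i$ below which the tangent $\mathbb{R}^m$ at $q_i$ is resolved is an intrinsic datum of $X_i$ with no a priori relation to $\epsilon_i$; in the torus example $\delta_i$ is comparable to $\epsilon_i$ (one must zoom past the collapsing circle to see two dimensions), so the window $\epsilon_i \ll t_i < \delta_i$ is empty. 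This is not a technicality that Kitabeppu's proof overcomes---his argument establishes the opposite inequality and never attempts such a synchronisation.
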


\section{The isometric immersion into $L^2$ space via heat kernel}\label{sec3}

Recently the equivalence between weakly non-collapsed RCD spaces and non-collapsed RCD spaces is proved in \cite[Theorem 1.3]{BGHZ21}, which states as follows.

\begin{thm}\label{BGHZmainthm}
Assume that $({X},\mathsf{d},\mathfrak{m})$ is an $\mathrm{RCD}(K,N)$ space. If
\[
\mathfrak{m}\left(\left\{x\in {X}:\limsup\limits_{r\rightarrow 0^+}\frac{\mathfrak{m}(B_r(x))}{r^N}<\infty\right\}\right)>0,
\]
then $\mathfrak{m}=c\mathcal{H}^N$ for some $c>0$. Therefore, $\left({X},\mathsf{d},c^{-1}\mathfrak{m}\right)$ is a non-collapsed $\mathrm{RCD}(K,N)$ space.
\end{thm}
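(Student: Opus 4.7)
The plan is to prove the theorem in two stages: first reduce to the case $\mathrm{dim}_{\mathsf{d},\mathfrak{m}}(X) = N$, then exploit a blow-up at a density point to deduce that the density of $\mathfrak{m}$ with respect to $\mathcal{H}^N$ is essentially constant.

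Set $n := \mathrm{dim}_{\mathsf{d},\mathfrak{m}}(X) \in [1,N]\cap \mathbb{N}$. Recall from the preliminaries that $\mathfrak{m} = \vartheta_n\,\mathcal{H}^n \llcorner \mathcal{R}_n$ and $\vartheta_n(X,\mathsf{d},\mathfrak{m})(x) \in (0,\infty)$ for $\mathfrak{m}$-a.e. $x$. If $n < N$, then at $\mathfrak{m}$-a.e. $x$ the asymptotic $\mathfrak{m}(B_r(x)) \sim \omega_n \vartheta_n(x) r^n$ forces
\[
\limsup_{r \downarrow 0} \frac{\mathfrak{m}(B_r(x))}{r^N} = \limsup_{r \downarrow 0} \omega_n \vartheta_n(x)\, r^{n-N} = +\infty.
\]
Hence the set in the hypothesis would have $\mathfrak{m}$-measure zero, a contradiction. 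Therefore $n = N$ and $\mathfrak{m} = \theta\,\mathcal{H}^N$ with $\theta := \vartheta_N(X,\mathsf{d},\mathfrak{m}) \in (0,\infty)$ $\mathfrak{m}$-a.e.

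For the second stage I would fix an $\mathfrak{m}$-density point $x_0 \in \mathcal{R}_N^\ast$ with $\theta(x_0) = c \in (0,\infty)$ and consider the rescaled sequence
\[
(X_i, \mathsf{d}_i, \mathfrak{m}_i, x_0) := \bigl(X,\, r_i^{-1}\mathsf{d},\, (c\,\omega_N\,r_i^N)^{-1}\mathfrak{m},\, x_0\bigr), \qquad r_i \downarrow 0.
\]
By Theorem \ref{sta} this sequence pmGH-subconverges to an $\mathrm{RCD}(0,N)$ tangent $(Y,\mathsf{d}_Y,\mathfrak{m}_Y,y_0)$, and since $x_0 \in \mathcal{R}_N$ and the choice of normalization fixes the unit-ball mass, one identifies $(Y,\mathsf{d}_Y,\mathfrak{m}_Y) = (\mathbb{R}^N,\mathsf{d}_{\mathbb{R}^N},\omega_N^{-1}\mathcal{H}^N)$. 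Applying the continuity of Hausdorff measure along non-collapsed pmGH convergence (Theorem \ref{11thm2.15}) to the rescaled spaces therefore identifies $\mathfrak{m}_i$ asymptotically with $c^{-1}\mathcal{H}^N$ on the rescaled space, which is precisely the statement that $\theta \equiv c$ holds locally near $x_0$ in the infinitesimal limit.

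The main obstacle is upgrading this infinitesimal information to a global constancy of $\theta$. My plan is to combine the upper semicontinuity of $\theta$, inherited from the monotonicity in Theorem \ref{BGineq} (which exhibits $\theta$ as an infimum of continuous functions of $x$), with the preceding tangent-cone rigidity at every $\mathfrak{m}$-density point: together these force $\theta$ to attain its essential infimum $c$ at every density point. A connectedness / chaining argument along the connected set of regular points of full $\mathfrak{m}$-measure then propagates the identity $\theta \equiv c$ to all of $X$ up to an $\mathfrak{m}$-null set. Consequently $\mathfrak{m} = c\mathcal{H}^N$, and the final assertion that $(X,\mathsf{d},c^{-1}\mathfrak{m})$ is a non-collapsed $\mathrm{RCD}(K,N)$ space is then immediate from the scaling invariance of the $\mathrm{RCD}(K,N)$ condition under positive rescaling of the reference measure.
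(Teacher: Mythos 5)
The paper does not actually prove this statement---it cites it directly from \cite{BGHZ21}, and only sketches the mechanism: the crucial ingredient is Theorem \ref{eqnBGHZ21} (the improved Bochner inequality with sharp dimension constant $n$), whose proof in turn rests on the heat-kernel short-time asymptotics of Theorem \ref{20211222a}. Your plan takes a different route and, as written, has a genuine gap at its central step.

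The reduction to $n=N$ is fine. The subsequent blow-up observation, however, carries no information about $\theta=\vartheta_N$: at every point of $\mathcal{R}_N^\ast$ the rescaled spaces converge to $\mathbb{R}^N$ by the very definition of the regular set, and the normalization $\mathfrak{m}(B_{r_i}(x_0))^{-1}$ already divides out the density, so ``identifying $\mathfrak{m}_i$ asymptotically with $c^{-1}\mathcal{H}^N$'' is a tautology that says nothing about how $\theta$ varies from point to point. The hard part of the theorem is precisely to show $\theta$ is ($\mathfrak{m}$-essentially) constant, and your argument for this is ``upper semicontinuity of $\theta$ plus tangent-cone rigidity plus a chaining argument.'' Upper semicontinuity of $\theta$ (which does follow from the almost-monotonicity in Bishop--Gromov) only gives $\theta(x) \geq \operatorname{essinf}\theta$ at every $x$; it cannot give the reverse inequality $\theta \leq c$, because USC controls $\theta$ from below near a point, not from above. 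And tangent-cone rigidity, as noted, does not distinguish between two regular points with different densities. No ``chaining along the regular set'' can close this gap without a new analytic input: there is no elementary a priori reason an $\mathrm{RCD}(K,N)$ space with $n=N$ could not have a nonconstant, upper-semicontinuous density $\theta$ with Euclidean tangents everywhere---ruling this out is exactly what \cite{BGHZ21} achieves, using the asymptotic expansion of the pullback metric $g_t$ and the resulting sharp Bochner inequality. Without some substitute for that machinery, your proof does not go through.
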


The key to prove Theorem \ref{BGHZmainthm} is Theorem \ref{eqnBGHZ21}, and the asymptotic formula (Theorem \ref{20211222a}) of $g_t$ plays an important role in the proof of Theorem \ref{eqnBGHZ21}. The precise definition of $g_t$ shall be given in Theorem \ref{thm2.18}.

\begin{thm}[{\cite[Theorem 1.5, Theorem 2.22]{BGHZ21}}]\label{eqnBGHZ21}
Assume that $({X},\mathsf{d},\mathcal{H}^n)$ is an $\mathrm{RCD}(K,N)$ space with $\mathrm{dim}_{\mathsf{d},\mathfrak{m}}({X})=n$ and $U$ is a connected open subset of ${X}$ such that for any compact subset $A\subset U$, 

 \begin{equation}\label{BGHZ}
\inf\limits_{r\in (0,1),x\in A}\frac{\mathcal{H}^n\left(B_r(x)\right)}{r^n}>0.
\end{equation}
Then for any $ f\in \mathrm{Test}F\left({X},\mathsf{d},\mathcal{H}^n\right)$,  any $\varphi\in D(\Delta)$ with
  $ \varphi \geqslant 0$, $\text{supp}(\varphi)\subset U$ and $\Delta \varphi \in L^\infty (\mathcal{H}^n)$, it holds that 
 \[
  \frac{1}{2}\int_U |\nabla f|^2 \Delta \varphi \ \mathrm{d}\mathcal{H}^n 
  \geqslant \int_U \varphi \left(\langle \nabla f , \nabla \Delta f \rangle +K |\nabla f|^2   + \frac{(\Delta f)^2}{n}  \right) \mathrm{d}\mathcal{H}^n.
  \]
\end{thm}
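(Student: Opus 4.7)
The plan is to upgrade the dimensional constant in Bochner's inequality from $N$ to the sharp essential dimension $n$, localized on $U$, by combining the Hessian form (\ref{abc2.14}) with the pointwise Cauchy--Schwarz bound
\[
\bigl(\mathrm{Tr}(\mathrm{Hess}\, f)\bigr)^2 \;=\; \langle \mathrm{Hess}\, f,\, g\rangle^2 \;\leqslant\; |\mathrm{Hess}\, f|_{\mathsf{HS}}^2\cdot |g|_{\mathsf{HS}}^2 \;=\; n\,|\mathrm{Hess}\, f|_{\mathsf{HS}}^2,
\]
where $|g|_{\mathsf{HS}}^2 = n$ is from Theorem \ref{111thm2.21} and $\mathrm{Tr}$ is the $g$-trace defined after it. Once the trace--Laplacian identity
\[
\Delta f \;=\; \mathrm{Tr}(\mathrm{Hess}\, f) \qquad \mathcal{H}^n\text{-a.e. on } U
\]
is established for $f \in \mathrm{Test}F(X, \mathsf{d}, \mathcal{H}^n)$, the inequality $|\mathrm{Hess}\, f|_{\mathsf{HS}}^2 \geqslant (\Delta f)^2/n$ will hold $\mathcal{H}^n$-a.e. on $U$; substituting into (\ref{abc2.14}) with $\varphi \in \mathrm{Test}F_+(X, \mathsf{d}, \mathcal{H}^n)$ supported in $U$, and then invoking the density of $\mathrm{Test}F$ in $D(\Delta)$ to reach general $\varphi \in D(\Delta)$ with $\mathrm{supp}(\varphi) \subset U$ and $\Delta \varphi \in L^\infty(\mathcal{H}^n)$, delivers the claimed sharp estimate.

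To establish the trace identity on $U$, I would exploit the heat-kernel regularization $g_t := \Phi_t^{\ast}(g_{L^2(\mathcal{H}^n)})$ together with the asymptotic formula of Theorem \ref{20211222a}. The density bound (\ref{BGHZ}), combined with the Bishop--Gromov inequality (\ref{BGinequality111}), yields uniform two-sided bounds $c_A\, t^{n/2} \leqslant \mathcal{H}^n(B_{\sqrt{t}}(x)) \leqslant C_A\, t^{n/2}$ on any compact $A \subset U$ for small $t > 0$, upgrading the $L^p_{\mathrm{loc}}$ convergence of Theorem \ref{20211222a} to honest $L^p(A, \mathcal{H}^n)$ convergence of a suitable scalar multiple of $g_t$ to $g$. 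The $g$-trace of $g_t$ can be computed through the heat-kernel formula for $\nabla_x \rho$; using $\partial_t \rho = \Delta_x \rho$ together with the stochastic completeness identity $\int_X \rho(x, y, t)\, d\mathcal{H}^n(y) = 1$, the short-time asymptotics of $\mathrm{Tr}(g_t)$ pair against a test $f$ to produce both $\Delta f$ (from differentiating the heat semigroup at time $t$) and $\mathrm{Tr}(\mathrm{Hess}\, f)$ (from the asymptotic formula), giving the identity after $t \downarrow 0$.

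The main obstacle will be converting the $L^p$-level convergence provided by Theorem \ref{20211222a} into a genuine $\mathcal{H}^n$-a.e. identity on $U$, uniformly on compact subsets. The role of (\ref{BGHZ}) is precisely to force the heat-kernel asymptotics to reflect the $n$-dimensional local geometry everywhere in $U$; without it, the Bishop--Gromov density $\vartheta_n(X, \mathsf{d}, \mathcal{H}^n)$ could degenerate and the effective dimension appearing in the final Bochner inequality would degrade from $n$ back to $N$. The heat-kernel tail estimates of Theorem \ref{thm2.12} and the time-derivative bounds of Remark \ref{aaaaarmk2.9} supply the quantitative control needed to carry out the limit procedure and to justify the interchange of integration and differentiation in the heat-kernel computation of $\mathrm{Tr}(g_t)$.
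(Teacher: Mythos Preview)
The paper does not contain a proof of this statement: it is quoted verbatim from \cite[Theorem~1.5, Theorem~2.22]{BGHZ21}, and the only hint the present paper gives is the remark preceding the theorem that the asymptotic formula for $g_t$ (Theorem~\ref{20211222a}) ``plays an important role in the proof of Theorem~\ref{eqnBGHZ21}.''

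Your outline is consistent with that hint and with the actual argument in \cite{BGHZ21}: the sharp Bochner inequality with constant $n$ follows from the Hessian form (\ref{abc2.14}) together with Cauchy--Schwarz, once the trace identity $\Delta f=\langle\mathrm{Hess}\,f,g\rangle$ is known $\mathcal{H}^n$-a.e.\ on $U$; and the trace identity is obtained from the short-time behaviour of $g_t$ under the density hypothesis (\ref{BGHZ}). So the skeleton is right.

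Two points where your sketch is loose. First, the way the trace identity is extracted in \cite{BGHZ21} is not by ``pairing the $g$-trace of $g_t$ against a test $f$'' but by pairing the (normalized) tensor $g_t$ against $\mathrm{Hess}\,f$: one limit gives $\langle g,\mathrm{Hess}\,f\rangle$ via Theorem~\ref{20211222a}, while a direct heat-kernel computation (using $\partial_t\rho=\Delta\rho$ and the semigroup property) gives $\Delta f$ for the same quantity. The density bound (\ref{BGHZ}) is what makes the normalizing factor $t\,\mathcal{H}^n(B_{\sqrt t}(\cdot))$ uniformly comparable to $t^{(n+2)/2}$ on compact subsets of $U$, so that the $L^p_{\mathrm{loc}}$ convergence of Theorem~\ref{20211222a} actually bites. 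Second, the last step---passing from $\varphi\in\mathrm{Test}F_+$ supported in $U$ to general $\varphi\in D(\Delta)$ with $\Delta\varphi\in L^\infty$ and $\mathrm{supp}(\varphi)\subset U$---is not a pure density statement in $D(\Delta)$; one uses that $|\nabla f|^2\in H^{1,2}$ for $f\in\mathrm{Test}F$, rewrites the left side as $-\int\langle\nabla|\nabla f|^2,\nabla\varphi\rangle$, and approximates $\varphi$ by heat-flow regularization, which preserves nonnegativity, support control up to $\epsilon$, and the $L^\infty$ bound on the Laplacian.
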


In addition, for a weakly non-collapsed (and is now non-collapsed) RCD$(K,n)$ space $({X},\mathsf{d},\mathcal{H}^n)$, it follows from \cite[Theorem 1.12]{DG18} that
\[
\Delta f=\langle \mathop{\mathrm{Hess}}f,g\rangle \ \ \ \mathfrak{m}\text{-a.e.}, \ \forall f\in \text{D}(\Delta).
\]



\subsection{The pullback metric $g_t$}\label{sec3.1}

On $\mathbb{R}^n$, it is obvious that
\begin{equation} 
g_t^{\mathbb{R}^n}=\frac{c_1^{\mathbb{R}^n}}{t^{\frac{n+2}{2}}}g_{\mathbb{R}^n},\ \ \ \text{with } c_1^{\mathbb{R}^n}=\int_{\mathbb{R}^n}\left(\frac{\partial}{\partial x_1}\rho^{\mathbb{R}^n}(x,y,t)\right)^2\mathrm{d}\mathcal{L}^n (y).
\end{equation}

In \cite{Ta66}, Takahashi proves that any compact homogeneous irreducible Riemannian manifold $( M^n,g)$ is IHKI, which is even true provided that $( M^n,g)$ is a non-compact homogeneous irreducible Riemannian manifold. 

To generalize such isometric immersions to RCD$(K,N)$ spaces, let us first introduce the following locally Lipschitz {}{$t$-time heat kernel mapping on an RCD$(K,N)$ space $({X},\mathsf{d},\mathfrak{m})$ by using its heat kernel $\rho$ analogously}	:
\[
\begin{aligned}
\Phi_t:{X}&\longrightarrow L^2(\mathfrak{m})\\
x&\longmapsto \left(y\mapsto \rho(x,y,t)\right),
\end{aligned}
\]
which is well-defined due to the estimates in Theorem \ref{thm2.12}.
 
The natural pull-back semi-Riemannian metric of the flat metric of $L^2(\mathfrak{m})$, namely $g_t:=(\Phi_t)^\ast(g_{L^2(\mathfrak{m})})$, is defined as follows, see \cite[Proposition 4.7]{AHPT21} and \cite[Proposition 3.7]{BGHZ21}. 
\begin{thm}[The pull-back semi-Riemannian metrics]\label{thm2.18}
For all $t>0$, there is a unique semi-Riemannian metric $g_t\in L_{\mathrm{loc}}^\infty\left((T^\ast)^{\otimes 2}({X},\mathsf{d},\mathfrak{m})\right)$ such that 
\begin{enumerate}

\item For any $\eta_i\in L^2\left(T^\ast({X},\mathsf{d},\mathfrak{m})\right)$ with bounded support $(i=1,2)$, 
\[
\int_{{X}} \left\langle g_t,\eta_1 \otimes \eta_2 \right\rangle \mathrm{d}\mathfrak{m}=\int_{{X}} \int_{{X}} \left\langle d_x \rho(x,y,t),\eta_1\right\rangle \left\langle d_x \rho(x,y,t),\eta_2\right\rangle\mathrm{d}\mathfrak{m}(x)\mathrm{d}\mathfrak{m}(y).
\]
In particular, if $({X},\mathsf{d},\mathfrak{m})$ is compact, then $g_t=\sum\limits_{i=1}^\infty e^{-2\mu_i t}d\phi_i\otimes d\phi_i$.

\item For any $t\in (0,1)$, the rescaled semi-Riemannian metric $t\mathfrak{m}(B_{\sqrt{t}}(\cdot))g_t$ satisfies
\begin{equation}\label{tsuikaeqn3.2}
t\mathfrak{m}(B_{\sqrt{t}}(\cdot))g_t\leqslant C(K,N) g,
\end{equation}
which means that for any $\eta\in L^2\left(T^\ast({X},\mathsf{d},\mathfrak{m})\right)$, it holds that
\[
t\mathfrak{m}(B_{\sqrt{t}}(x))\langle g_t,\eta\otimes \eta \rangle (x) \leqslant C(K,N) |\eta|^2(x)\ \  \text{$\mathfrak{m}$-a.e. $x\in {X}$}.
\]

\end{enumerate}
\end{thm}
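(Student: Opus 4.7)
The plan is to define $g_t$ pointwise by the Bochner-type integral
\[
g_t(x) := \int_X d_x\rho(x,y,t)\otimes d_x\rho(x,y,t)\,\mathrm{d}\mathfrak{m}(y),
\]
and to verify both claims using the Jiang--Li--Zhang Gaussian gradient estimate from Theorem \ref{thm2.12}. To realise this integral inside the calculus module, I would first introduce the symmetric $L^\infty(\mathfrak{m})$-bilinear map $Q_t$ on the algebra of $\mathrm{Test}F$-forms defined by $Q_t(\eta_1,\eta_2)(x):=\int_X \langle d_x\rho(x,y,t),\eta_1\rangle\langle d_x\rho(x,y,t),\eta_2\rangle\,\mathrm{d}\mathfrak{m}(y)$, and identify $g_t$ as the unique element of $L^\infty_{\mathrm{loc}}((T^\ast)^{\otimes 2})$ representing $Q_t$. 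Fubini on bounded supports then immediately produces the integral identity in (1).

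The heart of the argument is the pointwise $L^\infty_{\mathrm{loc}}$ bound in (2). Squaring the JLZ estimate with some fixed small $\epsilon>0$ yields
\[
|\nabla_x\rho(x,y,t)|^2\leqslant \frac{C(K,N,\epsilon)}{t\,\mathfrak{m}(B_{\sqrt{t}}(x))^2}\exp\!\left(-\frac{\mathsf{d}^2(x,y)}{(2+\epsilon)t}+2C_4 t\right),
\]
and after integrating in $y$, the remaining Gaussian tail integral is controlled by $C(K,N)\,\mathfrak{m}(B_{\sqrt{t}}(x))$ through a standard dyadic decomposition of $X$ into annuli $B_{(k+1)\sqrt{t}}(x)\setminus B_{k\sqrt{t}}(x)$ coupled with the Bishop--Gromov inequality (\ref{BGinequality111}). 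This delivers $Q_t(\eta,\eta)(x)\leqslant C(K,N)|\eta|^2(x)/(t\,\mathfrak{m}(B_{\sqrt{t}}(x)))$ for $t\in(0,1)$, which is precisely the announced tensor inequality $t\,\mathfrak{m}(B_{\sqrt{t}}(\cdot))g_t\leqslant C(K,N)g$. This is the step I expect to be the main obstacle: the bound has to be uniform in $x$ up to the volume factor, and the doubling/Bishop--Gromov input must be applied carefully in order to absorb the exponential tail.

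For the eigenfunction identity in (1), in the compact case I would plug the spectral expansion (\ref{heatkernel}) into the definition. Proposition \ref{heatkernel2} supplies $\|\phi_i\|_{L^\infty}\lesssim \mu_i^{N/4}$, $\||\nabla\phi_i|\|_{L^\infty}\lesssim \mu_i^{(N+2)/4}$ and the Weyl-type lower bound $\mu_i\gtrsim i^{2/N}$, which together ensure absolute convergence of the formal $x$-differential $d_x\rho(x,y,t)=\sum_i e^{-\mu_i t}\phi_i(y)\,d\phi_i(x)$ in $L^\infty$ on $X\times X$. Substituting this series into the defining integral, exchanging sum and integral by the uniform convergence, and invoking Parseval's identity for the $L^2(\mathfrak{m})$-orthonormal basis $\{\phi_i\}$ collapses the double sum in $y$ and yields $g_t=\sum_i e^{-2\mu_i t}d\phi_i\otimes d\phi_i$.

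Uniqueness is the easiest step: the integral identity in (1) pinned down $\int_X h\,\langle g_t, df_1\otimes df_2\rangle\,\mathrm{d}\mathfrak{m}$ for every triple $f_1,f_2,h\in\mathrm{Test}F(X,\mathsf{d},\mathfrak{m})$ (use $\eta_i=h_i\,df_i$ and polarise), and such elementary tensors are dense in $L^2((T^\ast)^{\otimes 2})$ on bounded sets, so two semi-Riemannian metrics agreeing with $Q_t$ must coincide $\mathfrak{m}$-a.e.
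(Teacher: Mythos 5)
Your proposal is essentially correct, and it mirrors the standard argument used in the references the paper itself points to (\cite[Proposition 4.7]{AHPT21} and \cite[Proposition 3.7]{BGHZ21}); the paper does not reprove Theorem \ref{thm2.18} but cites those sources. One small remark: the dyadic annular decomposition you describe for controlling $\int_X \exp(-\beta\mathsf{d}^2(x,y)/t)\,\mathrm{d}\mathfrak{m}(y)$ is exactly what is packaged as Lemma \ref{aaaalem3.11} in the paper, so in the present context you could simply invoke that lemma with $\alpha=0$; likewise ``Parseval'' is slightly more than is needed — the orthonormality relation $\int_X\phi_i\phi_j\,\mathrm{d}\mathfrak{m}=\delta_{ij}$ suffices to collapse the double sum, with the $L^\infty$ bounds from Proposition \ref{heatkernel2} justifying the exchange of sum and integral. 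These are cosmetic observations; the structure and estimates of your proof are sound.
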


The rest part of this subsection proves Theorem \ref{thm1.2}. The following inequality is needed. See for instance \cite[Lemma 2.3]{AHPT21} and \cite[Lemma 2.7]{BGHZ21}.
\begin{lem}\label{aaaalem3.11}
Let $({X},\mathsf{d},\mathfrak{m})$ be an $\mathrm{RCD}(K,N)$ space. Then for any $\alpha\in \mathbb{R}$, $\beta>0$ and any $x\in{X}$, it holds that
\begin{equation}
\int_{X}\mathfrak{m}\left(B_{\sqrt{t}}(y)\right)^\alpha \exp\left(-\frac{\beta \mathsf{d}^2(x,y)}{t}\right)\mathrm{d}\mathfrak{m}(y)\leqslant C\left(K,N,\alpha,\beta\right) \mathfrak{m}\left(B_{\sqrt{t}}({}{x})\right)^{\alpha+1}.
\end{equation}
\end{lem}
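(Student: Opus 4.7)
The plan is to control the integral via a dyadic/annular decomposition around $x$ together with two applications of the Bishop--Gromov inequality (\ref{BGinequality111}): one to compare $\mathfrak{m}(B_{\sqrt{t}}(y))$ with $\mathfrak{m}(B_{\sqrt{t}}(x))$, and one to bound the total measure of each annulus in terms of $\mathfrak{m}(B_{\sqrt{t}}(x))$. The Gaussian factor $\exp(-\beta\mathsf{d}^2(x,y)/t)$ will then dominate the (at most exponential) growth produced by those two comparisons, yielding a summable series.

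More concretely, for each integer $k\geqslant 0$ set $A_k:=B_{(k+1)\sqrt{t}}(x)\setminus B_{k\sqrt{t}}(x)$, so that $X=\bigsqcup_{k\geqslant 0}A_k$. For $y\in A_k$ one has $k\sqrt{t}\leqslant \mathsf{d}(x,y)<(k+1)\sqrt{t}$, hence
\[
\exp\!\left(-\frac{\beta\mathsf{d}^2(x,y)}{t}\right)\leqslant \exp(-\beta k^2).
\]
Applying (\ref{BGinequality111}) with $r=\sqrt{t}$ gives, for any $y\in A_k$,
\[
\mathfrak{m}\!\left(B_{\sqrt{t}}(y)\right)^{\alpha}\leqslant C(K,N,\alpha)\exp\!\left(C(K,N,\alpha)(k+2)\right)\mathfrak{m}\!\left(B_{\sqrt{t}}(x)\right)^{\alpha},
\]
where one uses the inequality directly when $\alpha\geqslant 0$ and with the roles of $x,y$ swapped when $\alpha<0$. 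Similarly (\ref{BGinequality}) with $R=(k+1)\sqrt{t}$ and $r=\sqrt{t}$ gives
\[
\mathfrak{m}(A_k)\leqslant \mathfrak{m}\!\left(B_{(k+1)\sqrt{t}}(x)\right)\leqslant C(K,N)\exp\!\left(C(K,N)(k+1)\right)\mathfrak{m}\!\left(B_{\sqrt{t}}(x)\right).
\]

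Multiplying these three bounds and integrating over $A_k$ yields
\[
\int_{A_k}\mathfrak{m}(B_{\sqrt{t}}(y))^{\alpha}\exp\!\left(-\frac{\beta\mathsf{d}^2(x,y)}{t}\right)\mathrm{d}\mathfrak{m}(y)\leqslant C(K,N,\alpha)\,e^{C(K,N,\alpha)k-\beta k^2}\,\mathfrak{m}\!\left(B_{\sqrt{t}}(x)\right)^{\alpha+1}.
\]
Summing over $k\geqslant 0$ and using that the series $\sum_{k\geqslant 0}\exp(Ck-\beta k^2)$ converges to a constant depending only on $K,N,\alpha,\beta$ delivers the claimed bound.

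The only mild technical point is the case distinction on the sign of $\alpha$ when applying (\ref{BGinequality111}); apart from this, the argument is a routine application of Bishop--Gromov, and there is no genuine obstacle.
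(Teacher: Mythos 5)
Your proof is correct, and it is essentially the standard argument: dyadic annuli $A_k$ at scale $\sqrt{t}$, Bishop--Gromov in the form (\ref{BGinequality111}) to compare $\mathfrak{m}(B_{\sqrt{t}}(y))$ with $\mathfrak{m}(B_{\sqrt{t}}(x))$ on each annulus, Bishop--Gromov in the form (\ref{BGinequality}) to bound $\mathfrak{m}(A_k)$, and the Gaussian factor $e^{-\beta k^2}$ to beat the resulting $e^{Ck}$ growth so the series sums. The paper does not reprove this lemma but refers to \cite[Lemma 2.3]{AHPT21} and \cite[Lemma 2.7]{BGHZ21}, and your argument coincides with the one given there. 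One very minor remark: since (\ref{BGinequality111}) is symmetric in $x$ and $y$, the ``case distinction'' you flag at the end is cosmetic --- raising the two-sided comparison $\mathfrak{m}(B_{\sqrt{t}}(x))/\mathfrak{m}(B_{\sqrt{t}}(y))\in[C^{-1}e^{-C(k+2)},\,Ce^{C(k+2)}]$ to the power $\alpha$ gives the displayed bound in a single stroke for every $\alpha\in\mathbb{R}$, so no genuine branching is needed.
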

\begin{remark}
When $({X},\mathsf{d},\mathfrak{m})$ is an RCD$(0,N)$ space, by \cite[Corollary 1.1]{JLZ16} and Lemma \ref{aaaalem3.11}, (\ref{tsuikaeqn3.2}) becomes
\begin{equation}\label{tsukaeqn3.3}
t\mathfrak{m}(B_{\sqrt{t}}(\cdot))g_t\leqslant C(N) g,\ \forall {}{t>0}.
\end{equation}
\end{remark}
Jiang's gradient estimate \cite[Theorem 3.1]{J14} is also important in this paper, which states as follows. 
\begin{thm}\label{aaaathm3.12}
Let $({X},\mathsf{d},\mathfrak{m})$ be an $\mathrm{RCD}(K,N)$ space and $\Omega$ be {}{an} open subset. If for some $u\in D(\Delta)\cap L^\infty(\Omega,\mathfrak{m})$, $\Delta u \in L^\infty(\Omega,\mathfrak{m})$, then for every $B_R(x)$ with $R\leqslant 1$ and $B_{8R}(x)\Subset \Omega$, it holds that
\begin{equation}
\left\| |\nabla u|\right\|_{L^\infty\left(B_{R}(x),\mathfrak{m}\right)}\leqslant C(K,N)\left(\frac{1}{R} \left\| u\right\|_{L^\infty\left(B_{8R}(x),\mathfrak{m}\right)}+ R\left\|\Delta u\right\|_{L^\infty\left(B_{8R}(x),\mathfrak{m}\right)}\right).
\end{equation}  
\end{thm}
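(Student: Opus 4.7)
The plan is to run a Moser iteration on $f:=|\nabla u|^2$, using the Bochner inequality (\ref{bochnerineq}) as the underlying subsolution estimate. This transplants the classical Li--Schoen gradient estimate to the nonsmooth setting, and the three ingredients needed—the doubling property of Theorem \ref{BGineq}, a local $L^2$-Poincar\'e inequality (hence a local Sobolev embedding of some effective dimension $\nu>N$), and the existence of good cutoff functions $\eta$ supported in annuli with $|\nabla\eta|+R^2|\Delta\eta|\leqslant C(K,N)$—are all available on $\mathrm{RCD}(K,N)$ spaces. As a preliminary step, I would cut off $u$ to produce a globally defined function and then regularize by $u_\varepsilon:=\mathrm{h}_\varepsilon u$, so that $u_\varepsilon\in\mathrm{Test}F$ locally, carry out the iteration with constants independent of $\varepsilon$, and pass $\varepsilon\to 0$ at the end using the $L^\infty$-control on $\Delta u$.

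Applying Bochner to $u_\varepsilon$ against a nonnegative test $\varphi$ and integrating the cross term $\langle\nabla u_\varepsilon,\nabla\Delta u_\varepsilon\rangle$ by parts yields the weak inequality
\[
\tfrac12\!\int f\,\Delta\varphi\,\mathrm{d}\mathfrak{m}+\!\int\Delta u_\varepsilon\,\langle\nabla\varphi,\nabla u_\varepsilon\rangle\,\mathrm{d}\mathfrak{m}+\tfrac{N-1}{N}\!\int\varphi(\Delta u_\varepsilon)^2\,\mathrm{d}\mathfrak{m}\geqslant K\!\int\varphi f\,\mathrm{d}\mathfrak{m}.
\]
Inserting $\varphi=\eta^2(f+\kappa)^{p-1}$ with $\kappa:=R^2\|\Delta u\|_{L^\infty(B_{8R}(x))}^2$ and absorbing the mixed gradient terms via Cauchy--Schwarz produces a Caccioppoli-type bound
\[
\int\eta^2\bigl|\nabla(f+\kappa)^{p/2}\bigr|^2\mathrm{d}\mathfrak{m}\leqslant C(K,N)p^2\!\int(|\nabla\eta|^2+\eta^2)(f+\kappa)^p\,\mathrm{d}\mathfrak{m}.
\]
Combining this with the local Sobolev inequality and iterating on a geometric sequence of concentric balls $B_{r_k}(x)\searrow B_R(x)$ in the standard De Giorgi--Moser fashion gives
\[
\sup_{B_R(x)}f\leqslant\frac{C(K,N)}{\mathfrak{m}(B_{2R}(x))}\int_{B_{2R}(x)}f\,\mathrm{d}\mathfrak{m}+C(K,N)R^2\|\Delta u\|_{L^\infty(B_{8R}(x))}^2.
\]
Finally, testing $-\Delta u$ against $\eta^2 u$ with a cutoff supported in $B_{4R}(x)$ yields the standard reverse Poincar\'e
\[
\frac{1}{\mathfrak{m}(B_{2R}(x))}\int_{B_{2R}(x)}f\,\mathrm{d}\mathfrak{m}\leqslant\frac{C}{R^2}\|u\|_{L^\infty(B_{4R}(x))}^2+CR^2\|\Delta u\|_{L^\infty(B_{4R}(x))}^2,
\]
and combining the two bounds and taking a square root delivers the desired estimate.

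The main obstacle is the technical execution of the iteration in the nonsmooth setting. One must verify that the test functions $\eta^2(f+\kappa)^{p-1}$ lie in $H^{1,2}\cap L^\infty$ at every stage, which requires some qualitative $L^\infty$-control on $|\nabla u_\varepsilon|$ to start the bootstrap—this is precisely what forces the preliminary heat-semigroup regularization—and the use of the chain and Leibniz rules for the minimal relaxed slope. Equally important is the availability of cutoff functions with bounded Laplacian on $\mathrm{RCD}(K,N)$ spaces, whose construction is by now standard but is the real nontrivial ingredient that allows the entire classical scheme to be imported into the synthetic framework.
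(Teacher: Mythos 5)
The paper does not prove this theorem — it states it as a citation of Jiang's gradient estimate, \cite[Theorem 3.1]{J14}, and uses it as a black box. There is therefore no proof in the paper to compare against; what you have written is, in effect, a reconstruction of Jiang's original argument, and your route — Moser iteration on $f=|\nabla u|^2$ from the weak Bochner inequality, the Li--Schoen shift $\kappa=R^2\|\Delta u\|_{L^\infty}^2$, local Sobolev from doubling plus Poincar\'e, the good cutoffs of \cite[Lemma 3.1]{MN19}, and the reverse Poincar\'e finish — is indeed the one used there. Your integration by parts of the cross term, turning $\int\varphi\langle\nabla u,\nabla\Delta u\rangle$ into $-\int\varphi(\Delta u)^2-\int\Delta u\langle\nabla\varphi,\nabla u\rangle$, is the right move to avoid needing $\Delta u\in H^{1,2}$.

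Two technical points deserve more care than a passing mention, because they are where the scheme is most likely to go wrong if carried out naively. First, after multiplying $u$ by a cutoff $\eta_0$ (equal to $1$ on $B_{8R}(x)$) to obtain a global function $\tilde u\in D(\Delta)$, the Laplacian picks up a term $2\langle\nabla\eta_0,\nabla u\rangle+u\Delta\eta_0$ that is only in $L^2$, not $L^\infty$, outside $B_{8R}(x)$. Since $\Delta \mathrm{h}_\varepsilon\tilde u=\mathrm{h}_\varepsilon\Delta\tilde u$ and the heat semigroup is nonlocal, you do not automatically get a uniform $L^\infty$ bound for $\Delta(\mathrm{h}_\varepsilon\tilde u)$ even on the inner ball. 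You need to split $\Delta\tilde u$ into its restriction to $B_{8R}(x)$ and its tail, and use the off-diagonal Gaussian decay of the heat kernel (Theorem \ref{thm2.12}) to show the tail contributes $o(1)$ on $B_{4R}(x)$ as $\varepsilon\to 0$; only then is the iteration carried out with constants uniform in $\varepsilon$. Second, passing $\varepsilon\to 0$ in the $\sup$-estimate requires $H^{1,2}_{\mathrm{loc}}$-convergence $u_\varepsilon\to u$ together with lower semicontinuity of the $L^\infty$-norm along an a.e.-convergent subsequence; this should be stated explicitly rather than left implicit. Finally, the Caccioppoli bound you display should really read $|\nabla\eta|^2+R^{-2}\eta^2$ on the right, since the $(\Delta u)^2$ and $K$ contributions each carry an $R^{-2}$; with $R\leqslant 1$ this is a cosmetic matter, but keeping track of it is what makes the final inequality scale-correct. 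Modulo these points your outline is sound and matches the cited argument.
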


Finally, we need the following proposition.
\begin{prop}\label{llem3.4}
{}{Suppose that $({X},\mathsf{d},\mathfrak{m})$ is an $\mathrm{RCD}(K,N)$ space which is not a single point. Then for any $t>0$, 
\[
\mathfrak{m}\left(\{x\in {X}:|g_t|_{\mathsf{HS}}>0\}\right)>0. 
\]}
\end{prop}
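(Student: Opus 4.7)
The plan is a proof by contradiction. Suppose, for some $t>0$, that $\mathfrak{m}(\{x\in X:|g_t|_{\mathsf{HS}}>0\})=0$, so that $g_t$ vanishes $\mathfrak{m}$-a.e.\ as an $L^2_{\mathrm{loc}}$-tensor. The strategy is to propagate this vanishing through the heat-kernel machinery until it forces $L^2(\mathfrak{m})$ to consist only of constants, which conflicts with $X$ having more than one point.

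First I would exploit the integral identity in Theorem \ref{thm2.18}(1) with $\eta_1=\eta_2=\chi_A\,d\varphi$, where $\varphi\in\mathrm{Test}F(X,\mathsf{d},\mathfrak{m})$ and $A\subset X$ is a bounded Borel set. The vanishing of the left-hand side combined with Fubini yields $\langle \nabla_x\rho(\cdot,y,t),\nabla\varphi\rangle=0$ $\mathfrak{m}$-a.e.\ on $A$ for $\mathfrak{m}$-a.e.\ $y\in X$. Running $\varphi$ through a countable family whose differentials generate the cotangent module then gives $|\nabla_x\rho(\cdot,y,t)|=0$ $\mathfrak{m}$-a.e.\ for $\mathfrak{m}$-a.e.\ $y$. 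Since the Gaussian and gradient estimates of Theorem \ref{thm2.12} place $\rho(\cdot,y,t)$ in $H^{1,2}(X,\mathsf{d},\mathfrak{m})$, the Sobolev-to-Lipschitz property together with the fact that any $\mathrm{RCD}(K,N)$ space is a length space, hence connected, produces a constant Lipschitz representative of $\rho(\cdot,y,t)$. The joint local Lipschitz continuity of $\rho$ then upgrades this to $\rho(\cdot,y,t)\equiv c(y)$ pointwise on $X$ for every $y\in X$.

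Next I would pin down $c(y)$. The symmetry $\rho(x,y,t)=\rho(y,x,t)$ forces $c$ to be a single constant $c\equiv 1/\mathfrak{m}(X)$, via the stochastic completeness identity $\int_X\rho(x,y,t)\,\mathrm{d}\mathfrak{m}(y)=1$. In particular $\mathfrak{m}(X)<\infty$, since $\mathfrak{m}(X)=\infty$ would give $c=0$ and contradict the normalization. To promote this identity to all times, I would apply Chapman–Kolmogorov and $\mathrm{h}_s\hat{1}=\hat{1}$ to obtain $\rho(\cdot,\cdot,t+s)=1/\mathfrak{m}(X)$ for every $s>0$, and then use real analyticity of $s\mapsto \rho(x,y,s)$ on $(0,\infty)$ (Remark \ref{aaaaarmk2.9}) to extend the identity to every $s>0$. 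Consequently $\mathrm{h}_s f\equiv \mathfrak{m}(X)^{-1}\int_X f\,\mathrm{d}\mathfrak{m}$ for every $f\in L^2(\mathfrak{m})$ and every $s>0$; letting $s\downarrow 0$ and using strong $L^2$-continuity of the heat semigroup forces every $f\in L^2(\mathfrak{m})$ to be constant $\mathfrak{m}$-a.e. Since $\mathfrak{m}$ has full support, this contradicts the assumption that $X$ is not a single point.

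The main obstacle is the first step: going from the $L^2$-level vanishing of $g_t$ against test forms to a pointwise statement about $\nabla_x\rho(\cdot,y,t)$ for a.e.\ $y$ requires a careful choice of test 1-forms and a Fubini-type bookkeeping inside the cotangent module. Everything after this step is essentially formal manipulation with stochastic completeness, analyticity in $t$, and the $C_0$-semigroup property of $\{\mathrm{h}_s\}_{s>0}$.
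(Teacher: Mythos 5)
Your proposal is correct and follows essentially the same route as the paper: contradiction, deduce $|\nabla_x\rho(\cdot,y,t_0)|=0$ a.e., use Sobolev-to-Lipschitz plus continuity of $\rho$ to conclude $\rho(\cdot,\cdot,t_0)$ is constant, derive $\mathfrak{m}(X)<\infty$ from stochastic completeness, propagate the identity to all times via the semigroup and analyticity in $t$ (Remark~\ref{aaaaarmk2.9}), and conclude from $\mathrm{h}_s f\to f$ in $L^2$ that $X$ is a single point. The only cosmetic difference is that you spell out, via the integral identity of Theorem~\ref{thm2.18}(1) and a countable generating family of test $1$-forms, the step from $|g_{t_0}|_{\mathsf{HS}}=0$ a.e.\ to $|\nabla_x\rho(\cdot,y,t_0)|=0$ a.e., which the paper states without detail (it also follows more directly by pairing $g_{t_0}$ against $g$).
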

\begin{proof}
Assume by contradiction the existence of $t_0>0$ such that $\mathfrak{m}(\{x\in {X}:|g_{t_0}|_{\mathsf{HS}}>0\})=0$. Clearly this implies $|\nabla_x \rho(x,y,t_0)|=0$, $\mathfrak{m}$-a.e. $x,y \in {X}$. For any fixed $x\in{X}$, the locally Lipschitz continuity of $y\mapsto \rho(x,y,t_0)$ as well as the Sobolev to Lipschitz property then yields that $\Phi_{t_0}\equiv c\hat{1}$ for some constant $c$. Therefore, it follows from the stochastic completeness of RCD$(K,N)$ spaces that $\mathfrak{m}({X})<\infty$. Without loss of generality, assume that $\mathfrak{m}({X})=1$. Notice that $\Phi_{2t_0}(x)=h_{t_0}(\Phi_{t_0}(x))\equiv \hat{1}$, which implies $\rho(x,y,t)\equiv 1$ on ${X}\times{X}\times [t_0,2t_0]$ by (\ref{111eqn2.4}). {}{Then applying Remark \ref{aaaaarmk2.9} shows that 
\[
\rho(x,y,t)=1,\ \forall (x,y,t)\in X\times X\times (0,\infty).
\]
As a consequence, for any $f\in L^2(\mathfrak{m})$, we have 
\[
\mathrm{h}_t f =\int_X \rho(x,y,t) f\mathrm{d}\mathfrak{m}= \int_X  f\mathrm{d}\mathfrak{m},\ \forall t>0.
\]
Since $\mathrm{h}_t f$ converges to $f$ in $L^2(\mathfrak{m})$ as $t\rightarrow 0$, $f$ is nothing but a constant function, which is enough to conclude that ${X}$ is a single point. A contradiction.
}

\end{proof}
\begin{proof}[Proof of Theorem \ref{thm1.2}]
{}{Let $n=\mathrm{dim}_{\mathsf{d},\mathfrak{m}(X)}$.} For any fixed $B_R(x_0)\subset {X}$, set
{}{\[
\begin{aligned}
f: (0,\infty)&\longrightarrow [0,\infty)\\
t&\longmapsto n\mathfrak{m}(B_R(x_0))\int_{B_R(x_0)}\langle g_t,g_t\rangle\mathrm{d}\mathfrak{m}-\left(\int_{B_R(x_0)}\langle g,g_t\rangle \mathrm{d}\mathfrak{m}\right)^2.
\end{aligned} 
\]
}

Since we can rescale the space, it suffices to show that $f$ is analytic at any $t\in (0,1)$. {}{Because then by applying Proposition \ref{llem3.4} we are done.}

For any {}{$m\geqslant 1$}, the commutativity  of $\dfrac{\partial}{\partial t}$ and $\Delta $ allows us to fix an arbitrary $y\in {X}$ and apply Theorem \ref{aaaathm3.12} on $B_{8\sqrt{t}}(x)$ for {}{$u:z\mapsto \dfrac{\partial^m}{\partial t^m}\rho(z,y,t)$.} (\ref{aabbeqn3.7}) then implies

\[
\left\||\nabla u| \right\|_{L^\infty(B_{\sqrt{t}}(x),\mathfrak{m})}
\leqslant {}{\frac{C(K,N)m!}{t^{m+\frac{1}{2}} }}\sup\limits_{z\in B_{8\sqrt{t}}(x)}\left(\mathfrak{m}(B_{\sqrt{t}}(z))\mathfrak{m}(B_{\sqrt{t}}(y))\right)^{-\frac{1}{2}}\exp\left(-\frac{\mathsf{d}^2({}{z,y})}{100t}\right).
\]

Using (\ref{BGinequality111}), for any $z\in B_{8\sqrt{t}}(x)$, we know
\[
\frac{\mathfrak{m}\left(B_{\sqrt{t}}(x)\right)}{\mathfrak{m}\left(B_{\sqrt{t}}(z)\right)}\leqslant C(K,N)\exp\left(\frac{\sqrt{t}+\mathsf{d}(x,z)}{\sqrt{t}}\right)\leqslant C(K,N).
\] 
{}{This as well as the inequality $-\mathsf{d}^2(z,y)\leqslant \mathsf{d}^2(z,x)-\dfrac{\mathsf{d}^2(x,y)}{2}$} implies that for $\mathfrak{m}$-a.e. $x\in {X}$,
\begin{equation}\label{aaaaeqn3.8}
\left|\nabla_x {}{\frac{\partial^m}{\partial t^m}}\rho(x,y,t)\right|\leqslant {}{\frac{C(K,N)m!}{t^{m+\frac{1}{2}}}}\left(\mathfrak{m}(B_{\sqrt{t}}(x))\mathfrak{m}(B_{\sqrt{t}}(y))\right)^{-\frac{1}{2}}\exp\left(-\frac{\mathsf{d}^2(x,y)}{{}{200t}}\right).
\end{equation}

Let {}{ $f=n\mathfrak{m}(B_R(x_0))f_1-f_2^2$, with $f_2(t)= \int_{B_R(x_0)}\langle g,g_t\rangle \mathrm{d}\mathfrak{m}$. We only give a proof of the analyticity of $f_1$, since the analyticity of $f_2$ will follow from similar arguments.}

Rewrite {}{$f_1$} as
\[
{}{f_1}(t)=\int_{B_R(x_0)}\int_{X}\int_{X} \left\langle \nabla_x \rho(x,y,t),\nabla_x \rho(x,z,t)\right\rangle^2 \mathrm{d}\mathfrak{m}(z) \mathrm{d}\mathfrak{m}(y) \mathrm{d}\mathfrak{m}(x). 
\]

It is enough to estimate derivatives of each order of ${}{f_1}$ at any fixed $t\in (0,1)$. We first show that {}{$f_1$} is differentiable.
 
For any sufficiently small $s$, {}{$\dfrac{f_1(t+s)-f_1(t)}{s}$} can be written as the sum of the integrals of functions like
\begin{equation}\label{0324eqn1}
 \left\langle \nabla_x \frac{\rho(x,y,t+s)-\rho(x,y,t)}{s},\nabla_x \rho(x,z,t)\right\rangle \left\langle \nabla_x \rho(x,y,t+s),\nabla_x \rho(x,z,t+s)\right\rangle 
\end{equation}
on $B_R(x_0)
\times {X}\times {X}$. 

In order to use the dominated convergence theorem, we need estimates of $\left|\nabla_x \dfrac{\rho(x,y,t+s)-\rho(x,y,t)}{s}\right|$ and $|\nabla_x \rho(x,y,t+s) |$ for any sufficiently small $s$. By Theorem \ref{thm2.12} and the Bishop-Gromov inequality, for $\mathfrak{m}$-a.e. $x\in{X}$, 
\begin{equation}\label{0324eqn3}
\begin{aligned}
|\nabla_x \rho(x,y,t+s) |&\leqslant \dfrac{C(K,N)}{\sqrt{t+s}\ \mathfrak{m}\left(B_{\sqrt{t+s}}(x)\right)}\exp\left(-\dfrac{\mathsf{d}^2(x,y)}{100(t+s)}\right)\\
\ &\leqslant \dfrac{C(K,N)}{\sqrt{t}\ \mathfrak{m}\left(B_{\sqrt{t}}(x)\right)}\dfrac{\mathfrak{m}\left(B_{\sqrt{t}}(x)\right)}{\mathfrak{m}\left(B_{\sqrt{t+s}}(x)\right)}\exp\left(-\dfrac{\mathsf{d}^2(x,y)}{200t}\right) \\
\ &\leqslant \dfrac{C(K,N)}{\sqrt{t}\ \mathfrak{m}\left(B_{\sqrt{t}}(x)\right)}\exp\left(-\dfrac{\mathsf{d}^2(x,y)}{200t}\right) .\\
\end{aligned}
\end{equation}
The last inequality of (\ref{0324eqn3}) is obvious when $s>0$, and is guaranteed by the Bishop-Gromov inequality when $s<0$. 

Applying (\ref{aaaaeqn3.8}), Theorem \ref{aaaathm3.12} and the Lagrange mean value theorem, the {}{following estimate} can also be obtained as in (\ref{0324eqn3}): 
\begin{equation}\label{0324eqn2}
\begin{aligned}
\ &\left|\nabla_x \left(\dfrac{\rho(x,y,t+s)-\rho(x,y,t)}{s}-\dfrac{\partial}{\partial t}\rho(x,y,t)\right)\right|\\
\leqslant\ & \dfrac{C(K,N)2!|s|}{t^{\frac{5}{2}}}\left(\mathfrak{m}\left(B_{\sqrt{t}}(x)\right)\mathfrak{m}\left(B_{\sqrt{t}}(y)\right)\right)^{-\frac{1}{2}}\exp\left(-\dfrac{\mathsf{d}^2(x,y)}{{}{300t}}\right).
\end{aligned}
\end{equation}

 Therefore the $L^1(\mathfrak{m}\otimes \mathfrak{m}\otimes \mathfrak{m})$ convergence of 
(\ref{0324eqn1}) as $s\rightarrow 0$ can be verified by (\ref{0324eqn3}), (\ref{0324eqn2}) and Lemma \ref{aaaalem3.11}. The limit of (\ref{0324eqn1}) as $s\rightarrow 0$ is actually 
\[
\int_{B_R(x_0)\times {X}\times {X}}\left\langle \nabla_x \frac{\partial}{\partial t}\rho(x,y,t),\nabla_x \rho(x,z,t)\right\rangle  \left\langle \nabla_x \rho(x,y,t),\nabla_x \rho(x,z,t)\right\rangle \mathrm{d}\mathfrak{m}(z) \mathrm{d}\mathfrak{m}(y) \mathrm{d}\mathfrak{m}(x).
\]

   The proof of any higher order differentiability of {}{$f_1$} can follow from similar arguments as above.
   
    On the other hand, the higher order derivatives of {}{$f_1$} shall be written as
\[
{}{f_1^{(m)}(t)}=\sum\limits_{k=0}^m\sum\limits_{i=0}^k\sum\limits_{j=0}^{{}{m-k}}\int_{B_R(x_0)}\int_{X}\int_{X}I_{k,i}I_{{}{m-k},j}\mathrm{d}\mathfrak{m}(z) \mathrm{d}\mathfrak{m}(y) \mathrm{d}\mathfrak{m}(x),
\]
where 
\[
I_{k,i}=\left\langle \nabla_x \frac{\partial^i}{\partial t^i}\rho(x,y,t),\nabla_x \frac{\partial^{k-i}}{\partial t^{k-i}}\rho(x,z,t)\right\rangle.
\]

{}{Letting \[
I_i=\left|\nabla_x\frac{\partial^i}{\partial t^i}\rho(x,y,t)\right|,\ \  J_{i}=\left|\nabla_x\frac{\partial^i}{\partial t^i}\rho(x,z,t)\right|,
\]
we obtain
\[
|I_{k,i}I_{m-k,j}|\leqslant I_i I_j J_{k-i} J_{m-k-j},\ \mathfrak{m}\text{-a.e.}
\]
}

Finally Theorem \ref{thm2.12}, Lemma \ref{aaaalem3.11} and (\ref{aaaaeqn3.8}) yield that 
\[
\left|\int_{X}I_i I_j \mathrm{d}\mathfrak{m}(y)\right|\leqslant C(K,N)\frac{i!j!}{t^{i+j+1}},
\]
\[
\left|\int_{X}J_{k-i} {}{J_{m-k-j}} \mathrm{d}\mathfrak{m}(z)\right|\leqslant C(K,N){}{\frac{(k-i)!(m-k-j)!}{t^{m-i-j+1}}.}
\]
Thus ${}{|f_1^{(m)}(t)|}\leqslant \mathfrak{m}(B_R(x_0))C(K,N){}{m!t^{-(m+2)}}$. This completes the proof.
\end{proof}

\subsection{A regularity result about IHKI RCD$(K,N)$ spaces}\label{sec3.2}


{}{This subsection is aimed at proving Theorem \ref{mainthm1.3}.} The following statement is trivial for the pmGH convergence of geodesic spaces, which is frequently used in the proof of  Theorem \ref{mainthm1.3}. We shall call no extra attention to this well-known fact in this paper.
\begin{fact}\label{11lem3.7}
Assume that $({X},\mathsf{d},\mathfrak{m})$ is an RCD$(K,N)$ space  {}{and is not a single point}. Then for any sequence of points $\{x_i\}\subset {X}$, and any $\{r_i\}$ with $r_i \rightarrow 0$, after passing to a subsequence, the pmGH limit of $\left\{\left({X}_{i},\dfrac{1}{r_{i}}\mathsf{d}_{i},\dfrac{\mathfrak{m}}{\mathfrak{m}(B_{r_{i}}(x_{i}))},x_i\right)\right\}$ is not a single point.
\end{fact}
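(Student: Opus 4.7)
The approach has two steps: first, extract a pmGH subsequential limit via precompactness; second, show this limit is not a single point by contradiction, using the geodesic structure of $(X,\mathsf{d})$.

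For the first step, observe that by construction $\mathfrak{m}_i(B_1^{X_i}(x_i)) \equiv 1$, where $\mathfrak{m}_i := \mathfrak{m}/\mathfrak{m}(B_{r_i}(x_i))$ and $B_1^{X_i}$ denotes the unit ball with respect to the rescaled metric $r_i^{-1}\mathsf{d}$. Hence the hypotheses of Theorem \ref{sta} are trivially satisfied, so I can pass to a subsequence converging to a pointed RCD$(0,N)$ space $(X_\infty,\mathsf{d}_\infty,\mathfrak{m}_\infty,x_\infty)$, realized inside some ambient $(Y,\mathsf{d}_Y)$ via isometric embeddings $\{\iota_i\}_{i\in\mathbb{N}\cup\{\infty\}}$ as in Definition \ref{1defn2.5}.

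For the second step, I will argue by contradiction. Assume $X_\infty = \{x_\infty\}$. Then $\iota_\infty(B_R^{X_\infty}(x_\infty)) = \{\iota_\infty(x_\infty)\}$ for every $R>0$, so applying the second inclusion in Definition \ref{1defn2.5} with $R = 2$ and $\epsilon = 1/4$ yields $\iota_i(B_2^{X_i}(x_i)) \subset B_{1/4}^Y(\iota_\infty(x_\infty))$ for all sufficiently large $i$; since $\iota_i$ is an isometric embedding, any two points of $B_2^{X_i}(x_i)$ are at rescaled distance less than $1/2$. The plan is to derive a contradiction by exhibiting a point $y_i \in B_2^{X_i}(x_i)$ with $\mathsf{d}^{X_i}(x_i, y_i) = 1$.

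Such a $y_i$ exists because $(X,\mathsf{d})$ is a geodesic RCD$(K,N)$ space that is not a single point, so $\mathrm{diam}(X,\mathsf{d}) > 0$. Concretely, for $i$ large enough that $r_i < \mathrm{diam}(X,\mathsf{d})/4$, I can choose $p,q \in X$ with $\mathsf{d}(p,q) > 4r_i$, whence the triangle inequality gives $\max\{\mathsf{d}(x_i,p),\mathsf{d}(x_i,q)\} > 2r_i$; then a geodesic from $x_i$ to whichever of $p,q$ lies farther away, parameterized at distance $r_i$ from $x_i$, produces the required $y_i$. The only step requiring any input beyond elementary geometry is this last one, which uses the standard fact that every RCD$(K,N)$ space is a geodesic metric space; this is why the statement really does need the geodesic structure rather than merely an abstract metric measure structure.
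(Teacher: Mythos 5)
Your proof is correct, and you should know that the paper itself offers no proof of this Fact: it is dismissed as ``trivial for the pmGH convergence of geodesic spaces,'' with the explicit note that no further attention will be paid to it. So there is no paper argument to compare against; you have simply supplied the details the author left implicit. Your two-step structure is exactly what that one-line remark was gesturing at. The precompactness step is handled cleanly (the normalization $\mathfrak{m}_i(B_1^{X_i}(x_i)) \equiv 1$ makes Theorem~\ref{sta} apply immediately; one should perhaps add a word that the rescaled spaces are RCD$(r_i^2 K, N)$ and hence, for $r_i \le 1$, uniformly RCD$(\min(K,0), N)$, so the fixed-parameter hypothesis of Theorem~\ref{sta} is met and the limit is RCD$(0,N)$ by stability as $r_i^2 K \to 0$). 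The contradiction step is where geodesicity genuinely enters: since $(X,\mathsf{d})$ is geodesic and $\mathrm{diam}(X,\mathsf{d})>0$, once $r_i$ is small you can slide along a geodesic from $x_i$ to a far point and land at a point $y_i$ with $\mathsf{d}(x_i,y_i)=r_i$, i.e.\ rescaled distance exactly $1$; this sits inside $B_2^{X_i}(x_i)$, contradicting the conclusion $\mathsf{d}^{X_i}<1/2$ on that ball that you extract from the defining inclusions of pmGH convergence with $R=2$, $\epsilon=1/4$ if the limit were a point. The argument is complete and, as you observe, uses only that $(X,\mathsf{d})$ is a non-trivial geodesic space together with precompactness, which is precisely the generality at which the paper asserts the Fact.
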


{}{Let us fix an IHKI RCD$(K,N)$ space $({X},\mathsf{d},\mathfrak{m})$ which is not a single point. According to Proposition \ref{llem3.4}, we make a convention that there exists a function $c(t)$ such that 
\[
c(t)g_t= g,\ \forall t>0,
\] 
in the rest of this subsection.}

\begin{proof}[Proof of Theorem \ref{mainthm1.3}]
The proof consists of three steps.

\textbf{Step 1} There exists $\tilde{c}>0$, such that

\begin{equation}\label{11eqn3.3}
\lim\limits_{r\rightarrow 0}\frac{\mathfrak{m}(B_r(x))}{r^n}=\tilde{c},\ \  \forall x\in \mathcal{R}_n^\ast, 
\end{equation}
and the function $c$ satisfies
\begin{equation}\label{11eqn3.11}
\lim\limits_{t\rightarrow 0} \frac{t^{n+2}}{c(t^2)}=\tilde{c}^{-1}\omega_n c_1^{\mathbb{R}^n}.
\end{equation}

Fix $x\in\mathcal{R}_n^\ast$. From the very definition of $\mathcal{R}_n^\ast$, $\lim\limits_{r\rightarrow 0} r^{-n}\mathfrak{m}(B_{r}(x))=\tilde{c}$ for some $\tilde{c}=\tilde{c}(x)>0$. For any $\{r_i\}$ with $r_i \rightarrow 0$, we have
\begin{equation}\label{1pmGHconvergence}
({X}_i,\mathsf{d}_i,\mathfrak{m}_i,x):=\left({X},\frac{1}{r_i}\mathsf{d},\frac{\mathfrak{m}}{\mathfrak{m}(B_{r_i}(x))},x\right)\xrightarrow{\mathrm{pmGH}} \left(\mathbb{R}^n,\mathsf{d}_{\mathbb{R}^n},\frac{1}{\omega_n}\mathcal{L}^n,0_n\right).
\end{equation}

On each ${X}_i$, $c(r_i^2 t)g_t^{{X}_i}=r_i^2 \mathfrak{m}(B_{r_i}(x))g_{{X}_i}$. By \cite[Theorem 3.11]{BGHZ21}, $\{g_t^{{X}_i}\}$ $L^2$-strongly converges to $\omega_n g_t^{\mathbb{R}^n}$ on any $B_R(0_n)\subset \mathbb{R}^n$, from which we know
\[
\lim\limits_{i\rightarrow \infty}r_i^2 \frac{\mathfrak{m}(B_{r_i}(x))}{c(r_i^2 t)}=\omega_n c_t^{\mathbb{R}^n}.
\]
Since the above limit does not depend on the choice of the sequence $\{r_i\}$, we have
\begin{equation}\label{11eqn3.5}
\lim\limits_{r\rightarrow 0} r^2 \frac{\mathfrak{m}(B_{r}(x))}{c(r^2 t)}=\lim\limits_{r\rightarrow 0} \frac{ \mathfrak{m}(B_{r}(x))}{r^n} \frac{r^{n+2}}{c(r^2 t)}=\omega_n c_t^{\mathbb{R}^n}.
\end{equation}  

As a result, we get (\ref{11eqn3.11}). Observe that the limit in (\ref{11eqn3.5}) also does not depend on the choice of $x\in \mathcal{R}_n^\ast$, which suffices to show (\ref{11eqn3.3}).

\textbf{Step 2} $\mathfrak{m}=\tilde{c}\mathcal{H}^n$, for the constant $\tilde{c}$ obtained in Step 1.

Reprising the same arguments as in Step 1, we know that $\mathcal{R}_n=\mathcal{R}_n^\ast$ (In fact, $L^2$-strong convergence of $\{g_t^{{X}_i}\}$ on any $B_R(0_n)\subset \mathbb{R}^n$ is also valid when $x\in \mathcal{R}_n$ by \cite[Theorem 3.11]{BGHZ21}). This implies $\mathfrak{m}=\tilde{c}\mathcal{H}^n\llcorner\mathcal{R}_n$. To complete the proof of Step 2, we need nothing but $\mathcal{H}^n\ll\mathfrak{m}$.  {}{Because then a combination with Theorem \ref{1111thm2.22} gives $\mathcal{H}^n({X}\setminus \mathcal{R}_n)=0$, which is sufficient to conclude.}

For any $x\in {X}\setminus \mathcal{R}_n$, and any sequence $\{r_i\}$ with $r_i\rightarrow 0$, after passing to a subsequence, there exists a  pointed RCD$(0,N)$ space $({X}_\infty,\mathsf{d}_\infty,\mathfrak{m}_\infty,x_\infty)$ such that
\[
\left({X}_i,\mathsf{d}_i,\mathfrak{m}_i,x\right):=\left({X},\frac{1}{r_i}\mathsf{d},\frac{\mathfrak{m}}{\mathfrak{m}(B_{r_i}(x))},x\right)\xrightarrow{\mathrm{pmGH}} ({X}_\infty,\mathsf{d}_\infty,\mathfrak{m}_\infty,x_\infty).
\]

When $i$ is sufficiently large, again on each ${X}_i$, $c(r_i^2 t)g_t^{{X}_i}=r_i^2 \mathfrak{m}(B_{r_i}(x))g_{{X}_i}$. In particular, we know from Theorem \ref{thm2.18} that $r_i^2 \mathfrak{m}(B_{r_i}(x))\leqslant C(K,N)c(r_i^2 t)$. {}{Since $(X_\infty,\mathsf{d}_\infty)$ is not a single point}, using {}{Theorems \ref{thm2.26} and \ref{11thm2.26}}, and (\ref{11eqn3.11}), we see 
\[
\lim\limits_{i\rightarrow \infty} \frac{\mathfrak{m}(B_{r_i}(x))}{r_i^n}\in \left(0,C(K,N)\right).
\]

In particular, 
\begin{equation}\label{111eqn3.7}
C(K,N)\geqslant \limsup\limits_{r\rightarrow 0} \frac{\mathfrak{m}(B_{r}(x))}{r^n}\geqslant   \liminf\limits_{r\rightarrow 0} \frac{\mathfrak{m}(B_{r}(x))}{r^n}> 0.
\end{equation}

Set
\[
{X}_\tau:=\left\{x\in{X}:\liminf\limits_{r\rightarrow 0}\frac{\mathfrak{m}(B_r(x))}{r^n}\geqslant \tau\right\},
\]
and notice that ${X}=\bigcup_{\tau>0}{X}_\tau$ by (\ref{111eqn3.7}). Applying \cite[Theorem 2.4.3]{AT04} then implies
\[
\mathcal{H}^n\llcorner {X}_\tau \ll \mathfrak{m}\llcorner {X}_\tau,\ \forall \tau>0,
\]
 from which we conclude.

\textbf{Step 3} $({X},\mathsf{d},\mathcal{H}^{n})$ is an RCD$(K,n)$ space.

Without loss of generality, assume $\mathfrak{m}=\mathcal{H}^n$. We first treat the case that $({X},\mathsf{d},\mathcal{H}^{n})$ is compact. By Theorem \ref{eqnBGHZ21}, it suffices to show
\begin{equation}\label{eqn20220203}
\inf\limits_{x\in{X}} \inf\limits_{s\in (0,1)} \frac{\mathcal{H}^n(B_s(x))}{s^n}>0.
\end{equation}

Assume on the contrary that  (\ref{eqn20220203}) does not hold, then for any $\epsilon>0$, there exists $x_\epsilon \in {X}$, such that $ \inf\limits_{s\in (0,1)} s^{-n}\mathcal{H}^n(B_s(x_\epsilon))<\epsilon$. By (\ref{BGinequality}),  
\[
\frac{\mathcal{H}^n(B_{r}(x_\epsilon))}{r^n}<\epsilon, \ \ \text{for some}\  r=r(\epsilon)\leqslant \Psi\left(\epsilon|K,N,\mathrm{diam}({X},\mathsf{d}),\mathcal{H}^n({X})\right).
\]

As a consequence, there {}{exists} a sequence $\{x_i\}\subset {X}$, a sequence $\{r_i\}\subset (0,\infty)$ with $r_i\rightarrow 0$ and a pointed RCD {}{$(0,N)$} space $({X}_\infty,\mathsf{d}_\infty,\mathfrak{m}_\infty,x_\infty)$, such that 
\begin{equation}\label{111eqn3.9}
\lim\limits_{i\rightarrow \infty}\frac{\mathcal{H}^n(B_{r_i}(x_i))}{r_i^n}=0,
\end{equation}
and 

\[
({X}_i,\mathsf{d}_i,\mathfrak{m}_i,x_i):=\left({X}_i,\frac{1}{r_i}\mathsf{d},\frac{\mathfrak{m}}{\mathfrak{m}\left(B_{r_i}(x_i)\right)} ,x_i\right)\xrightarrow{\mathrm{pmGH}} ({X}_\infty,\mathsf{d}_\infty,\mathfrak{m}_\infty,x_\infty).
\]

Again $c(r_i^2 t)g_t^{{X}_i}=r_i^2 \mathfrak{m}\left(B_{r_i}(x_i)\right) g_{{X}_i}$ on each ${X}_i$, and $\left\{g_t^{{X}_i}\right\}$ $L^2$-strongly converges to 0 on {}{$B_R({x_\infty})$ for any $R>0$} by (\ref{111eqn3.9}), which contradicts Proposition \ref{llem3.4}.

As for the non-compact case, it suffices to repeat Step 1-3 and apply Theorem \ref{eqnBGHZ21} again on any $B_R(x)\subset {X}$.

\end{proof}


\subsection{Non-compact IHKI RCD$(0,n)$ spaces }\label{sec3.3}

We start by proving the following theorem in this subsection.

\begin{thm}\label{thm4.5}
Suppose $({X},\mathsf{d},\mathcal{H}^{n-1})$ is a non-collapsed $\mathrm{RCD}(n-2,n-1)$ space with $n\geqslant 2$. If $ g_1^{\text{C}({X})}\geqslant c g_{\text{C}({X})}$ for some $c>0$, then $({X},\mathsf{d})$ is isometric to $(\mathbb{S}^{n-1},\mathsf{d}_{S^{n-1}})$.

\end{thm}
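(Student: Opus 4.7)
The cornerstone is the radial self-similarity of the Euclidean cone. The dilation $D_\lambda:(r,x)\mapsto(\lambda r,x)$ is a distance-scaling bijection with $(D_\lambda)_\sharp\mathfrak{m}_{\mathrm{C}({X})}=\lambda^{-n}\mathfrak{m}_{\mathrm{C}({X})}$; it therefore realizes a metric measure isomorphism between $(\mathrm{C}({X}),\mathsf{d}_{\mathrm{C}({X})},\mathfrak{m}_{\mathrm{C}({X})})$ and $(\mathrm{C}({X}),\lambda^{-1}\mathsf{d}_{\mathrm{C}({X})},\lambda^{-n}\mathfrak{m}_{\mathrm{C}({X})})$. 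The heat kernel rescaling rule of Remark 2.11 then gives $\rho^{\mathrm{C}({X})}(D_\lambda p,D_\lambda q,t)=\lambda^{-n}\rho^{\mathrm{C}({X})}(p,q,t/\lambda^{2})$, and a direct calculation from the very definition of the pullback metric yields the transformation laws
\[
D_\lambda^\ast g_{\mathrm{C}({X})}=\lambda^{2}g_{\mathrm{C}({X})},\qquad D_\lambda^\ast g_t^{\mathrm{C}({X})}=\lambda^{-n}\,g_{t/\lambda^{2}}^{\mathrm{C}({X})}.
\]
Applying both to the hypothesis $g_1^{\mathrm{C}({X})}\geq c\,g_{\mathrm{C}({X})}$ and letting $\lambda$ range over $(0,\infty)$ propagates the inequality to every time:
\[
g_t^{\mathrm{C}({X})}\geq c\,t^{-(n+2)/2}\,g_{\mathrm{C}({X})},\qquad \forall\, t>0.
\]

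Since $(\mathrm{C}({X}),\mathsf{d}_{\mathrm{C}({X})},\mathfrak{m}_{\mathrm{C}({X})})$ is $\mathrm{RCD}(0,n)$ by Remark 2.10, the universal upper estimate (3.3) gives $t\,\mathfrak{m}_{\mathrm{C}({X})}(B_{\sqrt t}(\cdot))\,g_t^{\mathrm{C}({X})}\leq C(n)\,g_{\mathrm{C}({X})}$ for all $t>0$. Pairing this upper bound with the propagated lower bound forces a scale-invariant Ahlfors-type control $\mathfrak{m}_{\mathrm{C}({X})}(B_r(y))\leq (C(n)/c)\,r^{n}$ at every $y\in\mathrm{C}({X})$ and every $r>0$. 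Specializing at the apex $y=o^\ast$, where $\mathfrak{m}_{\mathrm{C}({X})}(B_r(o^\ast))=r^{n}\mathcal{H}^{n-1}({X})/n$ by Fubini, yields a first a-priori bound on $\mathcal{H}^{n-1}({X})$.

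The sphere rigidity is then extracted by comparing the propagated lower bound with the short-time asymptotic formula of Theorem 3.1, which at a generic point computes the exact leading coefficient in front of $t^{-(n+2)/2}g$ in the expansion of $g_t$, the coefficient being proportional to $1/\vartheta_n$. Transporting this comparison to the apex via the self-similar blow-down identifies the density $\vartheta_n(\mathrm{C}({X}),\mathsf{d}_{\mathrm{C}({X})},\mathcal{H}^n)(o^\ast)=\mathcal{H}^{n-1}({X})/(n\omega_n)$ with its maximal value $1$. The equality case of the Bishop inequality (Theorem 2.32) then places $o^\ast\in\mathcal{R}_n$, and because the cone is self-similar the tangent cone at $o^\ast$ coincides with $\mathrm{C}({X})$ itself, so $\mathrm{C}({X})$ is isometric to $\mathbb{R}^n$ and consequently $({X},\mathsf{d})$ is isometric to $(\mathbb{S}^{n-1},\mathsf{d}_{\mathbb{S}^{n-1}})$. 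The main obstacle I foresee is precisely this last step: converting the one-sided inequality on $g_1^{\mathrm{C}({X})}$ into the sharp equality $\vartheta_n(o^\ast)=1$, which will likely require either a careful use of the explicit Bessel expansion in Proposition 2.23 to evaluate $g_t^{\mathrm{C}({X})}$ near $o^\ast$, or the $H^{1,2}$-strong convergence of heat kernels (Theorem 2.26) along a suitable sequence at the apex.
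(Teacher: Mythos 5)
Your scaling observation is correct and attractively clean: the dilation $D_\lambda$ transforms the heat kernel and pullback metric exactly as you state, and propagating $g_1^{\mathrm{C}(X)}\geq c\,g_{\mathrm{C}(X)}$ to $g_t^{\mathrm{C}(X)}\geq c\,t^{-(n+2)/2}g_{\mathrm{C}(X)}$ for all $t>0$ is a valid and useful reformulation of the hypothesis. However, the third stage of your argument — extracting $\vartheta_n(o^\ast)=1$ from this propagated inequality together with the short-time asymptotic formula — does not close, and this is the whole content of the theorem.

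The difficulty is that your chain of inequalities is non-sharp and points the wrong way. Combining the propagated lower bound with the universal estimate $t\,\mathfrak{m}(B_{\sqrt t}(y))g_t\leq C(N)g$ gives $\mathfrak{m}(B_r(y))\leq (C(N)/c)\,r^n$, and at the apex this yields $\mathcal{H}^{n-1}(X)\leq n\,C(N)/c$, a crude upper bound with a constant that has nothing to do with $\omega_n$. At a regular point the asymptotic formula of Theorem 1.1 forces $c\leq c(n)/\omega_n$, again just an upper bound on $c$, not a lower bound on the apex density. Nothing in this circle of estimates excludes a cone whose link has $\mu_1>n-1$ and density strictly below $1$ at the apex; for such cones the hypothesis $g_1^{\mathrm{C}(X)}\geq c\,g_{\mathrm{C}(X)}$ must in fact fail, but your argument would never detect this. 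You correctly flag this gap at the end; but the fix you propose as ``likely needed'' (the Bessel expansion near $o^\ast$) is not a patch on your approach — it is a different proof. That is exactly what the paper does: it expands $g_1^{\mathrm{C}(X)}(\nabla(f\varphi),\nabla(f\varphi))$ near the apex via Proposition 2.23, integrates over $X$ to obtain $c\,\mathcal{H}^{n-1}(X)\leq \tfrac12 r^{-n}e^{-r^2/2}\sum_j I_{\nu_j}(r^2/2)\mu_j$, and lets $r\to 0$; the Bessel decay lemma forces $\mu_1=n-1$, after which Ketterer's Obata rigidity splits a line and an induction on $n$ (with the separate treatment of $n=3$ via superharmonicity and the exclusion of $\mathbb{S}^2_+$, and of $n=2$ by stabilizing with $\mathbb{R}$) finishes the proof.

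One further caution: your propagated inequality puts the hypothesis exactly in the form of Corollary 3.17, which would immediately give $\mathrm{C}(X)\cong\mathbb{R}^n$; but in the paper Corollary 3.17 is derived as a consequence of Theorem 1.5, which in turn rests on Theorem 3.9 — the very statement you are proving. Appealing to it here would be circular. If you wish to salvage your route you would need an independent proof of (the relevant special case of) Corollary 3.17, and that independent proof is essentially the Bessel-eigenvalue argument.
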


We need some preparations. According to Remark \ref{rmk2.10}, $\left(\text{C}({X}),\mathsf{d}_{\text{C}({X})},\mathfrak{m}_{\text{C}({X})}\right)$ is an RCD$(0,n)$ space. In addition, by applying Theorem \ref{1111thm2.20}, Theorem \ref{BGHZmainthm} and the splitting theorem for RCD$(0,n)$ spaces (see \cite[Theorem 1.4]{G13}, \cite{G14}), $\left(\text{C}({X}),\mathsf{d}_{\text{C}({X})},\mathfrak{m}_{\text{C}({X})}\right)$ is also non-collapsed, which means that $\mathfrak{m}_{\text{C}({X})}=\mathcal{H}^{n}$.

To fix the {}{notation}, we use (\ref{notation2.7}), and set $\alpha=(2-n)/2$, $\nu_j=\sqrt{\alpha^2+\mu_j}$ for {}{every} $j\in \mathbb{N}$.  It is notable that $\mu_1\geqslant n$ by \cite[Corollary 1.3]{K15b}. {}{For any RCD$(K,N)$ space $(Y,\mathsf{d}_Y,\mathfrak{m}_Y)$, we define
\[
\begin{aligned}
\rho_t^Y:Y&\longrightarrow (0,\infty)\\
                y&\longmapsto \rho^Y(y,y,t).
\end{aligned}
\]
}

The validity of limit processes in the proof of Theorem \ref{thm4.5} can be verified by the following estimates. We check one of them for reader's convenience.
\begin{lem}\label{20211220b}
There exists $C=C(n,\mathrm{diam}({X},\mathsf{d}))$, such that the following estimates hold.
\begin{enumerate}

\item\label{lem3.192}
  $\ \sup\limits_{x\in{X}}\sum\limits_{j=k}^\infty I_{\nu_j}(r)\phi^2_j(x) \leqslant C\left(\dfrac{r}{2}\right)^{k^{\frac{1}{2(n-1)}}}, \ \forall r
\in (0,1),\ \forall k\in \mathbb{N}_+.
$

\item

$\ 
I_{\nu_j}(r)\mu_j \leqslant Cj^2 \left(\dfrac{r}{2}\right)^{\nu_j}\leqslant Cj^2 \left(\dfrac{r}{2}\right)^{j^{\frac{1}{n-1}}}, \ \forall r
\in (0,1),\  \forall j\in \mathbb{N}.$
\item$\ \sum\limits_{j=k}^\infty I_{\nu_j}(r)\mu_j \leqslant C\left(\dfrac{r}{2}\right)^{k^{\frac{1}{2(n-1)}}},\ \forall r
\in (0,1),\  \forall k\in \mathbb{N}_+.$

\end{enumerate}
\end{lem}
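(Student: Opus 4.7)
All three estimates follow from the Taylor expansion
\begin{equation*}
I_\nu(r)=\sum_{m\geq 0}\frac{(r/2)^{2m+\nu}}{m!\,\Gamma(\nu+m+1)}
\end{equation*}
together with the Weyl-type bounds of Proposition~\ref{heatkernel2} applied to the compact $\mathrm{RCD}(n-2,n-1)$ space $({X},\mathsf{d},\mathcal{H}^{n-1})$, namely $\|\phi_j\|_{L^\infty}\leq C\mu_j^{(n-1)/4}$ and $C_6 j^{2/(n-1)}\leq \mu_j\leq C_5 j^2$, together with the Lichnerowicz-type bound $\mu_1\geq n$ recalled just before the lemma. Via $\nu_j=\sqrt{\alpha^2+\mu_j}$, these inputs yield $\nu_j\geq c(n)\,j^{1/(n-1)}$, $\nu_j\geq\sqrt{n}$, and $\Gamma(\nu_j+1)\geq c_0(n)>0$ for every $j\geq 1$.

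The starting point is the scalar bound, valid for $r\in(0,1)$,
\begin{equation*}
I_\nu(r)\leq \Bigl(\frac{r}{2}\Bigr)^\nu\frac{1}{\Gamma(\nu+1)}\sum_{m\geq 0}\frac{(r/2)^{2m}}{m!}\leq \frac{e^{1/4}}{\Gamma(\nu+1)}\Bigl(\frac{r}{2}\Bigr)^\nu,
\end{equation*}
obtained by pulling out the $m=0$ term. Substituting $\nu=\nu_j$, multiplying by $\mu_j\leq C_5 j^2$, and then inserting $\nu_j\geq c(n)\,j^{1/(n-1)}$ in the exponent (absorbing the constant $c(n)$ by writing $(r/2)^{c(n)j^{1/(n-1)}}=((r/2)^{c(n)})^{j^{1/(n-1)}}$ and enlarging $C$) proves (2).

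For (3) and (1) I would sum the scalar bound over $j\geq k$ and perform the splitting
\begin{equation*}
\Bigl(\frac{r}{2}\Bigr)^{\nu_j}=\Bigl(\frac{r}{2}\Bigr)^{k^{1/(2(n-1))}}\cdot\Bigl(\frac{r}{2}\Bigr)^{\nu_j-k^{1/(2(n-1))}},\qquad j\geq k.
\end{equation*}
The first factor produces the claimed right-hand side, while the second makes the remaining series
\begin{equation*}
\sum_{j\geq 1}\frac{j^2\,(r/2)^{\nu_j-k^{1/(2(n-1))}}}{\Gamma(\nu_j+1)}\quad\text{for (3)},\qquad \sum_{j\geq 1}\frac{j^{n-1}\,(r/2)^{\nu_j-k^{1/(2(n-1))}}}{\Gamma(\nu_j+1)}\quad\text{for (1)}
\end{equation*}
uniformly finite in $r\in(0,1)$: for $j\geq k$ sufficiently large (in a threshold depending only on $n$) one has $\nu_j-k^{1/(2(n-1))}\geq c(n)\,j^{1/(n-1)}/2$, so $(r/2)^{\nu_j-k^{1/(2(n-1))}}\leq 2^{-c(n)j^{1/(n-1)}/2}$, which decays faster than any polynomial in $j$. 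For (1), substitute in addition $\phi_j^2(x)\leq C^2\,\mu_j^{(n-1)/2}\leq C'\,j^{n-1}$ uniformly in $x\in X$.

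The main obstacle is the bookkeeping needed to make the splitting work uniformly in $k\geq 1$: the comparison $\nu_j-k^{1/(2(n-1))}\geq c(n)j^{1/(n-1)}/2$ is clean once $k$ exceeds a threshold depending only on $n$ and $c(n)$, and for the finitely many small $k$ below this threshold one uses $\nu_k\geq\sqrt{n}$ to bound the ratio $(r/2)^{\nu_k-k^{1/(2(n-1))}}$ uniformly in $r\in(0,1)$, absorbing the bounded outcome into the constant $C=C(n,\mathrm{diam}({X},\mathsf{d}))$.
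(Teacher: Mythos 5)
Your proposal is correct and follows essentially the same route as the paper: bound $I_{\nu_j}(r)$ by $(r/2)^{\nu_j}$ times a uniformly bounded factor via the Bessel series, invoke the $L^\infty$ and eigenvalue bounds of Proposition~\ref{heatkernel2} for the RCD$(n-2,n-1)$ space, and then factor out $(r/2)^{k^{1/(2(n-1))}}$ and verify that the remaining tail series is uniformly bounded for $r\in(0,1)$. You are in fact slightly more careful than the paper's displayed chain of inequalities (which tacitly absorbs the constant in $\nu_j\gtrsim j^{1/(n-1)}$ and glosses over the very small $j,k$ range where the term-by-term comparison $j^{1/(n-1)}\geqslant k^{1/(2(n-1))}+j^{1/(2(n-1))}$ fails), but both arguments carry through to the stated conclusion.
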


\begin{proof}[Proof of \ref{lem3.192}.] According to Proposition \ref{heatkernel2}, there exists $C=C(n,\mathrm{diam}({X},\mathsf{d}))$, such that for any $x\in {X}$,
\[
\begin{aligned}
\sum\limits_{j=k}^\infty I_{\nu_j}(r)\phi^2_j(x)&\leqslant C\sum\limits_{j=k}^\infty I_{\nu_j}(r)j^{n-1}\\
\ &=C \sum\limits_{j=k}^\infty j^{n-1}\sum\limits_{l=0}^\infty \frac{1}{l! \Gamma(\nu_j+l+1)}\left(\frac{r}{2}\right)^{2l+\nu_j}\\
\ &\leqslant  C \sum\limits_{j=k}^\infty j^{n-1}  \left(\frac{r}{2}\right)^{\nu_j}\exp\left(\frac{r^2}{4}\right)\\
\ &\leqslant  C \sum\limits_{j=k}^\infty  j^{n-1} \left(\frac{r}{2}\right)^{j^{\frac{1}{n-1}}}\\
\ &\leqslant  C \left(\frac{r}{2}\right)^{k^{\frac{1}{2(n-1)}}}\sum\limits_{j=k}^\infty  j^{n-1} \left(\frac{r}{2}\right)^{j^{\frac{1}{2(n-1)}}}\leqslant C\left(\frac{r}{2}\right)^{k^{\frac{1}{2(n-1)}}}. \\
\end{aligned}
\]

\end{proof}

Notice that $(\text{C}({X}),\mathsf{d}_{\text{C}({X})},\mathcal{H}^n)$ has maximal volume growth, and its blow down is itself. Applying the large time behavior of the heat kernel \cite[Theorem 1.3]{JLZ16} shows

\begin{equation}\label{1prop4.3}
\rho^{\text{C}({X})}_{t}\equiv \frac{n\omega_n}{\mathcal{H}^{n-1}({X})} (4\pi t)^{-\frac{n}{2}},\ \ \forall t>0.
\end{equation}

Lemma \ref{llem3.1} and Lemma \ref{1lem3.15} are also useful in the proof of Theorem \ref{thm4.5}. 

\begin{lem}\label{llem3.1}
Let $({Y_i},\mathsf{d}_{i},\mathfrak{m}_{i})$ be two $\mathrm{RCD}(K,N)$ spaces such that $\rho^{Y_i}_{2t}$ are constant functions for some $t>0$ $(i=1,2)$. Then on $Y_1\times Y_2$,
\[
 g_t^{Y_1\times Y_2 } (y_1,y_2)=\rho^{Y_1}_{2t}(y_1)g_t^{Y_2}(y_2) + \rho^{Y_2}_{2t}(y_2)g_t^{Y_1}(y_1).
\]
That is, for any $f\in \mathrm{Lip}_c\left(Y_1\times Y_2,\mathsf{d}_{Y_1\times Y_2}\right)$, denote by $f^{(y_1)}:y_2\mapsto f(y_1,y_2)$ for any fixed $y_1$, and $f^{(y_2)}:y_1\mapsto f(y_1,y_2)$  for any fixed $y_2$, it holds that  
\[
\begin{aligned}
 \ &g_t^{Y_1\times Y_2 }\left(\nabla^{Y_1\times Y_2 } f, \nabla^{Y_1\times Y_2 } f\right)(y_1,y_2)\\
=\ &\rho^{Y_1}_{2t}(y_1)g_t^{Y_2}\left(\nabla^{Y_2} f^{(y_1)},\nabla^{Y_2} f^{(y_1)}\right)(y_2) + \rho^{{Y_2}}_{2t}(y_2)g_t^{Y_1}\left(\nabla^{Y_1} f^{(y_2)},\nabla^{Y_1} f^{(y_2)}\right)(y_1),
\end{aligned}
\]
for $\mathfrak{m}_{Y_1\times Y_2}$-a.e. $(y_1,y_2)$ in $Y_1\times Y_2$.
\end{lem}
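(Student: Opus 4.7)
The plan is to compute $g_t^{Y_1\times Y_2}$ pointwise (in the $\mathfrak{m}_{Y_1\times Y_2}$-a.e.\ sense) using the characterization in Theorem~\ref{thm2.18}, and then exploit the product structure of the heat kernel given by (\ref{eqn2.1}) together with the splitting of the gradient (\ref{2.27}) on $Y_1\times Y_2$. For $\mathfrak{m}_{Y_1\times Y_2}$-a.e.\ $(y_1,y_2)$ and any $f\in\mathrm{Lip}_c(Y_1\times Y_2)$, Theorem~\ref{thm2.18}(1) gives
\[
g_t^{Y_1\times Y_2}(\nabla f,\nabla f)(y_1,y_2)=\int_{Y_1\times Y_2}\bigl\langle d_{(y_1,y_2)}\rho^{Y_1\times Y_2}\bigl((y_1,y_2),(z_1,z_2),t\bigr),\nabla f(y_1,y_2)\bigr\rangle^{\!2}\mathrm{d}\mathfrak{m}_{Y_1\times Y_2}(z_1,z_2).
\]
By (\ref{eqn2.1}) and (\ref{1234eqn2.9}), the inner product inside the integral equals
\[
\rho^{Y_2}(y_2,z_2,t)\bigl\langle d_{y_1}\rho^{Y_1}(y_1,z_1,t),\nabla^{Y_1}f^{(y_2)}\bigr\rangle+\rho^{Y_1}(y_1,z_1,t)\bigl\langle d_{y_2}\rho^{Y_2}(y_2,z_2,t),\nabla^{Y_2}f^{(y_1)}\bigr\rangle.
\]

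Squaring and using Fubini produces three terms. For the first ``diagonal'' term I integrate out the $z_2$ variable and use the semigroup property
\[
\int_{Y_2}\rho^{Y_2}(y_2,z_2,t)^2\,\mathrm{d}\mathfrak{m}_{Y_2}(z_2)=\rho^{Y_2}(y_2,y_2,2t)=\rho^{Y_2}_{2t}(y_2),
\]
which is the key identity obtained by plugging $f=\rho^{Y_2}(y_2,\cdot,t)$ into the symmetry $\int \rho(y_2,z_2,t)f(z_2)\,\mathrm{d}\mathfrak{m}=\mathrm{h}_{2t}f(y_2)$. What remains on the $Y_1$-side is exactly $g_t^{Y_1}(\nabla^{Y_1}f^{(y_2)},\nabla^{Y_1}f^{(y_2)})(y_1)$ by Theorem~\ref{thm2.18}(1), yielding $\rho^{Y_2}_{2t}(y_2)\,g_t^{Y_1}(\nabla^{Y_1}f^{(y_2)},\nabla^{Y_1}f^{(y_2)})(y_1)$. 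The second ``diagonal'' term is handled symmetrically and produces $\rho^{Y_1}_{2t}(y_1)\,g_t^{Y_2}(\nabla^{Y_2}f^{(y_1)},\nabla^{Y_2}f^{(y_1)})(y_2)$.

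The crux of the argument is that the cross term vanishes. The cross integral over $z_1$ takes the form
\[
\int_{Y_1}\rho^{Y_1}(y_1,z_1,t)\bigl\langle d_{y_1}\rho^{Y_1}(y_1,z_1,t),\nabla^{Y_1}f^{(y_2)}\bigr\rangle\mathrm{d}\mathfrak{m}_{Y_1}(z_1)=\tfrac{1}{2}\bigl\langle d_{y_1}\rho^{Y_1}_{2t}(y_1),\nabla^{Y_1}f^{(y_2)}\bigr\rangle,
\]
where I have used the chain rule $2\rho\,d\rho=d(\rho^2)$ and interchanged the differential with the $z_1$-integration (justified by the uniform Gaussian and gradient bounds of Theorem~\ref{thm2.12}, which give an $L^1(\mathfrak{m}_{Y_1})$-dominating function and allow the argument of \cite[Proposition~4.7]{AHPT21}). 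Under the standing hypothesis that $\rho^{Y_1}_{2t}$ is constant this vanishes, and the symmetric statement kills the $z_2$-side contribution as well. Combining the three contributions yields the claimed identity first for $\nabla f$ with $f\in\mathrm{Lip}_c(Y_1\times Y_2)$, which is enough to identify the two semi-Riemannian metrics since such $\nabla f$ generate $L^2(T(Y_1\times Y_2))$.

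The main technical obstacle is justifying the interchange of $d_{y_1}$ with the integral over $z_1$ in the cross-term computation; this is not completely formal because the differential is only defined $\mathfrak{m}$-a.e. I would handle it by testing against an arbitrary bounded 1-form with compact support and integrating over $y_1$ as well, reducing everything to Fubini and the already established integral identity for $\rho^{Y_1}_{2t}$.
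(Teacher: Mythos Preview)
Your proposal is correct and follows essentially the same route as the paper: expand $g_t^{Y_1\times Y_2}$ via the product heat kernel (\ref{eqn2.1}) and the splitting (\ref{1234eqn2.9}), identify the two diagonal terms with $\rho^{Y_i}_{2t}\,g_t^{Y_{3-i}}$ using the semigroup identity, and kill the cross terms using constancy of $\rho^{Y_i}_{2t}$. The only cosmetic difference is that the paper phrases the computation at the level of the tensor $g_t$ itself rather than its evaluation on $\nabla f$, and for the interchange of differential and integral it invokes the ``local Hille's theorem'' \cite[Proposition~3.4]{BGHZ21} instead of the argument you sketch; these justifications are interchangeable here.
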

\begin{proof}
Recalling (\ref{1234eqn2.9}),(\ref{eqn2.1}) and the definition of $g_t^{Y_1\times Y_2 }$ in Theorem \ref{thm2.18}, we have
{}{\[
\begin{aligned}
\ &\ g_t^{Y_1\times Y_2 }(y_1,y_2)\\
=\ &\int_{Y_1\times Y_2} \sum\limits_{i=0}^1 \rho^{Y_{i+1}}(y_{i+1},y_{i+1}',t)d_{y_{2-i}}\rho^{Y_{2-i}}(y_{2-i},y_{2-i}',t)\\ 
\ &\otimes \sum\limits_{i=0}^1 \rho^{Y_{i+1}}(y_{i+1},y_{i+1}',t)d_{y_{2-i}}\rho^{Y_{2-i}}(y_{2-i},y_{2-i}',t)\mathrm{d}\mathfrak{m}_1(y_1')\mathrm{d}\mathfrak{m}_2(y_2')\\
=\ &\rho^{Y_1}_{2t}(y_1)g_t^{Y_2}(y_2) + \rho^{Y_2}_{2t}(y_2)g_t^{Y_1}(y_1)+I_1(y_1,y_2)+I_2(y_1,y_2),
\end{aligned}
\]}
where 
\[
I_1(y_1,y_2)=\frac{1}{4}\int_{Y_1\times {Y}_2} d_{y_1}\left(\rho^{Y_1}(y_1,y_1',t)\right)^2\otimes d_{y_2}\left(\rho^{{Y}_2}(y_2,y'_2,t)\right)^2\mathrm{d}\mathfrak{m}_1(y_1')\mathrm{d}\mathfrak{m}_2(y_2'),
\]
\[
I_2(y_1,y_2)=\frac{1}{4}\int_{Y_1\times {Y}_2}d_{y_2}\left(\rho^{{Y}_2}(y_2,y'_2,t)\right)^2\otimes d_{y_1}\left(\rho^{Y_1}(y_1,y_1',t)\right)^2\mathrm{d}\mathfrak{m}_1(y_1')\mathrm{d}\mathfrak{m}_2(y_2'),
\]

By our assumption, for $i=1,2$, we have
{}{\[
\left(y_i\mapsto d_{y_i} \int_{Y_i} \left(\rho^{Y_i}(y_i,y_i',t)\right)^2 \mathrm{d}\mathfrak{m}_i(y_i')\right)=0\ \  \text{in}\  L^2(T^\ast (Y_i,\mathsf{d}_i,\mathfrak{m}_i)).
\] 
}

Therefore $I_1(y_1,y_2)=0$ and $I_2(y_1,y_2)=0$ follow from the local Hille's theorem (see {}{for example} \cite[Proposition 3.4]{BGHZ21}). 

\end{proof}

\begin{lem}\label{1lem3.15}
Under the assumption of Lemma \ref{llem3.1}, if moreover there exist $c_1,c_2,{}{t}>0$, such that $g_t^{Y_1}= c_1 g_{Y_1}$ and
\[
g_t^{{Y_1}\times {Y_2}}\geqslant c_2 g_{Y_1\times {Y}_2} \ (\text{resp. }g_t^{Y_1\times {Y}_2}= c_2 g_{{Y}_1\times {Y}_2}),
\]
then there exists $c_3>0$, such that 
\[
g_t^{Y_2}\geqslant c_3 g_{Y_2}\ (\text{resp. } g_t^{Y_2}=c_3 g_{Y_2}){}{.}
\]
\end{lem}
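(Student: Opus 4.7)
The plan is to apply Lemma \ref{llem3.1} directly to decompose $g_t^{Y_1\times Y_2}$ and then test the resulting identity against ``horizontal'' functions, i.e.\ those depending only on the $Y_2$-coordinate. Since $\rho^{Y_i}_{2t}$ are constants, set $a_i:=\rho^{Y_i}_{2t}>0$. By Lemma \ref{llem3.1}, for any $f\in \mathrm{Lip}_c(Y_1\times Y_2,\mathsf{d}_{Y_1\times Y_2})$,
\[
g_t^{Y_1\times Y_2}(\nabla f,\nabla f)(y_1,y_2)=a_1 g_t^{Y_2}\!\left(\nabla f^{(y_1)},\nabla f^{(y_1)}\right)\!(y_2)+a_2 g_t^{Y_1}\!\left(\nabla f^{(y_2)},\nabla f^{(y_2)}\right)\!(y_1)
\]
$\mathfrak{m}_{Y_1\times Y_2}$-a.e. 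Inserting the hypothesis $g_t^{Y_1}=c_1 g_{Y_1}$ and combining with (\ref{2.27}), the assumption $g_t^{Y_1\times Y_2}\geqslant c_2 g_{Y_1\times Y_2}$ translates into
\[
a_1 g_t^{Y_2}\!\left(\nabla f^{(y_1)},\nabla f^{(y_1)}\right)\!(y_2)+a_2 c_1\left|\nabla^{Y_1}f^{(y_2)}\right|^2\!(y_1)\geqslant c_2\left|\nabla^{Y_2}f^{(y_1)}\right|^2\!(y_2)+c_2\left|\nabla^{Y_1}f^{(y_2)}\right|^2\!(y_1).
\]

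Next I would specialize to products $f(y_1,y_2)=\eta(y_1)\tilde f(y_2)$, where $\tilde f\in \mathrm{Lip}_c(Y_2,\mathsf{d}_2)$ and $\eta\in \mathrm{Lip}_c(Y_1,\mathsf{d}_1)$ is a cutoff with $\eta\equiv 1$ on a prescribed ball. On the slab where $\eta\equiv 1$ one has $\nabla^{Y_1}f^{(y_2)}\equiv 0$ and $f^{(y_1)}=\tilde f$, so the displayed inequality collapses to
\[
a_1 g_t^{Y_2}\!\left(\nabla \tilde f,\nabla \tilde f\right)\!(y_2)\geqslant c_2\left|\nabla^{Y_2}\tilde f\right|^2\!(y_2),\qquad \mathfrak{m}_2\text{-a.e. }y_2\in Y_2.
\]
Varying $\tilde f$ over a dense family of test functions on $Y_2$ and varying the ball on which $\eta\equiv 1$ exhausts $\mathfrak{m}_2$-a.e.\ $y_2\in Y_2$, producing $g_t^{Y_2}\geqslant c_3\, g_{Y_2}$ with $c_3:=c_2/a_1>0$. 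The equality case is identical, replacing ``$\geqslant$'' by ``$=$'' throughout.

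The argument is essentially mechanical once Lemma \ref{llem3.1} is in hand; the only mild technicality is ensuring that the ``horizontal'' product functions $\eta\cdot\tilde f$ really do isolate the $g_t^{Y_2}$-term in the decomposition, which is handled by the standard density of such products in $H^{1,2}(Y_1\times Y_2)$ together with the locality of the minimal relaxed slope. No further input beyond the material already developed in \S 2.2 is required.
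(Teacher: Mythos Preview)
Your proof is correct and essentially identical to the paper's own argument: both test the decomposition from Lemma~\ref{llem3.1} against product functions $\eta(y_1)\tilde f(y_2)$ with $\eta$ a cutoff that is identically $1$ on a ball in $Y_1$, so that the $Y_1$-gradient term vanishes and one reads off $g_t^{Y_2}\geqslant (c_2/\rho^{Y_1}_{2t})\,g_{Y_2}$. The only cosmetic difference is that the paper tests directly with $\varphi\in H^{1,2}(Y_2)$ rather than $\tilde f\in\mathrm{Lip}_c(Y_2)$; your closing remarks about density and varying the ball are harmless but unnecessary, since the pointwise inequality for $\mathfrak{m}_2$-a.e.\ $y_2$ already follows from Fubini applied to the $\mathfrak{m}_{Y_1\times Y_2}$-a.e.\ statement on a single slab.
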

\begin{proof}
Since the {}{proofs} of both cases are almost the {}{same}, we only give the proof of the case that $g_t^{{Y_1}\times {Y_2}}\geqslant c_2 g_{Y_1\times {Y}_2} $. 

Fix a ball $B_R^{Y_1}(\tilde{y}_1)\subset Y_1$, by \cite[Lemma 3.1]{MN19}, there exists a cut-off function $\phi\in \mathrm{Lip}_c(Y_1,\mathsf{d}_1)$ such that 
\[
\phi|_{B_R^{Y_1}(\tilde{y}_1)}\equiv 1, \  \phi|_{Y_1\setminus B_{2R}^{Y_1}(\tilde{y}_1)}\equiv 0.
\]

Now for any $\varphi \in H^{1,2}(Y_2,\mathsf{d}_2,\mathfrak{m}_2)$, 
 set $f:(y_1,y_2)\mapsto \phi(y_1)\varphi(y_2)$. Then it follows from (\ref{2.27}) and Lemma \ref{llem3.1} that for $\mathfrak{m}_{Y_1\times Y_2}$-a.e. $(x,y)$ in $ B_R^{Y_1}(\tilde{y}_1)\times {Y_2}$,  
 
 \[
\begin{aligned}
\ &\rho^{Y_1}_{2t}(y_1)g_t^{Y_2} \left(\nabla^{Y_2} \varphi,\nabla^{Y_2} \varphi\right)(y_2)\\
=\ &\phi^2(y_1)\rho^{Y_1}_{2t}(y_1)g_t^{Y_2} \left(\nabla^{Y_2} \varphi,\nabla^{Y_2} \varphi\right)(y_2)+c_1 \varphi^2(y_2)\rho^{{Y}_2}_{2t}(y_2)  \left|\nabla \phi\right|^2(y_1)\\  
=\ &\rho^{Y_1}_{2t}(y_1)g_t^{{Y}_2}\left(\nabla^{Y_2} f^{(y_1)},\nabla^{Y_2} f^{(y_1)}\right)(y_2)+\rho^{{Y}_2}_{2t}(y_2)g_t^{Y_1}\left(\nabla^{Y_1} f^{(y_2)},\nabla^{Y_1} f^{(y_2)}\right)(y_1)\\  
=\ &g_t^{Y_1\times {Y}_2 }\left(\nabla^{Y_1\times {Y}_2 } f, \nabla^{Y_1\times {Y}_2 } f\right)(y_1,y_2)\\
\geqslant\ &c_2 g_{Y_1\times {Y}_2} \left(\nabla^{Y_1\times {Y}_2 } f, \nabla^{Y_1\times {Y}_2 } f\right)(y_1,y_2)=c_2|\nabla^{Y_2}\varphi|^2(y_2).
\end{aligned}
\]

In particular, 
\[
\rho^{Y_1}_{2t}(y_1)g_t^{Y_2} \left(\nabla^{Y_2} \varphi,\nabla^{Y_2} \varphi\right)(y_2)\geqslant c_2|\nabla^{Y_2}\varphi|^2(y_2), \ \ \mathfrak{m}_2\text{-a.e.}\ y_2\in{Y_2}.
\]

Since $\varphi \in H^{1,2}(Y_2,\mathsf{d}_2,\mathfrak{m}_2)$ is taken to be arbitrary,  we complete the proof by setting $c_3:=c_2 \left(\rho_{2t}^{Y_1}\right)^{-1}$.
\end{proof}

\begin{proof}[Proof of Theorem \ref{thm4.5}]
We start by considering the case that $n\geqslant 4$.

For any fixed $(r_0,x_0)\in \text{C}({X})$ and any $\varphi \in \text{Lip}({X},\mathsf{d})$, take $f\in C^\infty((0,\infty))$ such that $\text{supp}f\in (r_0/4,3r_0)$ and $f\equiv 1$ on $(r_0/2,2r_0)$. Then {}{Proposition \ref{1prop2.23} and (\ref{neiji1}) yield} that for $\mathcal{H}^n$-a.e. $(r,x)\in B_{r_0/2}^{\text{C}({X})}\left(r_0,x_0\right)$, 

 \begin{equation}\label{111eqn3.21}
\begin{aligned}
cr^{-2} \left| \nabla \varphi\right|^2(x)&=c \left| \nabla (f\varphi)\right|^2_{\text{C}({X})}(r,x)\\
\ &\leqslant  g_1^{\text{C}({X})}\left(\nabla (f\varphi),\nabla (f\varphi) \right)(r,x)\\
\ &=\frac{1}{4} r^{2\alpha}\sum\limits_{j=1}^\infty\int_0^\infty  s\exp\left(-\frac{r^2+s^2}{2}\right)I_{\nu_j}\left(\frac{rs}{2}\right)^2 \mathrm{d}s\left\langle \nabla(f\varphi), \nabla\phi_j \right\rangle_{\text{C}({X})}^2(r,x)  \\
\ &=\frac{1}{4} r^{2\alpha-4}\sum\limits_{j=1}^\infty\int_0^\infty  s\exp\left(-\frac{r^2+s^2}{2}\right)I_{\nu_j}\left(\frac{rs}{2}\right)^2 \mathrm{d}s\left\langle \nabla \varphi, \nabla\phi_j \right\rangle^2(x)  \\
\ &=\frac{1}{2} r^{2\alpha-4}\sum\limits_{j=1}^\infty  \exp\left(-\frac{r^2}{2}\right)I_{\nu_j}\left(\frac{r^2}{2}\right) \left\langle \nabla \varphi, \nabla\phi_j \right\rangle^2(x),  \\
\end{aligned}
\end{equation}
where the last equality follows from the semigroup property of $\{h^{\text{C}({X})}_t\}_{t>0}$.

In the remaining part of the proof, we just  denote by $|\cdot|$ the pointwise norm on $L^2(T^\ast ({X},\mathsf{d},\mathcal{H}^{n-1}))$ for notation convenience. 

{}{Combining the fact that $\left|\langle \nabla \varphi, \nabla\phi_j \rangle\right|\leqslant |\nabla \varphi||\nabla \phi_j|$, $\mathcal{H}^{n-1}$-a.e. in ${X}$, with last equality of (\ref{111eqn3.21})} implies 
\[
c \left| \nabla \varphi\right|^2 \leqslant \frac{1}{2} r^{-n}\sum\limits_{j=1}^\infty  \exp\left(-\frac{r^2}{2}\right)I_{\nu_j}\left(\frac{r^2}{2}\right) \left|\nabla \varphi\right|^2  \left|\nabla \phi_j\right|^2\ \ \mathcal{H}^n\text{-a.e. } (r,x)\in B_{r_0/2}^{\text{C}({X})}(r_0,x_0).
\]

In particular, taking $\varphi=\mathsf{d}(x_0,\cdot)$ which satisfies that $|\nabla \varphi|\equiv 1$, we have
\begin{equation}\label{3.9}
c \leqslant  \frac{1}{2}r^{-n}\exp\left(-\frac{r^2}{2}\right) \sum\limits_{j=1}^\infty I_{\nu_j}\left(\frac{r^2}{2}\right) | \nabla\phi_j |^2\  \ \mathcal{H}^n\text{-a.e. } (r,x)\in B_{r_0/2}^{\text{C}({X})}(r_0,x_0).
\end{equation}
 Integration of (\ref{3.9}) on ${X}$ then gives
\begin{equation}\label{3.10}
c \mathcal{H}^{n-1}({X}) \leqslant  \frac{1}{2}r^{-n}\exp\left(-\frac{r^2}{2}\right) \sum\limits_{j=1}^\infty I_{\nu_j}\left(\frac{r^2}{2}\right) \mu_j\ \ \mathcal{L}^1\text{-a.e.}\ r\in(r_0/2,2r_0).
\end{equation}
In fact, (\ref{3.10}) holds for any $r>0$ due to the arbitrarity of $r_0>0$, which is still denoted as (\ref{3.10}).

{}{If $n\geqslant 4$ and $\mu_1>n-1$, then $\nu_j\geqslant \nu_1>n/2$, for all $ j\in \mathbb{N}_+$}. However, Lemma \ref{20211220b} implies that the right hand side of (\ref{3.10}) vanishes as $r\rightarrow 0$. Thus a contradiction occurs. Therefore $\mu_1=n-1$ {}{when $n\geqslant 4$}. 

By Theorem \ref{BGHZmainthm} and Obata's first eigenvalue rigidity theorem \cite[Theorem 1.2]{K15b}, there exists a non-collapsed RCD$(n-3,n-2)$ space $({X}',\mathsf{d}_{{X}'},\mathcal{H}^{n-2})$, such that {}{$\left(\text{C}({X}),\mathsf{d}_{\text{C}({X})}\right)$ is isometric to $\left(\mathbb{R}\times \text{C}({X}'),\sqrt{\mathsf{d}_{\mathbb{R}}^2+\mathsf{d}_{\text{C}({X}')}^2}\right)$.}

From (\ref{eqn2.1}) and (\ref{1prop4.3}), we know
\[
\rho^{\text{C}(X)}_{t}\equiv \frac{n\omega_{n}}{\mathcal{H}^{n-1}({X})} (4\pi t)^{\frac{n-1}{2}}.
\]

Using Lemmas \ref{llem3.1} and \ref{1lem3.15}, we see that  $g_1^{\mathrm{C}({X}')}\geqslant c' g_{\mathrm{C}({X}')}$ for some $ c'>0$. It is now sufficient to deal with the case that $n=3$.  

Repeating the previous arguments, we have $\mu_1=2$. We claim that $\mu_2=2$. If $\mu_2>2$, then the integration of (\ref{3.9}) on any measurable set $\Omega \subset {X}$ yields

\[
\begin{aligned}
c \mathcal{H}^2(\Omega)\leqslant &\ Cr^{-2} \sum\limits_{j=1}^\infty I_{\nu_j}\left(\frac{r^2}{2t}\right)\int_\Omega \left| \nabla\phi_j \right|^2 \mathrm{d}\mathcal{H}^2\\
\leqslant &\ Cr^{-2} I_{\nu_1}\left(\frac{r^2}{2t}\right)\int_\Omega \left| \nabla\phi_1\right|^2 \mathrm{d}\mathcal{H}^2+r^{-2}\sum\limits_{j=2}^\infty I_{\nu_j}\left(\frac{r^2}{2t}\right)\int_{{X}} \left| \nabla\phi_j \right|^2 \mathrm{d}\mathcal{H}^2\\
\rightarrow &\  C \int_\Omega \left| \nabla\phi_1\right|^2\mathrm{d}\mathcal{H}^2 \ \ \text{as }r\rightarrow 0. 
\end{aligned}
\]  
for some $C=C(n,\mathrm{diam}({X},\mathsf{d}))$. The arbitrarity of $\Omega$, together with the Lebesgue differentiation theorem shows that $|\nabla \phi_1|^2 \geqslant c_0:=c^{-1}C>0$,  $\mathcal{H}^2$-a.e. 

Consider the Laplacian of $\phi_1^\alpha$ for any even integer $\alpha$, and calculate as follows:
\[
\begin{aligned}
\Delta \phi_1^\alpha &=\alpha (\alpha-1)|\nabla \phi_1|^2 \phi_1^{\alpha-2}+\alpha \phi_1^{\alpha-1}\Delta \phi_1\\
\ &=\alpha (\alpha-1)|\nabla \phi_1|^2 \phi_1^{\alpha-2}-\alpha \phi_1^{\alpha-1}(n-1) \phi_1\\
\ &=\alpha \phi_1^{\alpha-2}\left((\alpha-1)|\nabla \phi_1|^2 - (n-1)\phi_1^2 \right)\\
\ &\geqslant \alpha \phi_1^{\alpha-2}
\left((\alpha-1)c_0 -C(n,\mathrm{diam}({X},\mathsf{d}))\right), \ \ \mathcal{H}^{2}\text{-a.e.}
\end{aligned}
\]

As a result, the integer $\alpha$ can be chosen to be sufficiently large such that $\phi_1^\alpha$ is superharmonic.  However, any superharmonic function on a compact RCD {}{space} must be a constant function (see like \cite[Theorem 2.3]{GR19}). A contradiction. Therefore $\mu_2=2$. 	

According to \cite[Theorem 1.4]{K15b}, $({X},\mathsf{d})$ must be isometric to either $(\mathbb{S}^2,\mathsf{d}_{\mathbb{S}^2})$ or $\left(\mathbb{S}^2_+,\mathsf{d}_{\mathbb{S}^2_+}\right)$. Thus $\left(\text{C}({X}),\mathsf{d}_{\text{C}({X})}\right)$ must be isometric to either $(\mathbb{R}^3,\mathsf{d}_{\mathbb{R}^3})$ or $\left(\mathbb{R}^3_+,\mathsf{d}_{\mathbb{R}^3_+}\right)$.

 Notice that on $\mathbb{R}^n_+:=\{(x_1,\cdots,x_n)\in \mathbb{R}^n:x_n>0\}$, 
 \[
g_t^{\mathbb{R}^n_+}\left(\frac{\partial}{\partial x_n},\frac{\partial}{\partial x_n}\right)(x_1,\cdots,x_n)=c_n t^{-\frac{n+2}{2}}\left(\frac{1-\exp\left(-\frac{x_n^2}{2t}\right)}{2}+\frac{x_n^2}{4t}\exp(-\frac{x_n^2}{2t})\right).
\]

It is clear that
\[
\lim\limits_{x_3\rightarrow 0^+} g_t^{\mathbb{R}^3_+}\left(\frac{\partial }{\partial x_3},\frac{\partial }{\partial x_3}\right)(x_1,x_2,x_3)=0, 
\] 
which contradicts our assumption.

When $n=2$, set ${Y}=\text{C}({X})\times \mathbb{R}$, and notice that $g_1^{{Y}}\geqslant c' g_{Y}$ for some $c'>0$ by (\ref{1prop4.3}), Lemma \ref{llem3.1} and Lemma \ref{1lem3.15}, which shall be verified in the same way as previous arguments. Thus $({Y},\mathsf{d}_{Y})$ must be isometric to $\left(\mathbb{R}^3,\mathsf{d}_{\mathbb{R}^3}\right)$ and $\left(\text{C}({X}),\mathsf{d}_{\mathrm{C}({X})}\right)$ must be isometric to $(\mathbb{R}^2,\mathsf{d}_{\mathbb{R}^2})$.

\end{proof}

As an application of Theorem \ref{thm4.5}, we prove Theorem \ref{mainthm1.5}.

\begin{proof}[Proof of Theorem \ref{mainthm1.5}]
It follows from Theorem \ref{mainthm1.3} that $\mathfrak{m}=c\mathcal{H}^n$ for some $c>0$, and $({X},\mathsf{d},\mathcal{H}^n)$ is an RCD$(0,n)$ space. Without loss of generality, we may assume that $\mathfrak{m}=\mathcal{H}^n$.

The subsequent pmGH arguments in this proof are almost the same as that in the proof of Theorem \ref{mainthm1.3}, and we omit the details.

Take $\{r_i\}$ with $r_i\rightarrow \infty$, and a pointed RCD$(0,n)$ space $({X}_\infty,\mathsf{d}_{\infty},\mathfrak{m}_\infty,x_\infty)$ such that
\[
({X}_i,\mathsf{d}_i,\mathfrak{m}_i,x):=\left({X},\frac{1}{r_i}\mathsf{d},\frac{\mathfrak{m}}{\mathfrak{m}\left(B_{r_i}(x)\right)},x\right)\xrightarrow{\mathrm{pmGH}} ({X}_\infty,\mathsf{d}_\infty,\mathfrak{m}_\infty,x_\infty).
\]

Again on each ${X}_i$, $c(r_i^2 t)g_t^{{X}_i}=r_i^2 \mathfrak{m}\left(B_{r_i}(x_i)\right)g_{{X}_i}$. Applying  (\ref{tsukaeqn3.3}) and Proposition \ref{llem3.4} implies that
\begin{equation}\label{1eqn4.11}
\liminf\limits_{i\rightarrow \infty} \frac{\mathfrak{m}(B_{r_i}(x))}{r_i^n}=a>0, 
\end{equation}
and 
\begin{equation}\label{1eqn4.12}
\lim\limits_{i\rightarrow \infty} r_i^{-(n+2)}c(r_i^2)=b>0.
\end{equation}

By Theorem \ref{11thm2.15}, there {}{exists} a subsequence of $\{r_i\}$ which is still denoted as $\{r_i\}$ and a pointed RCD$(0,n)$ space $({Y}_\infty,\mathsf{d}_\infty',\mathcal{H}^n,y_\infty)$ such that 
\[
({Y}_i,\mathsf{d}_i',\mathcal{H}^n,y):=\left({X},\frac{1}{r_i}\mathsf{d},\frac{1}{r_i^n}\mathfrak{m},x\right)\xrightarrow{\mathrm{pmGH}} ({Y}_\infty,\mathsf{d}_\infty',\mathcal{H}^n,y_\infty).
\]

As a result, combining \cite[Theorem 1.1]{DG16} with (\ref{1eqn4.11}) and (\ref{1eqn4.12}) yields that $({Y}_\infty,\mathsf{d}_\infty',\mathcal{H}^n,y_\infty)$ is an Euclidean cone with $g_1^{{Y}_\infty}= b g^{{Y}_\infty}$ .

Therefore Theorem \ref{thm4.5} implies that $({Y}_\infty, \mathsf{d}_\infty')$ must be isometric to $(\mathbb{R}^n,\mathsf{d}_{\mathbb{R}^n})$. Finally, it remains to use the volume rigidity theorem for non-collapsed almost RCD$(0,n)$ spaces \cite[Theorem 1.6]{DG18} to conclude.
\end{proof}

The following corollary can be proved by using similar arguments as in the proof of Theorem \ref{mainthm1.5}.

\begin{cor}\label{cor4.7}
Let $({X},\mathsf{d},\mathcal{H}^n)$ be a non-collapsed $\mathrm{RCD}(0,n)$ space. {}{If there exists a function $c(t)$ such that 
\begin{enumerate}
\item $c(t)g_t \geqslant g$, $\forall t>0$, 
\item $\liminf\limits_{t\rightarrow \infty} t^{-(n+2)}c(t^2)>0$.
\end{enumerate}}
Then  $({X},\mathsf{d})$ is isometric to $\left(\mathbb{R}^n,\mathsf{d}_{\mathbb{R}^n}\right)$.
\end{cor}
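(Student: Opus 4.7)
The plan is to follow faithfully the blow-down strategy from the proof of Theorem \ref{mainthm1.5}. Fix a base point $x\in X$ and a sequence $r_i\to\infty$. Passing to a subsequence, the volume-normalised blow-down
\[
(X_i,\mathsf{d}_i,\mathfrak{m}_i,x):=\left(X,\tfrac{1}{r_i}\mathsf{d},\tfrac{\mathfrak{m}}{\mathfrak{m}(B_{r_i}(x))},x\right)
\]
pmGH-converges (Theorem \ref{sta}) to an $\mathrm{RCD}(0,n)$ space $(X_\infty,\mathsf{d}_\infty,\mathfrak{m}_\infty,x_\infty)$, and the Hausdorff-normalised blow-down
\[
(Y_i,\mathsf{d}_i',\mathcal{H}^n,y):=\left(X,\tfrac{1}{r_i}\mathsf{d},\tfrac{1}{r_i^n}\mathcal{H}^n,x\right)
\]
pmGH-converges (Theorem \ref{11thm2.15}) to a non-collapsed $\mathrm{RCD}(0,n)$ space $(Y_\infty,\mathsf{d}_\infty',\mathcal{H}^n,y_\infty)$.

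The same rescaling identity as in the proof of Theorem \ref{mainthm1.5}, combined with hypothesis (1), converts to the one-sided pointwise tensor inequality
\[
c(r_i^2 t)\,g_t^{X_i}\;\geq\; r_i^2\mathfrak{m}(B_{r_i}(x))\,g^{X_i},\qquad\forall t>0,
\]
on each $X_i$. Plugging in $t=1$ and comparing with the universal upper bound (\ref{tsukaeqn3.3}) applied on $X_i$ produces $\mathfrak{m}(B_{r_i}(x))/r_i^n\leq C(n)\,c(r_i^2)/r_i^{n+2}$; together with hypothesis (2) this keeps both ratios bounded along $\{r_i\}$. The next step is to extract the two positive lower bounds
\[
\liminf_{i\to\infty}\frac{\mathfrak{m}(B_{r_i}(x))}{r_i^n}=a>0,\qquad \liminf_{i\to\infty}\frac{c(r_i^2)}{r_i^{n+2}}=b>0.
\]
The second is immediate from hypothesis (2); the first is obtained by passing the above tensor inequality to the $L^2$-strong limit on $X_\infty$ via \cite[Theorem 3.11]{BGHZ21}, and using Proposition \ref{llem3.4} on the limit (which is not a single point because $\mathfrak{m}_{X_i}(B_1^{X_i})\equiv 1$) together with the uniform upper bound $g_1^{X_i}\leq C(n)g^{X_i}$ to rule out the degeneration $r_i^2\mathfrak{m}(B_{r_i}(x))/c(r_i^2)\to 0$, exactly as in the derivation of (4.11)–(4.12) in the proof of Theorem \ref{mainthm1.5}.

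With $a,b>0$ in hand, $Y_\infty$ has maximal volume growth, so \cite[Theorem 1.1]{DG16} identifies it with a Euclidean metric cone $\mathrm{C}(Z)$ over a non-collapsed $\mathrm{RCD}(n-2,n-1)$ cross-section $(Z,\mathsf{d}_Z,\mathcal{H}^{n-1})$. The analogous rescaling identity on $Y_i$ gives $\frac{c(r_i^2)}{r_i^{n+2}}g_1^{Y_i}\geq g^{Y_i}$, and passing this to the $L^2$-strong limit (with coefficient $\to b$) yields $b\,g_1^{Y_\infty}\geq g^{Y_\infty}$, which is precisely the hypothesis of Theorem \ref{thm4.5} applied to the cone $\mathrm{C}(Z)$. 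That theorem therefore forces $(Z,\mathsf{d}_Z)\cong(\mathbb{S}^{n-1},\mathsf{d}_{\mathbb{S}^{n-1}})$, so $Y_\infty$ is isometric to $(\mathbb{R}^n,\mathsf{d}_{\mathbb{R}^n})$. A concluding application of the volume rigidity theorem \cite[Theorem 1.6]{DG18} for non-collapsed almost $\mathrm{RCD}(0,n)$ spaces upgrades this asymptotic flatness to the isometry $(X,\mathsf{d})\cong(\mathbb{R}^n,\mathsf{d}_{\mathbb{R}^n})$.

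The main obstacle is the volume lower bound $a>0$. In Theorem \ref{mainthm1.5} one reads $|g_1^{X_i}|_{\mathsf{HS}}=\sqrt{n}\,r_i^2\mathfrak{m}(B_{r_i}(x))/c(r_i^2)$ directly off the pointwise equality $g_1^{X_i}=\beta_i g^{X_i}$ and invokes Proposition \ref{llem3.4}; here only the one-sided bound $g_1^{X_i}\geq\beta_i g^{X_i}$ is available, so the non-degeneracy of $g_1^{X_\infty}$ must be extracted by pinching this lower bound against the uniform upper bound coming from (\ref{tsukaeqn3.3}), which is the only technical deviation from the proof of Theorem \ref{mainthm1.5}.
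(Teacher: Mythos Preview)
Your overall strategy---blow down, identify the asymptotic cone via \cite[Theorem 1.1]{DG16}, pass the one-sided tensor inequality to the limit to feed Theorem~\ref{thm4.5}, and finish with volume rigidity \cite[Theorem 1.6]{DG18}---is exactly what the paper has in mind, and once $a>0$ is secured the remaining steps go through as you write them (after passing to a subsequence along which $c(r_i^2)/r_i^{n+2}\to b$).

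The gap is precisely where you flag it, and your proposed fix does not close it. In the proof of Theorem~\ref{mainthm1.5} the \emph{equality} $g_1^{X_i}=\beta_i\,g^{X_i}$ is what makes Proposition~\ref{llem3.4} bite: if $\beta_i\to 0$ then the $L^2$-strong limit gives $g_1^{X_\infty}=0$, contradicting the proposition. Under hypothesis~(1) you only have $g_1^{X_i}\geq\beta_i\,g^{X_i}$, and if $\beta_i\to 0$ this limit inequality becomes $g_1^{X_\infty}\geq 0$, which is vacuous. ``Pinching'' against the upper bound $g_1^{X_i}\leq C(n)\,g^{X_i}$ from (\ref{tsukaeqn3.3}) yields only $\beta_i\leq C(n)$, not a positive lower bound; and the non-triviality of $g_1^{X_\infty}$ says nothing about $\liminf\beta_i$ in the absence of an equality tying $g_1^{X_\infty}$ to $\beta_\infty g^{X_\infty}$. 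Concretely: if $a=0$ then, since $\mathcal{H}^n(B_r(x))/r^n$ is non-increasing by Bishop--Gromov and hypothesis~(2) keeps $c(r^2)/r^{n+2}$ bounded below for large $r$, one has $\beta(r)=\dfrac{\mathcal{H}^n(B_r(x))/r^n}{c(r^2)/r^{n+2}}\to 0$, and nothing in your argument rules this out. So the derivation of (\ref{1eqn4.11}) does not transfer verbatim from the IHKI setting; this step needs a genuinely different ingredient beyond ``exactly as in (4.11)--(4.12)''.
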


\section{The isometric immersion into Euclidean space}\label{sec4}
The main purpose of this section is to prove Theorem \ref{thm1.5}. To begin with, let us recall a useful result (Theorem \ref{111thm4.3}) in \cite{H21}, which plays a important role in this section.
\begin{defn}[Regular map]
Let $({X},\mathsf{d},\mathfrak{m})$ be an RCD$(K,N)$ space. Then a map $F:=(\varphi_1,\ldots,\varphi_k):{X}\rightarrow \mathbb{R}^k$ is said to be regular if each $\varphi_i$ is in $D(\Delta)$ with $\Delta \varphi_i\in L^\infty(\mathfrak{m})$.
\end{defn}

\begin{defn}[Locally uniformly $\delta$-isometric immersion]
Let $({X},\mathsf{d},\mathfrak{m})$ be an RCD$(K,N)$ space and $F:=(\varphi_1,\ldots,\varphi_k):{X}\rightarrow \mathbb{R}^k$ be a locally Lipschitz map. $F$ is said to be a locally uniformly $\delta$-isometric immersion on $B_r(x_0)\subset {X}$ if for any $x\in B_r(x_0)$ it holds that
\[
\frac{1}{\mathfrak{m}(B_s(x))}\int_{B_{\delta^{-1}s}(x)}|F^\ast g_{\mathbb{R}^k}-g_{X}|\mathrm{d}\mathfrak{m}<\delta,\ \forall s\in (0,r).
\]
\end{defn}

\begin{thm}[{\cite[Theorem 3.4]{H21}}]\label{111thm4.3}
Let $({X},\mathsf{d},\mathfrak{m})$ be an $\mathrm{RCD}(K,N)$ space with $\mathrm{dim}_{\mathsf{d},\mathfrak{m}}({X})=n$ and let $F:=(\varphi_1,\ldots,\varphi_k):{X}\rightarrow \mathbb{R}^k$ be a regular map with
\[
\sum\limits_{i=1}^k \| |\nabla \varphi_i|\|_{L^\infty(\mathfrak{m})}\leqslant C.
\]
If $F$ is a locally uniformly $\delta$-isometric immersion on some ball $B_{4r}(x_0)\subset {X}$. Then the following {}{statements hold.}
\begin{enumerate}
\item For any $s\in (0,r)$, $\mathsf{d}_{\mathrm{GH}}(B_s(x_0),B_s(0_n))\leqslant \Psi(\delta|K,N,k,C)s$, where $\mathsf{d}_{\mathrm{GH}}$ is the Gromov-Hausdorff distance.
\item $F|_{B_{r}(x_0)}$ is $(1+\Psi(\delta|K,N,k,C))$-bi-Lipschitz from $B_{r}(x_0)$ to $F(B_{r}(x_0))\subset \mathbb{R}^k$.
\end{enumerate}
\end{thm}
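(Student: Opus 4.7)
The plan is to deduce Theorem \ref{111thm4.3} from the quantitative almost-splitting machinery for RCD$(K,N)$ spaces. The integral near-isometry of $F$, combined with the regularity of its components, should force the gradients $\nabla\varphi_i$ to behave like $n$ almost-orthonormal, almost-parallel vector fields, which in turn forces $B_s(x_0)$ to be close to a Euclidean ball and $F$ to be bi-Lipschitz onto its image.

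First, I would upgrade the integral hypothesis on $|F^{\ast}g_{\mathbb{R}^k}-g|$ to an integral Hessian smallness. Applying the Bochner inequality (\ref{bochnerineq}) and its refined form (\ref{abc2.14}) to each $\varphi_i$ and to the linear combinations $\varphi_i\pm\varphi_j$, testing against a good cut-off supported in a slightly smaller concentric ball, and using the $L^\infty$-bounds on $\Delta\varphi_i$ and $|\nabla\varphi_i|$ to absorb lower-order terms, one should obtain for every $x\in B_{r}(x_0)$ and $s\in(0,r)$ an estimate
\[
\fint_{B_s(x)}\sum_{i=1}^k \bigl|\mathop{\mathrm{Hess}}\varphi_i\bigr|_{\mathsf{HS}}^2\,\mathrm{d}\mathfrak{m}\leq \Psi(\delta|K,N,k,C).
\]
The mechanism is that the oscillation of $\langle\nabla\varphi_i,\nabla\varphi_j\rangle$, controlled by the near-isometry defect applied to coordinate pairs, forces the integrated Hessians to be small via the usual integration-by-parts identities behind Bochner.

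Next, since $|g|_{\mathsf{HS}}^2=n$ $\mathfrak{m}$-a.e.\ (Theorem \ref{111thm2.21}) and $\sum_i|\nabla\varphi_i|^2$ is the pointwise trace of $F^{\ast}g_{\mathbb{R}^k}$, the hypothesis yields $\sum_i|\nabla\varphi_i|^2 \approx n$ in $L^1(B_r(x_0),\mathfrak{m})$. At a typical regular point I would apply a Gram--Schmidt type selection to the gradients $\{\nabla\varphi_i\}$ to extract $n$ linear combinations $\tilde F=(\tilde\varphi_1,\ldots,\tilde\varphi_n)$ whose gradients form a near-orthonormal frame on a definite fraction of $B_{2r}(x_0)$, producing a $\Psi$-splitting map into $\mathbb{R}^n$ in the Cheeger--Colding/Mondino--Naber sense (i.e.\ small average $|\langle\nabla\tilde\varphi_i,\nabla\tilde\varphi_j\rangle-\delta_{ij}|$, small Hessians, and bounded Laplacians). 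The quantitative $\epsilon$-splitting theorem in the RCD setting then yields conclusion (1): the existence of an $n$-dimensional $\Psi$-splitting map on $B_{2r}(x_0)$, with $n$ equal to the essential dimension $\mathrm{dim}_{\mathsf{d},\mathfrak{m}}(X)$, forces $\mathsf{d}_{\mathrm{GH}}(B_s(x_0),B_s(0_n))\leqslant \Psi(\delta|K,N,k,C)s$ for all $s\in(0,r)$, with no extra factor surviving in the limit because the target dimension matches the essential dimension.

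For conclusion (2), the upper Lipschitz bound follows by applying Cheeger--Colding's segment inequality to the small quantity $|F^{\ast}g_{\mathbb{R}^k}-g|$: integration along geodesics for $\mathfrak{m}\otimes\mathfrak{m}$-a.e.\ pair $(y,z)\in B_r(x_0)\times B_r(x_0)$, combined with the pointwise inequality $|dF(\gamma')|_{\mathbb{R}^k}^2\leqslant g(\gamma',\gamma')+|F^{\ast}g_{\mathbb{R}^k}-g|(\gamma',\gamma')$, yields $|F(y)-F(z)|\leqslant(1+\Psi)\mathsf{d}(y,z)$; continuity and the Lipschitz bound $\sum\||\nabla\varphi_i|\|_\infty\leqslant C$ then extend this everywhere. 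For the lower bound, I would compose $F$ with the GH-approximation $B_s(x_0)\to B_s(0_n)\hookrightarrow\mathbb{R}^k$ from (1): since $F$ and the canonical $\mathbb{R}^n$-coordinates agree modulo an affine isometry of $\mathbb{R}^k$ up to a $\Psi$-error on $B_r(x_0)$, the desired $(1+\Psi)^{-1}\mathsf{d}(y,z)\leqslant|F(y)-F(z)|$ follows. The principal obstacle I anticipate is the Gram--Schmidt/$\epsilon$-splitting reduction: one must ensure $n$ almost-orthonormal gradients are available uniformly across $B_{2r}(x_0)$ rather than only at a single regular point, which requires a covering argument at scales adapted to the Bishop--Gromov density (\ref{BGinequality}), together with a careful use of the $\mathfrak{m}$-a.e.\ identity $|g|_{\mathsf{HS}}=\sqrt{n}$ that holds only on the regular set.
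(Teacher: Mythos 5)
The theorem you are trying to reprove is imported by the paper from Honda's earlier work (\cite[Theorem~3.4]{H21}) and is not proved here, so there is no ``paper's own proof'' to line your argument up against; I can only assess the proposal on its own terms.

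Your high-level template -- use the near-isometry hypothesis together with Bochner to get integral Hessian smallness, extract an $n$-dimensional almost-splitting map, invoke quantitative almost-splitting for RCD spaces, and derive bi-Lipschitzness -- is the natural route and is broadly consistent with how such statements are proved in the Cheeger--Colding/Mondino--Naber tradition. The problem lies in the first step, and it is not a cosmetic one. The estimate you claim,
\[
\fint_{B_s(x)}\sum_{i=1}^k \bigl|\mathop{\mathrm{Hess}}\varphi_i\bigr|_{\mathsf{HS}}^2\,\mathrm{d}\mathfrak{m}\leq \Psi(\delta\,|\,K,N,k,C)\quad\text{for all }s\in(0,r),
\]
is dimensionally inconsistent and cannot hold uniformly in $s$: the Bochner cut-off argument produces $\int|\nabla\varphi_i|^2\Delta\phi$ with $|\Delta\phi|\lesssim s^{-2}$, and $\int\phi\langle\nabla\Delta\varphi_i,\nabla\varphi_i\rangle$ which after integration by parts becomes $-\int\Delta\varphi_i\langle\nabla\phi,\nabla\varphi_i\rangle-\int\phi(\Delta\varphi_i)^2$. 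Summing over $i$ does let the trace of $F^{\ast}g_{\mathbb{R}^k}-g$ kill the leading $n\int\Delta\phi$ term as you intend, but the Laplacian terms remain, and the constants you are allowed to use ($K,N,k$, and the gradient bound $C$) simply do not control $\|\Delta\varphi_i\|_{L^\infty(\mathfrak{m})}$. ``Regular map'' only asserts $\Delta\varphi_i\in L^\infty(\mathfrak{m})$, not a quantitative bound; the paper itself, when it invokes Theorem~\ref{111thm4.3} in the proof of Lemma~\ref{1lem4.3}, separately verifies a Laplacian bound in (\ref{1eqn4.8}) before applying the theorem. Even granting such a bound $\|\Delta\varphi_i\|_\infty\leq L$, the scale-$s$ rescaled estimate yields $s^2\fint_{B_s}|\mathop{\mathrm{Hess}}\varphi_i|_{\mathsf{HS}}^2\lesssim\Psi(\delta)+sL+s^2L^2$, which is small only for $s\ll L^{-1}$, not uniformly over $s\in(0,r)$. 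This is why the actual proof cannot proceed by a direct Bochner-to-splitting reduction and needs a compactness/contradiction scheme (or a more careful scale-by-scale bookkeeping) to eliminate the $L$-dependence.

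A second, smaller gap is the lower bi-Lipschitz bound in conclusion~(2). You propose to ``compose $F$ with the GH-approximation $B_s(x_0)\to B_s(0_n)$,'' asserting that $F$ agrees with the $\mathbb{R}^n$-coordinates modulo an affine isometry up to a $\Psi$-error. But a Gromov--Hausdorff approximation carries no canonical relation to $F$; the statement that $F$ differs from that approximation only by a nearly isometric affine map of $\mathbb{R}^k$ is essentially the content of conclusion~(2) itself, so as written the argument is circular. The standard way to obtain the lower bound is to run the almost-splitting map comparison at every scale simultaneously (the ``iterated Pythagoras'' mechanism of Cheeger--Colding), not to reuse the GH-closeness from~(1) as a black box. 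You also flag the frame-selection issue yourself; it is real, but it is resolvable with the uniform-scale hypothesis once the Hessian estimate is in place, so the Laplacian issue above is the genuinely blocking step.
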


{}{
From now on, we let $({X},\mathsf{d},\mathcal{H}^n)$ be a fixed compact non-collapsed $\mathrm{RCD}(K,n)$ space, and we assume that
\begin{equation}\label{111111eqn1.5}
g=\sum\limits_{i=1}^m d\phi_i\otimes d\phi_i,
\end{equation}
where $g$ is the canonical Riemannian metric of $(X,\mathsf{d},\mathcal{H}^n)$ and each $\phi_i$ is an eigenfunction of $-\Delta$ with corresponding eigenvalue $\mu_i$ ($i=1,\ldots,m$). To fix the notation}, denote by $C$ a constant with 
{}{\[
C=C\left(K,m,n,\mathrm{diam}({X},\mathsf{d}),\mathcal{H}^n({X}),\mu_1,\ldots,\mu_m,\left\|\phi_1\right\|_{L^2(\mathcal{H}^n)},\ldots,\left\|\phi_m\right\|_{L^2(\mathcal{H}^n)}\right),\] }
which may vary from line to line, and by $\mathsf{M}_{n\times n}(\mathbb{R})$ the set of all $n\times n$ real matrices equipped with the Euclidean metric on $\mathbb{R}^{n^2}$, and by $I_n$ the $n\times n$ identity matrix.

\begin{lem}\label{1lem4.2}
Each $\langle \nabla \phi_i,\nabla \phi_j \rangle $ is a Lipschitz function $(i,j=1,\ldots,m)$. In particular, 
\begin{equation}
\sum\limits_{i,j=1}^m  \left\| |\nabla \left\langle \nabla \phi_i,\nabla \phi_j \right\rangle| \right\|_{L^\infty(\mathcal{H}^n)}\leqslant  C.
\end{equation}
\end{lem}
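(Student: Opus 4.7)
The strategy is to extract a uniform pointwise bound on $\sum_{i=1}^m|\mathrm{Hess}\,\phi_i|_{\mathsf{HS}}^2$ from the rigidity imposed by the immersion hypothesis~(\ref{111111eqn1.5}) via the Bochner inequality, and then convert this into a Lipschitz bound on the inner products $\langle\nabla\phi_i,\nabla\phi_j\rangle$ using the Hessian identity~(\ref{11eqn2.16}) and the Sobolev-to-Lipschitz property. First, I would take the trace of~(\ref{111111eqn1.5}) against the canonical Riemannian metric $g$ itself: since $\mathrm{Tr}(d\phi_i\otimes d\phi_i)=|\nabla\phi_i|^2$ and $|g|_{\mathsf{HS}}^2=n$ by Theorem~\ref{111thm2.21}, this yields the pointwise constraint~(\ref{aaaaa1111122}), namely $\sum_{i=1}^m|\nabla\phi_i|^2=n$ $\mathcal{H}^n$-a.e., and in particular $|\nabla\phi_i|\leq\sqrt{n}$ uniformly.

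Next, I would apply the Bochner inequality in its Hessian form~(\ref{abc2.14}) to each $\phi_i$. Using $\Delta\phi_i=-\mu_i\phi_i$, hence $\langle\nabla\phi_i,\nabla\Delta\phi_i\rangle=-\mu_i|\nabla\phi_i|^2$, summing over $i=1,\ldots,m$ turns the integrated left-hand side into $\tfrac{1}{2}\int n\,\Delta\varphi\,\mathrm{d}\mathcal{H}^n=0$ for every $\varphi\in\mathrm{Test}F_+(X,\mathsf{d},\mathcal{H}^n)$, so
\[
0\,\geq\,\int\varphi\,\Big(\sum_{i=1}^m|\mathrm{Hess}\,\phi_i|_{\mathsf{HS}}^2+\sum_{i=1}^m(K-\mu_i)|\nabla\phi_i|^2\Big)\,\mathrm{d}\mathcal{H}^n.
\]
By the density of $\mathrm{Test}F_+$ in $H^{1,2}_+$ and the arbitrariness of $\varphi\geq 0$, the integrand must be nonpositive $\mathcal{H}^n$-a.e., which, combined with $|\nabla\phi_i|\leq\sqrt{n}$, yields the pointwise a.e.\ bound $\sum_{i=1}^m|\mathrm{Hess}\,\phi_i|_{\mathsf{HS}}^2\leq \sum_{i=1}^m(\mu_i-K)|\nabla\phi_i|^2\leq C$.

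Finally, I would invoke the Hessian identity~(\ref{11eqn2.16}), which represents $\langle\nabla\varphi,\nabla\langle\nabla\phi_i,\nabla\phi_j\rangle\rangle$ as a sum of two Hessian contractions. Together with the pointwise Cauchy--Schwarz inequality $|T(u,v)|\leq|T|_{\mathsf{HS}}|u||v|$ this produces
\[
|\nabla\langle\nabla\phi_i,\nabla\phi_j\rangle|\,\leq\,|\mathrm{Hess}\,\phi_i||\nabla\phi_j|+|\mathrm{Hess}\,\phi_j||\nabla\phi_i|\,\leq\, C
\]
$\mathcal{H}^n$-a.e., and the Sobolev-to-Lipschitz property of the RCD$(K,n)$ space $(X,\mathsf{d},\mathcal{H}^n)$ then promotes $\langle\nabla\phi_i,\nabla\phi_j\rangle$ to a genuine $C$-Lipschitz representative. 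The only delicate point is the passage from the integrated Bochner inequality against nonnegative test functions to the pointwise a.e.\ bound on $\sum_i|\mathrm{Hess}\,\phi_i|_{\mathsf{HS}}^2$; all other ingredients are direct consequences of the first- and second-order calculus on RCD$(K,N)$ spaces recalled in Section~\ref{sec2}.
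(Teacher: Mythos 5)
Your proposal is correct, and it takes a genuinely streamlined route compared to the paper. The paper first uses the constraint $\sum_i|\nabla\phi_i|^2=n$ together with the lower Bochner bound~(\ref{bochnerineq}) and the identity $\Delta|\nabla\phi_1|^2=-\sum_{j\neq 1}\Delta|\nabla\phi_j|^2$ to extract, via a Riesz representation argument, two-sided $L^\infty$ bounds on each $\Delta|\nabla\phi_i|^2$; it then invokes Jiang's gradient estimate (Theorem~\ref{aaaathm3.12}) to control $|\nabla|\nabla\phi_i|^2|$, and only afterwards feeds the $L^\infty$ bound on $\Delta|\nabla\phi_i|^2$ into the Hessian form of Bochner~(\ref{abc2.14}) to get $\||\mathrm{Hess}\,\phi_i|_{\mathsf{HS}}\|_{L^\infty}\leq C$. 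You instead sum~(\ref{abc2.14}) directly over $i$: since the constant $n$ annihilates the integrated left-hand side on a compact space ($\int\Delta\varphi\,\mathrm{d}\mathcal{H}^n=0$), the pointwise $L^\infty$ bound on $\sum_i|\mathrm{Hess}\,\phi_i|_{\mathsf{HS}}^2$ drops out in one step, eliminating both the Riesz-representation detour and the appeal to Jiang's gradient estimate. What the paper's longer route buys are the intermediate $L^\infty$ bounds on $\Delta|\nabla\phi_i|^2$ and $|\nabla|\nabla\phi_i|^2|$ (collected as~(\ref{1eqn4.4})), but these are not in fact used outside the proof of this lemma — (\ref{1eqn4.8}) later cites only the conclusion~(\ref{1eqn4.6}) together with Proposition~\ref{heatkernel2} — so nothing downstream is lost. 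Your final step from the Hessian identity~(\ref{11eqn2.16}) to the $L^\infty$ gradient bound requires the standard trick of testing against $\varphi=\langle\nabla\phi_i,\nabla\phi_j\rangle$ itself, which you leave implicit but is exactly what the paper does in~(\ref{1eqn4.6}); and the "delicate point" you flag — passing from $\int\varphi f\leq 0$ for all $\varphi\in\mathrm{Test}F_+$ to $f\leq 0$ a.e.\ with $f\in L^1$ — is indeed legitimate (approximate $\chi_{\{f>0\}}$ by $\mathrm{h}_t\chi_{\{f>0\}}\in\mathrm{Test}F_+$), and the paper makes the same passage without comment.
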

\begin{proof}
We first show that $|\nabla \phi_1|^2\in \text{Lip}({X},\mathsf{d})$. Taking trace of {}{(\ref{111111eqn1.5})} gives
\begin{equation}\label{1eqn4.1}
\sum\limits_{i=1}^m \left|\nabla \phi_i\right|^2 =\langle g,g\rangle =n.
\end{equation}

Using the Bochner's inequality (\ref{bochnerineq}), for any $\varphi\in \mathrm{Test}F_+({X},\mathsf{d},\mathcal{H}^n)$, we get

\begin{equation}\label{1eqn4.2}
\int_{X} \left|\nabla \phi_1\right|^2 \Delta \varphi \mathrm{d}\mathcal{H}^n
  \geqslant 2\int_{X} \varphi \left( (K-\mu_1) \left|\nabla \phi_1\right|^2   + \frac{1}{n}\mu_1^2\phi_1^2  \right) \mathrm{d}\mathcal{H}^n
\geqslant  -C\int_{X} \varphi \mathrm{d}\mathcal{H}^n,
 \end{equation}
where the last inequality comes from Proposition \ref{heatkernel2}. Owing to (\ref{1eqn4.1}) and (\ref{1eqn4.2}), 
\begin{equation}\label{111eqn4.4}
 \int_{X} \left|\nabla \phi_1\right|^2 \Delta \varphi \mathrm{d}\mathcal{H}^n\\
=  -\sum\limits_{j=2}^m  \int_{X} \left|\nabla \phi_j\right|^2 \Delta \varphi \mathrm{d}\mathcal{H}^n
\leqslant C\int_{X} \varphi \mathrm{d}\mathcal{H}^n.
\end{equation}

Since $\mathrm{Test}F_+({X},\mathsf{d},\mathcal{H}^n)$ is dense in $  H^{1,2}_+({X},\mathsf{d},\mathcal{H}^n)$, and $\phi_1\in \mathrm{Test}F({X},\mathsf{d},\mathcal{H}^n)$ with {}{$|\nabla \phi_1|^2 \in H^{1,2}({X},\mathsf{d},\mathcal{H}^n)$}, the combination of {}{these} facts with (\ref{1eqn4.2}) and (\ref{111eqn4.4}) yields that for any $\varphi\in  H^{1,2}_+({X},\mathsf{d},\mathcal{H}^n)$, 
\begin{equation}\label{1eqn4.3}
\left |\int_{X} \langle \nabla \left|\nabla \phi_1\right|^2,  \nabla \varphi \rangle \mathrm{d}\mathcal{H}^n\right|
=\left|\int_{X} \left|\nabla \phi_1\right|^2 \Delta \varphi \mathrm{d}\mathcal{H}^n\right|
\leqslant C\int_{X} |\varphi |\mathrm{d}\mathcal{H}^n
\leqslant {}{C}\left\|\varphi\right\|_{L^2(\mathcal{H}^n)}.
\end{equation}

Note that (\ref{1eqn4.3}) also holds for any $\varphi\in \text{Lip}({X},\mathsf{d})$ because $\varphi+|\varphi|$, $|\varphi|-\varphi\in \text{Lip}({X},\mathsf{d})$. Since $\mathrm{Test}F({X},\mathsf{d},\mathcal{H}^n)$ is dense in $H^{1,2}({X},\mathsf{d},\mathcal{H}^n)$, we have
\[
 \left|\int_{X} \langle \nabla \left|\nabla \phi_1\right|^2,  \nabla \varphi \rangle \mathrm{d}\mathcal{H}^n\right|\leqslant {}{C}\left\|\varphi\right\|_{L^2(\mathcal{H}^n)}, \ \forall \varphi \in H^{1,2}({X},\mathsf{d},\mathcal{H}^n).
\]

Consequently, the linear functional 
\[
\begin{aligned}
T:H^{1,2}({X},\mathsf{d},\mathcal{H}^n)&\longrightarrow \mathbb{R}\\
\varphi &\longmapsto \int_{X} \langle \nabla \left|\nabla \phi_1\right|^2,  \nabla \varphi \rangle \mathrm{d}\mathcal{H}^n
\end{aligned}
\]
can be continuously extended to a bounded linear functional on $L^2(\mathcal{H}^n)$. Applying the Riesz representation theorem, there exists a unique $h\in L^2(\mathcal{H}^n)$, such that 
\[
T(\varphi)={}{-}\int_{X} \varphi h \mathrm{d}\mathcal{H}^n, \ \ \forall \varphi \in L^2(\mathcal{H}^n).
\]

Therefore $|\nabla \phi_1|^2\in D(\Delta)$ with $\left\|\Delta |\nabla \phi_1|^2\right\|_{L^2(\mathcal{H}^n)}\leqslant {}{C}$. Using $(\ref{1eqn4.3})$ again, and repeating the previous arguments, we have
\[
 \left|\int_{X}  \Delta \left|\nabla \phi_1\right|^2 \varphi  \mathrm{d}\mathcal{H}^n\right|
\leqslant C\int_{X} |\varphi |\mathrm{d}\mathcal{H}^n,\ \forall \varphi\in L^1(\mathcal{H}^n), 
\]
because $\mathrm{Test}F({X},\mathsf{d},\mathcal{H}^n)$ is also dense in $L^1(\mathcal{H}^n)$.
Thus $\left\|\Delta \left|\nabla \phi_1\right|^2\right\|_{L^\infty(\mathcal{H}^n)}\leqslant C$.

According to Theorem \ref{aaaathm3.12}, $\left\||\nabla |\nabla \phi_1|^2|\right\|_{L^\infty(\mathcal{H}^n)}\leqslant C$. For any other $|\nabla \phi_i|^2$, the estimates of $\left\|\Delta |\nabla \phi_i|^2\right\|_{L^\infty(\mathcal{H}^n)}$ and $\left\||\nabla |\nabla \phi_i|^2|\right\|_{L^\infty(\mathcal{H}^n)}$ can be obtained along the same lines. Rewrite these estimates as
\begin{equation}\label{1eqn4.4}
\sum\limits_{i=1}^m\left(\left\|\Delta |\nabla \phi_i|^2\right\|_{L^\infty(\mathcal{H}^n)}+ \left\|\left|\nabla |\nabla \phi_i|^2\right|\right\|_{L^\infty(\mathcal{H}^n)}\right)\leqslant  C.
\end{equation}

Applying (\ref{abc2.14}), (\ref{1eqn4.4}) and Proposition \ref{heatkernel2}, we have
\[
 \int_{X}\varphi \left|\mathop{\mathrm{Hess}}\phi_i\right|_{\mathsf{HS}}^2\mathrm{d}\mathcal{H}^n\leqslant C\int_{X}\varphi \mathrm{d}\mathcal{H}^n,\ \ \forall\varphi\in \mathrm{Test}F_+({X},\mathsf{d},\mathcal{H}^n), \ \ i=1,\ldots,m,
\]
which implies that
\begin{equation}\label{1eqn4.5}
\sum\limits_{i=1}^m \left\|\left|\mathop{\mathrm{Hess}}\phi_i\right|_{\mathsf{HS}}\right\|_{L^\infty(\mathcal{H}^n)}\leqslant C.
\end{equation}

For {}{each} $\langle \nabla \phi_i,\nabla \phi_j \rangle$ ($i,j=1,\ldots,m$), from (\ref{11eqn2.16}) we obtain that
\begin{equation}\label{1111eqn4.7}
\begin{aligned}
|\langle \nabla \varphi, \nabla \langle \nabla \phi_i,\nabla \phi_j \rangle \rangle|&=\left| \mathop{\mathrm{Hess}}\phi_i(\nabla \phi_j,\nabla\varphi)+ \mathop{\mathrm{Hess}}\phi_j(\nabla \phi_i,\nabla\varphi)\right|\\
\ &\leqslant \left(\left|\mathop{\mathrm{Hess}}\phi_i\right|_{\mathsf{HS}}|\nabla \phi_j|+\left|\mathop{\mathrm{Hess}}\phi_j\right|_{\mathsf{HS}}|\nabla \phi_i| \right)|\nabla \varphi|\\
&\leqslant C |\nabla \varphi| \ \  \mathcal{H}^n\text{-a.e.}, \ \ \ \forall \varphi\in H^{1,2}({X},\mathsf{d},\mathcal{H}^n).
\end{aligned}
\end{equation}

As a result, $\langle \nabla\phi_i,\nabla \phi_j \rangle\in H^{1,2}({X},\mathsf{d},\mathcal{H}^n)$. We complete the proof by letting $\varphi=\langle \nabla\phi_i,\nabla \phi_j \rangle$ in (\ref{1111eqn4.7}), which shows that
\begin{equation}\label{1eqn4.6}
 \left\|\nabla \langle \nabla \phi_i,\nabla \phi_j \rangle \right\|_{L^\infty(\mathcal{H}^n)}\leqslant C.
\end{equation} 

\end{proof}

\begin{lem}\label{1lem4.3}
For any $\epsilon>0$, there exists $0<\delta\leqslant\Psi(\epsilon|C)$, such that for any $0<r<\delta$ and any arbitrary but fixed $x_0\in {X}$, the following holds.
\begin{enumerate} 
\item\label{1lem4.3a} The map 
\begin{equation}\label{1eqn4.9}
\begin{aligned}
\mathbf{x}_0:B_r(x_0)&\longrightarrow \mathbb{R}^n\\
x&\longmapsto (u_1(x),\ldots,u_n(x))
\end{aligned}
\end{equation}
is $(1+\epsilon)$-bi-Lipschitz from $B_r(x_0)$ to $\mathbf{x}_0(B_r(x_0))$, where each $u_i$ is a linear combination of $\phi_1,\ldots,\phi_m$ with coefficients only dependent on $x_0$. 
\item\label{1lem4.3b}  The matrix-valued function 
\[
\begin{aligned}
U: B_r(x_0)&\longrightarrow \mathsf{M}_{n\times n}(\mathbb{R})\\
 x&\longmapsto (u^{ij}(x)):=\left(\langle \nabla u_i,\nabla u_j\rangle(x)\right),
\end{aligned}
\]
is Lipschitz {}{continuous and satisfies} $(1-\epsilon)I_n\leqslant U\leqslant (1+\epsilon)I_n$ on $B_r(x_0)$. Moreover, there exists a matrix-valued Lipschitz function 
\[
\begin{aligned}
B: B_r(x_0)&\longrightarrow  \mathsf{M}_{n\times n}(\mathbb{R})\\
 x&\longmapsto \left(b_{ij}(x)\right),
\end{aligned}
\]
such that 
\[
BUB^{T}(x)=I_n, \ \ \ \forall {}{x\in B_r(x_0)}.
\]

\end{enumerate}

\end{lem}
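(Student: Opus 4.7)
The plan is to treat both (1) and (2) simultaneously by analyzing the $m\times m$ Gram matrix $G(x):=(\langle\nabla\phi_i,\nabla\phi_j\rangle(x))$, which by Lemma \ref{1lem4.2} admits a Lipschitz representative on $X$ with uniformly bounded gradient. The first key observation is that the assumption $g=\sum_{i=1}^m d\phi_i\otimes d\phi_i$ combined with $|g|_{\mathsf{HS}}^2=n$ forces $G(x)$ to be an orthogonal projection of rank exactly $n$ at every $x\in X$. Indeed, at any $\mathcal{H}^n$-a.e.\ point, taking an orthonormal basis $\{e_a\}_{a=1}^n$ of the tangent space and setting $M_{ia}:=d\phi_i(e_a)$ gives $M^TM=I_n$ by the isometric immersion condition, whence $G=MM^T$ is a rank-$n$ projection there; the identities $G(x)^2=G(x)$ and $\mathrm{tr}\,G(x)=n$ then extend to every $x\in X$ by the continuity of the Lipschitz representative, and a symmetric projection with trace $n$ has rank exactly $n$.

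Given this, I will fix $x_0\in X$, choose $A\in\mathbb{R}^{n\times m}$ whose rows form an orthonormal basis of the image of $G(x_0)$ (so that $AA^T=I_n$ and $AG(x_0)A^T=I_n$), and define $u_i:=\sum_j A_{ij}\phi_j$. Then
\[
U(x):=\bigl(\langle\nabla u_i,\nabla u_j\rangle(x)\bigr)_{i,j=1}^n=AG(x)A^T
\]
is Lipschitz with $U(x_0)=I_n$, so the uniform gradient bound on $G$ from Lemma \ref{1lem4.2} yields $\|U(x)-I_n\|\leq C\mathsf{d}(x,x_0)$, hence $(1-\epsilon)I_n\leq U(x)\leq (1+\epsilon)I_n$ on $B_r(x_0)$ for every $r\leq\delta(\epsilon,C)$. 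The matrix $B$ required in (2) is then secured by taking $B:=U^{-1/2}$, which is smooth in $U$ on the open set of positive definite matrices bounded away from $0$, hence Lipschitz continuous in $x$, and satisfies $BUB^T=I_n$.

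For (1), I will apply Theorem \ref{111thm4.3} to the regular map $\mathbf{x}_0=(u_1,\ldots,u_n)$; regularity follows from $\Delta u_i=-\sum_j A_{ij}\mu_j\phi_j\in L^\infty(\mathcal{H}^n)$ via Proposition \ref{heatkernel2}, together with the gradient bound coming from Lemma \ref{1lem4.2}. A direct computation using the HS inner product on symmetric $2$-tensors and the identities $|g|_{\mathsf{HS}}^2=n$, $\langle du_i\otimes du_i,g\rangle=|\nabla u_i|^2$ gives
\[
|\mathbf{x}_0^*g_{\mathbb{R}^n}-g|_{\mathsf{HS}}^2 = \sum_{i,j}\langle\nabla u_i,\nabla u_j\rangle^2-2\sum_i|\nabla u_i|^2+n = |U-I_n|^2,
\]
where $|\cdot|$ denotes the Euclidean norm on $\mathsf{M}_{n\times n}(\mathbb{R})$. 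Shrinking $r$ if necessary, this makes $\mathbf{x}_0$ a locally uniformly $\delta'$-isometric immersion on $B_{4r}(x_0)$ for any prescribed $\delta'>0$, so Theorem \ref{111thm4.3} produces the $(1+\Psi(\delta'|C))$-bi-Lipschitz conclusion on $B_r(x_0)$.

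The main subtlety I expect is the extension of the rank-$n$ projection structure of $G(x)$ from an $\mathcal{H}^n$-a.e.\ statement to a pointwise one on all of $X$, which is exactly what makes the local coefficients $A$ available at every $x_0\in X$; once that is secured, everything else reduces to elementary linear algebra and the bi-Lipschitz criterion from \cite{H21}.
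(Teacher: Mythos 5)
Your argument is correct and follows the same route as the paper: use the Lipschitz regularity of the Gram matrix $\left(\langle\nabla\phi_i,\nabla\phi_j\rangle\right)$ from Lemma \ref{1lem4.2}, diagonalize it at $x_0$ to choose the linear combinations $u_i$, control the deviation of $\mathbf{x}_0^\ast g_{\mathbb{R}^n}$ from $g$ via the Lipschitz bound, and invoke Theorem \ref{111thm4.3}. Your choice of an $n\times m$ partial isometry $A$ and the identity $\left|\mathbf{x}_0^\ast g_{\mathbb{R}^n}-g\right|_{\mathsf{HS}}^2=|U-I_n|^2$, together with $B=U^{-1/2}$, is a clean and explicit variant of the paper's $m\times m$ orthogonal diagonalization and unspecified ``congruent transformation,'' and you also supply the justification (via $M^TM=I_n$ at regular points and continuity) for the rank-$n$ projection structure of the Gram matrix that the paper asserts without proof.
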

\begin{proof}
Consider the matrix-valued function \[
\begin{aligned}
E:{X}&\longrightarrow  \mathsf{M}_{m\times m}(\mathbb{R})\\
x&\longmapsto \left(\langle \nabla \phi_i,\nabla \phi_j\rangle(x)\right),
\end{aligned}
\] 
which is Lipschitz continuous by Lemma \ref{1lem4.2}. For any fixed $x_0\in {X}$, since $E(x_0)$ is a symmetric matrix of trace $n$ and satisfies $E(x_0)^2=E(x_0)$, there exists an $m\times m$ orthogonal matrix $A=(a_{ij})$, such that 
\[
AE(x_0)A^{T}=\left(
\begin{array}{rl}
I_n & 0\\
0 & 0
\end{array}
\right).
\]

Letting $u_i=\sum\limits_{j=1}^m a_{ij}\phi_j$, $g$ then can be written as $g=\sum\limits_{i=1}^m d u_i \otimes d u_i $ with 
\begin{equation}\label{1111eqn4.11}
\sum\limits_{i,j=n+1}^m \left\langle \nabla u_i,\nabla u_j \right\rangle^2(x_0)=0.
\end{equation}

In order to use Theorem \ref{111thm4.3}, we need

\begin{equation}\label{1eqn4.8}
\sum\limits_{i=1}^m\left\|\left|\nabla u_i\right|^2\right\|_{L^\infty(\mathcal{H}^n)} +\sum\limits_{i=1}^m\left\| \Delta u_i\right\|_{L^\infty(\mathcal{H}^n)}+\sum\limits_{i,j=1}^m \left\||\nabla \left\langle \nabla u_i,\nabla u_j \right\rangle|\right\|_{L^\infty(\mathcal{H}^n)}\leqslant C,
\end{equation}
which follows directly from the {}{Proposition \ref{heatkernel2} and} Lemma \ref{1lem4.2}. We claim that for any $\epsilon\in (0,1)$, there exists $0<\delta\leqslant \Psi(\epsilon|C)$, such that $\mathbf{x}_0$ is a locally uniformly $\epsilon$-isometric immersion on $B_r(x_0)$ for any $0<r<\delta$. 

For any $y_0\in B_r(x_0)$, $0<s<r$, we have
\begin{equation}\label{1eqn4.10}
\begin{aligned}
\ &\frac{1}{\mathcal{H}^n\left(B_s(y_0)\right)}\int_{B_{\epsilon^{-1}s}(y_0)}\left|g-\sum\limits_{i=1}^n du_i \otimes du_i\right|_{\mathsf{HS}}\mathrm{d}\mathcal{H}^n \\
\leqslant \ &\frac{\mathcal{H}^n\left(B_{\epsilon^{-1}s}(y_0)\right)}{\mathcal{H}^n\left(B_s(y_0)\right)}\left({}{\fint_{B_{\epsilon^{-1}s}(y_0)}}\left|g-\sum\limits_{i=1}^n du_i \otimes du_i\right|_{\mathsf{HS}}^2\mathrm{d}\mathcal{H}^n\right)^{\frac{1}{2}}\\
=\ &\frac{\mathcal{H}^n\left(B_{\epsilon^{-1}s}(y_0)\right)}{\mathcal{H}^n\left(B_s(y_0)\right)}\left({}{\fint_{B_{\epsilon^{-1}s}(y_0)}}\sum\limits_{i,j=n+1}^m \left\langle \nabla u_i , \nabla u_j \right\rangle^2 \mathrm{d}\mathcal{H}^n\right)^{\frac{1}{2}} \leqslant C\epsilon^{-1}\exp(C\epsilon^{-1})\delta^2{}{,}
\end{aligned}
\end{equation}
where the last inequality comes from (\ref{BGinequality}), (\ref{1111eqn4.11}) and (\ref{1eqn4.8}). 

Thus applying Theorem \ref{111thm4.3}, there exists $0<\delta\leqslant \Psi(\epsilon|C)$, such that for any $0<r<\delta$, the function $\textbf{x}_0$ defined in (\ref{1eqn4.9}) is  $(1+\epsilon)$-bi-Lipschitz from $B_r(x_0)$ to $\mathbf{x}_0(B_r(x_0))$. We may also require $\delta$ to satisfy condition \ref{1lem4.3b}, which is again due to  (\ref{1eqn4.8}). Finally, the choice of the matrix $B(x)$ follows from a standard congruent transformation of $U(x)$.


\end{proof}

\begin{lem}\label{1lem4.4}
${X}$ admits a $C^{1,1}$ differentiable structure.
\end{lem}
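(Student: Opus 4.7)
The plan is to build a $C^{1,1}$ atlas on $X$ out of the local bi-Lipschitz coordinates produced by Lemma \ref{1lem4.3}. First, fix a small $\epsilon > 0$ and, using the compactness of $X$, extract a finite cover $\{B_{r_\alpha}(x_\alpha)\}_{\alpha=1}^N$ together with bi-Lipschitz coordinate maps $\mathbf{x}_\alpha = (u^\alpha_1,\ldots,u^\alpha_n) : B_{r_\alpha}(x_\alpha) \to \mathbb{R}^n$ whose components are linear combinations of $\phi_1,\ldots,\phi_m$. Combining the bi-Lipschitz estimate with the Gromov--Hausdorff closeness in Theorem \ref{111thm4.3}(1), each image $\mathbf{x}_\alpha\left(B_{r_\alpha}(x_\alpha)\right)$ contains a Euclidean ball of comparable radius; invariance of domain then implies that each $\mathbf{x}_\alpha$ is a homeomorphism onto an open subset of $\mathbb{R}^n$, so $X$ carries the topology of an $n$-manifold.

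Second, verify that for any two overlapping charts $\mathbf{x}_\alpha$ and $\mathbf{x}_\beta = (v_1,\ldots,v_n)$ the transition map
\[
T := \mathbf{x}_\beta \circ \mathbf{x}_\alpha^{-1}
\]
is $C^{1,1}$ on the overlap. Being bi-Lipschitz between open subsets of $\mathbb{R}^n$, $T$ is differentiable $\mathcal{L}^n$-a.e.\ by Rademacher's theorem. The key identification is that, $\mathcal{L}^n$-a.e.\ on $\mathbf{x}_\alpha\left(B_{r_\alpha}(x_\alpha) \cap B_{r_\beta}(x_\beta)\right)$,
\[
DT(p) = \bigl(\langle \nabla v_i, \nabla u^\alpha_k\rangle\bigr)_{ik}\cdot U_\alpha^{-1}\,\Big|_{\mathbf{x}_\alpha^{-1}(p)},
\]
where $U_\alpha = (\langle \nabla u^\alpha_i, \nabla u^\alpha_j\rangle)$ is the matrix from Lemma \ref{1lem4.3}(2). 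By Lemma \ref{1lem4.2} each entry $\langle \nabla v_i, \nabla u^\alpha_k\rangle$ is Lipschitz on $B_{r_\alpha}(x_\alpha)$, while Lemma \ref{1lem4.3}(2) yields that $U_\alpha^{-1} = B^{T}B$ is also Lipschitz and uniformly bounded there. Since $\mathbf{x}_\alpha^{-1}$ is bi-Lipschitz, the right-hand side of the display defines a Lipschitz matrix field on the overlap. Consequently $DT$ agrees $\mathcal{L}^n$-a.e.\ with a Lipschitz continuous function, and standard Sobolev calculus (e.g.\ the ACL characterization) promotes this to $T \in C^{1,1}$.

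The main obstacle is justifying the displayed identification of the classical Jacobian $DT$ with the intrinsic formula. Concretely, one must show that for any Lipschitz $f : B_{r_\alpha}(x_\alpha) \to \mathbb{R}$, the Euclidean partial derivatives of $f \circ \mathbf{x}_\alpha^{-1}$ satisfy, $\mathcal{L}^n$-a.e.,
\[
\frac{\partial(f\circ \mathbf{x}_\alpha^{-1})}{\partial p_i}(p) = \sum_{j=1}^n \bigl(U_\alpha^{-1}\bigr)_{ij}\,\langle \nabla f, \nabla u^\alpha_j\rangle\,\Big|_{\mathbf{x}_\alpha^{-1}(p)}.
\]
This is the bridge between the cotangent module calculus of \cite{G18} on $(X,\mathsf{d},\mathcal{H}^n)$ and pointwise differentiation in the chart, and can be established by testing the identity against arbitrary smooth $\eta \in C^\infty_c(\mathbf{x}_\alpha(B_{r_\alpha}(x_\alpha)))$ (integrating by parts in both pictures and using the Lipschitz regularity of $U_\alpha^{-1}$), or equivalently by approximating $f$ along the flow of the gradients $\nabla u^\alpha_i$ and exploiting the locality of the minimal relaxed slope. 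Once this identification is in place the $C^{1,1}$ transition property, and hence the $C^{1,1}$ differentiable structure, follows immediately.
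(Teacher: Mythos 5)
Your overall strategy — produce bi‑Lipschitz charts from linear combinations of eigenfunctions, show the transition maps are $C^{1,1}$, and then conclude — matches the paper's. The derivative formula you write (with $U_\alpha^{-1}=B^T B$) is also exactly formula (\ref{111eqn4.18}) in the paper, so you have identified the right target. The problem is where you locate the work: you correctly flag the "main obstacle" as identifying the Euclidean partial derivatives of $f\circ\mathbf{x}_\alpha^{-1}$ with the intrinsic expression $\sum_j (U_\alpha^{-1})_{ij}\langle\nabla f,\nabla u^\alpha_j\rangle$, but you then dispose of it in a sentence, and neither suggested route closes the gap.

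Concretely: the integration‑by‑parts route requires you to transport a test function $\eta$ back to $X$ and compare the weak derivative of $f\circ\mathbf{x}_\alpha^{-1}$ against $\langle\nabla f,\nabla(\eta\circ\mathbf{x}_\alpha)\rangle$ under the pushforward measure. That comparison needs \emph{both} the pushforward density $(\mathbf{x}_\alpha)_\sharp\mathcal{H}^n=(\det U)^{-1/2}\mathcal{L}^n$ \emph{and} a chain rule relating $\nabla(\eta\circ\mathbf{x}_\alpha)$ on $X$ to $\partial\eta/\partial p_i$ on $\mathbb{R}^n$. But that chain rule is precisely the statement you are trying to prove; and both of these facts appear in the paper only as Lemma~\ref{1lem4.5}, stated and proved with the differentiable structure already in hand. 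The "flow of the gradients" route is also not available at this stage: there is no smooth structure yet, the vector fields $\nabla u_i^\alpha$ are only in $L^\infty$-modules, and you cannot use them as coordinate vector fields without circularity. Rademacher + ACL does correctly upgrade an a.e.\ Lipschitz derivative to $C^{1,1}$ (your step~3 is fine), but step~2 is not a soft consequence of the module calculus.

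The paper avoids this by working \emph{pointwise} rather than a.e.: fixing $y_0\in B_r(x_0)$, it orthonormalizes $\{u_i\}$ at $y_0$ to get $\{v_i\}$, shows via (\ref{0417efghi}) that $(v_1,\ldots,v_n)$ is a locally uniformly $\delta$-isometric immersion at $y_0$ for arbitrarily small $\delta$, and then invokes Theorem~\ref{111thm4.3} on balls $B_{2\mathsf{d}(y_0,y_t)}(y_0)$ to obtain the difference‑quotient limit (\ref{11111eqn4.15}). The remaining $u_i$ are handled by (\ref{123eqn4.16}) and the vanishing of $|\nabla f_i|(y_0)$ for the auxiliary functions $f_i$, which gives (\ref{111eqn4.17})–(\ref{111eqn4.18}) at every $x\in B_r(x_0)$, not just a.e.\ — and the Lipschitz continuity of the right‑hand side then yields $C^{1,1}$ directly. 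That pointwise argument at small scales is the essential content your proposal skips, and without it the proposal is not a complete proof.
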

\begin{proof}
Since $({X},\mathsf{d})$ is compact, by taking $\epsilon=\frac{1}{2}$ in Lemma \ref{1lem4.3}, there exists a finite index set $\Gamma$, such that the finite family of pairs $\{(B_r(x_\gamma),\mathbf{x}_\gamma)\}_{\gamma\in\Gamma}$ satisfies the following properties.

\begin{enumerate}
\item It is a covering of ${X}$, i.e. ${X}\subset \bigcup_{\gamma\in \Gamma} B_r(x_\gamma)$.
\item For every $\gamma\in \Gamma$, $\mathbf{x}_\gamma$ is $\frac{3}{2}$-bi-Lipschitz from $B_r(x_\gamma)$ to $\mathbf{x}_\gamma (B_r(x_\gamma))\subset \mathbb{R}^n$, and each component of $\mathbf{x}_\gamma$ is a linear combination of $\phi_1,\ldots,\phi_m$ with coefficients only dependent on $x_\gamma$.
\end{enumerate}

We only prove the $C^{1,1}$ regularity of $\phi_1,\ldots,\phi_m$ on $(B_r(x_0),\mathbf{x}_0)$, since the $C^{1,1}$ regularity of $\phi_1,\ldots,\phi_m$ on any other $(B_r(x_\gamma),\mathbf{x}_\gamma)$ can be proved in a same way. 

For any $y_0\in B_r(x_0)$, without loss of generality, assume that $B_s(y_0) \subset B_r(x_0)$ for some $s>0$ and $\mathbf{x}_0(y_0)=0_n \in \mathbb{R}^n$. Since $\mathbf{x}_0$ is a $\frac{3}{2}$-bi-Lipschitz map (thus also a homeomorphism) from $B_r(x_0)$ to $\mathbf{x}_0 (B_r(x_0))$, for any sufficiently small $t>0$, there exists a unique $y_t \in B_r(x_0)$ such that $\mathbf{x}_0(y_t)=(t,0,\ldots,0)$.

 For $i=1,\ldots,n$, set 
\begin{equation}\label{11eqn4.16}
\begin{aligned}
v_i: B_s(y_0)&\longrightarrow \mathbb{R}\\
x&\longmapsto\sum\limits_{j=1}^n b_{ij}(y_0)u_j(x),
\end{aligned}
\end{equation}
{}{where $B=(b_{ij})$ is taken as in Lemma \ref{1lem4.3}}. It can be immediately checked that $\langle \nabla v_i,\nabla v_j\rangle(y_0)=\delta_{ij}$ $(i,j=1,\ldots,n)$. 

Notice that
\begin{equation}\label{0417efghi}
\begin{aligned}
\ &{}{\fint_{B_\tau (y_0)}} \left|g-\sum\limits_{i=1}^n dv_i\otimes dv_i\right|_{\mathsf{HS}}^2 \mathrm{d}\mathcal{H}^n \\
=&{}{\fint_{B_\tau(y_0)}} \left(n+\sum\limits_{i,j=1}^n \langle \nabla v_i,\nabla v_j\rangle-2\sum\limits_{i=1}^n \left|\nabla v_i\right|^2 
\right)\mathrm{d}\mathcal{H}^n 
\rightarrow 0\ \ \text{as }\tau\rightarrow 0^+.
\end{aligned}
\end{equation}

Thus arguing as in the proof of Lemma \ref{1lem4.3} and applying Theorem \ref{111thm4.3} to $B_{2\mathsf{d}(y_0,y_{t})}(y_0)$ for any sufficiently small $t>0$, we know

\begin{equation}\label{11111eqn4.15}
\sum\limits_{i=1}^n \left(\frac{v_i(y_t)-v_i(y_0)}{\mathsf{d}(y_t,y_0)}\right)^2\rightarrow 1,\ \ \text{as }t\rightarrow 0^+.
\end{equation}

Recall $u_i(y_t)=u_i(y_0)=0$ ($i=2,\ldots,n$). This together with (\ref{11111eqn4.15}) shows
\begin{equation}\label{202204041}
\sum\limits_{i=1}^n b_{i1}^2(y_0) \lim\limits_{t \rightarrow 0^+} \frac{t^2}{\mathsf{d}(y_t,y_0)^2}=1.
\end{equation}

Next is to calculate values of $\lim\limits_{t\rightarrow 0^+}\dfrac{u_{i}(y_t)-u_{i}(y_0)}{t}$ for $i=n+1,\ldots,m$. 

For $i=n+1,\ldots,m$, set 

\[
\begin{aligned}
f_i:B_s(y_0)&\longrightarrow  [0,\infty)\\
x&\longmapsto u_i(x)- \sum\limits_{j=1}^n \langle\nabla u_i, \nabla v_j \rangle (y_0)  v_j(x).  
\end{aligned}
\]

Observe that 
\begin{equation}\label{123eqn4.16}
\lim\limits_{x\rightarrow y_0}\langle \nabla f_i,\nabla v_k\rangle(x)=0,\ \ i=n+1,\ldots,m,\  k=1,\ldots,n. 
\end{equation}

Thus (\ref{0417efghi}) and (\ref{123eqn4.16}) yield that $|\nabla f_i|(y_0)=0$ ($i=n+1,\ldots, m$). From the definition of the {}{local} Lipschitz constant of {}{a} Lipschitz function, we get

\[
\frac{1}{\mathsf{d}(y_t,y_0) }\left((u_{i}(y_t)-u_{i}(y_0))-\sum\limits_{j=1}^n \langle\nabla u_i, \nabla v_j \rangle (y_0) \left(v_j(y_t)-v_{j}(y_0)\right)  \right) \rightarrow 0,\ \ \text{as $t\rightarrow 0^+$}.
\]

  Therefore 
 \begin{equation}\label{111eqn4.17}
\begin{aligned}
\lim\limits_{t\rightarrow 0^+}\dfrac{u_{i}(y_t)-u_{i}(y_0)}{\mathsf{d}(y_t,y_0)}
=&\sum\limits_{j=1}^n \langle \nabla u_i,\nabla v_j \rangle (y_0) \lim\limits_{t\rightarrow 0^+}\dfrac{v_{j}(y_t)-v_{j}(y_0)}{\mathsf{d}(y_t,y_0)}\\
=& \sum\limits_{j=1}^n b_{j1}(y_0)\langle \nabla u_i,\nabla v_j \rangle (y_0)\lim\limits_{t \rightarrow 0^+}\dfrac{u_1(y_t)-u_1(y_0)}{\mathsf{d}(y_t,y_0)}\\
=&\sum\limits_{j,k=1}^n b_{j1}(y_0)b_{jk}(y_0)\langle \nabla u_i,\nabla u_k \rangle (y_0)\lim\limits_{t \rightarrow 0^+}\dfrac{t} {\mathsf{d}(y_t,y_0)}.
\end{aligned}
\end{equation}

As a result of (\ref{202204041}) and (\ref{111eqn4.17}), 
\[
\lim\limits_{t\rightarrow 0^+}\dfrac{u_{i}(y_t)-u_{i}(y_0)}{t}=\sum\limits_{j,k=1}^n b_{j1}(y_0)b_{jk}(y_0)\langle \nabla u_i,\nabla u_k \rangle (y_0).
\]

Analogously,
\[
\lim\limits_{t\rightarrow 0^-}\dfrac{u_{i}(y_t)-u_{i}(y_0)}{t}=\sum\limits_{j,k=1}^n b_{j1}(y_0)b_{jk}(y_0)\langle \nabla u_i,\nabla u_k \rangle (y_0).
\]

Hence for $i=n+1,\ldots,m$, $k=1,\ldots,n$, we get
\begin{equation}\label{111eqn4.18}
\frac{\partial u_i}{\partial u_k}(x)=\sum\limits_{j,l=1}^n b_{jk}(x)b_{jl}(x)\langle \nabla u_i,\nabla u_l \rangle (x), \ \ \forall x\in B_r(x_0).
\end{equation}

According to the fact that each $\phi_i$ is a linear combination of $u_1,\ldots,u_m$ with coefficients only dependent on $x_0$, each $\dfrac{\partial \phi_i}{\partial u_j}$ is Lipschitz continuous on $B_r(x_0)$ and is also Lipschitz continuous on $\mathbf{x}_0(B_r(x_0))$ ($i=1,\ldots,m$, $j=1,\ldots,n$). If $B_r(x_{\gamma'})\cap B_r(x_0)\neq \emptyset$ for some $\gamma' \in \Gamma\setminus \{0\}$, since each component of the coordinate function $\mathbf{x_{\gamma'}}$ is a linear combination of $\phi_1,\ldots,\phi_m$, the transition function from $(B_r(x_0),\mathbf{x}_0)$ to $(B_r(x_{\gamma'}),\mathbf{x}_{\gamma'})$ is $C^{1,1}$ on $(B_r(x_0)\cap B_r(x_{\gamma'}),\mathbf{x}_0)$. 

Therefore, $\{(B_r(x_\gamma),\mathbf{x}_\gamma)\}_{\gamma\in\Gamma}$ gives a $C^{1,1}$ differentiable structure of ${X}$.


\end{proof}

\begin{lem}\label{1lem4.5}

For the sake of brevity, we only state the following assertions for $\left(B_r(x_0),\mathbf{x}_0\right)$ by using the 
{}{notation} of Lemma \ref{1lem4.3}. 

\begin{enumerate}
\item\label{1lem4.52} For any $f_1,f_2\in C^1({X})$, we have 
\begin{equation}\label{1eqn4.16}
\langle \nabla f_1,\nabla f_2\rangle=\sum\limits_{j,k=1}^n u^{jk}\frac{\partial f_1}{\partial u_j}\frac{\partial f_2}{\partial u_k} \ \text{  on } B_r(x_0).
\end{equation}
\item\label{1lem4.53} $(\mathbf{x}_0)_\sharp \left(\mathcal{H}^n\llcorner B_r(x_0)\right)=\left(\mathrm{det}(U)\right)^{-\frac{1}{2}} \mathcal{L}^n\llcorner \mathbf{x}_0\left(B_r(x_0)\right)$.
\end{enumerate}
\end{lem}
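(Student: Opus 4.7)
For part \ref{1lem4.52}, the plan is to combine the first-order chain rule on RCD spaces with the $C^{1,1}$ chart structure of Lemma \ref{1lem4.4}. Since $\mathbf{x}_0=(u_1,\ldots,u_n)$ is bi-Lipschitz onto its image on $B_r(x_0)$, every $f\in C^1({X})$ can be written as $f=F\circ \mathbf{x}_0$ with $F\in C^1(\mathbf{x}_0(B_r(x_0)))$. Applying the chain rule for gradients on RCD spaces (a standard consequence of Gigli's first-order calculus recalled in Section \ref{sec2}) to $F(u_1,\ldots,u_n)$ gives
\[
\nabla f=\sum_{j=1}^n (\partial_j F)\circ\mathbf{x}_0\,\nabla u_j\quad \mathcal{H}^n\text{-a.e.\ on }B_r(x_0),
\]
where $\partial_j F$ denotes the $j$-th partial derivative of $F$ on the Euclidean target. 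Pairing two such gradients and using $u^{jk}=\langle \nabla u_j,\nabla u_k\rangle$ together with $(\partial_j F_i)\circ\mathbf{x}_0=\partial f_i/\partial u_j$ yields (\ref{1eqn4.16}) at once.

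For part \ref{1lem4.53}, the strategy is a blow-up identifying the density of $(\mathbf{x}_0)_\sharp(\mathcal{H}^n\llcorner B_r(x_0))$ with respect to $\mathcal{L}^n$. Since $\mathbf{x}_0$ is bi-Lipschitz onto its image, this pushforward is absolutely continuous with respect to $\mathcal{L}^n$, say $h\,\mathcal{L}^n$ with $h\in L^\infty_{\mathrm{loc}}$. Pick $y_0\in \mathcal{R}_n^\ast\cap B_r(x_0)$ that is a Lebesgue point of both the Lipschitz matrix $U$ and of $h\circ\mathbf{x}_0$, let $B_0:=B(y_0)$ (viewed as a constant matrix in $\mathsf{M}_{n\times n}(\mathbb{R})$) be given by Lemma \ref{1lem4.3}(2), and set $\mathbf{v}:=B_0\mathbf{x}_0$. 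Then $\langle\nabla v_i,\nabla v_j\rangle(y_0)=\delta_{ij}$, and the Lipschitz continuity of $U$ from Lemma \ref{1lem4.2} gives $|\langle \nabla v_i,\nabla v_j\rangle-\delta_{ij}|\leqslant Cs$ on $B_s(y_0)$. Rerunning the $\epsilon$-isometric-immersion computation of Lemma \ref{1lem4.3}(1) for the map $\mathbf{v}$ and invoking Theorem \ref{111thm4.3} show that $\mathbf{v}|_{B_s(y_0)}$ is $(1+\Psi(s|C))$-bi-Lipschitz onto its image as $s\downarrow 0$.

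Combined with the non-collapsed volume asymptotic $\mathcal{H}^n(B_s(y_0))/(\omega_n s^n)\to 1$ at regular points (Theorem \ref{1111thm2.20}), this forces $(\mathbf{v})_\sharp\mathcal{H}^n$ to have Lebesgue density $1$ at $\mathbf{v}(y_0)$. On the other hand, since $\mathbf{v}=B_0\mathbf{x}_0$, one has $(\mathbf{v})_\sharp\mathcal{H}^n=(B_0)_\sharp (h\,\mathcal{L}^n)$, whose density at $\mathbf{v}(y_0)$ equals $h(\mathbf{x}_0(y_0))\cdot |\det B_0|^{-1}$. The constraint $B_0 U(y_0) B_0^T=I_n$ yields $|\det B_0|=(\det U(y_0))^{-1/2}$, so equating the two expressions gives $h(\mathbf{x}_0(y_0))=(\det U(y_0))^{-1/2}$ at $\mathcal{H}^n$-a.e.\ $y_0$, which proves (2). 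The main technical point I anticipate is maintaining the uniform bi-Lipschitz control of $\mathbf{v}$ at all small scales centered at $y_0$; this rests on the quantitative form of Theorem \ref{111thm4.3} together with the Lipschitz bound on $U$ coming from Lemma \ref{1lem4.2}.
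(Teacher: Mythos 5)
The proposal is correct and takes essentially the same approach as the paper: part (1) via the chain rule, part (2) via a blow-up at a generic base point $y_0$ using the linear change of coordinates $\mathbf{v}=B(y_0)\mathbf{x}_0$ together with Theorem \ref{111thm4.3} and the identity $\det(B(y_0))^2\det U(y_0)=1$. The only cosmetic difference is that you phrase the measure comparison as a Lebesgue density of the pushforward at $\mathbf{v}(y_0)$ and restrict explicitly to $y_0\in\mathcal{R}_n^\ast$, whereas the paper works directly with the ratio $\mathcal{L}^n(\mathbf{y}_0(B_\tau(y_0)))/\mathcal{H}^n(B_\tau(y_0))$ (which neatly sidesteps the two-scale bookkeeping you flag at the end) and lets the regularity of every point follow from the already-established $C^{1,1}$ structure; both amount to the same computation.
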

\begin{proof}

Statement \ref{1lem4.52} follows directly from the chain rule of $\nabla$. As for statement \ref{1lem4.53}, according to the bi-Lipschitz property of $\mathbf{x}_0$, there exists a Radon-Nikodym derivative $h$ of $(\mathbf{x}_0^{-1})_\sharp\left(\mathrm{det}(U))^{-\frac{1}{2}} \mathcal{L}^n\llcorner \mathbf{x}_0(B_r(x_0))\right)$ with respect to $\mathcal{H}^n\llcorner B_r(x_0)$. 

Again for any $B_{2s}(y_0)\subset B_r(x_0)$, we choose $\{v_i\}_{i=1}^n$ as in (\ref{11eqn4.16}) and set
\[
\begin{aligned}
\mathbf{y}_0:B_s(y_0)&\longrightarrow \mathbb{R}^n\\
x&\longmapsto (v_1(x),\ldots,v_n(x)).
\end{aligned}
\]

By Theorem \ref{111thm4.3},
\begin{equation}\label{11eqn4.18}
\lim\limits_{\tau\rightarrow 0^+} \frac{ \mathcal{L}^n\left(\mathbf{y}_0\left(B_\tau(y_0)\right)\right) }{\mathcal{H}^n(B_\tau(y_0))}=1.
\end{equation}

Set $\tilde{B}=B(y_0)$. Then it follows from the choice of the matrix $B$ that   
\begin{equation}\label{11eqn4.20}
\mathrm{det}(\tilde{B})^2\mathrm{det}\left(U(y_0)\right)=1.
\end{equation}

Using the commutativity of the following diagram, 
\[
\xymatrix{
B_s(y_0)\ar[r]^{\mathbf{y}_0\ \ }\ar[dr]_{\mathbf{x}_0}\ \ & \mathbf{y}_0(B_s(y_0))\ar[d]^{\tilde{B}^{-1}}\\
\ & \mathbf{x}_0(B_s(y_0))}
\]
for any $0<\tau\leqslant s$, it holds that
\begin{equation}\label{11eqn4.19}
\int_{\mathbf{x}_0\left(B_\tau(y_0)\right)}\left(\mathrm{det}(U)\right)^{-\frac{1}{2}} \mathrm{d}\mathcal{L}^n=\int_{\mathbf{y}_0\left(B_\tau(y_0)\right)}\left(\mathrm{det}(U)\left(\tilde{B}^{-1}(x)\right)\right)^{-\frac{1}{2}}\mathrm{det}(\tilde{B})^{-1} \mathrm{d}\mathcal{L}^n(x).
\end{equation}

Thus combining the continuity of $\mathrm{det}(U)$ with (\ref{11eqn4.18}), (\ref{11eqn4.20}) and (\ref{11eqn4.19}) implies
\[
\lim\limits_{\tau\rightarrow 0^+} \frac{1}{\mathcal{H}^n(B_\tau(y_0))} \int_{\mathbf{x}_0\left(B_\tau(y_0)\right)}\left(\mathrm{det}(U)\right)^{-\frac{1}{2}} \mathrm{d}\mathcal{L}^n =1.
\]

Therefore, $h= 1$ $\mathcal{H}^n$-a.e. on $B_r(x_0)$, which suffices to conclude.
\end{proof}

\begin{proof}[Proof of Theorem \ref{thm1.5}]
We start by improving the regularity of each $\phi_i$ on each coordinate chart $\left(B_r(x_\gamma),\mathbf{x}_\gamma \right)$. It suffices to verify the case $\gamma=0$. 

We still use the notation in Lemma \ref{1lem4.3}. For any fixed $B_{2s}(y_0)\subset B_r(x_0)$, without loss of generality, assume that $\mathbf{x}_0(y_0)=0_n$ and $B_s(0_n)\subset \mathbf{x}_0\left(B_{2s}(y_0)\right)$.

We first claim that for $j=1,\ldots,n$, 
\begin{equation}\label{1eqn4.21}
\sum\limits_{k=1}^n\frac{\partial }{\partial u_k} \left(u^{jk}\mathrm{det}(U)^{-\frac{1}{2}}\right)=\Delta u_j \mathrm{det}(U)^{\frac{1}{2}} \ \ \mathcal{L}^n\text{-a.e. in }B_s(0_n).
\end{equation}

Notice that for any $\varphi \in C_c\left(B_s(0_n)\right)\cap C^1({X})$, in view of Lemma \ref{1lem4.5}, we have
\[
\begin{aligned}
\int_{B_s(0_n)} \varphi \Delta u_j \mathrm{det}(U)^{-\frac{1}{2}}\mathrm{d}\mathcal{L}^n &=\int_{\mathbf{x}_0^{-1}\left(B_s(0_n)\right)}  \varphi\Delta u_j \mathrm{d}\mathcal{H}^n\\
&=-\int_{\mathbf{x}_0^{-1}\left(B_s(0_n)\right)} \langle \nabla u_j,\nabla \varphi  \rangle \mathrm{d}\mathcal{H}^n\\
&=-\int_{B_s(0_n)}  \sum\limits_{k=1}^n u^{jk}\dfrac{\partial \varphi}{\partial u_k} \mathrm{det}(U)^{-\frac{1}{2}}\mathrm{d}\mathcal{L}^n, 
\end{aligned}
\]
which suffices to show (\ref{1eqn4.21}) since each $u^{jk}$ is Lipschitz continuous on $B_s(0_n)$. Similarly, for $i=1,\ldots,m$ and any $\varphi \in C_c\left(B_s(0_n)\right)\cap C^1({X})$, it holds that
\begin{equation}\label{1eqn4.22}
\int_{B_s(0_n)} \varphi \mu_i \phi_i \mathrm{det}(U)^{-\frac{1}{2}}\mathrm{d}\mathcal{L}^n =\int_{B_s(0_n)} \sum\limits_{j,k=1}^n u^{jk}\frac{\partial \phi_i}{\partial u_j}\frac{\partial \varphi}{\partial u_k} \mathrm{det}(U)^{-\frac{1}{2}}\mathrm{d}\mathcal{L}^n. 
\end{equation}

Therefore the $C^{1,1}$-regularity of $\phi_i$ as well as (\ref{1eqn4.21}), (\ref{1eqn4.22}) gives a PDE as follows.
\begin{equation}\label{111eqn4.26}
\sum\limits_{j,k=1}^n u^{jk}\frac{\partial^2 \phi_i}{ \partial u_j \partial u_k}+\sum\limits_{j=1}^n\Delta u_j \frac{\partial \phi_i}{ \partial u_j }+\mu_i \phi_i=0 \ \ \mathcal{L}^n\text{-a.e. in } B_s(0_n).
\end{equation}

Since each $\Delta u_j$ is some linear combination of $\phi_1,\ldots,\phi_m$, it is also $C^{1,1}$ with respect to $\{(B_r(x_\gamma),\mathbf{x}_\gamma)\}_{\gamma\in \Gamma}$. From the classical PDE theory (see for instance \cite[Theorem 6.13]{GT01}),  $\phi_i\in C^{2,\alpha}(B_s(0_n))$ for any $\alpha\in (0,1)$. Hence, ${X}$ admits a $C^{2,\alpha}$ differentiable structure $\{(B_r(x_\gamma),\mathbf{x}_\gamma)\}_{\gamma\in \Gamma}$.

Let us use this differentiable structure to define the following $(0,2)$-type symmetric tensor: 
\[
\tilde{g}:=\sum\limits_{i=1}^m \tilde{d}\phi_i\otimes \tilde{d}\phi_i,
\]
which is $C^{1,\alpha}$ with respect to $\{(B_r(x_\gamma),\mathbf{x}_\gamma)\}_{\gamma\in \Gamma}$. We claim that $\tilde{g}$ is a Riemannian metric. Again it suffices to prove this statement on $\left(B_r(x_0),\mathbf{x}_0\right)$. 

Set  
\[
\begin{aligned}
\mathcal{U}:{X}&\longrightarrow \mathsf{M}_{m\times m}(\mathbb{R})\\
x&\longmapsto \left(\langle \nabla u_i,\nabla u_j\rangle\right){}{.}
\end{aligned}
\]

{}{For any $x\in {X}$, rewrite $\mathcal{U}(x)$ as the following block matrix 
\[
\mathcal{U}(x):=\begin{pmatrix} U(x) &U_1(x) \\
U_1^T(x)& U_2(x)\end{pmatrix}.
\]
}
The choice of {}{$\{u_i\}_{i=1}^m$} implies that $\tilde{g}$ has a local expression as 
\[
\tilde{g}=\sum\limits_{i=1}^m \tilde{d}u_i\otimes \tilde{d}u_i=\sum\limits_{i=1}^n \tilde{d}u_i\otimes \tilde{d}u_i+\sum\limits_{i=n+1}^m \sum\limits_{k,l=1}^n\dfrac{\partial u_i}{\partial  u_k}\dfrac{\partial u_i}{\partial u_l}\tilde{d}u_k\otimes \tilde{d}u_l.
\]

By (\ref{111eqn4.18}), for $i=n+1,\ldots,m$, $l=1,\ldots,n$ and any $x\in B_r(x_0)$, we have
\[
\dfrac{\partial u_i}{\partial u_l}(x)=\sum\limits_{j,k=1}^n b_{jl}(x)b_{jk}(x)\langle \nabla u_i,\nabla u_k \rangle (x)=\left(B^TBU_1(x)\right)_{li}=\left(U^{-1}U_1(x)\right)_{li},
\]
which implies that 
\begin{equation}\label{111eqn4.25}
\tilde{g}(x)=\sum\limits_{i=1}^n \tilde{d}u_i\otimes \tilde{d}u_i+\sum\limits_{k,l=1}^n \left(U^{-1}U_1U_1^T U^{-1}(x)\right)_{kl}\tilde{d}u_k\otimes \tilde{d}u_l,\ \ \forall x\in B_r(x_0).
\end{equation}

Since $\mathcal{U}^2-\mathcal{U}\equiv 0$ on $B_r(x_0)$, $U^2+U_1U_1^T-U\equiv 0$ on $B_r(x_0)$. By (\ref{111eqn4.25}), 
\begin{equation}\label{1111eqn4.28}
\tilde{g}(x)=\sum\limits_{j,k=1}^n\left(U^{-1}\right)_{jk}(x)\tilde{d}u_j\otimes \tilde{d}u_k, \ \ \text{on}\ B_r(x_0),
\end{equation}
which is positive definie on $B_r(x_0)$.  Moreover, $u^{jk}\in C^{1,\alpha}\left(B_r(x_0)\right)$ $(j,k=1,\ldots,n)$. Applying the regularity theorem for second order elliptic PDE (for example \cite[Theorem 6.17]{GT01}) to (\ref{111eqn4.26}), we see that $\phi_i\in C^{3,\alpha}\left(B_r(x_0)\right)$ ($i=1,\ldots, m$). Thus the regularity of $\tilde{g}$ can be improved to $C^{2,\alpha}$. Then (\ref{1111eqn4.28}) shows that $u^{jk}\in C^{2,\alpha}\left(B_r(x_0)\right)$ $(j,k=1,\ldots,n)$.
  
Applying a proof by induction, $\tilde{g}=g$ is actually a smooth Riemannian metric with respect to the smooth differentiable structure $\{(B_r(x_\gamma),\mathbf{x}_\gamma)\}_{\gamma\in \Gamma}$. This implies that $({X},\mathsf{d})$ is isometric to {}{an} $n$-dimensional smooth Riemannian manifold $(M^n,g)$. To see that $(M^n,g)$ is a closed Riemannian manifold, it suffices to use Theorem \ref{111thm4.3} again to show that the tangent space at any point is not isometric to the upper plane $\mathbb{R}^n_+$.
\end{proof}
\begin{proof}[{}{Proof of Corollary \ref{cor1.11}}]
{}{Without loss of generality, we may assume that $\mathfrak{m}({X})=1$.} Among lines in the proof, each limit process and each convergence of the series {}{is} guaranteed by Proposition \ref{heatkernel2}, which can be checked via similar estimates in Lemma \ref{20211220b}.

First calculate that

\begin{equation}\label{eqn4.2}
{}{n=\left\langle g,g\right\rangle=\left\langle c(t)g_t,g\right\rangle
=c(t)\sum\limits_{i=1}^\infty e^{-2\mu_i t} \left|\nabla \phi_i\right|^2.}
\end{equation}

Integrating (\ref{eqn4.2}) on ${X}$, we have 
\[
{}{n=c(t)\sum\limits_{i=1}^\infty e^{-2\mu_i t}\mu_i.}
\] 

Let $\phi_1,\ldots,\phi_m$ be {}{an} $L^2(\mathfrak{m})$-orthonormal basis of the eigenspace corresponding to the first eigenvalue $\mu_1$. Then 
\begin{equation}\label{eqn4.3}
{}{\left|\sum\limits_{i=1}^m d\phi_i \otimes d\phi_i-\frac{e^{2\mu_1 t}}{c(t)}g\right|_{\mathsf{HS}}}\leqslant \sum\limits_{i=m+1}^\infty e^{2\mu_1t-2\mu_i t} \left|d\phi_i \otimes d\phi_i\right|_{\mathsf{HS}}=\sum\limits_{i=m+1}^\infty e^{2\mu_1t-2\mu_i t} \left|\nabla \phi_i\right|^2.
\end{equation}

Again the integration of (\ref{eqn4.3}) on ${X}$ gives
\begin{equation}\label{111eqn3.19}
{}{\int_{X} \left|\sum\limits_{i=1}^m d\phi_i \otimes d\phi_i-\frac{e^{2\mu_1 t}}{c(t)}g\right|_{\mathsf{HS}}\mathrm{d}\mathfrak{m}}\leqslant \sum\limits_{i=m+1}^\infty e^{2\mu_1 t-2\mu_i t}\mu_i.
\end{equation}

Since
\[
\lim\limits_{t\rightarrow \infty} {}{\frac{e^{2\mu_1 t} }{c(t)} }=\frac{\mu_1}{n}+\lim\limits_{t\rightarrow \infty} \frac{1}{n}\sum\limits_{i=2}^\infty e^{2\mu_1t-2\mu_i t}\mu_i=\frac{\mu_1}{n},
\] 
 (\ref{111eqn3.19}) implies that
 \[
 \int_{X} \left|\sum\limits_{i=1}^m d\phi_i \otimes d\phi_i-\frac{\mu_1}{n}g\right|_{\mathsf{HS}}\mathrm{d}\mathfrak{m}=0.
 \]

 In other words,
 \[
  \sum\limits_{i=1}^m d\phi_i \otimes d\phi_i= \frac{\mu_1}{n}g.
 \]  

For other eigenspaces, it suffices to use a proof by induction to conclude.
\end{proof}

\section{Diffeomorphic finiteness theorems}\label{sec5}

This section is {}{dedicated} to prove Theorem \ref{thm1.8} and Theorem \ref{thm1.12}. To fix the notation, for a Riemannian manifold $(M^n,g)$, denote by $\mathrm{vol}_g$ its volume element, by $\mathrm{K}_g$ its sectional curvature, by $\mathrm{Ric}_g$ its Ricci curvature tensor, by $\mathrm{inj}_g(p)$ the injectivity radius at $p$ and by $(\nabla^g)^k$, $\Delta^g$ the $k$-th covariant derivative and the Laplacian with respect to $g$, by $\mathsf{d}_g$ the {}{metric} induced by $g$.

To begin with, let us recall some results about the convergence of Sobolev functions on varying spaces. See \cite{AH18, GMS13, AST16}.

\begin{thm}[Compactness of Sobolev functions]\label{222thm5.1}
{}{Let $\{\left({X}_i,\mathsf{d}_i,\mathcal{H}^n\right)\}$ be a sequence of non-collapsed $\mathrm{RCD}(K,n)$ spaces} with {}{$\sup_i\mathrm{diam}({X}_i,\mathsf{d}_i)<\infty$} and 
\[
\left({X}_i,\mathsf{d}_i,\mathcal{H}^n\right)\xrightarrow{\mathrm{mGH}} \left({X},\mathsf{d},\mathcal{H}^n\right).
\]
Let $f_i\in H^{1,2}\left({X}_i,\mathsf{d}_i,\mathcal{H}^n\right)$ with $\sup_i\|{}{f_i}\|_{H^{1,2}\left({X}_i,\mathsf{d}_i,\mathcal{H}^n\right)}<\infty$. Then there exists $f\in H^{1,2}\left({X},\mathsf{d},\mathcal{H}^n\right)$, and a subsequence of $\{f_i\}$ which is still denoted as $\{f_i\}$ such that $\{f_i\}$ $L^2$-strongly converges to $f$ and 
\[
\liminf\limits_{i\rightarrow \infty} \int_{X_i} \left|\nabla^{{X}_i} f_i\right|^2 \mathrm{d}\mathcal{H}^n \geqslant \int_{X} \left|\nabla^{{X}} f\right|^2 \mathrm{d}\mathcal{H}^n.
\]
\end{thm}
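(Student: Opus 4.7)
The statement combines three standard ingredients in the convergence theory for RCD spaces: (i) extraction of an $L^2$-weak subsequential limit, (ii) upgrading weak to $L^2$-strong via heat-kernel smoothing, and (iii) the $\Gamma$-liminf half of the Mosco convergence of Cheeger energies.

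First I would realize the mGH convergence by isometric embeddings $\iota_i:(X_i,\mathsf{d}_i)\hookrightarrow (Y,\mathsf{d}_Y)$ into a common complete separable metric space. The Bishop-Gromov inequality (Theorem \ref{BGineq}) together with the uniform diameter bound gives $\sup_i\mathcal{H}^n(X_i)<\infty$, so the uniform $L^2$-bound on $\{f_i\}$ yields, up to a subsequence, some $f\in L^2(\mathcal{H}^n)$ with $f_i\xrightarrow{L^2\text{-weakly}} f$.

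The main step is the strong $L^2$-convergence. For any fixed $t>0$, the Gaussian estimates (Theorem \ref{thm2.12}) combined with non-collapsing give a uniform-in-$i$ $L^\infty$ bound on $\mathrm{h}_t^{X_i}f_i$, while the Bakry-\'Emery gradient estimate (equivalently Jiang's bound, Theorem \ref{aaaathm3.12}) gives a uniform Lipschitz bound. Using the representation $\mathrm{h}_t^{X_i}f_i(x)=\int \rho^{X_i}(x,y,t)f_i(y)\,\mathrm{d}\mathcal{H}^n(y)$ together with the $L^2$-weak convergence of $\{f_i\}$ and the $H^{1,2}$-strong convergence of heat kernels (Theorem \ref{thm2.26}), the pointwise limit of any subsequential uniform limit of $\mathrm{h}_t^{X_i}f_i$ (obtained by Arzel\`a-Ascoli on $(Y,\mathsf{d}_Y)$) must be $\mathrm{h}_t^X f$; since $Y$ is of finite measure on the relevant region, this yields $\mathrm{h}_t^{X_i}f_i\to \mathrm{h}_t^X f$ strongly in $L^2$ for each $t>0$. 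On the other hand, the classical spectral identity applied on each $X_i$ yields
\[
\|f_i-\mathrm{h}_t^{X_i}f_i\|_{L^2(\mathcal{H}^n)}^2 \;\leqslant\; t\,\mathrm{Ch}^{X_i}(f_i),
\]
so that the smoothing error is uniform in $i$ as $t\downarrow 0$. A three-epsilon argument (choose $t$ small so both $t\sup_i\mathrm{Ch}^{X_i}(f_i)$ and $\|\mathrm{h}_t^X f-f\|_{L^2}^2$ are small, then let $i\to\infty$ at fixed $t$) delivers $f_i\to f$ strongly in $L^2$.

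Finally, the $\liminf$-inequality (which simultaneously yields $f\in H^{1,2}(X,\mathsf{d},\mathcal{H}^n)$) is the Mosco-type lower semicontinuity of Cheeger energies under mGH convergence of non-collapsed RCD spaces, as developed in \cite{GMS13,AH17,AST16}. The main obstacle is the strong $L^2$ step; the heat-kernel approximation is robust here because the Gaussian bounds and Theorem \ref{thm2.26} already provide all the uniform regularity and pointwise convergence required.
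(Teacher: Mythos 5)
The paper does not prove this theorem; it is quoted as a known result from \cite{AH18,GMS13,AST16}, so there is no in-paper proof to compare against. Your reconstruction is essentially the standard argument from that literature (in particular the heat-flow mollification scheme in Ambrosio--Honda's stability papers), and it is correct. Specifically: (i) the weak $L^2$ extraction uses the embeddings $\iota_i$ into a common $(Y,\mathsf{d}_Y)$ and the uniform mass bound from Bishop--Gromov; (ii) the spectral inequality $\|f_i-\mathrm{h}_t^{X_i}f_i\|_{L^2}^2\leqslant t\,\mathrm{Ch}^{X_i}(f_i)$ is valid (one checks $(1-e^{-x})^2\leqslant x$ for all $x\geqslant 0$), and together with the ultracontractivity and heat-kernel gradient bound from Theorem \ref{thm2.12}, plus the uniform volume lower bound provided by continuity of $\mathcal{H}^n$ under non-collapsed convergence (Theorem \ref{11thm2.15}), gives the equi-boundedness and equi-Lipschitz estimates on $\mathrm{h}_t^{X_i}f_i$ needed for Arzel\`a--Ascoli after a McShane extension to $Y$; (iii) the identification of the uniform limit with $\mathrm{h}_t^X f$ follows from Theorem \ref{thm2.26} and the strong-weak pairing compatibility; (iv) the $\liminf$ inequality (with $f\in H^{1,2}$) is the $\Gamma$-liminf half of Mosco convergence of Cheeger energies, proved in \cite{GMS13}. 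One stylistic remark: invoking Jiang's estimate (Theorem \ref{aaaathm3.12}) for the Lipschitz bound on $\mathrm{h}_t^{X_i}f_i$ works but requires an extra step to first bound $\|\Delta \mathrm{h}_t f_i\|_{L^\infty}$ via ultracontractivity applied to $\Delta\mathrm{h}_{t/2}f_i\in L^2$; the more direct route is to differentiate under the integral and apply the gradient estimate for $\rho$ in Theorem \ref{thm2.12} together with Lemma \ref{aaaalem3.11}. Also, for the three-$\epsilon$ step you do not actually need $\|\mathrm{h}_t^X f-f\|_{L^2}\to 0$: the contraction $\|\mathrm{h}_t^X f\|_{L^2}\leqslant\|f\|_{L^2}$ already gives $\limsup_i\|f_i\|_{L^2}\leqslant \sqrt{t\sup_i\mathrm{Ch}^{X_i}(f_i)}+\|f\|_{L^2}$, and letting $t\downarrow 0$ closes the argument. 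No genuine gap.
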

\begin{thm}[Stability of Laplacian]\label{111thm5.2} {}{Let $\{\left({X}_i,\mathsf{d}_i,\mathcal{H}^n\right)\}$, $\left({X},\mathsf{d},\mathcal{H}^n\right)$ be taken as in Theorem \ref{222thm5.1}}. Let $f_i\in D\left(\Delta^{{X}_i}\right)$ with
\[
\sup\limits_i \left(\|f_i\|_{H^{1,2}\left({X}_i,\mathsf{d}_i,\mathcal{H}^n\right)}+\left\|\Delta^{{X}_i} f_i\right\|_{L^2\left(\mathcal{H}^n\right)}\right)<\infty.
\]
If $\{f_i\}$ $L^2$-strongly converges to $f$ on ${X}$ $($by Theorem \ref{222thm5.1} $f \in H^{1,2}({X},\mathsf{d},\mathcal{H}^n)$$)$, then the following statements hold.
\begin{enumerate}
\item $f\in D(\Delta^{X})$.
\item $\{\Delta^{{X}_i}f_i\}$ $L^2$-weakly converges to $\Delta^{{X}}f$.
\item $\{\left|\nabla^{{X}_i}f_i\right|\}$ $L^2$-strongly converges to $\left|\nabla^{{X}}f\right|$.
\end{enumerate}
\end{thm}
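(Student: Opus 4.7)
The plan is to prove the three assertions in the order stated, exploiting the duality between the Laplacian and the Dirichlet form, with the only genuine input being the existence of good test-function approximations along the mGH convergence.

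First, by the uniform bound $\sup_i\|\Delta^{X_i}f_i\|_{L^2(\mathcal{H}^n)}<\infty$ together with the uniform $H^{1,2}$ bound on $f_i$, one may apply $L^2$-weak compactness on varying spaces (essentially a diagonal version of Theorem \ref{222thm5.1}) to extract a subsequence such that $\{\Delta^{X_i}f_i\}$ $L^2$-weakly converges to some $h\in L^2(X,\mathcal{H}^n)$. The goal of step (1)--(2) is to identify $h=\Delta^X f$; once this is shown the limit is independent of the subsequence, so the full sequence $\{\Delta^{X_i}f_i\}$ $L^2$-weakly converges to $\Delta^X f$.

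To identify $h$ one integrates by parts against well-chosen probes. Given $\varphi\in\mathrm{Test}F(X,\mathsf{d},\mathcal{H}^n)$, the plan is to produce $\varphi_i\in\mathrm{Test}F(X_i,\mathsf{d}_i,\mathcal{H}^n)$ such that $\{\varphi_i\}$ $L^2$-strongly converges to $\varphi$ and $\{\Delta^{X_i}\varphi_i\}$ $L^2$-strongly converges to $\Delta^X\varphi$; the standard construction is to mollify via the heat semigroup, i.e.\ take $\varphi_i=\mathrm{h}_s^{X_i}\tilde\varphi_i$ for a fixed small $s>0$ and $\tilde\varphi_i\to\varphi$ $L^2$-strongly (combine Theorem \ref{thm2.26} with the commutation of $\mathrm{h}_s$ and $\Delta$, and the $L^\infty$--$L^\infty$ regularizing estimates in Proposition \ref{heatkernel2}). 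Passing to the limit in the identity
\[
\int_{X_i}\Delta^{X_i}f_i\cdot\varphi_i\,\mathrm{d}\mathcal{H}^n=\int_{X_i}f_i\cdot\Delta^{X_i}\varphi_i\,\mathrm{d}\mathcal{H}^n
\]
(weak $\times$ strong on the left, strong $\times$ strong on the right) yields $\int_X h\varphi\,\mathrm{d}\mathcal{H}^n=\int_X f\,\Delta^X\varphi\,\mathrm{d}\mathcal{H}^n$ for every $\varphi\in\mathrm{Test}F(X,\mathsf{d},\mathcal{H}^n)$. Since $\mathrm{Test}F$ is dense in $H^{1,2}$, this gives $f\in D(\Delta^X)$ with $\Delta^X f=h$, proving (1) and (2).

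For (3), use the same integration by parts now with $\varphi=f$: one has
\[
\int_{X_i}|\nabla^{X_i}f_i|^2\,\mathrm{d}\mathcal{H}^n=-\int_{X_i}f_i\cdot\Delta^{X_i}f_i\,\mathrm{d}\mathcal{H}^n.
\]
The right-hand side is the pairing of the $L^2$-strongly convergent sequence $\{f_i\}$ with the $L^2$-weakly convergent sequence $\{\Delta^{X_i}f_i\}$, so it converges to $-\int_X f\Delta^X f\,\mathrm{d}\mathcal{H}^n=\int_X|\nabla^X f|^2\,\mathrm{d}\mathcal{H}^n$. Combined with the lower semicontinuity of the Cheeger energy in Theorem \ref{222thm5.1}, this gives $\lim_i\mathrm{Ch}^{X_i}(f_i)=\mathrm{Ch}^X(f)$, i.e.\ $H^{1,2}$-strong convergence $f_i\to f$, which is exactly the $L^2$-strong convergence of $\{|\nabla^{X_i}f_i|\}$ to $|\nabla^X f|$ by definition.

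The main obstacle is the construction of the test-function approximants $\varphi_i$ with strongly convergent Laplacians; this is where the non-collapsed mGH convergence and the heat-kernel stability (Theorem \ref{thm2.26}) are essential. Once such $\varphi_i$ are available, the rest is a routine exercise in pairing weak and strong limits. If $\mathrm{Test}F(X)$-approximations with controlled Laplacians are already available from earlier references cited in the paper (e.g.\ \cite{AH17,AH18}), the first step reduces to merely quoting them.
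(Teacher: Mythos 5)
The paper does not actually prove Theorem~\ref{111thm5.2}: it is explicitly stated as a recalled result, with the references \cite{AH18,GMS13,AST16} cited immediately before Theorems~\ref{222thm5.1}--\ref{111thm5.2}. Your sketch is essentially the standard argument from those references (in particular Ambrosio--Honda), so there is no discrepancy with the paper to report. The overall structure---extract an $L^2$-weak sublimit $h$ of $\{\Delta^{X_i}f_i\}$, identify it as $\Delta^X f$ by integrating by parts against heat-mollified test functions that converge $H^{1,2}$-strongly together with their Laplacians, then use the strong$\times$weak pairing $\int_{X_i}f_i\Delta^{X_i}f_i\to\int_X f\Delta^X f$ combined with lower semicontinuity of Cheeger energy to upgrade to $H^{1,2}$-strong convergence---is exactly what is done in the literature, and the identification $\Delta^X f=h$ does follow after extending from $\mathrm{Test}F$ to $H^{1,2}$ by density as you indicate.

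One small imprecision: the final implication, from $H^{1,2}$-strong convergence of $\{f_i\}$ to $L^2$-strong convergence of $\{|\nabla^{X_i}f_i|\}$, is not ``by definition.'' The paper defines $H^{1,2}$-strong convergence through the $L^2$-norms and Cheeger energies, and the passage to $L^2$-strong convergence of the pointwise moduli $|\nabla^{X_i}f_i|$ is a genuine (if by now standard) theorem, proved via the convergence of the associated gradient derivations/vector fields; see \cite{AH17,AST16}. Citing that result rather than asserting it as definitional would close the gap, but the substance of your argument is correct.
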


We are now in the position to prove the following theorem.
\begin{thm}\label{abcthm5.3}
$\mathcal{M}(K,n,D,\tau)$ has finitely many members up to diffeomorphism.
\end{thm}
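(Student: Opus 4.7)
The plan is to argue by contradiction. Assume $\mathcal{M}(K,n,D,\tau)$ contains a sequence of pairwise non-diffeomorphic representatives $\{(M_i^n, g_i)\}$, each equipped with an isometrically immersing $\tau$-eigenmap $F_i = (\phi_1^i, \ldots, \phi_{m_i}^i)$ with corresponding eigenvalues $\mu_j^i > 0$. First I derive uniform spectral bounds. Taking the trace of $\sum_j d\phi_j^i \otimes d\phi_j^i = g_i$ yields $\sum_j |\nabla \phi_j^i|^2 \equiv n$, so integrating gives $\sum_j \mu_j^i \|\phi_j^i\|_{L^2(\mathrm{vol}_{g_i})}^2 = n\cdot\mathrm{vol}(M_i^n) \leq C(K,n,D)$ via the Bishop--Gromov upper volume bound. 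Combined with $\|\phi_j^i\|_{L^2} \geq \tau$ and the uniform lower bound $\mu_j^i \geq c_0(K,n,D) > 0$ on the first nonzero Laplacian eigenvalue of compact RCD$(K,n)$ spaces of diameter at most $D$, this forces $m_i \leq m_0$ and $\mu_j^i \leq \mu_0$ uniformly. In particular, $\mathrm{vol}(M_i^n) \geq c_0 \tau^2 / n > 0$.

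Next I extract a convergent subsequence. By Theorem~\ref{sta} and Theorem~\ref{11thm2.15}, after passing to a subsequence we obtain mGH convergence to a compact non-collapsed RCD$(K,n)$ space $(X,\mathsf{d},\mathcal{H}^n)$ with $\mathrm{diam}(X,\mathsf{d}) \leq D$. A diagonal extraction arranges $m_i \equiv m$ and $\mu_j^i \to \mu_j^\infty \in [c_0,\mu_0]$. The uniform $L^\infty$ bounds from Proposition~\ref{heatkernel2}, together with Theorems~\ref{222thm5.1} and \ref{111thm5.2}, produce limits $\phi_j^\infty \in D(\Delta^X)$ satisfying $-\Delta^X \phi_j^\infty = \mu_j^\infty \phi_j^\infty$, $\|\phi_j^\infty\|_{L^2} \geq \tau$, and $|\nabla^{M_i}\phi_j^i| \to |\nabla^X \phi_j^\infty|$ $L^2$-strongly. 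Polarizing $|\nabla^{M_i}(\phi_j^i + s\phi_k^i)|^2$ and passing to the limit then gives $L^2$-strong convergence of $\langle \nabla^{M_i}\phi_j^i, \nabla^{M_i}\phi_k^i\rangle$ to $\langle \nabla^X \phi_j^\infty, \nabla^X \phi_k^\infty\rangle$. Testing the identity $\sum_j d\phi_j^i \otimes d\phi_j^i = g_i$ against gradients of a dense family of admissible test functions and passing to the limit (using the tensor convergence framework of Section~\ref{sec2}) yields $\sum_j d\phi_j^\infty \otimes d\phi_j^\infty = g^X$. Hence $\Phi^\infty := (\phi_1^\infty, \ldots, \phi_m^\infty)$ is an isometrically immersing eigenmap on $X$, and Theorem~\ref{thm1.5} identifies $(X, \mathsf{d})$ isometrically with a smooth closed Riemannian manifold $(M_\infty^n, g_\infty) \in \mathcal{M}(K,n,D,\tau)$.

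The main obstacle is the final step: deducing that $M_i^n$ is diffeomorphic to $M_\infty^n$ for all sufficiently large $i$, which will contradict the initial assumption. I plan to exploit the explicit coordinate construction from the proof of Theorem~\ref{thm1.5}. By Lemma~\ref{1lem4.3}, a finite covering $\{B_r(x_\gamma)\}_{\gamma \in \Gamma}$ of $M_\infty^n$ admits $(1+\varepsilon)$-bi-Lipschitz charts whose components $u_k^\gamma = \sum_j a_{kj}^\gamma \phi_j^\infty$ are fixed linear combinations of the limit eigenfunctions. Picking $x_\gamma^i \in M_i^n$ with $x_\gamma^i \to x_\gamma$ and applying the same coefficients to $\{\phi_j^i\}$ produces maps $\mathbf{u}_\gamma^i$ on $M_i^n$ which, using the gradient inner product convergence established above together with Theorem~\ref{111thm4.3}, are locally uniformly $\delta_i$-isometric immersions with $\delta_i \to 0$, hence $(1+\varepsilon)$-bi-Lipschitz on balls of uniform radius for large $i$. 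Locally, the composition $(\mathbf{u}_\gamma^i)^{-1} \circ \mathbf{u}_\gamma$ defines a bi-Lipschitz map from a neighborhood in $M_\infty^n$ into $M_i^n$, and a partition-of-unity smoothing performed inside the smooth charts of $M_\infty^n$---an easier version of the Cheeger--Colding intrinsic Reifenberg argument later invoked for Theorem~\ref{thm1.12}---produces a global diffeomorphism $\psi_i : M_\infty^n \to M_i^n$ for sufficiently large $i$. The delicate point is guaranteeing global injectivity and the immersion property during the patching, which is achieved using the uniform bi-Lipschitz estimates together with the smoothness of $M_\infty^n$ to control the overlap transitions.
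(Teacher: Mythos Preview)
Your argument follows the paper's proof almost exactly through the spectral bounds, the extraction of a non-collapsed mGH limit, the passage of the eigenfunctions and the identity $\sum_j d\phi_j\otimes d\phi_j=g$ to the limit via Theorems~\ref{222thm5.1} and~\ref{111thm5.2}, and the application of Theorem~\ref{thm1.5} to conclude the limit is a smooth closed manifold.

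The only divergence is the final diffeomorphism step. The paper dispatches this in one line by invoking the Cheeger--Colding intrinsic Reifenberg theorem \cite[Theorem A.1.12]{ChCo1}: once the limit is a smooth closed manifold, every sufficiently mGH-close manifold in the sequence is automatically diffeomorphic to it. You instead propose to build the diffeomorphism by hand from the eigenfunction charts of Lemma~\ref{1lem4.3}, transported back to $M_i^n$ via the same coefficient matrices, and then patched by a partition of unity. This is plausible in spirit but is essentially a special-case reconstruction of the Reifenberg argument, and the step you flag as ``delicate''---global injectivity and smoothness under patching---is precisely the nontrivial content of that theorem. Since you already acknowledge that the Cheeger--Colding machinery is available (and indeed is used for Theorem~\ref{thm1.12}), it is both cleaner and safer to invoke it directly here rather than sketch an ad hoc version.
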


\begin{proof}
Assume the contrary, i.e. there exists a sequence of Riemannian manifolds $\{( M_i^n,g_i)\}\subset \mathcal{M}(K,n,D,\tau)$, which are pairwise non-diffeomorphic. 

On each $( M_i^n,g_i)$, there exists $m_i\in\mathbb{N}$, such that 
\begin{equation}\label{1111eqn5.2}
g_i=\sum\limits_{j=1}^{m_i}d\phi_{i,j}\otimes d\phi_{i,j},
\end{equation}
where $\phi_{i,j}$ is a non-constant eigenfunction of $-\Delta^{g_i}$ with the corresponding eigenvalue $\mu_{i,j}$ and satisfies that $\left\|\phi_{i,j}\right\|_{L^2(\mathrm{vol}_{g_i})}\geqslant \tau>0$ ($i\in\mathbb{N}$, $j=1,\ldots, m_i$). By taking trace of (\ref{1111eqn5.2}) with respect to $g_i$, we know
\begin{equation}\label{eqn4.29}
n=\sum\limits_{j=1}^{m_i}\left|\nabla^{g_i}\phi_{i,j}\right|^2.
\end{equation}

Integration of (\ref{eqn4.29}) on $( M_i^n,g_i)$ shows that 
\[
n\text{vol}_{g_i}( M^n_i)\geqslant \tau^2\sum\limits_{j=1}^{m_i} \mu_{i,j}.
\]

The Bishop-Gromov volume comparison theorem and  {}{Li-Yau's first eigenvalue lower bound \cite[Theorem 7]{LY80}} imply that 
\begin{equation}\label{eqn4.30}
C_1(K,n)D^n\geqslant n\text{vol}_{g_i}( M_i^n) {}{\geqslant}\tau^2\sum\limits_{j=1}^{m_i} \mu_{i,j}\geqslant C_2(K,n,D)\tau^2 m_i \geqslant C_2(K,n,D)\tau^2.
\end{equation}

Moreover, for each $\phi_{i,j}$, 
\begin{equation}\label{1112eqn5.4}
\|\phi_{i,j}\|_{L^2(\mathrm{vol}_{g_i})}^2 {}{=}\mu_{i,j}^{-1}\int_{M_i^n} |\nabla^{g_i} \phi_{i,j}|^2 \mathrm{dvol}_{g_i} \leqslant n\mu_{i,j}^{-1}\mathrm{vol}_{g_i}(M_i^n)\leqslant C(K,n,D,\tau).
\end{equation}

{}{Since (\ref{eqn4.30}) implies that $1\leqslant \inf_i m_i\leqslant \sup_i m_i\leqslant C(K,n,D,\tau)$, after passing to a subsequence, we may take $m\in\mathbb{N}$} such that
\begin{equation}\label{1111eqn5.3}
g_i=\sum\limits_{j=1}^{m}d\phi_{i,j}\otimes d\phi_{i,j}, \ \forall i\in \mathbb{N}.
\end{equation}

Moreover, by (\ref{eqn4.30}), we may assume that
\begin{equation}\label{111eqn5.4}
\lim\limits_{i\rightarrow \infty} \mu_{i,j}=\mu_j \in [C_2(K,n,D),\tau^{-2}C_1(K,n)D^n],\ \ j=1,\ldots,m.
\end{equation}

According to Theorem \ref{11thm2.15} and (\ref{eqn4.30}), $\{( M_i^n,g_i)\}$ can also be required to satisfy
\[
\left( M_i^n,\mathsf{d}_{g_i},\text{vol}_{g_i}\right)\xrightarrow{\mathrm{mGH}} \left({X},\mathsf{d},\mathcal{H}^n\right)
\]
for some non-collapsed RCD$(K,n)$ space  $({X},\mathsf{d},\mathcal{H}^n)$. In particular, combining (\ref{eqn4.30})-(\ref{111eqn5.4}) with Theorems \ref{222thm5.1} and \ref{111thm5.2}, we know that on $({X},\mathsf{d},\mathcal{H}^n)$,
\[
g=\sum\limits_{j=1}^m d\phi_j\otimes d\phi_j,
\]
where each $\phi_j$ is an eigenfunction of $-\Delta$ with the eigenvalue $\mu_j$. Therefore, from Theorem \ref{thm1.5}, we deduce that $(
{X},\mathsf{d})$ is isometric to an $n$-dimensional smooth closed Riemannian manifold $(M^n,g)$. However, due to \cite[Theorem A.1.12]{ChCo1}, $ M_i^n$ is diffeomorphic to $M^n$ for any sufficiently large $i$. A contradiction.
\end{proof}

The proof of Theorem \ref{thm1.8} mainly uses the estimates in Section \ref{sec4} and a stronger version of Gromov convergence theorem given by Hebey-Herzlish \cite{HH97}. For reader's convenience, Hebey-Herzlish's theorem is stated below.
\begin{thm}\label{11thm5.4}
Let $\{(M_i^n,g_i)\}$ be a sequence of {}{$n$-dimensional} closed Riemannian manifolds such that {}{
\[  \sup\limits_i\mathrm{vol}_{g_i}(M_i^n)<\infty,\ \inf\limits_i\inf\limits_{p\in M_i^n} \mathrm{inj}_{g_i}(p)>0,
\]
and for all $k\in \mathbb{N}$, 
\[
\sup\limits_i \sup\limits_{M_i^n}\left|(\nabla^{g_i})^k\mathrm{Ric}_{g_i}\right|<\infty.
\]
}
Then there exists a subsequence which is still denoted as $\{(M_i^n,g_i)\}$, such that it $C^\infty$-converges to a closed Riemannian manifold $(M^n,g)$.
\end{thm}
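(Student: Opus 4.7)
The plan is to follow the harmonic-coordinate Cheeger--Gromov compactness scheme, in the form refined by Anderson and Jost--Karcher, and promote $C^{1,\alpha}$-precompactness to $C^\infty$-precompactness via elliptic bootstrap. Set $r_0 := \inf_i \inf_{p\in M_i^n} \mathrm{inj}_{g_i}(p) > 0$ and $\Lambda_k := \sup_i \sup_{M_i^n} |(\nabla^{g_i})^k \mathrm{Ric}_{g_i}|$. The first step is to invoke Anderson's harmonic-radius estimate: since $|\mathrm{Ric}_{g_i}| \leqslant \Lambda_0$ and $\mathrm{inj}_{g_i} \geqslant r_0$, there exists $r = r(n, r_0, \Lambda_0) > 0$ such that every $p \in M_i^n$ admits a harmonic chart $\varphi_{i,p}: B_r(p) \to \mathbb{R}^n$ in which $(1+\epsilon)^{-1}\delta_{jk} \leqslant (g_i)_{jk} \leqslant (1+\epsilon)\delta_{jk}$ and $\|(g_i)_{jk}\|_{C^{1,\alpha}} \leqslant C(n, r_0, \Lambda_0, \alpha)$ for any chosen $\alpha \in (0,1)$.

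The second step is the elliptic bootstrap. In harmonic coordinates the metric satisfies the quasi-linear elliptic system
\[
\tfrac{1}{2}g^{ab}\partial_a\partial_b g_{jk} + Q_{jk}(g, \partial g) = -\mathrm{Ric}_{jk},
\]
where $Q_{jk}$ is a polynomial in $g_{ab}$, $g^{ab}$, and $\partial g$. Given $C^{m,\alpha}$ bounds on $(g_i)_{jk}$ and $C^{m-1,\alpha}$-type bounds on $\mathrm{Ric}_{g_i}$ (controlled by $\Lambda_0, \ldots, \Lambda_{m-1}$ together with the already-obtained control on Christoffel symbols), the Schauder interior estimates upgrade $(g_i)_{jk}$ to $C^{m+1,\alpha}$ on a slightly smaller ball, with constants depending only on $n$, $r_0$, $\alpha$, and $\Lambda_0, \ldots, \Lambda_{m-1}$. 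Iterating gives uniform $C^{m,\alpha}$ bounds on the metric components in harmonic charts for every $m \in \mathbb{N}$.

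Third, because $|\mathrm{Ric}_{g_i}| \leqslant \Lambda_0$ yields a uniform lower Ricci bound, the Bishop--Gromov inequality produces a uniform lower volume bound $v_0 = v_0(n, \Lambda_0, r_0) > 0$ on geodesic balls of radius $r_0/4$, and combined with $\sup_i \mathrm{vol}_{g_i}(M_i^n) < \infty$ a packing argument bounds the cardinality of any $r_0/4$-separated net uniformly by some $N$. Consequently $\mathrm{diam}(M_i^n, \mathsf{d}_{g_i}) \leqslant C(n, r_0, \Lambda_0, \sup_i \mathrm{vol}_{g_i}(M_i^n))$, and one can select a uniformly finite harmonic atlas $\{(B_r(p_{i,k}), \varphi_{i,p_{i,k}})\}_{k=1}^{N_i}$ with $N_i \leqslant N$ of uniformly bounded multiplicity. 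Passing to a subsequence we may assume $N_i \equiv N$, the transition functions converge in $C^\infty$, the metric components converge in $C^m$ for each $m$ on each chart by Arzel\`a--Ascoli and a diagonal extraction, and the limit combinatorics of chart intersections stabilize.

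The final and most delicate step assembles these local limits into a single smooth closed Riemannian manifold $(M^n, g)$ together with diffeomorphisms $\psi_i: M^n \to M_i^n$ such that $\psi_i^\ast g_i \to g$ in $C^\infty$. Here I expect the main obstacle: one must define $M^n$ abstractly from the limiting atlas and transition data, and then construct the diffeomorphisms $\psi_i$ (for large $i$) by a center-of-mass or partition-of-unity averaging procedure applied to local approximate identities between limiting and finite-stage charts, in a way that simultaneously yields $C^\infty$-convergence of the pulled-back metrics. One must carefully verify that the globally patched map is a diffeomorphism (injectivity, nondegeneracy of the differential) uniformly in $i$, which uses the $C^{1,\alpha}$ metric control of Step 1 in an essential way, and that the $C^\infty$-convergence on each chart survives the gluing; standard references (Hamilton's compactness argument, or the treatment in Hebey--Herzlish) provide the template.
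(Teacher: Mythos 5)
The paper does not actually prove this theorem: it is stated as a recalled result from Hebey--Herzlich \cite{HH97} (the text explicitly says ``For reader's convenience, Hebey-Herzlish's theorem is stated below''), so there is no in-paper proof to compare against. Your sketch follows the standard harmonic-coordinates route, which is indeed the strategy in \cite{HH97} (and in Anderson's and Hamilton's compactness arguments): Anderson-type harmonic radius lower bound from the injectivity radius and Ricci bound, elliptic bootstrap of the system $\tfrac12 g^{ab}\partial_a\partial_b g_{jk}+Q_{jk}(g,\partial g)=-\mathrm{Ric}_{jk}$ using the hierarchy of covariant derivative bounds on $\mathrm{Ric}$ to upgrade $C^{1,\alpha}$ to $C^{m,\alpha}$ for every $m$, and then Cheeger--Gromov patching. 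Two small technical remarks. First, in Step 3 the lower volume bound on balls of radius $\sim r_0$ is not a direct consequence of Bishop--Gromov (which bounds volume ratios from above under a lower Ricci bound); it comes from the harmonic radius estimate of Step 1, which furnishes nearly-Euclidean $C^{1,\alpha}$ coordinates and hence a volume lower bound, or alternatively from a Croke/Berger-type isoembolic inequality. Second, in Step 2 you should be explicit that the $C^{m-1,\alpha}$ bound on the \emph{coordinate components} of $\mathrm{Ric}_{g_i}$ requires both the $L^\infty$ bounds $\Lambda_0,\dots,\Lambda_{m-1}$ on covariant derivatives \emph{and} the $C^{m,\alpha}$ control on the metric already obtained at the previous stage of the bootstrap (to convert covariant into coordinate derivatives via the Christoffel symbols); your parenthetical notes this, and it is precisely why the hypothesis asks for bounds of all orders $k$. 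With these caveats your outline is a correct reconstruction of the argument in the cited reference.
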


The following Cheeger-Gromov-Taylor's estimate of the injectivity radius is also necessary for the proof of Theorem \ref{thm1.8}. 
\begin{thm}[{\cite[Theorem 4.7]{CGT82}}]\label{111thm5.5}
Let $(M^n,g)$ be a complete $n$-dimensional Riemannian manifold with $|K_g|\leqslant \kappa<\infty$. Then there exists a constant $c_0=c_0(n)>0$, such that for any $0<r\leqslant \frac{\pi}{4\sqrt{\kappa}}$, 
\[
\mathrm{inj}_g(p)\geqslant c_0 r\frac{\mathrm{vol}(B_r(p))}{\int_0^r V_{-(n-1)\kappa,n}\mathrm{d}t},\ \forall p\in M^n.
\]
\end{thm}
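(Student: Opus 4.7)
The plan is to reconstruct the original Cheeger--Gromov--Taylor strategy, combining Klingenberg's injectivity-radius lemma with a covering-space volume comparison. First I would invoke Rauch's theorem to conclude that, under $|K_g|\leqslant \kappa$, the conjugate radius at every point is at least $\pi/\sqrt{\kappa}$; hence Klingenberg's lemma gives $\mathrm{inj}_g(p)\geqslant \min\{\pi/\sqrt{\kappa},L(p)/2\}$, where $L(p)$ is the length of the shortest geodesic loop based at $p$. When $\mathrm{inj}_g(p)\geqslant \pi/\sqrt{\kappa}$ the conclusion is immediate: since $r\leqslant \pi/(4\sqrt{\kappa})$, we have $\mathrm{inj}_g(p)\geqslant 4r$, and Bishop's volume comparison $\mathrm{vol}(B_r(p))\leqslant \int_0^r V_{-(n-1)\kappa,n}\,\mathrm{d}t$ makes the inequality hold with any $c_0\leqslant 4$.

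The nontrivial case is $\mathrm{inj}_g(p)=L(p)/2 < \pi/(2\sqrt{\kappa})$. Here I would lift to the Riemannian universal cover $\pi\colon(\tilde M,\tilde g)\to(M,g)$ and pick a basepoint $\tilde p$ above $p$. Since $\tilde M$ is simply connected with the same curvature bound, Klingenberg applied on $\tilde M$ yields $\mathrm{inj}_{\tilde g}(\tilde p)\geqslant \pi/\sqrt{\kappa}\geqslant 4r$, so $\exp_{\tilde p}$ is a diffeomorphism onto $B_{4r}(\tilde p)$. Let $\Gamma$ denote the deck transformation group, and set $N:=\#\{\gamma\in\Gamma:d_{\tilde g}(\tilde p,\gamma\tilde p)<2r\}$. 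The short geodesic loop at $p$ lifts to a geodesic segment from $\tilde p$ to some $\gamma_0\tilde p$ with $d_{\tilde g}(\tilde p,\gamma_0\tilde p)=L(p)$; iterating, the powers $\gamma_0^k$ with $|k|\lesssim r/L(p)$ all contribute to $N$, giving the lower bound $N\gtrsim r/L(p)$.

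For each $\gamma$ counted by $N$, the projection $\pi$ restricted to $B_r(\gamma\tilde p)$ is a local isometry onto $B_r(p)$, and all these balls lie inside $B_{3r}(\tilde p)$. An overlap-accounting argument combined with Bishop--Gromov on $\tilde M$ yields $N\cdot \mathrm{vol}(B_r(p))\lesssim \int_0^{3r}V_{-(n-1)\kappa,n}\,\mathrm{d}t\lesssim C(n)\int_0^{r}V_{-(n-1)\kappa,n}\,\mathrm{d}t$. Combining the two estimates on $N$ produces $L(p)\gtrsim r\,\mathrm{vol}(B_r(p))\big/\int_0^rV_{-(n-1)\kappa,n}\,\mathrm{d}t$, which together with $\mathrm{inj}_g(p)=L(p)/2$ is exactly the claimed inequality.

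The main obstacle is the third step: one must carefully track the multiplicities with which the balls $B_r(\gamma\tilde p)$ overlap and how they cover $\pi^{-1}(B_r(p))$, and one must handle fundamental groups in which $\gamma_0$ has finite order (so that its powers do not all give distinct deck elements). Cheeger--Gromov--Taylor sidestep some of these subtleties by replacing the counting argument with a heat-kernel trace formula on $\tilde M$, using Gaussian bounds to convert the ``short loop'' condition into a quantitative deficit of the on-diagonal heat kernel on $M$ compared to Euclidean space; that route produces the cleanest derivation of the same final inequality.
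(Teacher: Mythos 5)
This result is imported by the paper from \cite{CGT82} without proof, so there is no internal argument to compare against; I will assess your reconstruction on its own terms. The skeleton is the standard one and is correct as far as it goes: Klingenberg reduces the problem to estimating the length $L(p)$ of the shortest geodesic loop, the case $\mathrm{inj}_g(p)\geqslant\pi/\sqrt\kappa$ is dispatched by Bishop comparison, and the remaining case is attacked by counting lifts of $p$ in the universal cover. Your worry about overlap accounting is, however, not a real obstacle: for $q\in B_r(p)$ fix the lift $\tilde q$ of $q$ with $d_{\tilde g}(\tilde p,\tilde q)<r$; then for each $\gamma\in\Gamma$ with $d_{\tilde g}(\tilde p,\gamma\tilde p)<2r$, the point $\gamma\tilde q$ is a distinct lift of $q$ lying in $B_{3r}(\tilde p)$, so the fiber $\pi^{-1}(q)\cap B_{3r}(\tilde p)$ has at least $N$ elements, and Fubini for the covering gives $N\,\mathrm{vol}(B_r(p))\leqslant\mathrm{vol}\bigl(\pi^{-1}(B_r(p))\cap B_{3r}(\tilde p)\bigr)\leqslant\mathrm{vol}(B_{3r}(\tilde p))$ with no delicate multiplicity bookkeeping.

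The genuine gap is the one you flag but do not close: the lower bound $N\gtrsim r/L(p)$ obtained from the powers $\gamma_0^k$ fails outright when $\gamma_0$ has small finite order, and free properly discontinuous actions with torsion certainly occur (the deck group of any spherical space form), so this is not a degenerate corner case that can be waved away. To complete the argument one must either prove that the hypotheses $|K_g|\leqslant\kappa$ and $r\sqrt\kappa\leqslant\pi/4$ force $\mathrm{ord}(\gamma_0)$ to be $\gtrsim r/L(p)$, or produce other elements of $\Gamma$ inside $B_{2r}(\tilde p)$ when $\gamma_0$ is torsion, or replace the cyclic-subgroup counting with a different device altogether. Your closing sentence attributing to Cheeger--Gromov--Taylor a ``heat-kernel trace formula'' workaround is a guess rather than a fix; plausible though it is given the subject of \cite{CGT82}, no such argument is supplied, so as written the proof is incomplete at precisely the step you yourself marked as the main obstacle. (A smaller point: Bishop comparison bounds $\mathrm{vol}(B_r(p))$ by the \emph{normalized} model ball volume $\omega_{n-1}\kappa^{-(n-1)/2}\int_0^r\sinh^{n-1}(\sqrt\kappa t)\,\mathrm dt$, not by $\int_0^r V_{-(n-1)\kappa,n}\,\mathrm dt$ alone; the $\kappa$-dependent factor must be tracked if one wants $c_0$ to depend only on $n$ as claimed, although it cancels harmlessly in the paper's application.)
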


\begin{proof}[Proof of Theorem \ref{thm1.8}]
By Theorem \ref{abcthm5.3}, without loss of generality, we may take a sequence $\{( M^n,g_i)\}\subset\mathcal{M}(K,n,D,\tau)$ such that $\{( M^n,g_i)\}$ mGH converges to $(M^n,g)$ and {}{that} (\ref{eqn4.30})-(\ref{111eqn5.4}) still hold. Denote by $B_r^i(p)$ the $r$-radius ball (with respect to $\mathsf{d}_{g_i}$) centered at $p\in {}{M^n}$ for notation convenience.

\textbf{Step 1} Uniform two-sided sectional curvature bound on $( M^n,g_i)$. 

According to the estimates in Section \ref{sec4}, combining (\ref{eqn4.30})-(\ref{111eqn5.4}), we may choose a uniform $r>0$, such that for every arbitrary but fixed $B_{4096r}^i(p)\subset {}{M^n}$, there exists a coordinate function $\mathbf{x}^i=(u^i_1,\ldots,u^i_n):B_{4096r}^i(p)\rightarrow \mathbb{R}^n$ satisfying the following properties.
\begin{enumerate}
\item $\mathbf{x}^i$ is $\dfrac{3}{2}$-bi-Lipschitz from $B_{4096r}^i(p)$ to $\mathbf{x}^i(B_{4096r}^i(p))$ (by Lemma \ref{1lem4.3}).
\item Set {}{$(g_i)_{jk}:=g_i\left(\dfrac{\partial}{\partial u_j^i},\dfrac{\partial}{\partial u_k^i}\right)$.}  Then it holds that 
\begin{equation}\label{1eqn5.6}
\frac{1}{2}I_n\leqslant (g_i)_{jk}\leqslant 2I_n,\ \text{on $B_{4096r}^i(p)$ (by Lemma \ref{1lem4.3} and (\ref{1111eqn4.28}))}.
\end{equation}
\end{enumerate}

We first give a $C^{2,\alpha}$-estimate of $g_i$ on each $( M^n,g_i)$ for any $\alpha\in (0,1)$. Applying (\ref{1eqn4.8}) and (\ref{1eqn5.6}) implies that on $B_{4096r}^i(p)$
\begin{equation}\label{11111eqn5.8}
C\geqslant \left|\nabla^{g_i} (g_i)^{jk}\right|^2=\sum\limits_{\beta,\gamma=1}^n (g_i)^{\beta\gamma}\frac{\partial}{\partial u^i_\beta} (g_i)^{jk} \frac{\partial}{\partial u^i_\gamma}  (g_i)^{jk}\geqslant \frac{1}{2}\sum\limits_{\beta=1}^n \left(\frac{\partial}{\partial u^i_\beta} (g_i)^{jk}\right)^2,
\end{equation}
for some $C=C(K,n,D,\tau)$ which may vary from line to line.

Then $\left\| (g_i)^{jk}\right\|_{C^{\alpha}(B_{4096r}^i(p))}\leqslant C$ follows from (\ref{11111eqn5.8}) and the local bi-Lipschitz property of $\mathbf{x}^i$ ($j,k=1,\ldots,n$).

For $j=1,\ldots,n$, $\left|\nabla^{g_i} \phi_{i,j}\right|\leqslant C$ yields that $\|\phi_{i,j}\|_{C^\alpha\left(B_{4096r}^i(p)\right)}\leqslant C$. This implies that $\left\|\Delta^{g_i} u_{i,j}\right\|_{C^\alpha\left(B_{4096r}^i(p)\right)}\leqslant C$ since each $u_{i,j}$ is the linear combination of $\phi_{i,j}$ constructed as in Lemma \ref{1lem4.3}. Then the the classical Schauder interior estimate (see for example \cite[Theorem 6.2]{GT01}), together with the PDE (\ref{111eqn4.26}) implies that  $\left\|\phi_{i,j}\right\|_{C^{2,\alpha}\left(B_{256r}^i(p)\right)}\leqslant C$ since $\mathbf{x}^i \left(B_{256r}(p)\right)\subset B_{512r}({}{\mathbf{x}^i(p)})\subset \mathbf{x}^i\left(B_{1024r}^i(p)\right)\subset B_{2048r}({}{\mathbf{x}^i(p)}) \subset\mathbf{x}^i\left(B_{4096r}^i(p)\right)$. As a result, $\left\|\Delta^{g_i} u_{i,j}\right\|_{C^{2,\alpha}\left(B_{256r}^i(p)\right)}\leqslant C$. Moreover, (\ref{1111eqn5.3}) shows that
\[
\left\| (g_i)_{jk}\right\|_{C^{1,\alpha}\left(B_{256r}^i(p)\right)}, \left\| (g_i)^{jk}\right\|_{C^{1,\alpha}\left(B_{256r}^i(p)\right)}\leqslant C,\ j,k=1,\ldots,n.
\]

Applying again the Schauder interior estimate to $\phi_{i,j}$ in the PDE (\ref{111eqn4.26}), we know $\left\|\phi_{i,j}\right\|_{C^{3,\alpha}\left(B_{16r}^i(p)\right)}\leqslant C$. Consequently,
\[
\left\| (g_i)_{jk}\right\|_{C^{2,\alpha}\left(B_{16r}^i(p)\right)}, \left\| (g_i)^{jk}\right\|_{C^{2,\alpha}\left(B_{16r}^i(p)\right)}\leqslant C,\ j,k=1,\ldots,n.
\]

Since the calculation of sectional curvature only involves the terms in form of $(g_i)_{jk}$, $(g_i)^{jk}$, $\dfrac{\partial }{\partial u^i_\beta} (g_i)^{jk}$, $\dfrac{\partial }{\partial u^i_\beta} (g_i)_{jk}$, $\dfrac{\partial^2 }{\partial u^i_\beta \partial u^i_\gamma} (g_i)_{jk}$ ($j,k,\beta,\gamma=1,\ldots,n$), $|\mathrm{K}
_{g_i}|$ has a uniform upper bound $C_0=C_0(K,n,D,\tau)$.

\textbf{Step 2} {}{Uniform lower injectivity radius bound on $( M^n,g_i)$.}

By Step 1, we may take $r'=\min\{r, C_0^{-1}\}$, which is still denoted as $r$. In order to use Theorem \ref{111thm5.5}, we need nothing but the lower bound of $\mathrm{vol}_{g_i}(B^i_r(p))$.  It suffices to apply (\ref{eqn4.30}) and Bishop-Gromov volume comparison theorem again to show that 
\begin{equation}\label{12345eqn5.7}
 \tilde{C}(K,n,D,\tau)r^n\leqslant \mathrm{vol}_{g_i}(B^i_r(p))\leqslant C(K,n)D^n,
\end{equation}
because (\ref{12345eqn5.7}), Theorem \ref{111thm5.5} as well as the two-sided sectional curvature bound obtained in Step 1 then imply that 
$\inf\limits_{p\in M^n} \mathrm{inj}_{g_i}(p)\geqslant \tilde{C}(K,n,D,\tau)$.

\textbf{Step 3} Improvement of the regularity.

In order to apply Theorem \ref{11thm5.4}, it suffices to show that for any $k\geqslant 0$, there exists $C_k(K,n,D,\tau)$ such that $|(\nabla^{g_i})^k \mathrm{Ric}_{g_i}|(p)\leqslant C_{k}(K,n,D,\tau)$ holds for any arbitrary but fixed $p\in M^n$. Since the case $k=0$ is already proved in Step 1, we prove the case $k=1$. 
 
Using the Schauder interior estimate again and an argument similar to Step 1 gives the following $C^{4,\alpha}$-estimate of $\phi_{i,j}$:
\[
\left\|\phi_{i,j}\right\|_{C^{4,\alpha}\left(B_{r}^i(p)\right)}\leqslant C_1(K,n,D,\tau),
\] 
which implies that 
\[
\left\| (g_i)_{jk}\right\|_{C^{3,\alpha}\left(B_{r}^i(p)\right)}, \left\| (g_i)^{jk}\right\|_{C^{3,\alpha}\left(B_{r}^i(p)\right)}\leqslant C_1(K,n,D,\tau),\ j,k=1,\ldots,n.
\]

Therefore, we see
\[
\sup_{M^n} |\nabla^{g_i}\mathrm{Ric}_{g_i}|\leqslant C_1(K,n,D,\tau).
\]

Now by using the proof by induction, for any $k\geqslant 2$, there exists $C_{k}=C_{k}(K,n,D,\tau)$ such that
 \[
\sup_{M^n} |\left(\nabla^{g_i}\right)^k\mathrm{Ric}_{g_i}|\leqslant C_{k}(K,n,D,\tau),
\] 
which suffices to conclude.

\end{proof}
\begin{proof}[Proof of Theorem \ref{thm1.12}]
The proof is almost the same as that of Theorem \ref{abcthm5.3}, and we omit some details. Assume the contrary, i.e. there exists a sequence of pairwise non-diffeomorphic Riemannian manifolds $\{( M_i^n,g_i)\}$ such that $( M_i^n,g_i)\in \mathcal{N}\left(K,n,D,i^{-1},\tau\right)$ for any $i\in \mathbb{N}$. Then for each $\{( M_i^n,g_i)\}$, the almost isometric immersion condition ensures the existence of some $m_i\in\mathbb{N}$, such that
\begin{equation}\label{eqn5.7}
\frac{1}{\mathrm{vol}_{g_i}(M_i^n)}\int_{M_i^n}\left|\sum\limits_{j=1}^{m_i} d\phi_{i,j}\otimes d\phi_{i,j}- g_i \right|\mathrm{dvol}_{g_i}\leqslant \frac{1}{i}.
\end{equation}

Thus
\begin{equation}\label{1234eqn5.1}
\begin{aligned}
\frac{\tau^2 \mu_{i,j}}{\mathrm{vol}_{g_i}(M_i^n)}&\leqslant \frac{1}{\mathrm{vol}_{g_i}(M_i^n)}\int_{M_i^n}|\nabla^{g_i} \phi_{i,j}|^2\mathrm{dvol}_{g_i}\\
\ & \leqslant \frac{1}{\mathrm{vol}_{g_i}(M_i^n)}\int_{M_i^n}{}{\left(\sum\limits_{j,k=1}^{m_i}\left\langle \nabla^{g_i} \phi_{i,j},\nabla^{g_i}\phi_{i,k}\right\rangle^2\right)^{\frac{1}{2}}}\mathrm{dvol}_{g_i}\\
\ &\leqslant \frac{1}{\mathrm{vol}_{g_i}(M_i^n)}\int_{M_i^n}\left|\sum\limits_{j=1}^{m_i} d\phi_{i,j}\otimes d\phi_{i,j}- g_i \right|\mathrm{dvol}_{g_i}+ \frac{1}{\mathrm{vol}_{g_i}(M_i^n)}\int_{M_i^n}| g_i |\mathrm{dvol}_{g_i}\\
\ &\leqslant \frac{1}{i}+\sqrt{n}.
\end{aligned}
\end{equation}

Applying {}{Li-Yau's first eigenvalue lower bound \cite[Theorem 7]{LY80}} and Bishop-Gromov volume comparison theorem to (\ref{1234eqn5.1}) shows that 
\begin{equation}\label{eqn5.8}
C_1(K,n,D)\leqslant \mu_{i,j}\leqslant C_2(K,n,D,\tau).
\end{equation}

It then follows from (\ref{1234eqn5.1}) and (\ref{eqn5.8}) that 
\begin{equation}\label{1eqn5.9}
C_3(K,n,D,\tau)\leqslant \mathrm{vol}_{g_i}(M_i^n)\leqslant C_4(K,n,D)\ \text{and}\ \tau\leqslant \|\phi_{i,j}\|_{L^2(\mathrm{vol}_{g_i})}\leqslant C_5(K,n,D).
\end{equation}

To see $\{m_i\}$ has an upper bound, it suffices to notice that 
\[
\begin{aligned}
\ &\left|\sum\limits_{j=1}^{m_i} \left\|\phi_{i,j}\right\|_{L^2(\mathrm{vol}_{g_i})}^2\mu_{i,j}-n\mathrm{vol}_{g_i}(M_i^n) \right|\\
=&\left|\int_{M_i^n}\left\langle\sum\limits_{j=1}^{m_i} d\phi_{i,j}\otimes d\phi_{i,j}- g_i ,g_i\right\rangle\mathrm{dvol}_{g_i}\right|\\
\leqslant& \sqrt{n}\int_{M_i^n}\left|\sum\limits_{j=1}^{m_i} d\phi_{i,j}\otimes d\phi_{i,j}- g_i \right|\mathrm{dvol}_{g_i}\leqslant \sqrt{n}\ C_4(K,n,D)\frac{1}{i}
\end{aligned}
\]

As a result, $m_i\leqslant C_6(K,n,D,\tau)$. Therefore there exists $m\in \mathbb{N}$ and a subsequence of $\{( M_i^n,g_i)\}$ which is still denoted as $\{( M_i^n,g_i)\}$, such that each $( M_i^n,g_i)$ admits an $i^{-1}$-almost isometrically immersing eigenmap into $\mathbb{R}^m$. In addition, $\{( M_i^n,g_i)\}$ can also be required to satisfy
\[
\left( M_i^n,\mathsf{d}_{g_i},\text{vol}_{g_i}\right)\xrightarrow{\mathrm{mGH}} \left({X},\mathsf{d},\mathcal{H}^n\right)
\]
for some non-collapsed RCD$(K,n)$ space  $({X},\mathsf{d},\mathcal{H}^n)$. Again combining (\ref{eqn5.7})-(\ref{1eqn5.9}) with Theorems \ref{222thm5.1} and \ref{111thm5.2}, we see that on $({X},\mathsf{d},\mathcal{H}^n)$,
\[
g=\sum\limits_{j=1}^m d\phi_j\otimes d\phi_j,
\]
where each $\phi_j$ is an eigenfunction of $-\Delta$ with the eigenvalue $\mu_j:=\lim\limits_{i\rightarrow\infty}\mu_{i,j}$. Finally, it suffices to apply Theorem \ref{thm1.5} and \cite[Theorem A.1.12]{ChCo1} to deduce the contradiction.
\end{proof}
\section{Examples}

In this section, some examples about the IHKI condition of Riemannian manifolds are provided. {}{Let us first emphasis that if $(M^n,g)$ is an $n$-dimensional compact IHKI Riemannian manifold, then it follows from Corollary \ref{cor1.11} and Takahashi theorem \cite[Theorem 3]{Ta66} that for any $t>0$, $\rho^{M^n}_{t}:(p\mapsto\rho^{M^n}(p,p,t))$} is a constant function. By Lemma \ref{llem3.1}, we see that 

\begin{enumerate}
\item\label{20221201} For any $k,n\in \mathbb{N}$, $\underbrace{\mathbb{S}^n\times\cdots \times \mathbb{S}^n}_{2^k \text{times}}$ is IHKI.

\item For any $p,q\in\mathbb{N}$, the compact Lie group $\mathrm{SO}(2p+q)/\mathrm{SO}(2p)\times \mathrm{SO}(q)$ with a constant positive Ricci curvature is IHKI since it is homogeneous and irreducible. 

\end{enumerate}

Example \ref{exmp4.5} gives the sharpness of Theorem \ref{thm1.2}. The construction of Example \ref{exmp4.5} needs the following two lemmas.

\begin{lem}\label{prop4.3}
Let $( M^m,g)$, $( N^n,h)$, $( M^m\times N^n,\tilde{g})$ be  $m,n,(m+n)$-dimensional $\mathrm{IHKI}$ Riemannian manifolds respectively, where $\tilde{g}$ is the standard product Riemannian metric. Then for any $t>0$, it holds that $(\rho^ {M^m}_t)^n=(\rho^ {N^n}_t)^m$.
\end{lem}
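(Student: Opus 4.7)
The plan is to reduce the identity $(\rho^{M^m}_t)^n = (\rho^{N^n}_t)^m$ to a first-order ODE relating the two diagonal heat kernels, and then to fix the integration constant via short-time asymptotics. Since $M^m$, $N^n$, and $P := M^m \times N^n$ are compact IHKI Riemannian manifolds, by the observation preceding this lemma the diagonal heat kernels $\rho^{M^m}_t$, $\rho^{N^n}_t$, $\rho^P_t$ are constant in space; write $a(t) := \rho^{M^m}_t$, $b(t) := \rho^{N^n}_t$, and let $c_M(t)$, $c_N(t)$, $c_P(t)$ denote the IHKI normalization functions, i.e.\ $c_M(t)\, g^{M^m}_t = g$, $c_N(t)\, g^{N^n}_t = h$, and $c_P(t)\, g^P_t = \tilde{g} = g + h$.

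First I would apply Lemma \ref{llem3.1} (whose hypothesis is satisfied because $\rho^{M^m}_{2t}$ and $\rho^{N^n}_{2t}$ are spatially constant) to obtain the tensor identity
\[
g^P_t = a(2t)\, g^{N^n}_t + b(2t)\, g^{M^m}_t
\]
on $P$. Substituting the IHKI relations yields
\[
\frac{g + h}{c_P(t)} = \frac{a(2t)}{c_N(t)}\, h + \frac{b(2t)}{c_M(t)}\, g.
\]
Since on $P$ the pulled-back tensors $g$ and $h$ vanish on vectors tangent to the complementary factor (see (\ref{2.27}) and Definition \ref{cp1}), this identity separates into $c_M(t) = c_P(t)\, b(2t)$ and $c_N(t) = c_P(t)\, a(2t)$, giving the key relation
\[
c_M(t)\, a(2t) = c_N(t)\, b(2t).
\]

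Next I would express $c_M$ and $c_N$ in terms of $a$ and $b$ via an integrated trace. Using the spectral expansion $g^{M^m}_t = \sum_i e^{-2\mu_i t}\, d\phi_i \otimes d\phi_i$ together with $\mathrm{Tr}(g^{M^m}_t) = m/c_M(t)$ pointwise, the integral $\int_{M^m} \mathrm{Tr}(g^{M^m}_t)\, \mathrm{dvol}_g$ equals both $\sum_i \mu_i e^{-2\mu_i t}$ and $m\, \mathrm{vol}(M^m)/c_M(t)$. The spatial constancy of $\rho^{M^m}_s = a(s)$ then gives $\sum_i e^{-\mu_i s} = a(s)\, \mathrm{vol}(M^m)$; differentiating in $s$ and setting $s = 2t$ shows $\sum_i \mu_i e^{-2\mu_i t} = -a'(2t)\, \mathrm{vol}(M^m)$, whence $c_M(t) = -m/a'(2t)$. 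An identical computation produces $c_N(t) = -n/b'(2t)$.

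Substituting these into the key relation and setting $u := 2t$ yields the ODE $m\, b'(u)/b(u) = n\, a'(u)/a(u)$, which integrates to $b(u)^m = K\, a(u)^n$ for some constant $K > 0$. To identify $K$, I would invoke the classical short-time asymptotic $\rho(p, p, t) \sim (4\pi t)^{-\dim/2}$ valid for smooth compact Riemannian manifolds: then $a(u)^n$ and $b(u)^m$ both have leading order $(4\pi u)^{-mn/2}$ as $u \to 0^+$, forcing $K = 1$ and concluding the proof. The most delicate point will be the tensor separation in the second paragraph, where I must justify that the $L^\infty$-module identity on $P$ decouples according to the product structure; this can be verified explicitly by evaluating both sides against pairs of vector fields each tangent to a single factor, using that a pure $M$-tangent vector has zero inner product under $h$ and likewise for $N$.
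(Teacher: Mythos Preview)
Your proof is correct and follows essentially the same route as the paper's. Both arguments (i) use Lemma~\ref{llem3.1} to split $g_t^{M\times N}$ and read off $c_M(t)\,\rho^{M^m}_{2t}=c_N(t)\,\rho^{N^n}_{2t}$, (ii) relate $c_M(t)$ to the $t$-derivative of $\rho^{M^m}_{2t}$, (iii) integrate the resulting logarithmic ODE, and (iv) fix the constant via the short-time asymptotic $(4\pi t)^{-\dim/2}$. The only cosmetic difference is in step (ii): the paper obtains $\frac{\partial}{\partial t}\rho^{M^m}_{2t}=-2m/c_M(t)$ by differentiating $\int_{M^m}\rho^{M^m}(p,p',t)^2\,\mathrm{dvol}_g(p')$ directly via the heat equation and integration by parts, whereas you pass through the spectral identity $\sum_i e^{-\mu_i s}=\rho^{M^m}_s\,\mathrm{vol}(M^m)$; these are equivalent computations of the same quantity.
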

\begin{proof}
Owing to Lemmas \ref{llem3.1} and \ref{1lem3.15}, we have  
{}{\begin{equation}\label{eqn4.4}
\begin{aligned}
c^{ M^m\times  N^n}(t)g_t^{ M^m\times N^n } (p,q)&=c^{ M^m\times  N^n}(t)\rho^{ M^m}_{2t}g_t^{ N^n}(q) +c^{ M^m\times  N^n}(t) \rho^{ N^n}_{2t}g_t^{ M^m}(p)\\
\ &=\rho^{ M^m}_{2t}\frac{c^{ M^m\times  N^n}(t)}{c^{N^n}(t)}h(q) + \rho^{ N^n}_{2t}\frac{c^{ M^m\times  N^n}(t)}{c^{M^m}(t)}g(p)\\
\ &=\tilde{g}(p,q).
\end{aligned}
\end{equation}}

Then from (\ref{eqn4.4}), {}{$\rho^ {N^n}_{2t}c^{N^n}(t)=\rho^ {M^m}_{2t}c^ {M^m}(t)$ for any $t>0$}. Moreover, {}{for any $p\in M^m$, we calculate that
\[
\begin{aligned}
\frac{\partial }{\partial t} \rho^ {M^m}_{2t}(p)
=\ & \frac{\partial }{\partial t} \int_{ M^m}\left(\rho^ {M^m}(p,p',t)\right)^2 \text{dvol}_g(p')\\
=\ & 2 \int_{ M^m}\Delta^ {M^m}_{p'}\rho^ {M^m}(p,p',t) \rho^ {M^m}(p,p',t) \text{dvol}_g(p')\\
=\ & -2 \int_{ M^m}\left|\nabla^ {M^m}_{p'}\rho^ {M^m}(p,p',t)\right|^2 \text{dvol}_g(p')
=-2 \left\langle g_t^ {M^m},g\right\rangle(p)=- \frac{2m}{ c^{ M^m}(t)}.
\end{aligned}
\]
}

Analogously {}{$\dfrac{\partial }{\partial t} \rho^ {N^n}_{2t}=- \dfrac{2n}{ c^{ N^n}(t)}$}, and thus $n \rho^ {N^n}_{2t}\dfrac{\partial }{\partial t} \rho^ {M^m}_{2t}  =m   \rho^ {M^m}_{2t}\dfrac{\partial }{\partial t} \rho^ {N^n}_{2t}$. Therefore there exists $\tilde{c}>0$, such that 
\[
\left(\rho^ {M^m}_t\right)^n=\tilde{c}\left(\rho^ {N^n}_t\right)^m,\ \ \forall t>0. 
\]

To see $\tilde{c}=1$, it suffices to use a blow up argument and Theorem \ref{thm2.26} to show that $\lim\limits_{t\rightarrow 0 }t^{\frac{m}{2}}\rho^ {M^m}_t=\left(4\pi\right)^{-\frac{m}{2}}$ and $\lim\limits_{t\rightarrow 0 }t^{\frac{n}{2}}\rho^ {N^n}_t=\left(4\pi\right)^{-\frac{n}{2}}$.
\end{proof}

\begin{lem}\label{lem4.4}
Let $( M^n,g)$ be an $n$-dimensional closed $\mathrm{IHKI}$ Riemannian manifold, then it holds that
{}{\[
\lim\limits_{t\rightarrow \infty} \frac{t}{c^ {M^n}(t)\rho^{ M^n}_{2t}}=0.
\]}
\end{lem}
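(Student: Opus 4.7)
The plan is to reduce the statement to a simple spectral asymptotic using Corollary \ref{cor1.11} together with the IHKI identity $c(t)g_t = g$. First, I would show that $\rho^{M^n}_{2t}$ is in fact a constant function of the base point. By Corollary \ref{cor1.11}, for each non-zero eigenvalue $\lambda$ of $-\Delta$ with eigenspace $E_\lambda$ of dimension $m_\lambda$, the normalized eigenmap into $\mathbb{R}^{m_\lambda}$ lands on a sphere; squaring its components, and using that the diagonal of the reproducing kernel of $E_\lambda$ is independent of the choice of orthonormal basis, one concludes $\sum_{k=1}^{m_\lambda} \phi_{\lambda,k}^2 \equiv m_\lambda/\mathrm{vol}_g(M^n)$ for any $L^2(\mathrm{vol}_g)$-orthonormal basis $\{\phi_{\lambda,k}\}$ of $E_\lambda$. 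Substituting this into the spectral representation (\ref{heatkernel}) and summing over eigenvalues gives
\[
\rho^{M^n}_{2t}(p) = \frac{1}{\mathrm{vol}_g(M^n)}\sum_{i=0}^\infty e^{-2\mu_i t},
\]
independently of $p$.

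Next I would extract $c^{M^n}(t)$ from the identity $c(t)g_t = g$. Since $g_t = \sum_{i\geq 1}e^{-2\mu_i t}d\phi_i \otimes d\phi_i$ by Theorem \ref{thm2.18} and the canonical metric satisfies $\langle g,g\rangle \equiv n$, pairing with $g$ pointwise and integrating over $M^n$ yields
\[
n\,\mathrm{vol}_g(M^n) = c^{M^n}(t)\sum_{i\geq 1}\mu_i e^{-2\mu_i t}.
\]
Combining with the previous display,
\[
c^{M^n}(t)\,\rho^{M^n}_{2t} = \frac{n\sum_{i\geq 0}e^{-2\mu_i t}}{\sum_{i\geq 1}\mu_i e^{-2\mu_i t}}.
\]

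It remains to let $t\to\infty$. The numerator tends to $n$, while for $t\geq 1$ one estimates
\[
\sum_{i\geq 1}\mu_i e^{-2\mu_i t} \leq e^{-2\mu_1 (t-1)}\sum_{i\geq 1}\mu_i e^{-2\mu_i},
\]
and the tail series converges by Proposition \ref{heatkernel2} (via $\mu_i \gtrsim i^{2/N}$). Hence the denominator decays at least as fast as $e^{-2\mu_1 t}$, so $t/(c^{M^n}(t)\rho^{M^n}_{2t}) = O(t\, e^{-2\mu_1 t}) \to 0$, which is the claim. I do not anticipate a real obstacle; the only mildly non-trivial input is the constancy of $\sum_k \phi_{\lambda,k}^2$ on each eigenspace, which is an immediate consequence of the sphere-valued conclusion in Corollary \ref{cor1.11}.
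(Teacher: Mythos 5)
Your proposal is correct and follows essentially the same route as the paper: the paper also records (in the preamble of this section, citing Corollary \ref{cor1.11} and Takahashi's theorem) that $\rho^{M^n}_{2t}$ is constant, writes $1/c^{M^n}(t)$ and $\rho^{M^n}_{2t}$ as the two spectral sums $\frac{1}{n\mathrm{vol}_g(M^n)}\sum_{i\geq 1}\mu_i e^{-2\mu_i t}$ and $\frac{1}{\mathrm{vol}_g(M^n)}\sum_{i\geq 0}e^{-2\mu_i t}$, and lets $t\to\infty$. Your proof simply makes these steps (the constancy of $\sum_k\phi_{\lambda,k}^2$ from the sphere-valued conclusion of Corollary \ref{cor1.11}, the trace identity, and the exponential decay estimate) explicit.
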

\begin{proof}

Set {}{$0=\mu_0< \mu_1\leqslant \ldots\rightarrow +\infty$} as the eigenvalues of $-\Delta$ counting with multiplicities. Then it suffices to notice that 
{}{\[
\frac{1}{c^ {M^n}(t)}=\frac{1}{n\text{vol}_g( M^n)}\sum\limits_{i=1}^\infty e^{-2\mu_i t}\mu_i, \ \ \rho^ {M^n}_{2t}=\frac{1}{\text{vol}_g( M^n)}\sum\limits_{i=0}^\infty e^{-2\mu_it}
\]}
 and let $t\rightarrow \infty$.
\end{proof}

\begin{exmp}\label{exmp4.5}
Set $\mathbb{S}^n(k):=\left\{(x_1,\ldots,x_{n+1})\in\mathbb{R}^{n+1}:x_1^2+\cdots+x_{n+1}^2=k^2\right\}$. Observe that {}{$c^{\mathbb{S}^n(k)}(1)=k^{n+2}c^{\mathbb{S}^n}(k^{-2})$}, $\rho^{\mathbb{S}^n(k)}_2=k^{-n}\rho^{\mathbb{S}^n}_{2k^{-2}}$. By Lemma \ref{lem4.4}, 
{}{\[
\lim\limits_{k\rightarrow 0} c^{\mathbb{S}^n(k)}(1)\rho^{\mathbb{S}^n(k)}_2=\infty.
\]}

This implies that for any small $r>0$, there exists $s=s(r)$ such that 
{}{\[
c^{\mathbb{S}^1(r)}(1)\rho^{\mathbb{S}^1(r)}_2=c^{\mathbb{S}^2(s)}(1)\rho^{\mathbb{S}^2(s)}_2.
\] }

Consider the product Riemannian manifold $\left(\mathbb{S}^1(r)\times \mathbb{S}^2(s), g_{\mathbb{S}^1(r)\times\mathbb{S}^2(s)}\right)$. By (\ref{eqn4.4}), there exists $c(r)>0$, such that $c(r)\Phi_1^{\mathbb{S}^1(r)\times \mathbb{S}^2(s)}$ realizes an isometric immersion into $L^2\left(\mathrm{vol}_{g_{\mathbb{S}^1(r)\times\mathbb{S}^2(s)}}\right)$. 

If $\left(\mathbb{S}^1(r)\times \mathbb{S}^2(s), g_{\mathbb{S}^1(r)\times\mathbb{S}^2(s)}\right)$ is IHKI, then by Proposition \ref{prop4.3}, it holds that 
\begin{equation}\label{eqn4.5}
\rho^{\mathbb{S}^2(s)}_t=\left(\rho^{\mathbb{S}^1(r)}_t\right)^2=\rho^{\mathbb{S}^1(r)\times \mathbb{S}^1(r)}_t, \ \ \forall t>0.
\end{equation}

 Therefore by taking integral of (\ref{eqn4.5}), we see that for any $t>0$,
\begin{equation}\label{eqn4.6}
\text{vol}\left(\mathbb{S}^2(s)\right)\sum\limits_{i=0}^\infty \exp\left(-r^{-2}\mu_i^{\mathbb{S}^1\times \mathbb{S}^1}t\right) =\text{vol}\left(\mathbb{S}^1(r)\times \mathbb{S}^1(r)\right)\sum\limits_{i=0}^\infty \exp\left(-s^{-2}\mu_i^{\mathbb{S}^2}t\right).
\end{equation}

Then $\mathrm{vol}\left(\mathbb{S}^2(r_2)\right)=\mathrm{vol}\left(\mathbb{S}^1(r_1)\times \mathbb{S}^1(r_1)\right)$ follows by letting $t\rightarrow 0$ in (\ref{eqn4.6}), which implies that $s(r)=r$.  (\ref{eqn4.6}) then becomes
\begin{equation}\label{eqn4.7}
\sum\limits_{i=1}^\infty \exp\left(-r^{-2}\mu_i^{\mathbb{S}^1\times \mathbb{S}^1}t\right) =\sum\limits_{i=1}^\infty \exp\left(-r^{-2}\mu_i^{\mathbb{S}^2}t\right),\  \forall t>0 .
\end{equation}

Since $\mu_1^{\mathbb{S}^1\times \mathbb{S}^1}=\mu_4^{\mathbb{S}^1\times \mathbb{S}^1}=2<\mu_5^{\mathbb{S}^1\times \mathbb{S}^1}$ and $\mu_1^{\mathbb{S}^2}=\mu_3^{\mathbb{S}^2}=2<\mu_4^{\mathbb{S}^2}$, multiplying $\exp(2r^{-2}t)$ to both sides of (\ref{eqn4.7}) and letting $t\rightarrow \infty$, the right hand side of (\ref{eqn4.7}) converges to 3, while the left hand side of (\ref{eqn4.7}) converges to 4. A contradiction.
\end{exmp}

There is also a simple example which does not satisfy the condition 2 in Corollary \ref{cor4.7}.
\begin{exmp}
Consider the product manifold $(\mathbb{S}^1\times \mathbb{R}, g_{\mathbb{S}^1\times \mathbb{R}})$. It is obvious that
\[
\begin{aligned}
\pi g_t^{\mathbb{S}^1\times \mathbb{R}}=&\dfrac{1}{(4\pi t)^{\frac{1}{2}}}\sum\limits_{i=1}^\infty e^{-i^2t } g_{\mathbb{S}^1}+\dfrac{c_1^\mathbb{R}}{t^{\frac{3}{2}}}\sum\limits_{i=0}^\infty e^{-i^2t} i^2 g_\mathbb{R}\\
\geqslant &\dfrac{1}{(4\pi t)^{\frac{1}{2}}}g_{\mathbb{S}^1}+\dfrac{c_1^\mathbb{R}}{t^{\frac{3}{2}}}g_\mathbb{R},
\end{aligned}
\]

As a result, $g_t^{\mathbb{S}^1\times\mathbb{R}}\geqslant \dfrac{c_1^\mathbb{R}}{\pi }t^{-\frac{3}{2}}g_{\mathbb{S}^1\times \mathbb{R}}$ for any sufficiently large $t>0$ but 
\[
{}{\lim\limits_{t\rightarrow \infty} t^{-2}c(t)=\lim\limits_{t\rightarrow \infty} t^{-2}\frac{\pi}{c_1^\mathbb{R}}t^{\frac{3}{2}}=0.}
\]
\end{exmp}



\bigskip

\end{document}